\documentclass{article}

\usepackage[T1]{fontenc}
\usepackage{lmodern}

\usepackage{mathtools}
\usepackage{amssymb}
\usepackage{amsthm}

\usepackage{caption} 

\usepackage{xcolor}
\usepackage{csquotes}
\usepackage{booktabs}
\usepackage{makecell}
\usepackage{graphicx}

\usepackage{cite}

\usepackage{tikz}
\usetikzlibrary{arrows.meta,positioning,calc}

\usepackage{hyperref}
\usepackage{geometry}
\usepackage[english]{babel}
\usepackage{tcolorbox}
\usepackage{orcidlink}

\mathtoolsset{showonlyrefs}

\newcommand{\FundingLogos}{%
  \raisebox{0pt}{\includegraphics[height=1.5cm]{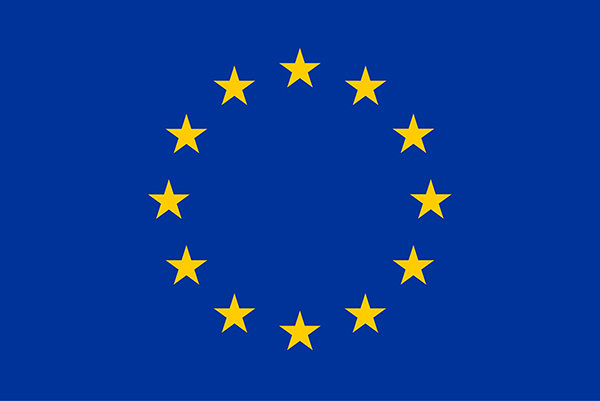}}%
  \hspace{1em}%
  \raisebox{0pt}{\includegraphics[height=1.5cm]{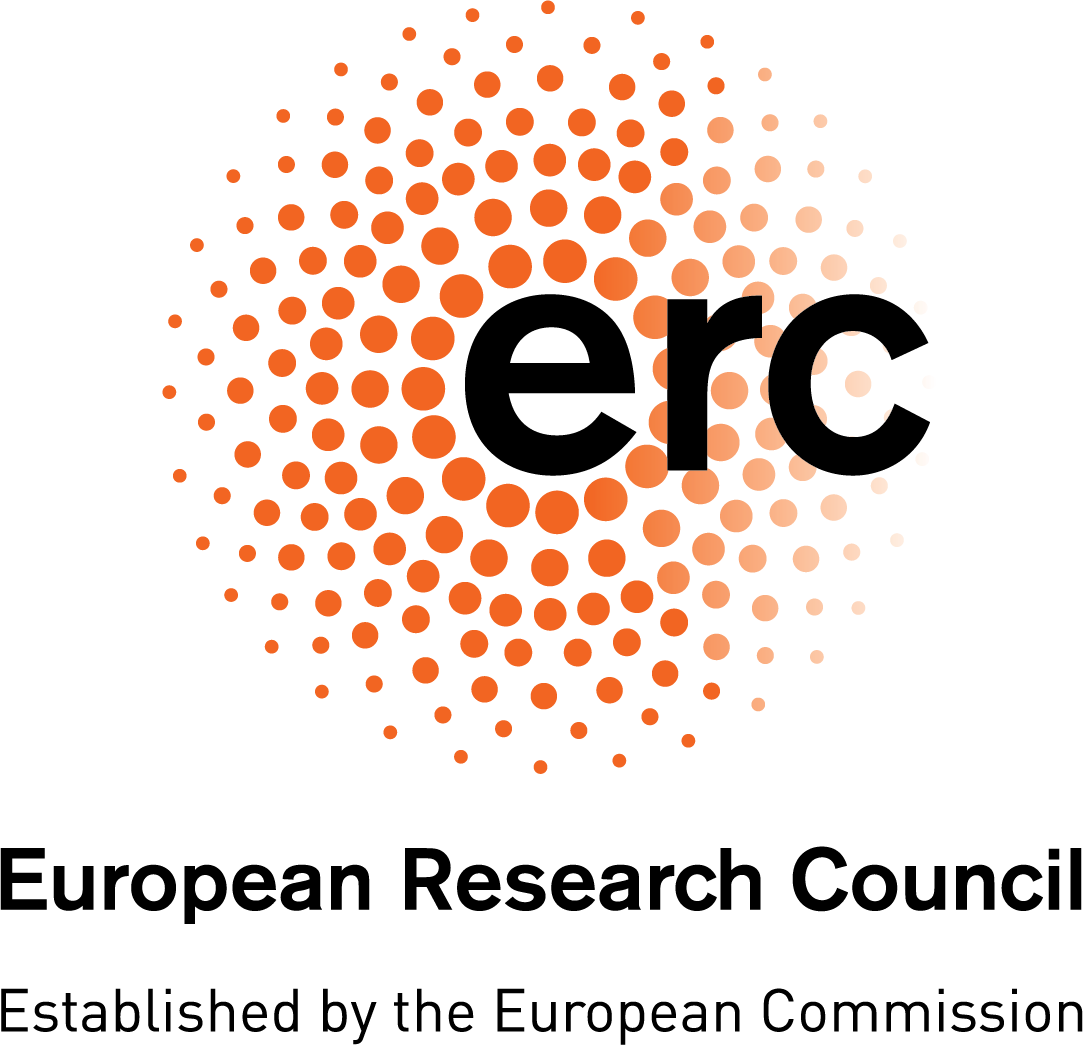}}%
}

\tcbset{
  colback=white,
  boxrule=0.5pt,
  arc=2pt,
  left=4pt,
  right=4pt,
  top=4pt,
  bottom=4pt
}

\newcounter{dummy}
\numberwithin{dummy}{section}

\newtheorem{remark}[dummy]{Remark}
\newtheorem{theorem}[dummy]{Theorem}
\newtheorem{lemma}[dummy]{Lemma}
\newtheorem{example}[dummy]{Example}

\newtheorem{cor}[dummy]{Corollary}
\newtheorem{proposition}[dummy]{Proposition}

\newcommand{\R}{\mathbb R}

\let\P\relax
\newcommand{\P}{\mathcal P}

\newcommand{\F}{\mathcal F}
\newcommand{\NN}{\mathcal N}

\DeclareMathOperator{\lin}{lin}
\DeclareMathOperator{\diag}{diag}
\DeclareMathOperator{\loc}{loc}
\DeclareMathOperator{\tr}{tr}
\DeclareMathOperator{\Attn}{Attn}
\DeclareMathOperator{\LN}{LN}
\DeclareMathOperator{\reg}{reg}
\DeclareMathOperator{\MLP}{MLP}
\DeclareMathOperator{\GL}{GL}
\DeclareMathOperator{\ran}{ran}
\DeclareMathOperator{\Sym}{Sym}
\DeclareMathOperator{\eff}{eff}
\DeclareMathOperator{\SM}{SM}
\DeclareMathOperator{\grad}{grad}

\renewcommand{\d}{\mathop{}\!\mathrm{d}}

\allowdisplaybreaks

\mathtoolsset{showonlyrefs}

\title{SympFormer: Inertial Attention Blocks from Damped Hamiltonian Dynamics on Spaces of Probability Measures}
\author{%
Viktor Stein\orcidlink{0000-0001-9040-820X}\thanks{Equal contribution.}\  
\thanks{Department of Mathematics, Technical University of Munich, and the Munich Center for Machine Learning. {\ttfamily viktor.stein@tum.de}, \url{https://neitalg.github.io/NEITALG/}.} {},
Levin Maier%
\protect\footnotemark[1]%
\ 
\thanks{School of Mathematics, Institute for Advanced Study, 1 Einstein Drive, Princeton, NJ 08540, USA, {\ttfamily lmaier@ias.edu}.}{},
Wuchen Li\,\orcidlink{0000-0002-2218-5734}%
\thanks{%
Department of Mathematics,
University of South Carolina,
Columbia.
1523 Greene St, Columbia, SC 29208, USA, {\ttfamily wuchen@mailbox.sc.edu}.
},
Gabriele Steidl\,\orcidlink{0000-0001-7486-6315}%
\thanks{Institute of Mathematics,
Technische Universit\"at Berlin,
Stra{\ss}e des 17.\ Juni 136,
10623 Berlin, Germany,
{\ttfamily steidl@math.tu-berlin.de},
\url{https://tu.berlin/imageanalysis}.
}%
}
\date{\today}

\begin{document}
\maketitle

\begin{abstract}
The attention mechanism underpins modern large language models, yet their dynamics remain only partially understood. Recent perspectives interpret attention blocks as interacting particle systems, whose mean-field limits correspond to gradient flows of energy functionals on spaces of probability densities on $\R^d$ equipped with Wasserstein-type metrics. We extend this viewpoint by deriving accelerated attention blocks from inertial, Nesterov-type damped Hamiltonian dynamics on these density spaces, developing the geometry and differential calculus needed to make the construction rigorous.
In particular, we introduce a local-mobility geometry for the Otto–Wasserstein and softmax–Wasserstein settings, and a quotient Stein geometry that resolves the degeneracy of the linear-attention kernel to yield a well-defined metric and differential calculus. We provide rigorous foundations when softmax attention admits a gradient-flow formulation and when its inertial counterpart admits a damped Hamiltonian on explicitly characterized domains. For linear attention, we obtain a finite-dimensional solution theory with sharp conditions for global existence versus finite-time second-moment blow-up. We further prove a quantitative link between continuous moment blow-up and the discrete-layer depth at which token norms cross a prescribed threshold. Finally, motivated by the finite-token geometry, we propose geometric corrections to token and momentum updates. In decoder-only, nanoGPT-style experiments on TinyStories and OpenWebText, these corrections lower validation negative log-likelihood on next-token prediction relative to momentum-only controls.
\end{abstract}

\noindent\textbf{Keywords:} 
 transformers; attention dynamics; local-mobility geometry;
Wasserstein geometry; quotient Stein geometry;
differential calculus; damped Hamiltonian dynamics;
finite-time blow-up; discrete layer depth.

\section{Introduction}

This paper develops foundations of infinite-dimensional Nesterov acceleration and connects it to the mathematical theory of deep transformers—two fields that at first glance appear largely unrelated. To motivate this connection, we first consider the role of attention in transformers.

The transformer architecture introduced in \emph{Attention Is All You Need}~\cite{VSPUJGKP2017}
forms the backbone of modern large language models.
Its attention layers update token representations by
allowing tokens to exchange information across a sequence.
The widespread use and societal importance of these systems
make a mathematical understanding of their behavior important. 
Although attention has become an active subject of mathematical research,
with contributions to approximation theory, stability,
learning, and the dynamics of token interactions \cite{YBRRK2020,FHP2025,DCL2021,VBC2020,KPM2021,CAP2024,LiEtAl2026},
even the behavior of the original transformer architecture remains
only partially understood. 

Attention layers can be viewed as discrete dynamical systems for token representations
\cite{VBC2020,DCL2021}
and, in a continuous-depth limit, as interacting particle systems
\cite{LLHSDQWL2019,SABP2022,GLPR2025}.
Under suitable assumptions, a subsequent mean-field limit describes
the evolution of the token distribution through a non-linear transport equation,
allowing PDE methods to be used to study collective behavior
\cite{BKKRW2025,CACP2025}.
We provide further details in Section~\ref{sec:trans}
and illustrate this passage in Figure~\ref{fig:attention_dynamics}.
In their recent work, Castin--Ablin--Carrillo--Peyr\'e~\cite{CACP2025}
establish well-posedness of the mean-field PDE for softmax attention
for compactly supported initial data and study Gaussian initial data separately.
General non-Gaussian initial densities with unbounded support remain
outside their analysis.
As in their work, we consider probability densities on $\R^d$,
allowing unbounded support, which lets us study growth in token magnitudes,
which sphere-constrained models exclude
\cite{GLPR2023,GKPR2024,GLPR2025,BKKRW2025,MDCBS2026}.

The transformer PDE arising in the mean-field limit can also be
interpreted formally as a gradient flow of an interaction energy
on a space of probability densities equipped with a suitable
Riemannian metric.
For softmax attention, Castin et al.~\cite[Sec.~5]{CACP2025}
develop this interpretation, but explicitly leave its rigorous
geometric justification open.
Before moving on, we highlight several obstacles to making this
gradient-flow interpretation rigorous.
The first concerns differentiation itself: the admissible density
classes on $\R^d$ are not assumed to carry a smooth manifold structure.
Therefore the relevant tangent and cotangent spaces and notions of differentiation
must be specified explicitly.
A further difficulty comes from weak Riemannian metrics, for which
even smooth energies need not admit a gradient vector field; see
Section~\ref{ss:background_inf_dim_geometry}.
To define the Riemannian metrics considered here, we follow
a guiding principle: associate a differential operator, called the
\emph{mobility operator}, with the attention kernel and use its
inverse to define the metric.
Making this construction rigorous requires identifying suitable
function spaces on which this operator is invertible.
Figure~\ref{fig:attention_geometry} illustrates this passage from
attention kernels to Riemannian metrics.

A natural extension of this viewpoint is inspired by Nesterov acceleration~\cite{N1983}
in finite-dimensional optimization, which, roughly speaking, introduces
momentum to accelerate gradient descent; see Section \ref{sec:Nesterov}.
On spaces of probability densities, this leads us to study
infinite-dimensional damped Newton systems, a particular class
of damped Hamiltonian systems.
Newton dynamics on such spaces also arise in geometric hydrodynamics;
see Khesin--Misio{\l}ek--Modin~\cite{khesin2021geometric}.
Wang and Li~\cite{WL2022} study such damped Newton dynamics
as accelerated gradient flows on spaces of probability densities.
However, their geometric construction has serious analytic gaps
even in the Otto--Wasserstein and Stein settings;
see Section~\ref{sec3} and
Appendix~\ref{app:Non-surjectivity of the Stein operator on compact domains}.
The authors of~\cite{SLS2026} use this framework to derive inertial PDEs
for linear and softmax attention, but leave the underlying geometric
construction at a formal level. 

These connections motivate the following question:
\emph{Can PDE methods, functional analysis, and geometry
applied to the mean-field limit yield new insights into
the dynamics of transformers with finitely many tokens
and layers?}

\paragraph{Main results.}
Our paper makes the following main contributions, whose connections we summarize in Figure~\ref{fig:intro-roadmap}.

\medskip\noindent
\textbf{1. Foundations for infinite-dimensional Nesterov acceleration.}
For linear and softmax attention, we develop the geometric foundations
needed to make the formal gradient-flow and damped Hamiltonian
formulations rigorous on specified domains
(Corollary~\ref{thm:sm-gradient},
Theorem~\ref{thm:sm-hamiltonian},
Proposition~\ref{prop:lin-covariance-geometry}, and
Lemma~\ref{lem:lin-canonical-hamiltonian}).
This provides the rigorous justification left open
in~\cite[Sec.~5.4]{CACP2025}.
For softmax attention, we introduce a \emph{local-mobility geometry},
which includes both Otto--Wasserstein and softmax--Wasserstein
geometry on smooth probability densities over $\R^d$
(Theorem~\ref{prop:LM_operator_invertible}).
For linear attention, we introduce a \emph{quotient Stein geometry}
adapted to the bilinear attention kernel
(Theorem~\ref{prop:isomorphism}).
Since a differentiable structure on the spaces of probability
densities involved is not given a priori, we develop the associated
differential calculus directly on these spaces
(Appendices~\ref{app:diif_in_in_local_mobility_calculus}
and~\ref{app:diff_stein_geom}).

\medskip\noindent
\textbf{2. Well-posedness and blow-up for the linear-attention
transformer PDE and its accelerated counterpart.}
By choosing coordinates adapted to the evolving density, we reduce
the infinite-dimensional dynamics of both equations to finite-dimensional
matrix ODEs.
The resulting two-way PDE--ODE correspondences yield existence
and uniqueness of strong solutions for admissible non-Gaussian initial
data with potentially unbounded support
(Theorems~\ref{thm:lin-ordinary-correspondence}
and~\ref{thm:lin-correspondence}).
In the accelerated case, we require a quadratic initial momentum potential.
These reductions also give conditions for global-in-time existence
and finite-time second-moment blow-up
(Proposition~\ref{prop:lin-ordinary-commuting} and
Theorem~\ref{thm:lin-bounded-damping-blowup}).

\medskip\noindent
\textbf{3. From PDE blow-up to growth of token norms across layers.}
For accelerated linear attention at fixed token count, we use
continuous moment growth to predict the first explicit-Euler layer
crossing a prescribed directional second-moment threshold.
The prediction has a bounded error in layer count as the step size
tends to zero (Theorem~\ref{thm:lin-transformer-depth}).
As a corollary, continuum moment blow-up implies that every prescribed
token-norm bound is exceeded in the discrete layers for sufficiently
small step sizes
(Corollary~\ref{cor:lin-transformer-amplification}).
To the best of our knowledge, these are the first results linking
continuum moment blow-up to quantitative threshold-depth predictions
and arbitrarily large token norms in discrete attention networks
at fixed token count.

\medskip\noindent
\textbf{4. Numerical evaluation of geometric attention updates.}
We derive corrections to token and momentum updates from the
finite-token softmax metric
(Lemma~\ref{lem:sm-particle-connection}).
On TinyStories and OpenWebText, we compare practical versions of
these updates with matched momentum-only controls.
The corrections give small reductions in mean validation loss
in some settings, at increased computational cost
(Section~\ref{sec:numerics}).

\medskip\noindent
We discuss the relation to previous work in
Section~\ref{sec:Related results}.

\color{black}

\begin{figure}[!htbp]
  \centering
  \definecolor{introBlue}{HTML}{4250A3}
  \definecolor{introTeal}{HTML}{24716D}
  \definecolor{introGray}{HTML}{526074}

  \resizebox{\linewidth}{!}{%
  \begin{tikzpicture}[
    font=\small,
    introbox/.style={
      draw,
      rounded corners=2mm,
      line width=.7pt,
      align=center,
      text width=4.2cm,
      minimum height=3cm,
      inner sep=6pt
    },
    theory/.style={
      introbox,
      draw=introBlue,
      fill=introBlue!6
    },
    design/.style={
      introbox,
      draw=introTeal,
      fill=introTeal!6
    },
    flow/.style={
      -{Latex[length=2mm]},
      line width=.7pt,
      draw=introGray
    }
  ]

  \node[
    introbox,
    text width=9cm,
    minimum height=1.1cm,
    draw=introGray,
    fill=introGray!4
  ] (attention) {
    \textbf{Softmax and linear attention}\\[3pt]
    Continuous dynamics and discrete layers
  };

  \node[
    introbox,
    text width=13.6cm,
    minimum height=2.6cm,
    below=8mm of attention,
    draw=introGray,
    fill=introGray!3
  ] (foundation) {
    \textbf{Geometric foundations on $\R^d$}\\[7pt]
    \begin{tabular}{@{}c@{\hspace{1cm}}c@{}}
      \textbf{Local-mobility geometry}
      &
      \textbf{Quotient Stein geometry}
      \\[3pt]
      Otto--Wasserstein and softmax-Wasserstein
      &
      Linear attention
      \\[3pt]
      {\footnotesize
        Theorem~\ref{prop:LM_operator_invertible}}
      &
      {\footnotesize
        Theorem~\ref{prop:isomorphism}}
    \end{tabular}\\[7pt]
    Function spaces, metrics, and differential calculus
  };

  \draw[flow] (attention) -- (foundation);

  \node[
    theory,
    anchor=north
  ] (softmax)
    at ($(foundation.south)+(-4.8cm,-1.1cm)$) {
    \textbf{Rigorous softmax\\formulations}\\[5pt]
    Conditions for gradient and
    damped Hamiltonian equations\\[6pt]
    {\footnotesize
      Corollary~\ref{thm:sm-gradient}\\
      Theorem~\ref{thm:sm-hamiltonian}}
  };

  \node[
    theory,
    anchor=north
  ] (linear)
    at ($(foundation.south)+(0,-1.1cm)$) {
    \textbf{Linear-attention\\solution theory}\\[5pt]
    Existence, uniqueness,\\
    and moment blow-up\\[6pt]
    {\footnotesize
      Theorem~\ref{thm:lin-correspondence}\\
      Theorem~\ref{thm:lin-bounded-damping-blowup}}
  };

  \node[
    design,
    anchor=north
  ] (tokens)
    at ($(foundation.south)+(4.8cm,-1.1cm)$) {
    \textbf{Direct finite-token\\geometry}\\[5pt]
    Explicit geometric corrections
    to token and momentum updates\\[6pt]
    {\footnotesize
      Lemma~\ref{lem:sm-particle-connection}}
  };

  \coordinate (fork)
    at ($(foundation.south)+(0,-4mm)$);

  \draw[line width=.7pt,draw=introGray]
    (foundation.south) -- (fork);
  \draw[flow] (fork) -| (softmax.north);
  \draw[flow] (fork) -- (linear.north);
  \draw[flow] (fork) -| (tokens.north);

  \node[
    theory,
    fill=introBlue!12,
    below=10mm of linear
  ] (depth) {
    \textbf{PDE growth predicts\\layer depth}\\[5pt]
    Thresholds for large token norms
    at a fixed number of tokens\\[6pt]
    {\footnotesize
      Theorem~\ref{thm:lin-transformer-depth}\\
      Corollary~\ref{cor:lin-transformer-critical-depth}}
  };

  \node[
    design,
    fill=introTeal!12,
    below=10mm of tokens
  ] (evaluation) {
    \textbf{Geometric corrections\\improve prediction}\\[5pt]
    Softmax and linear architectures
    compared with momentum controls\\[6pt]
    {\footnotesize
      Tables~\ref{tab:softmax_fiveway_uncertainty}
      and~\ref{tab:other_attention_endpoints}}
  };

  \draw[flow,draw=introBlue]
    (linear) -- (depth);
  \draw[flow,draw=introTeal]
    (tokens) -- (evaluation);

  \end{tikzpicture}%
  }

  \caption{From attention geometry to analysis and architecture.
  The local-mobility and quotient Stein constructions provide
  function spaces and a differential calculus for attention
  dynamics on $\R^d$.
  For softmax attention, they identify when the equations
  admit regular gradient and Hamiltonian formulations.
  For inertial linear attention under the stated assumptions,
  continuous blow-up predicts large token norms in discrete
  layers, even at a fixed number of tokens.
  A direct finite-token geometric construction motivates
  corrections to token and momentum updates.
  The reported experiments evaluate their predictive gains
  and additional computational cost.}
  \label{fig:intro-roadmap}
\end{figure}
\paragraph{Organization.}
Section~\ref{sec:trans} recalls the interpretation of
attention layers as interacting particle systems and
introduces their continuum descriptions.
Section~\ref{sec:Nesterov} reviews Nesterov acceleration
and motivates its extension to attention through geometric
formulations of density dynamics.
The obstruction to using the full formal tangent space
in Stein geometry is established in
Appendix~\ref{app:Non-surjectivity of the Stein operator on compact domains}.
Sections~\ref{sec3} and~\ref{sec:stein} develop the
local-mobility and quotient Stein frameworks, respectively.
They identify suitable function spaces, establish
invertibility of the corresponding mobility operators,
and define the induced metrics and gradients.
The notions of differentiability needed for the gradient
and Hamiltonian formulations are developed in
Appendices~\ref{app:diif_in_in_local_mobility_calculus}
and~\ref{app:diff_stein_geom}.
Section~\ref{sec:softmax-regular-dynamics} treats softmax
attention. It distinguishes pointwise well-definedness
from regular gradient and Hamiltonian formulations,
identifies their admissibility conditions, and derives
the finite-token geometric correction used in the
numerical section.
Section~\ref{sec:lin-accelerated-linear-attention}
applies the quotient Stein framework to linear attention.
It introduces covariance coordinates, establishes the
ODE--PDE correspondence and strong well-posedness, and
studies global existence and finite-time moment blow-up.
It then relates continuous growth to activation thresholds
and effective depth in discrete attention layers.
Section~\ref{sec:Discrete SympFormer architectures} introduces the
discretizations and resulting architectures, and
Section~\ref{sec:numerics} compares their linear and
softmax variants in language-modeling experiments.
We discuss the relation to previous work in
Section~\ref{sec:Related results}.

\part{Attention Models and Acceleration}

\section{Transformers as Dynamical Systems} \label{sec:trans}
In this section, we recall the mean-field description of
transformer dynamics and discuss possible geometric interpretations
of the resulting PDE; see, e.g., \cite{CACP2025,GLPR2025}.
We first specify the attention weights and residual layers, then
describe the formal passage to continuous-depth particle dynamics
and their mean-field continuity equation, as illustrated in
Figure~\ref{fig:attention_dynamics}. Finally, we discuss the geometric requirements for a gradient-flow
interpretation and the obstacles to making it rigorous.

\begin{figure}[ht]
\centering
\begin{tikzpicture}[
    box/.style={
        draw=blue!55!black, fill=blue!10, text=black,
        rounded corners=3pt, line width=0.7pt,
        minimum height=1.8cm, text width=3.4cm,
        align=center, inner sep=8pt, font=\small
    },
    arrow/.style={-{Stealth[length=2.2mm]}, line width=0.7pt},
    node distance=1.3cm
]
\node[box] (layers)
    {\textbf{Residual attention}\\[5pt]
     $X_i^{(k)}\mapsto X_i^{(k+1)}$};

\node[box, right=of layers] (particles)
    {\textbf{Particle dynamics}\\[5pt]
     $\dot X_i=\Gamma_{\mathbf X}(X_i)$};

\node[box, right=of particles] (density)
    {\textbf{Density evolution}\\[5pt]
     $\partial_t\rho=-\nabla\cdot(\rho\Gamma_\rho)$};

\draw[arrow] (layers)
    -- node[above, font=\scriptsize] {$h\to0$} (particles);
\draw[arrow] (particles)
    -- node[above, font=\scriptsize] {$N\to\infty$} (density);
\end{tikzpicture}
\caption{From attention layers to transport dynamics:
with shared weights, the formal continuous-depth limit turns
residual updates into an interacting particle system.
A suitably normalized mean-field limit then describes the
token distribution through a continuity equation.}
\label{fig:attention_dynamics}
\end{figure}
\paragraph{Token configurations and attention weights.}
Transformers act on token configurations
$$\mathbf{X} \coloneqq (X_1,\ldots, X_N) \in (\R^d)^N.$$ 
The literature also considers embeddings in $(\mathbb S^d)^N$; see, e.g., \cite{BKKRW2025,CKS2026,GLPR2023,CLPR2025}, which this paper does not handle.
Self-attention combines information
across tokens, whereas MLP layers apply {token-wise}
non-linear transformations. Let
$Q, K \in \R^{m\times d}$ and $V \in \R^{d\times d}$, $m\leq d$,
denote the query, key, and value matrices, and set
$A \coloneqq K^{\top} Q \in \R^{d\times d}$.
For the geometric constructions in
Sections~\ref{sec:lin-accelerated-linear-attention} and \ref{sec:softmax-regular-dynamics}, we assume that $m = d$
and that $A$ is a symmetric matrix.
In the following, we ignore the causal masking, layer normalization, the standard attention scaling, the output projection, and the multi-head structure usually used in transformer architectures.

Recall that the softmax function is defined for $w\in\R^d$ by
$$
\text{softmax} (w) \coloneqq \big(\exp(w_i) /\sum_{j=1}^d \exp(w_j) \big)_{i=1}^d.
$$ 
We consider the following two choices of attention
weights $\kappa = \kappa_{\mathbf{X}} \colon \R^d \times \R^d \to \R$:
\begin{equation} \label{eq:self_attn_kernel}
\kappa_{\mathbf{X}} (X_i,X_j) = 
\left\{
\begin{array}{ll}
   \frac{\exp\left((X_j)^{\top} A X_i \right)}{\sum_{\ell=1}^N \exp\left((X_{\ell})^{\top} A X_i\right)}
   =
   \text{softmax}_j\big((X_\ell^{\top} A X_i)_{\ell = 1}^{N}\big) &\quad  \text{softmax self-attention},\\[3ex]
 (X_j)^{\top} A X_i &\quad \text{linear self-attention}.
\end{array}
\right.
\end{equation}
We use the term \enquote{linear self-attention} for
unnormalized dot-product attention, to distinguish it from other linear
self-attention mechanisms \cite{KVPF2020}.
Note that the evaluation cost of transformers with softmax attention scales quadratically with sequence length. Kernel attention addresses this by replacing softmax with a more general kernel function. For example, \cite{RHS2026} aims to identify kernels that retain the expressivity of attention while enabling quasi-linear computation.

\paragraph{Residual attention and MLP layers.}
Let $(X^0_1,\cdots, X^0_N)$ be the input tokens, let $h>0$ be the step
size, and let $k\in\mathbb{N}$ denote the layer index. The attention
matrices usually differ between layers. For the continuous-depth
description below, we instead keep $Q$, $K$, and $V$ fixed across layers
(\enquote{weight sharing}), as in architectures such as ALBERT
\cite{Lan2020}. The \emph{residual attention sublayer} is
\begin{equation} \label{eq:self-attention}
X^{(k + 1)}_i = X^{(k)}_i + h \sum_{j=1}^N \kappa_{\mathbf{X}}(X^{(k)}_i,X^{(k)}_j) V X^{(k)}_j,
\end{equation}
where we have to replace $h$ by $h/N$ for linear attention.
For weight matrices $W,\widetilde W\in\R^{d\times d}$, a bias
$b\in\R^d$, and a non-linear function $\sigma\colon \R\to\R$ applied
componentwise, the \emph{residual MLP sublayer} is
\begin{equation} \label{eq:MLP}
    X^{(k + 1)}_i= X^{(k)}_i +  hW \sigma (\widetilde W X^{(k)}_i + b),     
\end{equation}
where the MLP parameters may differ between layers. The updates
\eqref{eq:self-attention} and \eqref{eq:MLP} describe the two sublayers
separately. In the following passage to particle and density dynamics,
we consider only the attention update \eqref{eq:self-attention}.

\paragraph{Particle dynamics.}
Given input tokens
$\mathbf{X}(0) = (X_1(0), \dots, X_N(0)) \in (\R^d)^N$,
we interpret the attention iteration as a discretization in depth.
Formally, as the step size $h\to 0$, this gives
\begin{equation}\label{eq:attention}
\dot{X}_i(t) = \Gamma_{{\mathbf X}(t)}(X_i(t)), 
\qquad 1 \le i \le N,
\end{equation}
where $\Gamma_{\mathbf X}\colon \R^d\to\R^{d}$ is the attention vector
field determined by $Q$, $K$, $V$, and the current token configuration
$\mathbf{X}(t)$. For softmax attention, it is given by
\begin{equation*}
 \Gamma_{\mathbf X}(x)\coloneqq 
\sum_{j=1}^N
\frac{\exp\big(Qx \cdot KX_j\big)}
{\sum_{\ell=1}^N \exp\big(Qx \cdot KX_\ell\big)}
\, VX_j,\qquad  \textrm{for }x\in\R^d.
\end{equation*}
The velocity of each token in \eqref{eq:attention} is a similarity-weighted average of the value vectors. The resulting dynamics form a non-linear interacting particle system.

\paragraph{Density continuity equation.}
For a probability density $\rho$, the corresponding attention field is
\begin{equation*}
    \Gamma_{\rho} \colon \R^d \to \R^d, \qquad
    x \mapsto \int_{} V y \, \kappa_{\rho}(x, y) \rho(y)  \d y,
\end{equation*}
for all $\rho$ such that this integral is finite.
The two choices in \eqref{eq:self_attn_kernel} lead to the density-dependent
weights
\begin{equation*}
    \kappa_{\rho}(x, y)
    \coloneqq \begin{cases} 
    \frac{\exp(y^{\top} A x)}{\int_{} \exp(z^{\top} A x) \rho(z) \d z} & \text{softmax self-attention},
    \\[2ex]
    y^{\top} A x, & \text{linear self-attention}.
    \end{cases}
\end{equation*}
In the formal mean-field limit, the self-attention dynamics
\eqref{eq:attention} are described on $\R^d$ by the continuity equation
\begin{equation}\label{eq:transformer_PDE}
    \partial_t \rho_t
    = -\nabla\cdot(\rho_t\Gamma_{\rho_t}), \qquad t>0,
\end{equation}
see \cite{CACP2025,GLPR2025} and also
\cite{BKKRW2025,GLPR2025,SABP2022,ZZCD2022}.

\paragraph{Geometry of the mean-field limit.}
Under suitable assumptions on the attention matrices, the mean-field
equation \eqref{eq:transformer_PDE} admits a formal gradient-flow
interpretation. Recall that a Riemannian gradient flow on a manifold
$\mathcal M$ is specified by a pair $(\F,G)$: a smooth functional
$\F \colon \mathcal M\to\R$ and a Riemannian metric $G$. Thus, identifying
an energy is only part of the construction; the geometry determines
the corresponding direction of steepest descent.

Making this interpretation rigorous requires specifying the underlying
space of probability densities. A central distinction is whether these
densities have compact support or extend over all of $\R^d$.
In the latter case, additional integrability and coercivity conditions
are needed to justify the formal geometric operators. For softmax
attention, a general rigorous justification of this geometry for
non-compactly supported densities was left open in
\cite[Sec.~5]{CACP2025}.

This geometric viewpoint provides a way to introduce momentum into
attention dynamics, following the principle of Nesterov acceleration,
as discussed in the next section. Discretizing the resulting
inertial equations yields attention layers that propagate both
token positions and a momentum variable across network depth.

We close this subsection by recalling the guiding question
of this paper:
\begin{quote}
\emph{Can PDE methods, functional analysis, and geometry
applied to the mean-field limit yield new insights into
the dynamics of transformers with finitely many tokens
and layers?}
\end{quote}

\section{Nesterov's Acceleration Method and Attention Models} \label{sec:Nesterov}
The aim of this section is to explain how Nesterov acceleration
provides a way to introduce momentum into attention dynamics
and to identify the geometric constructions needed to justify
this approach.

We first recall Nesterov acceleration in Euclidean space and
its continuous-time formulation as the damped Hamiltonian
system \eqref{hamil}. We then describe the corresponding
construction for smooth probability densities on compact
manifolds, leading to the accelerated Otto--Wasserstein
equations \eqref{eq:accelerated_W2_gradient_flow}.
This construction relies on the mobility operator being
invertible between suitable cotangent and tangent spaces.
We explain why this requirement presents an obstruction
in Stein geometry.

Finally, we connect this viewpoint to the linear and softmax
attention models introduced in Section~\ref{sec:trans}.
Their kernels give rise to different mobility operators
and hence different formal geometric descriptions.
We state the resulting accelerated systems in
\eqref{eq:nesterov-softmax-system} and \eqref{eq:nesterov-stein-system},
and identify the
function-space and differentiability questions that must
be resolved to make them rigorous.
These questions motivate the local-mobility calculus
developed in Section~\ref{sec3} and the quotient Stein
geometry developed in Section~\ref{sec:stein}.

\subsection{Nesterov Acceleration on \texorpdfstring{$\R^d$}{Rd}}
Nesterov acceleration~\cite{N1983} is a well-known technique for improving the convergence rate of gradient descent methods. For an objective function $F\in C^1(\R^d;\R)$, we consider the optimization problem
\begin{equation}\label{problem}
\min_{x \in \R^d} F(x).
\end{equation}
The \emph{gradient descent} sequence $(x^{(k)})_{k \in \mathbb N}$ starting at $x_0 \in \R^d$ with
step size $\tau$ is given by
$$
\begin{cases}    
x^{(k+1)} = x^{(k)} - \tau \nabla F(x^{(k)}), \\ x^{(0)} = x_0.
\end{cases}$$ 
Nesterov's \emph{accelerated gradient descent} is obtained by choosing an extrapolation coefficient as
\begin{equation} \label{low-d}
\begin{cases}
x^{(k + 1)} = y^{(k)} - \tau \nabla F(y^{(k)}),  
\\[2mm]
y^{(k + 1)} = x^{(k + 1)} + (1 - \frac{3}{k+3}) (x^{(k + 1)}-x^{(k)}), \\ 
x^{(0)} = x_0, \ y^{(0)} = x_0.
\end{cases}
\end{equation}
Substituting $t= k \sqrt{\tau}$ and letting $\tau \to 0$, Nesterov’s method can be interpreted through the following continuous-time dynamical system; see~\cite{su2016differential,WWJ2016}:
\begin{equation} \label{low-c}
\begin{cases}
\ddot{x}_t + \dfrac{3}{t} \dot{x}_t + \nabla F(x_t) = 0,  
\\[2mm]
 x_t|_{t = 0} = x_0,\ \dot{x}_t|_{t = 0} = 0.
\end{cases}
\end{equation}
For a convex function $F$ possessing a minimizer, and any step size $\tau \in (0,\frac1L]$, 
where $L$ is the Lipschitz constant of $\nabla F$, the discrete scheme \eqref{low-d} exhibits a convergence rate 
$F(x^{(k)}) - F^* \le \mathcal O(\frac{\|x_0-x^*\|^{2}}{\tau k^2})$, 
where $x^*$ is a minimizer of $F$ and $F^* = F(x^*)$ \cite[p.~2]{su2016differential}.
This rate is optimal among all methods having only information about the gradient of $F$ at consecutive iterates \cite{Nesterov2004}. 
For the continuous dynamics \eqref{low-c}, we have similarly
$F(x(t)) - F^* \le \mathcal O(\frac{\|x_0-x^*\|^{2}}{t^2})$.

Introducing a momentum variable $p_t$ and $\alpha_t \coloneqq \frac3t$, the second-order ODE \eqref{low-c} can  be rewritten as the first-order system
\begin{equation} \label{first_order}
\begin{cases}
\dot x_t = p_t, \\ 
\dot p_t +\alpha_t p_t  = - \nabla F(x_t), \\ 
x_t|_{t = 0} = x_0, \ p_0 = 0.
\end{cases}
\end{equation}
It was observed, e.g., in \cite{TM2019}, that with the \emph{Hamiltonian} and its derivatives
$$H(x,p) = \frac12 \|p\|^2 + F(x), 
\quad \nabla_p H(x,p) = p, 
\quad \nabla_x H(x,p) = \nabla F(x),$$
the first-order system \eqref{first_order} becomes a \emph{damped Hamiltonian} flow
\begin{equation} \label{hamil}
\begin{cases}
\dot x_t
= \nabla_p H(x_t, p_t), & x_t|_{t = 0} = x_0,\\
\dot p_t + \alpha_t p_t
= - \nabla_x H(x_t, p_t), & p_0 = 0.
\end{cases}
\end{equation}

\subsection{Nesterov Acceleration on  Probability Measures on Compact Sets}
In \cite{WL2022} (see \cite{CLTW2025,TLG2026} for
alternative approaches), Nesterov acceleration is
formulated on the space of smooth, strictly positive
probability densities on a closed, connected manifold $\Omega$.
In this subsection, we briefly recall this approach
to get a sense of the path we must follow for our
transformer setting on the space of smooth probability
densities on $\R^d$.
We focus on Otto--Wasserstein geometry from optimal
transport \cite{O2001} and Stein geometry associated
with Stein variational gradient descent
\cite{LW2016,DNS2023}; see also the references therein.

We begin by fixing notation for the space of smooth,
strictly positive probability densities on $\Omega$:
\[
\P^\infty(\Omega)
\coloneqq
\left\{
\rho\in C^\infty(\Omega):
\rho>0,\quad
\int_\Omega\rho(x)\,\d x=1
\right\}.
\]
This space carries a smooth Fr\'echet manifold structure
\cite{H1982,L1988}. The tangent space of $\P^\infty(\Omega)$ at a smooth probability density $\rho$ is given by 
\begin{equation} \label{eq:formal_tangent_space}
    T_\rho \P^{\infty}(\Omega) = C_0^\infty(\Omega) \coloneqq
    \{ \dot \rho  \in C^\infty (\Omega) : \int_\Omega \dot \rho \d x = 0\} . 
\end{equation} 
The cotangent space of $\P^\infty(\Omega)$ at a smooth probability density  $\rho$, to be precise its smooth dual, reads as
$
    T_\rho^* \P^{\infty} (\Omega)\coloneqq C^\infty (\Omega)/\R
$.
The class of a representative $\Phi$ modulo constants is denoted by
$[\Phi]$.

\paragraph{Nesterov acceleration in Otto--Wasserstein geometry.}
For all $\rho\in \P^\infty(\Omega)$, the \emph{Otto--Wasserstein mobility operator}
\begin{equation} \label{eq:LM}
    L_{\rho}^W:
    T_\rho^*\P^{\infty}(\Omega)
    \to
    T_\rho\P^{\infty}(\Omega),
    \qquad
    [\Phi]\mapsto
    -\nabla\cdot(\rho \nabla\Phi),
\end{equation}
is a linear isomorphism, and the corresponding bilinear form induces a well-defined \emph{Riemannian metric} on $T_\rho \P^\infty(\Omega)$, given by
$G_{\rho}^W \colon T_\rho \P^\infty(\Omega)  \times T_\rho\P^\infty(\Omega) \to \R$
with
\begin{equation}\label{eq:LM_metric_compact}
   G_{\rho}^W(\dot\rho,\dot\eta)
    \coloneqq
    \int_{\Omega}
    \Phi_{\dot\rho} \dot\eta\d x,
\end{equation}
where $\Phi_{\dot \rho}$ is uniquely defined for any given $\dot \rho$ via
\begin{equation}\label{helperL}
\dot\rho =  L_\rho^W([\Phi_{\dot \rho}]) = 
     - \nabla\cdot
    \left(
        \rho \nabla\Phi_{\dot\rho}
    \right), 
\end{equation}
 see \cite{L1988,O2001,V2008}. Note that the invertibility of $L_\rho^W$ is ensured as long as we work
with measures on compact, connected spaces. An important ingredient is that on these spaces the so-called Poincaré inequality holds.
In our transformer setting, we have to establish such a relation for a more general operator related to the softmax function
as well as for the whole space $\R^d$.

Let $0\leq t_0<T\leq\infty$.  For every accelerated evolution considered on
$[t_0,T)$, we assume that the damping coefficient satisfies
\begin{equation}\label{eq:assump_damping_factor}
     \alpha\in C\big([t_0,T);[0,\infty)\big).
\end{equation}
For schedules singular at the origin, such as $\alpha(t)=r/t$ \cite{AGR2000,APR2016} or
$\alpha(t)=r/t+m$, we always choose $t_0>0$ or translate time and use $\alpha(t_0+t)$ with $t_0>0$.
Then, instead of \eqref{problem}, we are looking for a minimizer of $\F \colon  \P^\infty (\Omega) \to \R$, i.e.,
\[
\min_{\rho \in \P^\infty(\Omega)} \F(\rho).
\]
The gradient flow of  $\mathcal{F}(\rho)$ in the probability density space  $(\P^\infty(\Omega), G^W)$ satisfies 
\begin{equation} \label{gradd_Omega}
\partial_t \rho_t = - L_{\rho_t}^W \Big[\frac{\delta \F(\rho_t)}{\delta \rho_t} \Big],
\end{equation}
where \(\frac{\delta\F}{\delta\rho}\) denotes the first variation of the functional \(\F\) at the density \(\rho\).
With the  \emph{Hamiltonian} defined by
\begin{equation} \label{hamilt_compact}
 \mathcal{H}(\rho,\Phi) \coloneqq \frac12 \int _{\Omega} \Phi L_\rho^W\Phi \d x + \F(\rho), \qquad \rho \in \P^\infty(\Omega),\,\Phi\in T_\rho^*\P^\infty(\Omega)
\end{equation}
the accelerated dynamics by the \emph{damped Hamiltonian} equations suggested in \cite{WL2022} satisfy 
\begin{equation} \label{sys_comp}
\begin{cases}
\partial_t \rho_t 
= \frac{\delta}{\delta \Phi_t}  \mathcal{H}(\rho_t,\Phi_t), & \rho_t|_{t = 0} = \rho_0\\
\partial_t \Phi_t
+ \alpha_t \Phi_t
= - \frac{\delta}{\delta \rho_t}  \mathcal{H}(\rho_t,\Phi_t), & \Phi_0 = 0,
\end{cases}
\end{equation}
where $\rho_0\in\P^\infty(\Omega)$ is a given initial density function, see also \cite{R2012,khesin2021geometric}.
The system \eqref{sys_comp} can be rewritten as
\begin{equation}\label{sys_1}
\begin{cases}
 \partial_t \rho_t= L_{\rho_t}^W   (\Phi_t), & \rho_t|_{t = 0}=\rho_0, 
 \\[1ex]
 \partial_t \Phi_t + \alpha_t \Phi_t = - \frac12 \frac{\delta}{\delta \rho_t}
 \left(\int_{\Omega} \Phi_t L_{\rho_t}^W(\Phi_t) \d x \right)  - \frac{\delta \F}{\delta \rho_t}, & \Phi_0 = 0,
 \end{cases}
\end{equation} 
and plugging in \eqref{helperL}, we obtain the \emph{Nesterov acceleration in the Otto--Wasserstein geometry}
\begin{equation} \label{eq:accelerated_W2_gradient_flow}
\begin{cases}
\partial_t \rho_t = - \nabla \cdot \left(\rho_t \nabla  \Phi_t  \right), & \rho_t|_{t = 0} = \rho_0, \\[1ex]
\partial_t \Phi_t + \alpha_t  \Phi_t = - \frac12 \| \nabla \Phi_t\|^2 - \frac{\delta \F(\rho_t)}{\delta \rho_t} , & \Phi_0 = 0.
\end{cases}    
\end{equation}
This PDE is related to damped Euler equations, whose long-time
behavior under constant damping was studied in \cite{CCT2019}
for suitable free energies, assuming the existence of global
energy-dissipating solutions. That work does not establish
general well-posedness. For a related kinetic formulation,
see \cite{M2026}.

To connect this construction with attention dynamics, we next
consider the mobility operators associated with the attention
kernels. For softmax attention, the operator in
\eqref{eq:softmax-attention-mobility} gives a modification of
Otto--Wasserstein geometry that remains closely related to it
on compact domains. Linear attention instead suggests a
Stein-type geometry through its bilinear kernel.
Here, an additional obstruction arises: even for symmetric,
bounded, integrally strictly positive definite kernels, the
Stein mobility operator need not be invertible between the
full formal cotangent and tangent spaces. The following
paragraph explains why the preceding Otto--Wasserstein
construction therefore requires further modifications in
the Stein setting, even on compact domains.

\paragraph{Nesterov acceleration in Stein geometry and its failure.}
In \cite{WL2022} and follow-up papers, it was assumed that the
\emph{Stein mobility operator}
\begin{equation}\label{eq:stein_mob_operator}
L_{\rho}^S:
    T_\rho^*\P^\infty(\Omega)
    \to
    T_\rho\P^\infty(\Omega),
    \qquad
    [\Phi]\mapsto
- \nabla \cdot \left(\rho \int_\Omega K(\cdot,y) \, \nabla \Phi(y) \, \rho(y) \d y\right)
\end{equation}
with a symmetric, bounded, integrally strictly positive definite kernel $K$ is
also a linear isomorphism, so that
$G_{\rho}^S \colon T_\rho \P^\infty(\Omega) \times T_\rho\P^\infty(\Omega) \to \R$
given by
\begin{equation}\label{eq:Stein_metric_compact}
G_{\rho}^S(\dot\rho,\dot\eta)
\coloneqq
\int_{\Omega} \Phi_{\dot\rho}(x) \dot\eta(x) \d x =
\int_{\Omega}\int_{\Omega}
K(x,y)\,
\big\langle \nabla\Phi_{\dot\rho}(x),
\nabla\Phi_{\dot\eta}(y) \big\rangle
\,\rho(x)\rho(y) \d y \d x
\end{equation}
with
\[
\dot\rho = L_\rho^S([\Phi_{\dot\rho}]),
\qquad
\dot\eta = L_\rho^S([\Phi_{\dot\eta}]).
\]
imposes a well-defined Riemannian metric on $T_\rho \P^\infty(\Omega)$.
Unfortunately, this is not true, since $L_{\rho}^S$ fails in general to be surjective, and a restriction of the formal tangent space is necessary to ensure surjectivity; see Appendix~{\ref{app:Non-surjectivity of the Stein operator on compact domains}}.

We close this paragraph by noting an additional geometric obstruction on general compact manifolds $\Omega$. The kernel in \eqref{eq:stein_mob_operator} has to do more than just weighting the gradients: it must map tangent vectors at the integration point to tangent vectors at the evaluation point, so that they can be averaged in a common tangent space. Thus, an intrinsic formulation requires an operator-valued kernel acting between the respective tangent spaces. A scalar kernel alone does not provide this identification, which is implicit in the Euclidean setting. For one possible way to overcome this obstruction, we refer to \cite{LZ2018}.

\subsection{Nesterov Acceleration and Attention Models on \texorpdfstring{$\R^d$}{Rd}}

This subsection connects the previously discussed interpretation of transformers as dynamical systems with Nesterov acceleration on $\R^d$, motivating the theory developed later in this paper. At a high level, each choice of a self-attention kernel in \eqref{eq:self_attn_kernel} corresponds to a choice of geometry. More precisely, the softmax attention kernel corresponds to softmax Otto--Wasserstein geometry, while the linear attention kernel corresponds to a geometry reminiscent of Stein geometry. We begin by stating at this point formally the analog of \eqref{eq:accelerated_W2_gradient_flow} in these geometries.
At a formal level, the self-attention kernel determines a mobility
operator, whose inverse on suitable tangent and cotangent spaces
induces the corresponding metric. This is the mechanism through
which the choice of an attention kernel determines a geometry.

\begin{figure}[ht]
\centering
\begin{tikzpicture}[
    box/.style={
        draw=blue!55!black, fill=blue!10, text=black,
        rounded corners=3pt, line width=0.7pt,
        minimum height=1.6cm, text width=3.3cm,
        align=center, inner sep=8pt, font=\small
    },
    arrow/.style={-{Stealth[length=2.2mm]}, line width=0.7pt},
    node distance=1.1cm
]
\node[box] (kernel)
    {\textbf{Attention kernel}\\[5pt]$K(x,y)$};

\node[box, right=of kernel, text width=3.9cm] (mobility)
    {\textbf{Mobility operator}\\[5pt]
     $L_\rho:T_\rho^*\mathcal P\to T_\rho\mathcal P$};

\node[box, right=of mobility] (metric)
    {\textbf{Riemannian metric}\\[5pt]$G_\rho$};

\draw[arrow] (kernel)
    -- node[above, font=\scriptsize] {induces} (mobility);
\draw[arrow] (mobility)
    -- node[above, font=\scriptsize] {invert} (metric);
\end{tikzpicture}
\caption{From attention to geometry: an attention kernel
gives rise to a mobility operator. The key step is to
identify suitable function spaces on which
this operator is invertible. Its inverse then defines
the metric underlying a rigorous analog of Otto calculus.}
\label{fig:attention_geometry}
\end{figure}

\paragraph{Accelerated softmax attention.}
Let $A=A^\top$ be invertible and let
$B=VA^{-1}=B^\top\succ0$. The softmax attention kernel induces
the \emph{formal local-mobility operator}
\begin{equation}\label{eq:softmax-attention-mobility}
L_\rho^{\SM}[\Phi]
\coloneqq
-\nabla\cdot\left(\frac{\rho}{Z_\rho}B\nabla\Phi\right),
\qquad
Z_\rho(x)\coloneqq
\int_{\R^d}\exp(y^\top Ax)\rho(y) \d y.
\end{equation}
As in the Otto--Wasserstein case, this formally induces a
\emph{Riemannian metric} through
\[
G_\rho^{\SM}\bigl(L_\rho^{\SM}[\Phi],L_\rho^{\SM}[\Psi]\bigr)
\coloneqq
\int_{\R^d}\Phi\,L_\rho^{\SM}[\Psi] \d x
=
\int_{\R^d}\frac{\rho}{Z_\rho}
\nabla\Phi^\top B\nabla\Psi \d x.
\]
For a functional $\F$, the associated \emph{Hamiltonian} is
\begin{equation}\label{eq:nesterov-softmax-hamiltonian}
\mathcal H^{\SM}(\rho,\Phi)
\coloneqq
\frac12 G_\rho^{\SM}
\bigl(L_\rho^{\SM}[\Phi],L_\rho^{\SM}[\Phi]\bigr)
+\F(\rho).
\end{equation}
The \emph{damped Hamiltonian} equations associated with
\eqref{eq:nesterov-softmax-hamiltonian} formally read
{\small
\begin{equation}\label{eq:nesterov-softmax-system}
\left\{
\begin{aligned}
&\partial_t\rho_t
= -\nabla\cdot\left(
\frac{\rho_t}{Z_{\rho_t}}B\nabla\Phi_t
\right),\\[1ex]
&\partial_t\Phi_t(x)+\alpha_t\Phi_t(x)
= -\frac{\nabla\Phi_t(x)^\top B\nabla\Phi_t(x)}
       {2Z_{\rho_t}(x)}
-\frac{\delta\F}{\delta\rho}(\rho_t)(x)
+\frac12\int_{\R^d}\exp(y^\top Ax)
\frac{\nabla\Phi_t(y)^\top B\nabla\Phi_t(y)}
     {Z_{\rho_t}(y)^2}
\,\rho_t(y) \d y.
\end{aligned}
\right.
\end{equation}
}
The second equation is understood modulo spatial constants.
The integral term accounts for the dependence of $Z_\rho$
on the density.

To make this construction rigorous, we must identify suitable
spaces of probability densities and corresponding tangent and
cotangent spaces on which the mobility operator is well-defined
and invertible. On $\R^d$, smooth positivity and finite second
moments alone do not suffice: the normalization $Z_\rho$ may be
infinite, and the required coercivity of the mobility operator
is not automatic. Section~\ref{sec3} develops the corresponding
functional framework using weighted Sobolev spaces and a
weighted Poincar\'e inequality for the mobility $\rho/Z_\rho$.

\paragraph{Accelerated linear attention.}
Let $A=A^\top\succ0$. The linear
attention kernel $K_A(x,y)=x^\top Ay$ induces the \emph{formal mobility operator}
\begin{equation}\label{eq:linear-attention-mobility}
L_\rho^{K_A}[\Phi]
\coloneqq
-\nabla_x\cdot\left(
\rho(x)\int_{\R^d}K_A(x,y)\nabla\Phi(y)\rho(y) \d y
\right).
\end{equation}
As in the Otto--Wasserstein case, $L_\rho^{K_A}$ formally gives rise to a
\emph{Riemannian metric} on the directions generated by the mobility operator:
\[
\begin{aligned}
G_\rho^{K_A}\bigl(L_\rho^{K_A}[\Phi],L_\rho^{K_A}[\Psi]\bigr)
&\coloneqq
\int_{\R^d}\Phi\,L_\rho^{K_A}[\Psi] \d x\\
&=
\int_{\R^d}\int_{\R^d}
K_A(x,y)\,\nabla\Phi(x)^\top\nabla\Psi(y)
\,\rho(x)\rho(y) \d y \d x.
\end{aligned}
\]

For a functional $\F$, the associated \emph{Hamiltonian} is
\begin{equation}\label{eq:nesterov-stein-hamiltonian}
\mathcal H^{K_A}(\rho,\Phi)
\coloneqq
\frac12 G_\rho^{K_A}
\bigl(L_\rho^{K_A}[\Phi],L_\rho^{K_A}[\Phi]\bigr)
+\F(\rho).
\end{equation}
The \emph{damped Hamiltonian} equations associated with
\eqref{eq:nesterov-stein-hamiltonian} formally read
\begin{equation}\label{eq:nesterov-stein-system}
\left\{
\begin{aligned}
&\partial_t\rho_t(x) = -
\nabla_x\cdot\left(
\rho_t(x)\int_{\R^d}
K_A(x,y)\nabla\Phi_t(y)\rho_t(y) \d y
\right),
\\[1ex]
&\partial_t\Phi_t(x)+\alpha_t\Phi_t(x) = -\int_{\R^d}K_A(x,y)\,
\nabla\Phi_t(x)^\top\nabla\Phi_t(y)\,
\rho_t(y) \d y-\frac{\delta\F}{\delta\rho}(\rho_t)(x),
\end{aligned}
\right.
\end{equation}
where the second equation is understood modulo spatial constants.

To make this construction rigorous, we must identify suitable spaces
of probability densities and corresponding tangent and cotangent spaces
on which the mobility operator is well-defined. Several obstacles arise: in particular, the bilinear kernel is not integrally strictly positive
definite, so the usual Stein framework based on this assumption does
not apply directly. The mobility operator has finite-dimensional range
and a nontrivial kernel beyond constants. Recovering an isomorphism
therefore requires restricting the tangent space to its range and
quotienting the cotangent space by its kernel. We develop this
construction in Section~\ref{sec:stein}.
\\

In summary, making the geometric interpretation of accelerated
linear and softmax attention rigorous involves several fundamental
obstacles, including:
\begin{itemize}
    \item \textbf{Function spaces and mobility operators.}
    Choosing appropriate function spaces, including the necessary
    restrictions and quotients, so that the mobility operators
    $L_\rho^{K_A}$ and $L_\rho^{\SM}$ become isomorphisms between
    suitable cotangent and tangent spaces and induce well-defined metrics.

    \item \textbf{Differentiability and variations.}
    Specifying a notion of differentiability that makes variations
    with respect to $\rho$ and $\Phi$ well-defined, since the chosen
    classes of probability densities on $\R^d$ have not been equipped
    with a smooth manifold structure.
\end{itemize}
\part{Geometric foundations}

\section{A Non-Compact Functional Framework for Otto Calculus} \label{sec3}
In this section, we develop a rigorous local-mobility
calculus for smooth probability densities on $\R^d$, encompassing
both Otto--Wasserstein geometry
from optimal transport \cite{V2003,V2008},
and softmax--Wasserstein geometry.
Even for the classical Otto calculus, smooth positivity and
finite second moments do not by themselves justify the formal
geometric construction on a non-compact domain. In particular,
the operator
\begin{equation}\label{eq:Otto_operator_again}
\Phi\longmapsto-\nabla\cdot(\rho\nabla\Phi)
\end{equation}
requires suitable domains before it can be inverted modulo
constants and used to define a metric or a gradient.

On a closed connected Riemannian manifold, smooth strictly
positive densities are bounded above and away from zero.
Weighted Poincar\'e inequalities \cite{BGL2014} and elliptic regularity
therefore provide the analytic tools needed for this
construction. On $\R^d$, these properties require additional
assumptions. Although the Wasserstein distance is well defined
on $\P_2(\R^d)$, membership in this space does not supply the
regular tangent and cotangent spaces \cite{AGS2008}, operator invertibility,
or notion of differentiation needed for the density calculus
used here.

We address these issues by specifying the function spaces
and analytic assumptions explicitly. After recalling the
relevant distinctions from infinite-dimensional Riemannian
geometry in Subsection~\ref{ss:background_inf_dim_geometry},
we introduce the local-mobility framework in
Subsection~\ref{subsec:Rigorous_Local_Mobility_Calculus}.
Its starting point is a weighted Sobolev space of potentials
and a weighted Poincar\'e inequality. This inequality makes
the centered potential space complete, as established in
Lemma~\ref{lem:LM_centered_energy_space}, and provides the
coercivity needed to solve the potential equation.

The main result in Theorem~\ref{prop:LM_operator_invertible}
shows that the local-mobility operator is a linear isomorphism
between the chosen regular cotangent and tangent spaces.
Its inverse defines the metric in
Corollary~\ref{prop:LM_metric_well_defined}.
Then Corollary~\ref{cor:LM_gradient} identifies the gradient of
a functional whenever its first variation belongs to the
regular cotangent space. The required notion of differentiation
is defined through regular first variations in
Appendix~\ref{app:diif_in_in_local_mobility_calculus}.

Finally, we specialize this construction to the
Otto--Wasserstein and softmax--Wasserstein geometry, which
correspond to different choices of mobility.
This gives explicit admissible density classes and regular
tangent and cotangent spaces for both geometries.
The resulting calculus provides the analytic framework
for the gradient and Hamiltonian formulations used later
in the paper.
All proofs are given in Appendix \ref{app:proofs}.

\subsection{Background on Infinite Dimensional Geometry}\label{ss:background_inf_dim_geometry}
We start by recalling the basic notation and definitions from infinite-dimensional Riemannian geometry used throughout the paper. For a more comprehensive introduction, see \cite{La99,maor2022riemannian,schmeding2022introduction}. 

Let \(\mathcal M\) be a smooth infinite-dimensional manifold. A \emph{Riemannian metric} on \(\mathcal M\) is a smooth map 
\[
    G:T\mathcal M\times_{\mathcal M}T\mathcal M\to\R
\]
such that, for every \(x\in\mathcal M\), the map
\[
    G_x:T_x\mathcal M\times T_x\mathcal M\to\R
\]
is an inner product on \(T_x\mathcal M\). The metric \(G\) is called \emph{strong} if, for every \(x\in\mathcal M\), the topology induced by \(G_x\) agrees with the given manifold topology on \(T_x\mathcal M\). It is called \emph{weak} if, for at least one \(x\in\mathcal M\), the topology induced by \(G_x\) is strictly weaker than the given manifold topology. 
We recall that the existence of a strong Riemannian metric is a very restrictive
condition in infinite dimensions. By \cite[Thm. 7.2.]{Bauer_2025}, a smooth infinite-dimensional manifold
\(\mathcal M\) admits a strong Riemannian metric if and only if
\(\mathcal M\) is a Hilbert manifold and the \emph{induced musical map}
\begin{equation}\label{eq:G_flat}
    G^\flat:T\mathcal M\to T^*\mathcal M,
    \qquad
    G^\flat_x(v)(w)=G_x(v,w),
\end{equation}
is a bundle isomorphism. Consequently, Fréchet manifolds, such as the space of
smooth probability densities \(\P^{\infty}(\mathbb \R^d)\), do not admit strong
Riemannian metrics in this sense.

The metrics considered in this paper therefore are weak Riemannian metrics.
Prominent examples include, besides the Otto--Wasserstein metric on spaces of probability
densities, the \(L^2\)-metric appearing in Arnold's geometric formulation of
hydrodynamics~\cite{A1966,O2001}. Weak Riemannian metrics that are not strong are a genuinely
infinite-dimensional phenomenon and lead to several difficulties that have no
finite-dimensional analog.
We highlight the distinction that is most important in our context.

\begin{remark}[Possible nonexistence of Riemannian gradients]\label{rmk:Possible nonexistence of Riemannian gradients}
    For a smooth
functional \(F:\mathcal M\to\R\), the differential
\(dF_x\in T_x^*\mathcal M\) is always defined. However, a Riemannian gradient  
\[\operatorname{grad}_G F(x) \in T_x\mathcal{M},\qquad x\in \mathcal{M}\] exists only if \(dF_x\) lies in the
range of the musical map \(G_x^\flat\) defined in~\eqref{eq:G_flat}. For strong
Riemannian metrics, this condition is automatically fulfilled, since \(G^\flat\) is a bundle
isomorphism. For weak Riemannian metrics, the map \(G_x^\flat\) is injective,
but not surjective, and hence not every smooth functional admits a Riemannian
gradient. In particular, formal expressions such as
\[
    (G_x^\flat)^{-1}dF_x\in T_x\mathcal{M}, \qquad x\in \mathcal{M}
\]
are meaningful only after verifying that \(dF_x\in\operatorname{Range}(G_x^\flat).\)
This range condition is one of the main analytic issues that should be addressed
when applying Otto-type calculus on spaces of smooth probability densities on non-compact domains.
\end{remark} 

We conclude this subsection by noting striking infinite-dimensional pathologies that have no finite-dimensional counterparts. One classical manifestation is the failure of the Hopf--Rinow theorem, first observed by Atkin~\cite{HopfrinowfalseAktkin} and later extended by Ekeland~\cite{HopfrinowfalseEkeland}. A second striking phenomenon is that the geodesic distance induced by a weak Riemannian or weak Finsler metric may vanish identically, even though the infinitesimal metric is pointwise non-degenerate. In symplectic geometry, Eliashberg and
Polterovich~\cite{EliashbergPolterovich1993} exhibited this phenomenon for \(L^p\)-analogs of Hofer's metric on the Hamiltonian diffeomorphism group. Michor and Mumford~\cite {MichorMumford2005} studied related vanishing-distance phenomena for weak Riemannian metrics.
Finally, while a Levi--Civita connection exists for strong Riemannian metrics, this may fail in the weak Riemannian setting~\cite{bauer2014homogeneous}.

\subsection{Weighted Local-Mobility Geometry on \texorpdfstring{$\mathcal{P}^\infty(\R^d)$}{Pinfty(Rd)}}
\label{subsec:Rigorous_Local_Mobility_Calculus}

This subsection develops a unified, rigorous framework that encompasses both Otto--Wasserstein and softmax--Wasserstein geometries for smooth probability densities on $\R^d$. On closed manifolds, these two geometries are norm-equivalent, whereas on non-compact domains such as $\R^d$, they differ significantly. Nevertheless, both belong to a more general class of geometries \cite{CLSS2010, DNS2009}, which we call local mobility geometries.
In the following, we introduce our general framework and then show that both settings fit within it.

\paragraph{General setting.}
For the rest of the paper, we set
$$
\P^{\infty}(\R^d) \coloneqq \{\rho \in C_{>0}^\infty(\R^d): \int_{\R^d}\rho(x) \d x = 1 \}.
$$
Further, let   
$
\mathfrak m_\rho  =
m_\rho(x) \d x,
$
and $B \in \R^{d \times d}$
a symmetric, positive definite matrix.
Here are our main model cases.
\begin{align} \label{set1}
\textbf{Otto--Wasserstein: }&  
 m_\rho=\rho \quad \text{and} \quad B=I, 
\\
\textbf{Softmax--Wasserstein: }&
m_\rho=\rho/Z_\rho 
, \;
Z_\rho(x) \coloneqq \int_{\R^d} \exp \left( z^{\top} A x\right) \rho(z) \d z
\quad \text{and} \quad
B\coloneqq VA^{-1}. \label{set2} 
\end{align}
Before moving on, we note that sigmoid--Wasserstein
and native $L^2$ geometry also fit within this framework. Now let
\begin{align*} 
    L^2( m_\rho)
    &\coloneqq
    \left\{
        \Phi:\R^d\to\R\text{ measurable}:
        \Vert \Phi\Vert_{L^2(m_\rho)}^2 \coloneqq \int_{\R^d}|\Phi|^2\,\d\mathfrak m_\rho<\infty
    \right\},
\\
    L^2( m_\rho,B;\R^d)
    &\coloneqq
    \left\{
        v:\R^d\to\R^d\text{ measurable}:
        \Vert v\Vert_{L^2(m_\rho,B;\R^d)}^2\coloneqq \int_{\R^d}v^\top Bv\,\d\mathfrak m_\rho<\infty
    \right\}.
\end{align*}
We consider the Hilbert space
\begin{equation}\label{eq:LM_H_1}
     H^1(m_\rho,B)
    \coloneqq
    \left\{
        \Phi\in L^2( m_\rho):
        \nabla\Phi
        \in L^2(m_\rho,B;\R^d) \right\}
\end{equation}
with distributional gradient and equipped with the norm
\begin{equation}\label{eq:norm_on_H_1_B_m_rho}
      \|\Phi\|_{H^1(m_\rho,B)}^2
    \coloneqq \Vert \Phi\Vert_{L^2( m_\rho)}^2+ \Vert \nabla \Phi\Vert_{L^2( m_\rho,B;\R^d)}^2=
    \int_{\R^d}|\Phi|^2\,\d\mathfrak m_\rho
    +
    \int_{\R^d}
    \nabla\Phi^\top B\nabla\Phi\,
    \d\mathfrak m_\rho.
\end{equation}
We further define the centered potential space by
\begin{equation} \label{eq:LM_centered_space}
    H^1_0(m_\rho,B)
    \coloneqq
    \left\{
        \Phi\in H^1(m_\rho,B):
        \int_{\R^d}\Phi\d\mathfrak m_\rho=0
    \right\}.
\end{equation}
Then
\begin{equation} \label{norm_h01}
    \|\Phi\|_{m_\rho,B}
    \coloneqq
    \left(
        \int_{\R^d}
        \nabla\Phi^\top B\nabla\Phi\,
        \d\mathfrak m_\rho
    \right)^{1/2}
\end{equation}
is a norm on $H^1_0(m_\rho, B)$, but the space is in general not complete with this norm. 
The missing analytic condition is a weighted Poincar\'e inequality.
We say that \(m_\rho\) satisfies the
\emph{local-mobility Poincar\'e inequality} (LM-PI), if there exists
\(C_\rho>0\)  such that
\begin{equation}\label{eq:LM_PI}
    \int_{\R^d}
    |\Phi|^2\d\mathfrak m_\rho
    \le
    C_\rho
    \int_{\R^d}
    \nabla\Phi^\top B\nabla\Phi\,
    \d\mathfrak m_\rho
    \quad
    \text{for all} \quad
    \Phi\in H^1_0(m_\rho,B). 
\end{equation}
With this notation at hand, we define the space of \emph{smooth local mobility probability densities} by
\begin{equation}\label{lmc}
    \P_{m_\rho,B}
    \coloneqq
        \left\{
        \rho\in\P^\infty(\R^d):
        \mathfrak m_\rho(\R^d) \in (0, \infty), m_\rho\in C^\infty_{>0}(\R^d) 
         \text{ and satisfies LM-PI }
        \eqref{eq:LM_PI}
        \right\}.
\end{equation}

Before proceeding, we note that the admissible densities
in both \eqref{set1} and \eqref{set2} have finite second
moments. Thus, each defines a probability measure in
$\P_2(\R^d)$.

In the Otto--Wasserstein case, this follows by applying
the Poincar\'e inequality \eqref{eq:LM_PI} to centered
truncations of the coordinate functions and passing to
the limit.
In the softmax--Wasserstein case, invertibility of $A$
and finiteness of $Z_\rho(x)$ for every $x\in\R^d$
imply
\[
\int_{\R^d}\exp(w^\top y)\rho(y)\,\d y<\infty
\qquad\text{for every }w\in\R^d.
\]
Consequently, all polynomial moments are finite.

We now establish the properties of the potential spaces
needed to invert the mobility operator.
The following lemma shows that the Poincar\'e inequality
\eqref{eq:LM_PI} makes the centered space
\eqref{eq:LM_centered_space} complete under the gradient
norm \eqref{norm_h01}, and that its elements can be
approximated by centered smooth test functions.

\begin{lemma}[Centered local-mobility energy space]\label{lem:LM_centered_energy_space}
For \(\rho\in \P_{m_\rho,B}\), it holds:
\begin{enumerate}
    \item \label{it:1_lem:LM_centered_energy_space}\(H^1_0(m_\rho,B)\) with norm 
    \eqref{norm_h01} is a Hilbert space; in particular, this norm is equivalent to
 the induced norm  \eqref{eq:norm_on_H_1_B_m_rho}.
\item \label{it:2_lem:LM_centered_energy_space}\(C_c^\infty(\R^d)\) is dense in \(H^1(m_\rho,B)\), and
the centered functions
\begin{equation} \label{centered}
    \psi
    -
    \frac{1}{\mathfrak m_\rho(\R^d)}
    \int_{\R^d}\psi\d\mathfrak m_\rho,
    \qquad
    \psi\in C_c^\infty(\R^d),
\end{equation}
are dense in \(H^1_0(m_\rho,B)\).
\end{enumerate}
\end{lemma}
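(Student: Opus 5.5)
Part 1 comes from the LM-PI, and Part 2 from a cutoff-and-mollify argument in which both cutoff and mollification must respect the weight $m_\rho$.

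\emph{Part 1.} For $\Phi\in H^1_0(m_\rho,B)$, the LM-PI \eqref{eq:LM_PI} gives
\[
\|\Phi\|_{m_\rho,B}^2\le \|\Phi\|_{H^1(m_\rho,B)}^2\le (1+C_\rho)\|\Phi\|_{m_\rho,B}^2,
\]
so the two norms are equivalent on $H^1_0(m_\rho,B)$. It then suffices to show that $H^1(m_\rho,B)$ is complete and that $H^1_0(m_\rho,B)$ is a closed subspace.

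For completeness, take a Cauchy sequence $\Phi_n$. Then $\Phi_n\to\Phi$ in $L^2(m_\rho)$ and $\nabla\Phi_n\to v$ in $L^2(m_\rho,B;\R^d)$. Since $m_\rho$ is smooth and positive, it is bounded below on every compact set, so both convergences hold in $L^2_{\mathrm{loc}}$. Passing to the limit in the distributional identity $\int\Phi_n\partial_i\varphi=-\int\partial_i\Phi_n\,\varphi$ gives $v=\nabla\Phi$.

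For closedness, the centering functional $\Phi\mapsto\int\Phi\,\d\mathfrak m_\rho$ is continuous on $L^2(m_\rho)$, because Cauchy--Schwarz and $\mathfrak m_\rho(\R^d)<\infty$ give $|\int\Phi\,\d\mathfrak m_\rho|\le \mathfrak m_\rho(\R^d)^{1/2}\|\Phi\|_{L^2(m_\rho)}$. Its kernel $H^1_0(m_\rho,B)$ is therefore closed.

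\emph{Part 2: the constant function.} I first show that $1$ can be approximated by $C_c^\infty$ functions. Let $\chi_R(x)=\chi(x/R)$ with $\chi\in C_c^\infty$, $0\le\chi\le1$, and $\chi=1$ on $B_1$. Then $\chi_R\to1$ in $L^2(m_\rho)$ by dominated convergence, using finite total mass. Also
\[
\int \nabla\chi_R^\top B\nabla\chi_R\,\d\mathfrak m_\rho\le C R^{-2}\,\mathfrak m_\rho(\R^d)\to0 .
\]

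\emph{Part 2: truncation.} Next I reduce to bounded, compactly supported functions. For $\Phi\in H^1(m_\rho,B)$, the truncations $T_k\Phi=\max(-k,\min(k,\Phi))$ converge to $\Phi$ in $H^1(m_\rho,B)$, by the chain rule for Lipschitz truncations and dominated convergence. For bounded $\Phi$, the functions $\chi_R\Phi$ converge to $\Phi$. The cross term $\Phi\nabla\chi_R$ is bounded by $\|\Phi\|_\infty R^{-1}$ and is supported where $|x|\ge R$, so its weighted $L^2$ norm tends to zero by finite mass. This step is where boundedness is essential: for unbounded $\Phi$ the term $\Phi\nabla\chi_R$ is only controlled by a weighted $L^2$ norm of $\Phi$ near infinity, not uniformly. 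Doing the truncation first avoids this, and I expect this to be the main obstacle.

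\emph{Part 2: mollification.} Finally, if $\Phi$ has compact support in a ball $K$, then $m_\rho$ is bounded above and below on a neighbourhood of $K$. Hence the $H^1(m_\rho,B)$ norm restricted there is equivalent to the unweighted $H^1$ norm. Standard mollification $\Phi*\eta_\varepsilon\in C_c^\infty$ then converges to $\Phi$ in $H^1$, and so in $H^1(m_\rho,B)$. This proves that $C_c^\infty(\R^d)$ is dense in $H^1(m_\rho,B)$.

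\emph{Part 2: centered density.} Given $\Phi\in H^1_0(m_\rho,B)$, choose $\psi_n\in C_c^\infty$ with $\psi_n\to\Phi$ in $H^1(m_\rho,B)$. By continuity of the centering functional, $\int\psi_n\,\d\mathfrak m_\rho\to\int\Phi\,\d\mathfrak m_\rho=0$. The centered functions in \eqref{centered} differ from $\psi_n$ by a constant that tends to zero. Constants have zero gradient and $L^2(m_\rho)$ norm equal to $|c|\,\mathfrak m_\rho(\R^d)^{1/2}$, so the centered functions converge to $\Phi$ as well. They lie in $H^1_0(m_\rho,B)$ because constants belong to $H^1(m_\rho,B)$ by finite mass. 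By Part 1, convergence in the full norm is the same as convergence in the gradient norm \eqref{norm_h01}.
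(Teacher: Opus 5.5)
Your proof is correct and follows essentially the paper's route. Part 1 uses continuity of the mean functional (closed kernel) plus the LM-PI for norm equivalence, and Part 2 uses cutoff, local mollification where $m_\rho$ is bounded above and below, and recentering by a vanishing constant; you additionally verify completeness of $H^1(m_\rho,B)$, which the paper takes for granted. The truncation step is harmless but unnecessary, contrary to your remark that it is essential, and so is the separate approximation of the constant $1$, which you never use. The paper cuts off an unbounded $\Phi\in H^1(m_\rho,B)$ directly, because $\int_{\R^d}|\Phi|^2\,\nabla\chi_R^\top B\nabla\chi_R\,\d\mathfrak m_\rho\le C^2\lambda_{\max}(B)R^{-2}\|\Phi\|_{L^2(m_\rho)}^2\to0$.
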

\paragraph{Regular tangent and cotangent spaces.}

For \(\rho\in\P_{m_\rho,B}\), define the \emph{regular formal
tangent space} by
\begin{equation} \label{eq:LM_regular_tangent}
    T_\rho\P_{m_\rho,B}
    \coloneqq
    \left\{
        \dot\rho\in C^\infty(\R^d):
        \frac{\dot\rho}{m_\rho}
        \in L^2( m_\rho),
        \quad
        \int_{\R^d}\dot\rho\d x=0
    \right\}.
\end{equation}
In the Otto–Wasserstein setting, the condition \(\frac{\dot\rho}{m_\rho} \in L^2(m_\rho)\) simplifies to \(\frac{\dot\rho}{\rho} \in L^2(\rho)\), which can be thought of as a natural decay condition. It also arises in Fisher–Rao geometry on the space of probability densities on \(\mathbb{R}^d\), as recently investigated in \cite{maier2026informationgeometrysimplex,maier2026informationgeometryqroottransform}.
Next, for a smooth potential \(\Phi\), we define the \emph{local-mobility
operator} by
\begin{equation}
\label{eq:LM_operator_expression}
    L_{m_\rho,B}\Phi
    \coloneqq
    -\nabla\cdot
    \left(
        m_\rho B\nabla\Phi
    \right).
\end{equation}
In Otto's calculus, we need 
this operator to be well-defined and invertible. 
Next, we introduce the \emph{regular formal cotangent space} as a domain of this operator, so that it will be invertible, namely
\begin{equation}
\label{eq:LM_regular_cotangent}
    T_\rho^*\P_{m_\rho,B}
    \coloneqq
    \left\{
        [\Phi]\in
        \frac{
            C^\infty(\R^d)\cap H^1(m_\rho,B)
        }{\R}:
        \frac{ L_{m_\rho,B}\Phi}{m_\rho}
        \in L^2(m_\rho)
    \right\}.
\end{equation}
The usual tangent--cotangent pairing is
\begin{equation}
\label{eq:LM_tangent_cotangent_pairing}
    \left\langle[\Phi],\dot\rho\right\rangle
    \coloneqq
    \int_{\R^d}\Phi \dot\rho\d x.
\end{equation}
Indeed, by the Cauchy--Schwarz inequality, the pairing is well-defined because
\begin{equation}\label{eq:fin_of_cotangent_pairing}
    \left|
        \int_{\R^d}\Phi \dot\rho\d x
    \right|=
    \left|
        \int_{\R^d}
        \Phi\frac{\dot\rho}{m_\rho}\,
        \d\mathfrak m_\rho
    \right|
    \le
    \|\Phi\|_{L^2(\mathfrak m_\rho)}
    \left\|
        \frac{\dot\rho}{m_\rho}
    \right\|_{L^2(\mathfrak m_\rho)}.
\end{equation}
Moreover, the pairing is independent of the representative of \([\Phi]\), since  \(\int_{\R^d}\dot\rho\d x=0\).

\paragraph{Invertibility of the local mobility operator and mobility metric.}
Next, we check that the local mobility operator \eqref{eq:LM_operator_expression} is a bijection from the regular cotangent space~\eqref{eq:LM_regular_cotangent} to the regular tangent space~\eqref{eq:LM_regular_tangent}.

\begin{theorem}[Invertibility of the local-mobility operator]
\label{prop:LM_operator_invertible}
Let \(\rho\in\P_{m_\rho,B}\). Then it holds:
\begin{enumerate}
\item
\label{it:1_prop:LM_operator_invertible}
For every
\([\Phi]\in T_\rho^*\P_{m_\rho,B}\) and every
\(\Psi\in H^1(m_\rho,B)\), we have
\begin{equation}
\label{eq:LM_weak_integration_by_parts}
    \int_{\R^d}
    (L_{m_\rho,B}\Phi)\Psi\d x
    =
    \int_{\R^d}
    \nabla\Phi^\top B\nabla\Psi\,
    \d \mathfrak m_\rho.
\end{equation}
\item
\label{it:2_prop:LM_operator_invertible}
The local-mobility operator
\begin{equation}
\label{eq:LM_operator_map}
    L_{m_\rho,B}:
    T_\rho^*\P_{m_\rho,B}
    \to
    T_\rho\P_{m_\rho,B},
    \qquad
    [\Phi]\mapsto
    -\nabla\cdot(m_\rho B\nabla\Phi),
\end{equation}
is a linear isomorphism. \end{enumerate}
\end{theorem}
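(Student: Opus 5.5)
For part~\ref{it:1_prop:LM_operator_invertible}, we first verify \eqref{eq:LM_weak_integration_by_parts} for $\Psi\in C_c^\infty(\R^d)$. Since $\Phi$ and $m_\rho$ are smooth, the divergence theorem applies on a ball containing $\operatorname{supp}\Psi$ with no boundary terms. We then extend to general $\Psi\in H^1(m_\rho,B)$ by the density statement of Lemma~\ref{lem:LM_centered_energy_space}\ref{it:2_lem:LM_centered_energy_space}. Take $\Psi_n\in C_c^\infty$ with $\Psi_n\to\Psi$ in $H^1(m_\rho,B)$. On the left, write $\int (L_{m_\rho,B}\Phi)\Psi_n\,\d x=\int \frac{L_{m_\rho,B}\Phi}{m_\rho}\Psi_n\,\d\mathfrak m_\rho$. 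By the cotangent condition $\frac{L_{m_\rho,B}\Phi}{m_\rho}\in L^2(m_\rho)$, this is a continuous functional of $\Psi_n$ in $L^2(m_\rho)$. On the right, $\nabla\Phi\in L^2(m_\rho,B;\R^d)$ because $\Phi\in H^1(m_\rho,B)$, so Cauchy--Schwarz in the $B$-weighted inner product gives continuity in $\nabla\Psi_n$. Passing to the limit yields the identity.

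For part~\ref{it:2_prop:LM_operator_invertible}, well-definedness comes first. $L_{m_\rho,B}$ kills constants, so it is defined on classes. Its image $\dot\rho=L_{m_\rho,B}\Phi$ is smooth, and $\dot\rho/m_\rho\in L^2(m_\rho)$ holds by the definition of the cotangent space. For zero mass, apply part~\ref{it:1_prop:LM_operator_invertible} with $\Psi\equiv1$. This is admissible because $\mathfrak m_\rho(\R^d)<\infty$ and $\nabla1=0$, so $\int\dot\rho\,\d x=0$.

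Injectivity: if $L_{m_\rho,B}\Phi=0$, testing with $\Psi=\Phi$ gives $\int\nabla\Phi^\top B\nabla\Phi\,\d\mathfrak m_\rho=0$. Since $B\succ0$ and $m_\rho>0$, we get $\nabla\Phi=0$ a.e. As $\Phi$ is smooth and $\R^d$ is connected, $\Phi$ is constant, so $[\Phi]=0$.

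Surjectivity is the main step. Given $\dot\rho\in T_\rho\P_{m_\rho,B}$, consider the bilinear form $a(\Phi,\Psi)=\int\nabla\Phi^\top B\nabla\Psi\,\d\mathfrak m_\rho$ on the Hilbert space $H^1_0(m_\rho,B)$ of Lemma~\ref{lem:LM_centered_energy_space}\ref{it:1_lem:LM_centered_energy_space}. It is the inner product there. Next take the functional $\ell(\Psi)=\int\Psi\dot\rho\,\d x$. It is bounded by \eqref{eq:fin_of_cotangent_pairing} combined with the LM-PI \eqref{eq:LM_PI}. Riesz then gives a unique $\Phi\in H^1_0$ with $a(\Phi,\Psi)=\ell(\Psi)$ for all $\Psi\in H^1_0$. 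The identity extends to all $\Psi\in H^1(m_\rho,B)$, because subtracting the $\mathfrak m_\rho$-mean changes neither side; for $\ell$ this uses $\int\dot\rho\,\d x=0$. In particular it holds for all $\Psi\in C_c^\infty$, so $-\nabla\cdot(m_\rho B\nabla\Phi)=\dot\rho$ in $\mathcal D'(\R^d)$.

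The expected obstacle is regularity: we must show $\Phi\in C^\infty$ so that $[\Phi]$ lies in the regular cotangent space. Locally, $m_\rho$ is smooth and bounded above and below on each ball, and $B$ is constant positive definite. So $\Phi\in H^1_{\loc}$ weakly solves a uniformly elliptic divergence-form equation with smooth coefficients and smooth right-hand side. Interior elliptic regularity (iterated $H^k_{\loc}$ estimates plus Sobolev embedding) then gives $\Phi\in C^\infty(\R^d)$. Membership in $H^1(m_\rho,B)$ holds by construction. Finally, $L_{m_\rho,B}\Phi/m_\rho=\dot\rho/m_\rho\in L^2(m_\rho)$, so $[\Phi]\in T_\rho^*\P_{m_\rho,B}$ and $L_{m_\rho,B}[\Phi]=\dot\rho$. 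Linearity is clear, which completes the isomorphism.
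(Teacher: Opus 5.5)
Your proposal is correct and follows essentially the same route as the paper. The shared steps are: the Green identity by integration by parts on $C_c^\infty$ plus the density statement of Lemma~\ref{lem:LM_centered_energy_space}; zero mass by testing with $\Psi\equiv 1$; injectivity by testing with $\Psi=\Phi$; and surjectivity by Lax--Milgram on $H^1_0(m_\rho,B)$ followed by interior elliptic regularity. Your Riesz representation is equivalent to Lax--Milgram here because $a$ is the inner product of that space. Your remark that the weak identity extends from $H^1_0(m_\rho,B)$ to all test functions because $\int\dot\rho\,\d x=0$ is the same step the paper carries out with the centered test functions \eqref{centered}.
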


As a direct consequence of Theorem \ref{prop:LM_operator_invertible} we can define 
the local-mobility metric. 

\begin{cor}[Well-definedness of the local-mobility metric]
\label{prop:LM_metric_well_defined}
For every \(\rho\in\P_{m_\rho,B}\), the following \emph{local-mobility metric} is well-defined:
\begin{equation}\label{eq:LM_metric}
   G_{m_\rho,B}:T_\rho \P_{m_\rho,B} \times T_\rho\P_{m_\rho,B} \to \R,\qquad 
    G_{m_\rho,B}(\dot\rho,\dot\rho)
    \coloneqq
    \int_{\R^d}
    \Phi_{\dot\rho} \dot\rho\d x,
\end{equation}
 where \(
    [\Phi_{\dot\rho}]
    \in T_\rho^*\P_{m_\rho,B}
\)
is determined by
\begin{equation}
\label{eq:LM_div_dot_rho_metric}
    \dot\rho
    +
    \nabla\cdot
    \left(
        m_\rho B\nabla\Phi_{\dot\rho}
    \right)
    =
    0.
\end{equation}
\end{cor}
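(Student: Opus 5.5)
The corollary follows directly from Theorem~\ref{prop:LM_operator_invertible}. For $\dot\rho\in T_\rho\P_{m_\rho,B}$, part~\ref{it:2_prop:LM_operator_invertible} of that theorem provides a unique class $[\Phi_{\dot\rho}]\in T_\rho^*\P_{m_\rho,B}$ with $L_{m_\rho,B}[\Phi_{\dot\rho}]=\dot\rho$. This identity is exactly \eqref{eq:LM_div_dot_rho_metric}, so the potential defining the metric exists and its class is determined uniquely by $\dot\rho$.

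Next I would check that the expression \eqref{eq:LM_metric} depends only on this class and is finite. The integral $\int\Phi_{\dot\rho}\dot\rho\,\d x$ is precisely the pairing \eqref{eq:LM_tangent_cotangent_pairing}. That pairing is finite by the Cauchy--Schwarz bound \eqref{eq:fin_of_cotangent_pairing}, because $\Phi_{\dot\rho}\in H^1(m_\rho,B)\subset L^2(m_\rho)$ and $\dot\rho/m_\rho\in L^2(m_\rho)$. It is also independent of the representative, since $\int\dot\rho\,\d x=0$.

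Then I would show that $G_{m_\rho,B}$ is a genuine inner product. I would define the bilinear form by polarization, or directly as $G_{m_\rho,B}(\dot\rho,\dot\eta)\coloneqq\int\Phi_{\dot\rho}\dot\eta\,\d x$. Applying the integration-by-parts identity \eqref{eq:LM_weak_integration_by_parts} with $\Phi=\Phi_{\dot\eta}$ and $\Psi=\Phi_{\dot\rho}\in H^1(m_\rho,B)$ gives
\[
G_{m_\rho,B}(\dot\rho,\dot\eta)=\int_{\R^d}\nabla\Phi_{\dot\rho}^\top B\nabla\Phi_{\dot\eta}\,\d\mathfrak m_\rho .
\]
This expression is manifestly symmetric. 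Bilinearity follows because $\dot\rho\mapsto[\Phi_{\dot\rho}]$ is linear, being the inverse of a linear isomorphism. Positivity is immediate: $G_{m_\rho,B}(\dot\rho,\dot\rho)=\|\nabla\Phi_{\dot\rho}\|^2_{L^2(m_\rho,B;\R^d)}\ge0$, since $B\succ0$.

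The only step that needs some care is definiteness. Suppose $G_{m_\rho,B}(\dot\rho,\dot\rho)=0$. Then $\nabla\Phi_{\dot\rho}=0$ $\mathfrak m_\rho$-a.e., and since $m_\rho>0$ this means everywhere. So $\Phi_{\dot\rho}$ is constant, which is possible because $\mathfrak m_\rho$ is finite and constants lie in $H^1(m_\rho,B)$. Its class is therefore $[0]$, and $\dot\rho=L_{m_\rho,B}[0]=0$. Alternatively, one can subtract the $\mathfrak m_\rho$-mean of $\Phi_{\dot\rho}$ and apply the Poincaré inequality \eqref{eq:LM_PI}. Everything else in the proof is bookkeeping on top of Theorem~\ref{prop:LM_operator_invertible}.
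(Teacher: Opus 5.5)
Your proposal is correct and follows the paper's route: the paper treats the corollary as a direct consequence of the invertibility in Theorem~\ref{prop:LM_operator_invertible}, with finiteness and representative-independence coming from the pairing bound \eqref{eq:fin_of_cotangent_pairing}, and the bilinear form $\int\nabla\Phi_{\dot\rho}^\top B\nabla\Phi_{\dot\eta}\,\d\mathfrak m_\rho$ coming from the integration-by-parts identity \eqref{eq:LM_weak_integration_by_parts}, exactly as in Remark~\ref{rem:LM_equivalent_metric_forms}. Your explicit check of definiteness is a useful addition, since the paper invokes it later for uniqueness of gradients; note, however, that once $\nabla\Phi_{\dot\rho}\equiv0$ you get $\dot\rho=-\nabla\cdot(m_\rho B\nabla\Phi_{\dot\rho})=0$ directly, so the aside about constants lying in $H^1(m_\rho,B)$ is not needed.
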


\begin{remark}[Equivalent energy and bilinear forms]
\label{rem:LM_equivalent_metric_forms}
For all \(\dot\rho,\dot\eta
    \in T_\rho\P_{m_\rho,B},\)
the bilinear local-mobility metric can be written by \eqref{eq:LM_div_dot_rho_metric} and the polarization formula as 
\begin{equation}
\label{eq:LM_metric_bilinear}
\begin{aligned}
    G_{m_\rho,B}(\dot\rho,\dot\eta)=
    \int_{\R^d}
    \Phi_{\dot\rho} \dot\eta\d x
    =
    \int_{\R^d}
    \nabla\Phi_{\dot\rho}^{\top} 
    B\nabla\Phi_{\dot\eta}\,
    \d \mathfrak m_\rho.
\end{aligned}
\end{equation}
\end{remark}

\paragraph{First variations and the local-mobility gradient.}
\label{subsec:LM_first_variation_gradient}
Before stating the following corollary, we want to stress that
\(\P_{m_\rho,B}\) has not been equipped with a differentiable
structure, so the notion of differentiability requires a separate
definition. In Appendix~\ref{app:diif_in_in_local_mobility_calculus},
we provide such a definition in terms of local regular first
variations. It agrees with ordinary Fr\'echet differentiation
in the Otto--Wasserstein geometry on smooth probability densities
on a closed manifold, whenever the differential has a smooth
representative. The reader may therefore think of ordinary
differentiation with a smooth first-variation representative in
what follows, while \(D\F(\rho)\) denotes the associated regular
extension defined in Appendix~\ref{app:diif_in_in_local_mobility_calculus}.
Finally, we provide a criterion for the corresponding
local-mobility gradient to be a regular tangent vector.

\begin{cor}[Local-mobility gradient]\label{cor:LM_gradient}
Let \( \F \colon 
    \P_{m_\rho,B}
    \to
    \R\) have a first variation satisfying
\begin{equation}
\label{eq:LM_first_variation}
    D\F(\rho)[\dot\eta]
    =
    \int_{\R^d}
    \frac{\delta\F}{\delta\rho}\,
    \dot\eta\d x
    \quad
    \text{for all} \quad
    \dot\eta\in
    T_\rho\P_{m_\rho,B}, 
\end{equation}
and so that \(\left[
        \frac{\delta\F}{\delta\rho}
    \right]
    \in
    T_\rho^*\P_{m_\rho,B}.\) Then the gradient of \(\F\) with respect to the
local-mobility metric \(G_{m_\rho,B}\) is
\begin{equation}
\label{eq:LM_gradient}
    \operatorname{grad}_{G_{m_\rho,B}}
    \F(\rho)
    =
    L_{m_\rho,B}
    \left[
        \frac{\delta\F}{\delta\rho}
    \right]
    =
    -\nabla\cdot
    \left(
        m_\rho B
        \nabla\frac{\delta\F}{\delta\rho}
    \right)
    \in
    T_\rho\P_{m_\rho,B}.
\end{equation}
\end{cor}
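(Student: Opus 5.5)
The gradient is characterised as the unique element $g\in T_\rho\P_{m_\rho,B}$ with
\[
G_{m_\rho,B}(g,\dot\eta)=D\F(\rho)[\dot\eta]\qquad\text{for all }\dot\eta\in T_\rho\P_{m_\rho,B}.
\]
The plan is to show that $g:=L_{m_\rho,B}[\delta\F/\delta\rho]$ satisfies this identity, and then that no other tangent vector does.

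\emph{Membership in the tangent space.} Since $[\delta\F/\delta\rho]\in T_\rho^*\P_{m_\rho,B}$ by assumption, Theorem~\ref{prop:LM_operator_invertible}\eqref{it:2_prop:LM_operator_invertible} gives directly that $g\in T_\rho\P_{m_\rho,B}$. It also identifies the potential of $g$ in Corollary~\ref{prop:LM_metric_well_defined}: because $L_{m_\rho,B}$ is injective on the cotangent space, the class solving \eqref{eq:LM_div_dot_rho_metric} for $\dot\rho=g$ is $[\Phi_g]=[\delta\F/\delta\rho]$.

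\emph{The defining identity.} By the bilinear form of the metric in Remark~\ref{rem:LM_equivalent_metric_forms}, for every $\dot\eta\in T_\rho\P_{m_\rho,B}$,
\[
G_{m_\rho,B}(g,\dot\eta)=\int_{\R^d}\Phi_g\,\dot\eta\,\d x=\int_{\R^d}\frac{\delta\F}{\delta\rho}\,\dot\eta\,\d x=D\F(\rho)[\dot\eta],
\]
where the last equality is \eqref{eq:LM_first_variation}. The pairing is independent of the chosen representative because $\int\dot\eta\,\d x=0$, and it is finite by \eqref{eq:fin_of_cotangent_pairing}, since $\delta\F/\delta\rho\in H^1(m_\rho,B)\subset L^2(m_\rho)$.

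\emph{Uniqueness and the main point to check.} Suppose two tangent vectors $g,g'$ both satisfy the identity. Then $G(g-g',\dot\eta)=0$ for all $\dot\eta$; taking $\dot\eta=g-g'$ gives $\int\nabla\Phi^\top B\nabla\Phi\,\d\mathfrak m_\rho=0$ with $\Phi=\Phi_{g-g'}$. Hence $\Phi$ is constant, so $g=g'$ by \eqref{eq:LM_div_dot_rho_metric}; equivalently, one can invoke positive-definiteness via the LM-PI norm in Lemma~\ref{lem:LM_centered_energy_space}. The only step needing care is the symmetric use of \eqref{eq:LM_metric_bilinear}. This requires the integration by parts of Theorem~\ref{prop:LM_operator_invertible}\eqref{it:1_prop:LM_operator_invertible}, applied with $\Psi=\Phi_{\dot\eta}\in H^1(m_\rho,B)$. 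That is legitimate since every cotangent representative lies in $H^1(m_\rho,B)$, so no further obstacle is expected.
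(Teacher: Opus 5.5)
Your proposal is correct and follows the paper's proof. It gets membership of $L_{m_\rho,B}[\frac{\delta\F}{\delta\rho}]$ in $T_\rho\P_{m_\rho,B}$ from Theorem~\ref{prop:LM_operator_invertible}, the gradient identity from \eqref{eq:LM_metric_bilinear} together with \eqref{eq:LM_weak_integration_by_parts}, and uniqueness from positive definiteness of $G_{m_\rho,B}$. The only cosmetic difference is where the integration by parts appears: the paper evaluates the metric as $\int(\nabla\frac{\delta\F}{\delta\rho})^\top B\nabla\Phi_{\dot\eta}\,\d\mathfrak m_\rho$ and integrates by parts explicitly, while you use the potential form $\int\Phi_g\dot\eta\,\d x$, whose validity rests on the same symmetry step you correctly flagged.
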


Since every tangent vector satisfies \(   \int_{\R^d}\dot\rho\d x=0,\) the first variation \( \frac{\delta\F}{\delta\rho}\) is defined only modulo constants.

Finally, we consider the model cases from \eqref{set1} and \eqref{set2}.

\paragraph{Otto--Wasserstein.} \label{para:Otto--Wasserstein-geometry}
For \(\d\mathfrak m_\rho=\rho\d x\) and \( B=I_d\), we obtain
\[
         H^1(\rho) \coloneqq H^1(m_\rho,I_d)
               =
        \left\{
        \Phi\in L^2(\rho):
        \nabla\Phi\in L^2(\rho;\R^d)
        \right\}.
\]
Then LM-PI \eqref{eq:LM_PI} becomes the usual \emph{\(\rho\)-weighted
Poincar\'e inequality}: there exists \(C_\rho>0\) such that
\begin{equation}
\label{eq:PI_1}
    \int_{\R^d}|\Phi|^2\rho\d x
    \le
    C_\rho
    \int_{\R^d}|\nabla\Phi|^2\rho\d x
    \qquad
    \text{for all }\Phi\in H_0^1(\rho),
\end{equation}
and the space \eqref{lmc} reads as 
\[
    \P_{W}
    \coloneqq 
    \left\{
    \rho\in\P^\infty(\R^d):
        \rho \text{ satisfies }
        \eqref{eq:PI_1}
    \right\}.
\]
Typical measures in this space are Gaussians and more general strongly log-concave measures. 
Moreover, \(\P_W\) is convex and hence path-connected in the topology induced by \(C^\infty(\R^d)\).
Its regular tangent space is given by
\[
    T_\rho \P_{W}
      =
    \left\{
        \dot\rho\in C^\infty(\R^d):
        \frac{\dot\rho}{\rho}\in L^2(\rho),
        \quad
        \int_{\R^d}\dot\rho\d x=0
    \right\}.
\]
The local mobility operator~\eqref{eq:LM_operator_expression} 
becomes 
\begin{equation} \label{eq:Otto_operator}
    L_\rho^W \Phi
    =
    -\nabla\cdot(\rho\nabla\Phi),
\end{equation}
and the cotangent space is
\[
    T_\rho^* \P_{W}
      =
    \left\{
        [\Phi]\in
        \frac{
            C^\infty(\R^d)\cap H^1(\rho)
        }{\R}:
        \frac{L_\rho^W \Phi}{\rho}
        \in L^2(\rho)
    \right\}.
    \]
Finally, the local mobility metric reads as
\[
G_\rho^W\colon T_\rho \P_{W} \times T_\rho \P_{W} \to \R, \quad
    G_\rho^W(\dot\rho,\dot\eta)
    \coloneqq
    \int_{\R^d}
    \rho\,
    \nabla\Phi_{\dot\rho}\cdot
    \nabla\Phi_{\dot\eta}\d x,
\]
where $\Phi_{\dot\rho},\Phi_{\dot\eta}$ are defined with the potential equation 
\begin{equation} 
    \dot\rho
    +
    \nabla\cdot
    \left(
        \rho\nabla\Phi_{\dot\rho}
    \right)
    =
    0.
\end{equation}

    These regular tangent and cotangent spaces make explicit the regularity assumptions tacitly made in \cite[Subsec.~4.2]{MPR2014} and \cite[Subsec.~4.1]{ADPZ2013}.

\paragraph{Softmax--Wasserstein.} \label{paragraph:Softmax-Wasserstein-geometry}
Softmax gradient flows have recently gained interest because they have applications in the mathematical foundations of transformers.
Let
\begin{equation}\label{eq:softmax_B}
     A=A^\top \in \R^{d\times d} \text{ be invertible and }
                B\coloneqq VA^{-1}=(VA^{-1})^\top\succ0 .
\end{equation}
For \(\rho\in\P_2(\R^d) \cap \P^\infty(\R^d)\), we define the  softmax mobility measure by
\begin{equation}\label{eq:mobility_measure}
         \d\mathfrak m_\rho(x) \coloneqq  \frac{\rho(x)}{Z_\rho(x)}\d x ,
         \qquad Z_\rho(x)
        \coloneqq 
        \int_{\R^d}\exp(z^\top A x) \rho(z)\d z.
\end{equation}
This gives rise to the weighted Sobolev space
\[
       H^1(m_\rho,B) = 
        \left\{
        \Phi\in L^2(m_\rho):
        \nabla\Phi\in L^2(m_\rho,VA^{-1};\R^d)
        \right\}.
\]
We say that \(m_\rho \coloneqq \frac{\rho(x)}{Z_\rho(x)}\) satisfies the
\emph{local-mobility Poincar\'e inequality}, if there exists
\(C_\rho>0\) such that
\begin{equation}\label{eq:SM-PI}
    \Vert\Phi\Vert_{{L^2\left(\frac{\rho(x)}{Z_\rho(x)}\right)}}^{2} \leq C_\rho \Vert \nabla \Phi\Vert_{L^2\left(
    \frac{\rho(x)}{Z_\rho(x)},
    VA^{-1};
    \mathbb{R}^d
\right)}^2 \quad \text{for all} \quad
    \Phi\in H_0^1(m_\rho,B).
\end{equation}
With this notation, we can define, as a special case of \eqref{lmc}, the corresponding smooth softmax--Wasserstein class by
\begin{equation}
\label{eq:LM_softmax_class}
    \P_{\SM} \coloneqq 
    \left\{
        \rho\in \P^\infty(\R^d):
        Z_\rho\in C^\infty_{>0}(\R^d),
        \int_{\R^d}\frac{\rho(x)}{Z_\rho(x)}\d x\in (0,\infty),
        m_\rho \text{ satisfies }\eqref{eq:SM-PI}
    \right\}.
\end{equation}
Before we define the associated regular tangent space, we observe that \[
    \left\|
        \frac{\dot\rho}{m_\rho}
    \right\|_{L^2(m_\rho)}^2
    =
    \int_{\R^d}
    \left|
        \frac{Z_\rho\dot\rho}{\rho}
    \right|^2
    \frac{\rho}{Z_\rho}\d x
    =
    \int_{\R^d}
    Z_\rho
    \frac{|\dot\rho|^2}{\rho}\d x.
\]
So the regular local mobility tangent space simplifies in the softmax setting to
\begin{equation}
\label{eq:LM_softmax_tangent}
    T_\rho\P_{\SM}
    =
    \left\{
        \dot\rho\in C^\infty(\R^d):
        \int_{\R^d}\dot\rho\d x=0,
        \quad
        \int_{\R^d}
        Z_\rho
        \frac{|\dot\rho|^2}{\rho}\d x
        <\infty
    \right\},
\end{equation}
Next, we define the softmax operator as
\begin{equation}
\label{eq:LM_softmax_operator}
    L_\rho^{\SM}\Phi
    =
    -\nabla\cdot
    \left(
        \frac{\rho}{Z_\rho}
        VA^{-1}\nabla\Phi
    \right), 
\end{equation}
and the regular softmax cotangent space is
\begin{equation}
\label{eq:LM_softmax_cotangent}
\begin{aligned}
    T_\rho^*\P_{\SM}
    =
    \left\{
        [\Phi]\in
        \frac{
            C^\infty(\R^d)\cap H^1(m_\rho,B)
        }{\R}:
        \int_{\R^d}
        \frac{Z_\rho}{\rho}
        |L_\rho^{\SM}\Phi|^2\d x
        <\infty
    \right\}.
\end{aligned}
\end{equation}
Finally, the \emph{local-mobility metric} at $\rho \in \P_{\SM}$ is given by
\begin{equation}\label{eq:SM_metric}
   G_\rho^{\SM} \colon T_\rho \P_{\SM}\times T_\rho\P_{\SM}\to \R,\quad 
    G_\rho^{\SM}(\dot\rho,\dot\rho)
    \coloneqq
    \int_{\R^d}
    \Phi_{\dot\rho} \dot\rho\d x,
\end{equation}
 where \(
    [\Phi_{\dot\rho}]
    \in T_\rho^*\P_{\SM}
\)
is determined by
\begin{equation}
\label{eq:SM_div_dot_rho_metric}
    \dot\rho
    +
        \nabla\cdot
    \left(
        \frac{\rho}{Z_\rho}
        VA^{-1}\nabla\Phi_{\dot\rho}
    \right)
    =
    0.
\end{equation}

We close this subsection by highlighting a fundamental difference
between softmax--Wasserstein and Otto--Wasserstein geometry on
probability densities on \(\R^d\), in contrast to the more commonly
considered setting of compact domains.\begin{remark}[Comparison of Otto and softmax geometry on non-compact domains]
On compact domains, \(Z_\rho\) is bounded above and away from zero,
so the Otto--Wasserstein and softmax--Wasserstein metrics induce
equivalent tangent norms at each fixed density.
On \(\R^d\), however, these norms need not be equivalent:
\(Z_\rho\) can grow without bound, even for Gaussian densities,
so the mobilities \(\rho\) and \(\rho/Z_\rho\) are no longer
uniformly comparable.

Moreover, there are densities in \(\P_W\) that do not belong to
\(\P_{\SM}\).
For example, in dimension one with \(A=B=1\), consider the smooth
probability density
\[
    \rho(x)\propto \exp\!\bigl(-\sqrt{1+x^2}\bigr).
\]
Since this density is uniformly comparable to the Laplace density,
it satisfies a Poincar\'e inequality and therefore belongs to \(\P_W\).
However, its exponential tails imply
\[
    Z_\rho(x)=\int_{\R}\exp(zx)\rho(z) \d z=\infty
    \qquad\text{whenever }|x|\geq1,
\]
so \(\rho\notin\P_{\SM}\).
\end{remark} 

\section{Stein Calculus for Bilinear Kernels} \label{sec:stein}
To interpret linear-attention dynamics geometrically, we
would like to associate a Stein-type geometry with the
bilinear kernel $K_A(x,y)=x^\top Ay$, where $A=A^\top\succ0$.
However, this kernel is not integrally strictly positive
definite, so the usual Stein framework based on this
assumption does not apply directly. Its associated
Stein--Onsager operator has finite-dimensional range and
an infinite-dimensional kernel, even after identifying
potentials that differ by a constant, see
Lemma~\ref{lem:kernel}.
Consequently, it cannot be inverted on the full regular
tangent and cotangent spaces.

We overcome this obstruction by introducing a
\emph{quotient Stein geometry}. We restrict the tangent
space to the density variations generated by the mobility
operator and identify cotangent elements that generate
the same variation. On these effective spaces,
Theorem~\ref{prop:isomorphism} establishes an induced
Stein--Onsager isomorphism. Its inverse defines a
positive definite metric on the effective tangent space,
as shown in
Corollary~\ref{cor:stein_metric_well_defined}.

We begin by specifying an admissible density class and
the regular tangent--cotangent pairing. Then we establish
the well-definedness of the Stein--Onsager operator and
its Green identity in Lemma~\ref{lem:well-defined},
identify its kernel, and carry out the quotient
construction. Finally, Corollary~\ref{lem:stein-gradient}
identifies the gradient of a functional with an admissible
regular first variation. The corresponding notion of
differentiation is introduced in
Appendix~\ref{app:diff_stein_geom}.
This calculus provides the geometric framework for the
linear-attention dynamics studied in
Section~\ref{sec:lin-accelerated-linear-attention}.
All proofs are given in Appendix \ref{app:5}.

\paragraph{General setting.}
Throughout this section, let $A=A^\top\succ0$ and
$K_A(x,y)\coloneqq x^\top Ay$.  We define the space of \emph{smooth bilinear Stein probability densities} by
\begin{equation}\label{eq:density-class}
\P_{K_A}
\coloneqq
\left\{\rho\in
\P^{\infty}(\R^d):
\quad
\int_{\R^d}|x|^2\,\rho\d x<\infty,
\quad
\int_{\R^d}|x|^2|\nabla\log\rho|^2\,\rho\d x<\infty
\right\}.
\end{equation}
First we observe that, as $\P^{\infty}(\R^d)$, the set $\P_{K_A}$ is convex.
As we shall see, a rigorous extension of the local-mobility calculus developed in Section~\ref{sec3} to the Stein setting requires two additional assumptions on the probability density: a finite second moment and the logarithmic condition. 
\paragraph{Regular tangent and cotangent spaces.}
For $\rho\in\P_{K_A}$, we define the regular formal tangent space by
\begin{equation}\label{eq:tangent}
  T_\rho\P_{K_A}
  \coloneqq
  \left\{
  \dot\rho\in C^\infty(\R^d):
  \frac{\dot\rho}{\rho}\in L^2(\rho),
  \quad
  \int_{\R^d}\dot\rho \d x=0
\right\}.
\end{equation}
Next, we define the corresponding regular formal cotangent space by
\begin{equation}\label{eq:cotangent}
  T_\rho^*\P_{K_A} \coloneqq
  \frac{C^\infty(\R^d)\cap H^1(\rho)}{\R}.
\end{equation}
For $[\Phi]\in T_\rho^*\P_{K_A}$ and
$\dot\rho\in T_\rho\P_{K_A}$, the usual tangent--cotangent
pairing is
\begin{equation}\label{eq:pairing}
  \big\langle[\Phi],\dot\rho\big\rangle_\rho
  \coloneqq \int_{\R^d}\Phi\dot\rho \d x
  =\int_{\R^d}\Phi\frac{\dot\rho}{\rho}\,\rho \d x.
\end{equation}
This pairing is well-defined by Cauchy--Schwarz in combination with the defining conditions of the tangent and cotangent spaces in \eqref{eq:tangent} and \eqref{eq:cotangent}:
\begin{equation}\label{eq:pairing-bound}
\left|
  \big\langle[\Phi],\dot\rho\big\rangle_\rho
\right|
\leq
\lVert\Phi\rVert_{L^2(\rho)}
\left\lVert\frac{\dot\rho}{\rho}\right\rVert_{L^2(\rho)}.
\end{equation}
Moreover, replacing $\Phi$ by $\Phi+c$ changes the pairing by
$c\int_{\R^d}\dot\rho \d x=0$.  

\paragraph{Stein--Onsager operator.}
To define the analog of the local-mobility operator in this framework, we first introduce the matrix-moment operator on representatives by
\begin{equation}\label{eq:moment-map}
\begin{aligned}
  M_\rho:C^\infty(\R^d)\cap H^1(\rho)
  &\longrightarrow\R^{d\times d},\\
  \Phi&\longmapsto
  M_\rho(\Phi)\coloneqq \int_{\R^d}x\,\nabla\Phi(x)^\top \d\rho(x).
\end{aligned}
\end{equation}
By the Cauchy--Schwarz inequality, we can check that $M_\rho(\Phi)$ is well-defined for every element of its domain, and  using $M_\rho(\Phi+c)=M_\rho(\Phi)$ for every $c\in\R$, we see that the induced operator \begin{equation}\label{eq:quotient-moment-map}
  \overline M_\rho:T_\rho^*\P_{K_A}
  \longrightarrow\R^{d\times d},
  \qquad
  \overline M_\rho[\Phi]\coloneqq M_\rho(\Phi).
\end{equation} 
is independent of the representative.
For the bilinear kernel $K_A$, the kernel velocity factors through exactly this
matrix moment, that is,
\begin{equation}\label{eq:kernel-velocity}
  \mathcal T_{K_A,\rho}(\nabla\Phi)(x)
  \coloneqq \int_{\R^d}K_A(x,y)\nabla\Phi(y)\,\rho \d y
  =M_\rho(\Phi)^\top Ax.
\end{equation}
Now we define, for $\rho\in\P_{K_A}$,  the \emph{Stein--Onsager operator} by
\begin{equation} \label{eq:onsager}
      L_\rho^{K_A} \colon T_\rho^*\P_{K_A}
\longrightarrow T_\rho\P_{K_A},\quad
  L_\rho^{K_A}[\Phi]\coloneqq -\nabla\!\cdot\!\left(
  \rho\,M_\rho(\Phi)^\top Ax
  \right).
\end{equation}
Next, we check that this operator is indeed well-defined and provide a useful correspondence between the matrix-moment operator and the Stein--Onsager operator.

\begin{lemma}[Well-definedness and Green identity]\label{lem:well-defined} For each $\rho\in \P_{K_A}$, the following holds true:
\begin{enumerate}
    \item\label{it:1_lem_well_def}\emph{Well-definedness:} The Stein--Onsager operator $L_\rho^{K_A}$ in \eqref{eq:onsager} is a
well-defined linear operator.
\item \label{it:2_lem_well_def} \emph{Green identity:} For all
$[\Phi],[\Psi]\in T_\rho^*\P_{K_A}$:
\begin{equation}\label{eq:green-identity}
  \big\langle[\Phi],L_\rho^{K_A}[\Psi]\big\rangle_\rho
  =\tr\!\left( M_\rho(\Phi)^\top A M_\rho(\Psi) \right).
\end{equation}
\end{enumerate}
\end{lemma}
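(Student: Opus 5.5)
The proof has two parts, matching the two items of the statement.

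For well-definedness, I first check that $M_\rho(\Phi)$ is finite and independent of the representative. Cauchy--Schwarz gives
\[
\Big|\int x\,\nabla\Phi^\top\rho\,\d x\Big|\le \Big(\int|x|^2\rho\,\d x\Big)^{1/2}\|\nabla\Phi\|_{L^2(\rho)},
\]
which is finite because $\rho\in\P_{K_A}$ has finite second moment and $\Phi\in H^1(\rho)$. Since $M_\rho(\Phi+c)=M_\rho(\Phi)$, the quotient map is well defined, and linearity is clear. Write $M\coloneqq M_\rho(\Phi)$ and $v(x)\coloneqq M^\top Ax$, a linear vector field. Expanding the divergence gives
\[
L_\rho^{K_A}[\Phi]=-\rho\,\tr(M^\top A)-\nabla\rho\cdot M^\top Ax ,
\]
which is smooth. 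It remains to show that this lies in $T_\rho\P_{K_A}$. Dividing by $\rho$,
\[
\frac{L_\rho^{K_A}[\Phi]}{\rho}=-\tr(M^\top A)-\nabla\log\rho\cdot M^\top Ax ,
\]
and
\[
\Big\|\frac{L_\rho^{K_A}[\Phi]}{\rho}\Big\|_{L^2(\rho)}\le|\tr(M^\top A)|+\|M^\top A\|\Big(\int|x|^2|\nabla\log\rho|^2\rho\,\d x\Big)^{1/2}<\infty .
\]
This is exactly where the logarithmic condition in \eqref{eq:density-class} is used. For zero mass, I integrate $-\nabla\cdot(\rho v)$ against cutoffs $\chi_R(x)=\chi(x/R)$. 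This gives $\int \chi_R L\,\d x=\int \rho\, v\cdot\nabla\chi_R\,\d x$, which is bounded by $C\int_{R\le|x|\le 2R}\rho\,|x|/R\,\d x\to0$ by finiteness of the first moment. Since $L_\rho^{K_A}[\Phi]\in L^1$ (because $L/\rho\in L^2(\rho)\subset L^1(\rho)$), dominated convergence yields $\int L_\rho^{K_A}[\Phi]\,\d x=0$.

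For the Green identity, the same cutoff argument applies with $\Psi$ absorbed. Set $v=M_\rho(\Psi)^\top Ax$. Then
\[
\int\Phi\,\chi_R\,L_\rho^{K_A}[\Psi]\,\d x=\int\rho\,v\cdot(\chi_R\nabla\Phi+\Phi\nabla\chi_R)\,\d x .
\]
The left side converges to $\langle[\Phi],L_\rho^{K_A}[\Psi]\rangle_\rho$ by \eqref{eq:pairing-bound}, since $\Phi\in L^2(\rho)$ and $L/\rho\in L^2(\rho)$. The first term on the right converges to $\int\rho\,\nabla\Phi^\top M_\rho(\Psi)^\top Ax\,\d x$ by dominated convergence, with $|\nabla\Phi||x|\rho\in L^1$ by Cauchy--Schwarz. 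The boundary term is bounded by $C\int_{R\le|x|\le2R}\rho\,|\Phi|\,|x|/R\,\d x\le C'\big(\int_{|x|\ge R}\rho\,|\Phi|^2\big)^{1/2}\to0$. Finally,
\[
\int\rho\,\nabla\Phi^\top M_\Psi^\top Ax\,\d x=\tr\Big(M_\Psi^\top A\int x\nabla\Phi^\top\rho\,\d x\Big)=\tr\big(M_\Psi^\top A M_\Phi\big),
\]
and since $A$ is symmetric this equals $\tr\big(M_\rho(\Phi)^\top A M_\rho(\Psi)\big)$ by transposition.

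The main obstacle is the justification of integration by parts on $\R^d$: controlling the boundary terms and the integrability of $\Phi\,L_\rho^{K_A}[\Psi]$. This is handled by the cutoff argument together with the second-moment and log-gradient conditions, and is where the two extra assumptions defining $\P_{K_A}$ enter.
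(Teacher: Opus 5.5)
Your proposal is correct and follows essentially the same route as the paper. The score identity $L_\rho^{K_A}[\Psi]/\rho=-\tr(M^\top A)-\nabla\log\rho^\top M^\top Ax$, combined with the weighted log-gradient condition, gives the $L^2(\rho)$ bound, a cutoff argument gives zero mass, and the Green identity follows by integrating by parts against $\chi_R\Phi$ and letting $R\to\infty$. The only cosmetic difference is in that last limit: the paper first proves the identity for compactly supported $\Phi$, then uses $\chi_R\Phi\to\Phi$ in $H^1(\rho)$ together with continuity of both sides, whereas you pass to the limit term by term via dominated convergence.
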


Next, we observe a feature absent in the local-mobility geometry framework in Section \ref{sec3}: a nonzero kernel of the operator $L^{K_A}$.

\begin{lemma}[Kernel of the Stein--Onsager operator]\label{lem:kernel}
For every $\rho\in\P_{K_A}$, the kernel of the Stein--Onsager
operator $L^{K_A}$ in \eqref{eq:onsager} is precisely the space of zero-moment cotangent classes, that is,
\begin{equation}\label{eq:operator-kernel}
  \ker L_\rho^{K_A}
  =\left\{[\Phi]\in T_\rho^*\P_{K_A}:M_\rho(\Phi)=0\right\}.
\end{equation}
In particular, $\ker L_\rho^{K_A}$ is infinite-dimensional.
\end{lemma}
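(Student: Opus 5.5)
The proof has two parts: first identify the kernel through the Green identity, then show that this kernel is infinite-dimensional.

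\emph{Kernel characterisation.} The inclusion ``$\supseteq$'' is immediate. If $M_\rho(\Phi)=0$, then the velocity field $\rho\,M_\rho(\Phi)^\top Ax$ vanishes identically, so $L_\rho^{K_A}[\Phi]=0$.

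For ``$\subseteq$'', suppose $L_\rho^{K_A}[\Phi]=0$. I pair it with $[\Phi]$ itself and apply the Green identity \eqref{eq:green-identity} of Lemma~\ref{lem:well-defined}:
\[
0=\big\langle[\Phi],L_\rho^{K_A}[\Phi]\big\rangle_\rho=\tr\!\big(M^\top A M\big),\qquad M\coloneqq M_\rho(\Phi).
\]
Since $A=A^\top\succ0$, we can write $A=A^{1/2}A^{1/2}$. Then $\tr(M^\top AM)=\|A^{1/2}M\|_F^2$, which vanishes only if $M=0$. This gives \eqref{eq:operator-kernel}.

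\emph{Infinite-dimensionality.} The map $\overline M_\rho\colon T_\rho^*\P_{K_A}\to\R^{d\times d}$ is linear with finite-dimensional target. Its kernel therefore has finite codimension, at most $d^2$, in $T_\rho^*\P_{K_A}$. It suffices to show that $T_\rho^*\P_{K_A}$ itself is infinite-dimensional.

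To see this, I take $\psi_k\in C_c^\infty(\R^d)$, $k\in\mathbb N$, with pairwise disjoint supports, each nonconstant. Each $\psi_k$ lies in $C^\infty(\R^d)\cap H^1(\rho)$, because compactly supported smooth functions are bounded with bounded gradient and $\rho$ is a probability density. The classes $[\psi_k]$ are linearly independent modulo constants. Indeed, a finite combination $\sum c_k\psi_k$ equal to a constant must equal $0$, since it vanishes outside a compact set. Disjointness of supports then forces each $c_k\psi_k=0$, hence $c_k=0$. So $T_\rho^*\P_{K_A}$ is infinite-dimensional, and so is $\ker L_\rho^{K_A}$.

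For a more concrete description, one can also project such $\psi_k$ onto the kernel explicitly. Subtract the combination $\sum_{ij}\beta_{ij}\phi_{ij}$ of finitely many fixed test functions $\phi_{ij}$ whose moments $M_\rho(\phi_{ij})$ span $\R^{d\times d}$; the coefficients $\beta_{ij}$ are chosen so that the moment of the difference vanishes. This refinement is not needed, since the codimension argument already suffices.

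\emph{Main obstacle.} The only delicate point is that the Green identity is applied with $\Psi=\Phi$ in the full cotangent space. This is covered by Lemma~\ref{lem:well-defined}, which I may assume, so I expect no real difficulty. The rest is linear algebra.
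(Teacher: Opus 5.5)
Your proof is correct and follows the paper's argument. For the kernel, both of you apply the Green identity with $\Psi=\Phi$ to get $\|A^{1/2}M_\rho(\Phi)\|_{\mathrm F}^2=0$. For infinite-dimensionality, both combine disjointly supported test functions with the finite codimension of the kernel: the paper phrases this as $\ran L_\rho^{K_A}$ being finite-dimensional together with the first isomorphism theorem, which is the same observation you make about $\overline M_\rho$. Your explicit check that the classes $[\psi_k]$ are linearly independent modulo constants is a small step the paper only asserts.
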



We have seen in Section \ref{sec3} that an essential ingredient for making the local-mobility calculus rigorous is ensuring that the local-mobility operator is an isomorphism from the cotangent space to the tangent space. By Lemma~\ref{lem:kernel}, this cannot hold for the Stein geometry corresponding to the bilinear kernel. To work around this issue, we restrict ourselves to smaller subspaces of the tangent and cotangent spaces:
\begin{equation}\label{eq:effective-spaces}
  T_\rho^{\mathrm{eff}}\P_{K_A}\coloneqq\ran L_\rho^{K_A}
  \subset T_\rho\P_{K_A},
  \qquad
  T_\rho^{*,\mathrm{eff}}\P_{K_A}
  \coloneqq\frac{T_\rho^*\P_{K_A}}{\ker L_\rho^{K_A}}.
\end{equation}

Before moving on, we first want to point out that the dimension of $T_\rho^{\mathrm{eff}}$ varies with $\rho$. Furthermore, we comment on \eqref{eq:effective-spaces} through the lens of geometry: $T_\rho^{\eff}$ contains exactly the directions generated by the
bilinear Stein kernel, while $T_\rho^{*,\eff}$ identifies precisely those
regular cotangent classes that generate the same direction. Next, we see that on these spaces the Stein--Onsager operator induces an isomorphism.

\begin{theorem}[Effective Stein--Onsager isomorphism]
\label{prop:isomorphism}
For all $\rho\in \P_{K_A}$, the operator $L_\rho^{K_A}$  in~\eqref{eq:onsager} induces a linear isomorphism
\begin{equation}\label{eq:induced-isomorphism}
  \overline L_\rho^{K_A}\colon T_\rho^{*,\mathrm{eff}}\P_{K_A}
  \longrightarrow T_\rho^{\mathrm{eff}}\P_{K_A},
  \qquad
  \overline L_\rho^{K_A}\big([[\Phi]]\big)
  \coloneqq L_\rho^{K_A}[\Phi].
\end{equation}
Moreover, $T_\rho^{*,\eff}\cong\ran M_\rho$ via
$[[\Phi]]\mapsto M_\rho(\Phi)$.
\end{theorem}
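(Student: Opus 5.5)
The argument is essentially the first isomorphism theorem for linear maps, combined with Lemma~\ref{lem:well-defined} and Lemma~\ref{lem:kernel}. By Lemma~\ref{lem:well-defined}, $L_\rho^{K_A}\colon T_\rho^*\P_{K_A}\to T_\rho\P_{K_A}$ is a well-defined linear map. Hence it factors through the algebraic quotient by its kernel. Concretely, if $[[\Phi]]=[[\Psi]]$ in $T_\rho^{*,\eff}\P_{K_A}$, then $[\Phi]-[\Psi]\in\ker L_\rho^{K_A}$. By linearity this gives $L_\rho^{K_A}[\Phi]=L_\rho^{K_A}[\Psi]$, so $\overline L_\rho^{K_A}$ in \eqref{eq:induced-isomorphism} is well defined and linear.

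The remaining properties of $\overline L_\rho^{K_A}$ follow directly. It is injective because $\overline L_\rho^{K_A}([[\Phi]])=0$ means $[\Phi]\in\ker L_\rho^{K_A}$, i.e.\ $[[\Phi]]=0$. It is surjective onto $T_\rho^{\eff}\P_{K_A}=\ran L_\rho^{K_A}$ by the definition \eqref{eq:effective-spaces}. Hence it is a linear isomorphism. No topology is involved here: the statement is purely algebraic, since both spaces turn out to be finite-dimensional.

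For the identification $T_\rho^{*,\eff}\cong\ran M_\rho$, consider the linear map $\overline M_\rho\colon T_\rho^*\P_{K_A}\to\R^{d\times d}$ from \eqref{eq:quotient-moment-map}. It is well defined on classes modulo constants because $M_\rho(\Phi+c)=M_\rho(\Phi)$. Lemma~\ref{lem:kernel} states exactly that $\ker L_\rho^{K_A}=\ker\overline M_\rho$. Applying the first isomorphism theorem to $\overline M_\rho$ then shows that $[[\Phi]]\mapsto M_\rho(\Phi)$ is a well-defined linear bijection from $T_\rho^*\P_{K_A}/\ker\overline M_\rho=T_\rho^{*,\eff}\P_{K_A}$ onto $\ran M_\rho$. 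As a consequence, both effective spaces have dimension at most $d^2$.

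The substantive input is therefore the kernel identity from Lemma~\ref{lem:kernel}, which we may assume. If one wanted to see why it holds, the key point is the direction $L_\rho^{K_A}[\Phi]=0\Rightarrow M_\rho(\Phi)=0$. Pair $L_\rho^{K_A}[\Phi]$ with $\Phi$ via the Green identity \eqref{eq:green-identity}:
\[
0=\big\langle[\Phi],L_\rho^{K_A}[\Phi]\big\rangle_\rho=\tr\bigl(M_\rho(\Phi)^\top A M_\rho(\Phi)\bigr).
\]
Since $A\succ0$, this forces $M_\rho(\Phi)=0$. The converse direction is immediate from \eqref{eq:onsager}. Once this is in hand, the only point requiring any care is checking that the double-bracket notation is consistent: $\ker L_\rho^{K_A}$ is a subspace of the quotient by constants, so the quotient in \eqref{eq:effective-spaces} is well formed.
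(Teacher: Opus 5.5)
Your proposal is correct and follows the paper's own proof: the first isomorphism theorem gives well-definedness, injectivity and surjectivity of $\overline L_\rho^{K_A}$ onto $T_\rho^{\mathrm{eff}}\P_{K_A}=\ran L_\rho^{K_A}$. The identification $T_\rho^{*,\mathrm{eff}}\P_{K_A}\cong\ran M_\rho$ then follows from the kernel identity $\ker L_\rho^{K_A}=\ker\overline M_\rho$ of Lemma~\ref{lem:kernel} and the first isomorphism theorem applied to $\overline M_\rho$.
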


\paragraph{Stein metric for the bilinear kernel and Stein gradients.}
Next, as in the local-mobility framework, we use Theorem~\ref{prop:isomorphism} to define the Stein metric associated with the bilinear kernel and show it is well-defined. First, for each $\rho\in\P_{K_A}$, we define the \emph{effective Stein metric} by
\begin{equation}\label{eq:metric-potential}
  G_\rho^{K_A}:
  T_\rho^{\mathrm{eff}}\P_{K_A}\times T_\rho^{\mathrm{eff}}\P_{K_A}\longrightarrow\R,
  \qquad
  G_\rho^{K_A}(\dot\rho,\dot\rho)
  \coloneqq \int_{\R^d}\Phi_{\dot\rho} \dot\rho \d x,
\end{equation}
where the unique effective cotangent class
$[[\Phi_{\dot\rho}]]\in T_\rho^{*,\mathrm{eff}}\P_{K_A}$ is determined by
\begin{equation}\label{eq:metric-potential-equation}
  \dot\rho
  +\nabla\!\cdot\!\left(
    \rho M_\rho(\Phi_{\dot\rho})^\top Ax
  \right)=0.
\end{equation}
Next, we obtain the following corollary
as a direct consequence of Lemma ~\ref{lem:well-defined}, Lemma~\ref{lem:kernel} and Theorem~\ref{prop:isomorphism}. 

\begin{cor}[Expression for the Stein metric]\label{cor:stein_metric_well_defined}
    For all $\rho\in \P_{K_A}$, the following holds true:
    \begin{enumerate}
        \item\label{it:1_stein_metric_well_defined} The metric $G_\rho^{K_A}$ in \eqref{eq:metric-potential} is a well-defined, positive-definite quadratic form on $T_\rho^{\mathrm{eff}}\P_{K_A}$.
        \item \label{it:2_stein_metric_well_defined}
        For all $[\Phi]\in T_\rho^*\P_{K_A}$, we have
        \begin{equation}\label{eq:metric}
            G_\rho^{K_A}\bigl(L_\rho^{K_A}[\Phi],L_\rho^{K_A}[\Phi]\bigr)
            =\tr\!\left(M_\rho(\Phi)^\top A M_\rho(\Phi)\right).
        \end{equation}
    \end{enumerate}
\end{cor}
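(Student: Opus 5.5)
The plan is to reduce both parts of Corollary~\ref{cor:stein_metric_well_defined} to the Green identity of Lemma~\ref{lem:well-defined} together with the isomorphism of Theorem~\ref{prop:isomorphism}.

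\textbf{Well-definedness.} Fix $\dot\rho\in T_\rho^{\mathrm{eff}}\P_{K_A}=\ran L_\rho^{K_A}$. By Theorem~\ref{prop:isomorphism}, there is a unique effective class $[[\Phi_{\dot\rho}]]$ with $\overline L_\rho^{K_A}[[\Phi_{\dot\rho}]]=\dot\rho$, which is exactly \eqref{eq:metric-potential-equation}. We must show that $\int\Phi_{\dot\rho}\dot\rho\d x$ does not depend on the representative. Changing $\Phi_{\dot\rho}$ by a constant leaves the pairing unchanged, since $\int\dot\rho\d x=0$. Now let $\Phi,\Phi'$ be two representatives of the same effective class. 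Then $[\Phi-\Phi']\in\ker L_\rho^{K_A}$, and by Lemma~\ref{lem:kernel} this means $M_\rho(\Phi-\Phi')=0$. Write $\dot\rho=L_\rho^{K_A}[\Psi]$ for some $\Psi$, for instance $\Psi=\Phi$. The Green identity \eqref{eq:green-identity} then gives
\[
\langle[\Phi-\Phi'],\dot\rho\rangle_\rho=\tr\bigl(M_\rho(\Phi-\Phi')^\top A M_\rho(\Psi)\bigr)=0,
\]
so the quadratic form is well defined. Finiteness of the pairing follows from \eqref{eq:pairing-bound}. Polarization then yields the bilinear version $G_\rho^{K_A}(\dot\rho,\dot\eta)=\int\Phi_{\dot\rho}\dot\eta\d x$. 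This expression is symmetric because, by Green, it equals $\tr(M_\rho(\Phi_{\dot\rho})^\top A M_\rho(\Phi_{\dot\eta}))$ and $A$ is symmetric.

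\textbf{The formula \eqref{eq:metric}.} Take any $[\Phi]\in T_\rho^*\P_{K_A}$ and set $\dot\rho=L_\rho^{K_A}[\Phi]$. Then $[[\Phi]]$ is the effective class solving \eqref{eq:metric-potential-equation}, so Green gives directly
\[
G_\rho^{K_A}(\dot\rho,\dot\rho)=\langle[\Phi],L_\rho^{K_A}[\Phi]\rangle_\rho=\tr\bigl(M^\top A M\bigr),\qquad M=M_\rho(\Phi).
\]

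\textbf{Positive definiteness.} Write $A=A^{1/2}A^{1/2}$ with $A\succ0$. Then
\[
\tr(M^\top A M)=\|A^{1/2}M\|_F^2\ge0,
\]
with equality iff $M=0$. If $M=0$, Lemma~\ref{lem:kernel} gives $[\Phi]\in\ker L_\rho^{K_A}$, hence $\dot\rho=0$. So the form is positive definite on $T_\rho^{\mathrm{eff}}\P_{K_A}$.

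\textbf{Main obstacle.} The only delicate step is representative-independence modulo the infinite-dimensional kernel. This is precisely what forces the use of the Green identity, rather than the naive argument with constants, and it needs that identity to be valid for arbitrary pairs of regular cotangent classes, as stated in Lemma~\ref{lem:well-defined}.
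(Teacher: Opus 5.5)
Your proposal is correct and follows essentially the route the paper intends. The paper gives no separate proof; it calls the corollary a direct consequence of the Green identity in Lemma~\ref{lem:well-defined}, Lemma~\ref{lem:kernel} and Theorem~\ref{prop:isomorphism}, and you use exactly these results: for independence of the representative modulo $\ker L_\rho^{K_A}$, for the formula \eqref{eq:metric}, and for positive definiteness via $\|A^{1/2}M_\rho(\Phi)\|_{\mathrm F}^2$.
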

We want to stress that
\(\P_{K_A}\) has not been equipped with a differentiable
structure, so the notion of differentiability requires a separate
definition. In Appendix~\ref{app:diff_stein_geom}, we provide such
a definition in terms of local regular first variations and their
associated regular extensions. The reader may think of ordinary
differentiation with a regular first-variation representative in
what follows, while \(D\F(\rho)\) denotes the associated regular
extension defined in the appendix.

Finally, following the argument of
Corollary~\ref{cor:LM_gradient}, we provide a criterion for the
gradient of a functional with respect to the metric
in~\eqref{eq:metric} to be a regular tangent vector.

\begin{cor}[The Stein gradient]\label{lem:stein-gradient}
Let \( \F \colon \P_{K_A}\longrightarrow\R\)
have a first variation satisfying
\begin{equation}\label{eq:stein-first-variation}
  D\F(\rho)[\dot\eta]
  =
  \int_{\R^d}
  \frac{\delta\F}{\delta\rho}\,
  \dot\eta \d x
  \qquad
  \text{for all }
  \dot\eta\in T_\rho^{\mathrm{eff}}\P_{K_A},
\end{equation}
and so that \(\left[\left[
    \frac{\delta\F}{\delta\rho}
  \right]\right]
  \in T_\rho^{*,\mathrm{eff}}\P_{K_A}\). Then, the gradient with respect to the Stein metric $G^{K_A}$ in \eqref{eq:metric} is given by
\begin{equation}\label{eq:stein-gradient}
  \operatorname{grad}_{G^{K_A}}\F(\rho)
  =
  L_\rho^{K_A}
  \left[
    \frac{\delta\F}{\delta\rho}
  \right]
  =  x \longrightarrow
  -\nabla\!\cdot\!\left(
    \rho\,
    M_\rho\!\left(
      \frac{\delta\F}{\delta\rho}
    \right)^\top
    Ax
  \right)
  \in T_\rho^{\mathrm{eff}}\P_{K_A}.
\end{equation}
\end{cor}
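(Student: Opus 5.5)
The proof follows the argument of Corollary~\ref{cor:LM_gradient}, with the local-mobility isomorphism replaced by the effective isomorphism of Theorem~\ref{prop:isomorphism}. Set $\Psi\coloneqq\frac{\delta\F}{\delta\rho}$, so that by assumption $[\Psi]\in T_\rho^*\P_{K_A}$ and its class $[[\Psi]]\in T_\rho^{*,\mathrm{eff}}\P_{K_A}$. The candidate gradient is $g\coloneqq L_\rho^{K_A}[\Psi]$. By Lemma~\ref{lem:well-defined}\ref{it:1_lem_well_def}, $g$ is a well-defined element of $T_\rho\P_{K_A}$, and by the definition \eqref{eq:effective-spaces} of the effective tangent space it lies in $T_\rho^{\mathrm{eff}}\P_{K_A}=\ran L_\rho^{K_A}$. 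The explicit formula in \eqref{eq:stein-gradient} then follows from \eqref{eq:onsager} with $\Phi=\Psi$.

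It remains to check the defining property of the gradient:
\[
G_\rho^{K_A}(g,\dot\eta)=D\F(\rho)[\dot\eta]\qquad\text{for every }\dot\eta\in T_\rho^{\mathrm{eff}}\P_{K_A}.
\]
First I need the polarized form of the metric \eqref{eq:metric-potential}. By Theorem~\ref{prop:isomorphism}, every $\dot\eta\in T_\rho^{\mathrm{eff}}\P_{K_A}$ can be written as $\dot\eta=L_\rho^{K_A}[\Phi_{\dot\eta}]$ for a unique effective class $[[\Phi_{\dot\eta}]]$. Polarizing \eqref{eq:metric} in Corollary~\ref{cor:stein_metric_well_defined} and using the Green identity \eqref{eq:green-identity} gives
\[
G_\rho^{K_A}\bigl(L_\rho^{K_A}[\Phi],L_\rho^{K_A}[\Phi']\bigr)
=\tr\bigl(M_\rho(\Phi)^\top A M_\rho(\Phi')\bigr)
=\bigl\langle[\Phi],L_\rho^{K_A}[\Phi']\bigr\rangle_\rho .
\]
The main point to verify is that the middle expression does not depend on the chosen representatives within the effective classes. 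This holds because Lemma~\ref{lem:kernel} shows that two representatives of the same effective class have equal moments $M_\rho$. Consequently the pairing $\langle[\Phi],\dot\eta\rangle_\rho$ is unchanged when $[\Phi]$ is modified by an element of $\ker L_\rho^{K_A}$, provided $\dot\eta$ lies in the range of $L_\rho^{K_A}$. Indeed, by the Green identity the pairing equals $\tr(M_\rho(\Phi)^\top A M_\rho(\Phi'))$, and the kernel element has zero moment.

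Applying this with $\Phi=\Psi$ and $\Phi'=\Phi_{\dot\eta}$ gives
\[
G_\rho^{K_A}(g,\dot\eta)=\langle[\Psi],\dot\eta\rangle_\rho=\int_{\R^d}\frac{\delta\F}{\delta\rho}\,\dot\eta\,\d x=D\F(\rho)[\dot\eta],
\]
where the last equality is \eqref{eq:stein-first-variation}. The pairing is finite by \eqref{eq:pairing-bound}, because $\Psi\in H^1(\rho)$ and $\dot\eta/\rho\in L^2(\rho)$.

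Uniqueness of the gradient follows from positive definiteness of $G_\rho^{K_A}$ on $T_\rho^{\mathrm{eff}}\P_{K_A}$, which is Corollary~\ref{cor:stein_metric_well_defined}\ref{it:1_stein_metric_well_defined}. Suppose $g'$ is another vector in $T_\rho^{\mathrm{eff}}\P_{K_A}$ with the same property. Then $G_\rho^{K_A}(g-g',\dot\eta)=0$ for all $\dot\eta$. Choosing $\dot\eta=g-g'$ gives $G_\rho^{K_A}(g-g',g-g')=0$, and positive definiteness forces $g=g'$.

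The only delicate step is the representative-independence of the identity $G(L[\Phi],\dot\eta)=\langle[\Phi],\dot\eta\rangle_\rho$ on effective classes. Everything else is a direct assembly of the three earlier results.
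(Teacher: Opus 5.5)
Your proposal is correct and follows the paper's own argument, given in Lemma~\ref{lem:regular-stein-gradient} of Appendix~\ref{app:diff_stein_geom}. There the candidate is $L_\rho^{K_A}\bigl[\frac{\delta\F}{\delta\rho}\bigr]\in T_\rho^{\mathrm{eff}}\P_{K_A}$, each $\dot\eta$ is written as $L_\rho^{K_A}[\Phi]$, polarization of the metric is combined with the Green identity \eqref{eq:green-identity} to obtain the defining gradient identity, and uniqueness comes from positive definiteness. Your representative-independence check is also the observation the paper records just before that lemma: classes in $\ker L_\rho^{K_A}$ pair to zero with every element of the effective tangent space.
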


\part{Geometry and Dynamics of Attention}

\section{Softmax Attention: Geometry and Particle Dynamics}
\label{sec:softmax-regular-dynamics}
This section aims to provide a rigorous approach to softmax attention.

In Subsection \ref{subsec:sm-ordinary-gradient}, we give a gradient-flow interpretation of the transformer PDE.
We characterize the densities at which the interaction
energy admits a regular first variation in
Proposition~\ref{prop:sm-regular-variation}.
Corollary~\ref{thm:sm-gradient} then identifies the PDE
as the corresponding regular gradient equation.
Example~\ref{ex:sm-infinite-energy} illustrates why
pointwise finiteness of the attention velocity alone
does not suffice for this interpretation.
The resulting gradient identity holds at each admissible
density. 

Then, in Subsection \ref{subsec:sm-regular-hamiltonian} 
we identify conditions
under which the accelerated softmax system is pointwise
well-defined. We then restrict the domain further to
ensure that its right-hand sides arise from regular
derivatives of the Hamiltonian,
giving the PDE a rigorous interpretation as a damped
Hamiltonian system.
We also provide explicit examples of states satisfying
the conditions for the pointwise formulation and the
stronger conditions for the regular Hamiltonian formulation.

Finally, Subsection \ref{subsec:softmax-particle-dynamics} uses the softmax geometry
to construct the attention updates considered in
Section~\ref{sec:numerics}.
Proofs are given in Appendix \ref{app:softmax-proofs}.

Figure~\ref{fig:softmax-hierarchy} summarizes the geometric
interpretations studied in this section and their relation
to the density and particle dynamics. 


\begin{figure}[ht]
\centering
\begin{tikzpicture}[
    box/.style={
        draw=blue!55!black,
        fill=blue!10,
        rounded corners=3pt,
        line width=0.7pt,
        minimum height=1.5cm,
        text width=4.8cm,
        align=center,
        inner sep=8pt,
        font=\small
    },
    arrow/.style={
        -{Stealth[length=2.2mm]},
        line width=0.7pt
    },
    explanation/.style={
        font=\scriptsize,
        align=center,
        fill=white,
        inner sep=3pt,
        text width=4.6cm
    }
]
\node[font=\small\bfseries] at (0,1.3)
    {First-order softmax dynamics};
\node[font=\small\bfseries] at (6.1,1.3)
    {Accelerated softmax dynamics};

\node[box] (ordinary) at (0,0)
    {\textbf{Pointwise density equation}\\[5pt]
     \eqref{eq:sm-ordinary}};

\node[box] (accelerated) at (6.1,0)
    {\textbf{Pointwise density equations}\\[5pt]
     Lemma~\ref{prop:sm-pointwise}};

\node[box] (gradient) at (0,-3.3)
    {\textbf{Regular gradient equation}\\[5pt]
     Corollary~\ref{thm:sm-gradient}};

\node[box] (hamiltonian) at (6.1,-3.3)
    {\textbf{Regular damped Hamiltonian system}\\[5pt]
     Theorem~\ref{thm:sm-hamiltonian}};

\draw[arrow] (ordinary) --
    node[explanation, midway]
    {Finite energy and regularity\\
     Proposition~\ref{prop:sm-regular-variation}}
    (gradient);

\draw[arrow] (accelerated) --
    node[explanation, midway]
    {Finite energy, regularity,\\
     and compatibility conditions}
    (hamiltonian);

\node[box, text width=10.9cm] at (3.05,-6.1)
    {\textbf{Separate finite-token construction}\\[5pt]
     An explicit geometric correction in
     Lemma~\ref{lem:sm-particle-connection}
     supplies the position and velocity updates
     used to construct attention layers
     in Section~\ref{sec:numerics}.};
\end{tikzpicture}
\caption{From pointwise softmax equations to regular
geometric formulations. The arrows indicate additional
conditions on the density and potential under which
the equations admit a gradient or Hamiltonian
interpretation. These are statements at admissible
states; existence of trajectories and preservation of
admissibility require a separate analysis.
The finite-token geometry is constructed directly
and provides explicit corrections for attention layers.}
\label{fig:softmax-hierarchy}
\end{figure}

\subsection{Geometry and First-Order Softmax Dynamics}
\label{subsec:sm-ordinary-gradient}

Let $A=A^\top$ be invertible and
$B=VA^{-1}=B^\top\succ0$ as in \eqref{eq:softmax_B}.
We use the density class $\P_{\SM}$ in
\eqref{eq:LM_softmax_class}, the regular tangent and cotangent
spaces in \eqref{eq:LM_softmax_tangent} and
\eqref{eq:LM_softmax_cotangent}, and the metric
$G_\rho^{\SM}$ in \eqref{eq:SM_metric}.
The normalizer $Z_\rho$ and mobility $m_\rho=\rho/Z_\rho$
are defined in \eqref{eq:mobility_measure}.

We begin by recalling the transformer PDE
\eqref{eq:transformer_PDE} for softmax attention
\begin{equation}\label{eq:sm-ordinary}
\partial_t\rho_t+\nabla\cdot(\rho_t\Gamma_{\rho_t})=0,
\qquad
\Gamma_\rho(x)
\coloneqq
\frac{1}
     {Z_\rho(x)} \int_{\R^d}Vy\,\exp(y^\top Ax)\rho(y) \d y
\end{equation}
with normalizing factor 
\begin{equation}\label{eq:def_Z_rho}
  Z_\rho(x)\coloneqq
\int_{\R^d}\exp(y^\top Ax)\rho(y) \d y.
\end{equation}
In \cite[Sec.~5.4]{CACP2025}, the authors give a formal
gradient-flow interpretation of this PDE and leave its
rigorous justification on non-compact domains open.
This subsection aims to address this issue
within the local-mobility framework of Section~\ref{sec3}.

\paragraph{Gradient-flow structure of the softmax transformer PDE.}
Recall that a Riemannian gradient flow is specified by
a metric $G$ and a functional $\F$ whose gradient with
respect to $G$ exists.
Here, we use the softmax--Wasserstein metric
$G_\rho^{\SM}$  in \eqref{eq:SM_metric}, together
with the interaction energy
\begin{equation}\label{eq:sm-energy}
\F^{\SM}(\rho)\coloneqq
-\frac12\int_{\R^d}\rho(x)Z_\rho(x) \d x.
\end{equation}
A formal computation as in~\cite[Sec.~5.4]{CACP2025} identifies \eqref{eq:sm-ordinary}
as the gradient-flow equation of $\F$ with respect to
$G_\rho^{\SM}$.
However, $\F^{\SM}$ is not finite on all of $\P_{\SM}$,
defined in \eqref{eq:LM_softmax_class}, as the following
example shows.

\begin{example}[Infinite energy at an admissible density]
\label{ex:sm-infinite-energy}
Let $d=1$, $A=B=1$, and $\rho=\mathcal N(0,2)$.
Then
\[
Z_\rho(x)=\exp(x^2),
\qquad
m_\rho(x)=\frac{\rho(x)}{Z_\rho(x)}
=\frac1{\sqrt{4\pi}}\exp(-5x^2/4).
\]
Since $m_\rho$ is a finite Gaussian weight, it satisfies
the weighted Poincar\'e inequality \eqref{eq:SM-PI}.
Thus $\rho\in\P_{\SM}$.
Nevertheless, we have
\[
\int_{\R}\rho(x)Z_\rho(x) \d x
=
\frac1{\sqrt{4\pi}}\int_{\R}\exp(3x^2/4) \d x
=\infty,
\]
so that the interaction energy is not finite at $\rho$.
\end{example}
To resolve this obstruction, we restrict $\F^{\SM}$ to its
finite-energy domain. We must then identify its domain
of differentiability, that is, the densities at which
$\F^{\SM}$ admits a local regular first variation in the sense
of \eqref{eq:def_local_regular_variation}.
The following proposition characterizes this domain.

\begin{proposition}[Regular variation criterion]
\label{prop:sm-regular-variation}
Let $\rho\in\P_{\SM}$ satisfy $\F^{\SM}(\rho)\in\R$, where
$\F^{\SM}$ is defined in \eqref{eq:sm-energy}.
Then $\F^{\SM}$ admits a local regular first variation at $\rho$
in the sense of \eqref{eq:def_local_regular_variation}
if and only if
\begin{equation}\label{eq:sm-gradient-tests}
\int_{\R^d}\frac{Z_\rho}{\rho}
|L_\rho^{\SM}Z_\rho|^2 \d x<\infty.
\end{equation}
\end{proposition}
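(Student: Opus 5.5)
The plan is to compute the first variation of $\F^{\SM}$ explicitly, show that it is the function $-Z_\rho$ modulo constants, and reduce the statement to whether $[Z_\rho]$ lies in the regular cotangent space $T_\rho^*\P_{\SM}$ of \eqref{eq:LM_softmax_cotangent}. I will read \eqref{eq:def_local_regular_variation} as follows: there is a class $[\Phi]\in T_\rho^*\P_{\SM}$ with $\frac{\d}{\d\varepsilon}\big|_{\varepsilon=0}\F^{\SM}(\rho+\varepsilon\dot\eta)=\int\Phi\dot\eta\d x$ for all admissible regular directions $\dot\eta\in T_\rho\P_{\SM}$. If the appendix definition differs, for example by using curves instead of straight lines, I would adapt the first step accordingly.

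\textbf{Step 1 (expansion).} Write
\[
\F^{\SM}(\rho)=-\tfrac12\iint \exp(y^\top Ax)\,\rho(x)\rho(y)\d x\d y.
\]
Because $A=A^\top$, the kernel is symmetric, so
\[
\F^{\SM}(\rho+\varepsilon\dot\eta)=\F^{\SM}(\rho)-\varepsilon\int Z_\rho\dot\eta\d x-\tfrac{\varepsilon^2}{2}\iint \exp(y^\top Ax)\,\dot\eta(x)\dot\eta(y)\d x\d y .
\]
Both remainder terms are controlled by the tangent norm $\int Z_\rho|\dot\eta|^2/\rho\d x$.

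For the linear term, Cauchy--Schwarz gives
\[
\Big|\int Z_\rho\dot\eta\d x\Big|\le\Big(\int\rho Z_\rho\d x\Big)^{1/2}\Big(\int Z_\rho|\dot\eta|^2/\rho\d x\Big)^{1/2},
\]
and the first factor is finite because $\F^{\SM}(\rho)\in\R$.

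For the quadratic term, I will use the weighted AM--GM inequality
\[
|\dot\eta(x)\dot\eta(y)|\le\tfrac12\Big(\tfrac{|\dot\eta(x)|^2}{\rho(x)}\rho(y)+\tfrac{|\dot\eta(y)|^2}{\rho(y)}\rho(x)\Big)
\]
together with symmetry of the kernel. This bounds the double integral by $\int Z_\rho|\dot\eta|^2/\rho\d x<\infty$. Hence the directional derivative exists, is finite, and equals $\int(-Z_\rho)\dot\eta\d x$.

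\textbf{Step 2 (necessity).} Suppose a regular variation $[\Phi]$ exists. Then $\int(\Phi+Z_\rho)\dot\eta\d x=0$ for all $\dot\eta\in C_c^\infty$ with zero mean, and such functions lie in $T_\rho\P_{\SM}$. This forces $\Phi+Z_\rho$ to be constant, so $[-Z_\rho]=[\Phi]\in T_\rho^*\P_{\SM}$. By the definition of the cotangent space, \eqref{eq:sm-gradient-tests} holds.

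\textbf{Step 3 (sufficiency).} Assume \eqref{eq:sm-gradient-tests}. Smoothness of $Z_\rho$ comes from $\rho\in\P_{\SM}$, and $\int Z_\rho^2\,\d\mathfrak m_\rho=\int\rho Z_\rho\d x<\infty$. The remaining requirement is $\nabla Z_\rho\in L^2(m_\rho,B;\R^d)$, and I expect this to be the main obstacle: a direct bound through $\nabla Z_\rho(x)=A\int y\exp(y^\top Ax)\rho(y)\d y$ does not obviously close.

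Instead I will use a cutoff energy identity. Let $\chi_R=\chi(\cdot/R)$ be a standard cutoff. Integrating by parts against compactly supported functions gives
\[
\int\chi_R^2\,\nabla Z_\rho^\top B\nabla Z_\rho\,\d\mathfrak m_\rho=\int\chi_R^2 Z_\rho\,L_\rho^{\SM}Z_\rho\d x-2\int\chi_R Z_\rho\,\nabla\chi_R^\top B\nabla Z_\rho\,\d\mathfrak m_\rho .
\]
The first term on the right is bounded, via Cauchy--Schwarz with weights $m_\rho$ and $1/m_\rho=Z_\rho/\rho$, by $(\int\rho Z_\rho)^{1/2}(\int\frac{Z_\rho}{\rho}|L_\rho^{\SM}Z_\rho|^2)^{1/2}$, which is finite by hypothesis. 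The second term is absorbed by Young's inequality into half of the left-hand side plus $C R^{-2}\int Z_\rho^2\,\d\mathfrak m_\rho$. Letting $R\to\infty$ and applying monotone convergence yields $\|\nabla Z_\rho\|_{L^2(m_\rho,B)}<\infty$. Thus $[-Z_\rho]\in T_\rho^*\P_{\SM}$, and by Step 1 it represents the derivative for every regular direction. This establishes the local regular first variation with $\frac{\delta\F^{\SM}}{\delta\rho}=-Z_\rho$.
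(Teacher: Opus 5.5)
Your proposal is correct and follows the paper's proof. You identify the variation as $[-Z_\rho]$ from the quadratic expansion, derive necessity from membership of $[Z_\rho]$ in $T_\rho^*\P_{\SM}$, and obtain the weighted gradient bound with the same $\chi_R^2 Z_\rho$ cutoff and Young argument. The one difference is in Step 1: your weighted AM--GM estimate gives the directional derivative for all of $T_\rho\P_{\SM}$, whereas the paper tests only compactly supported zero-mean $\eta$. Under the paper's $\mathcal V_\rho$-based definition, your necessity step needs these $\eta$ to be admissible, which is exactly what Lemma~\ref{lem:asump_differtiability_OW-SM} provides.
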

Before moving on, we comment briefly on
\eqref{eq:sm-gradient-tests} from three perspectives.

\begin{remark}[Meaning of the regularity condition]
Let $\rho\in\P_{\SM}$ satisfy $\F^{\SM}(\rho)\in\R$.
\\[1ex]
\textbf{Admissibility of the density variation.}
Condition~\eqref{eq:sm-gradient-tests} ensures that
$L_\rho^{\SM}Z_\rho$ belongs to the regular tangent space
$T_\rho\P_{\SM}$ in \eqref{eq:LM_softmax_tangent}.
Thus, the formal softmax descent direction is admissible.
\\[1ex]
\textbf{Weighted Sobolev regularity.}
Together with the finite energy,
\eqref{eq:sm-gradient-tests} implies
\[
\int_{\R^d}\frac{\rho}{Z_\rho}
\nabla Z_\rho^\top B\nabla Z_\rho \d x<\infty.
\]
This verifies the weighted gradient condition in the
definition of the regular cotangent space
$T_\rho^*\P_{\SM}$ in \eqref{eq:LM_softmax_cotangent}.
\\[1ex]
\textbf{Sufficient uniform bound.}
Condition~\eqref{eq:sm-gradient-tests} holds whenever
$L_\rho^{\SM}Z_\rho/\rho\in L^\infty(\R^d)$, since
\[
\int_{\R^d}\frac{Z_\rho}{\rho}
|L_\rho^{\SM}Z_\rho|^2 \d x
\leq
\left\|\frac{L_\rho^{\SM}Z_\rho}{\rho}\right\|_\infty^2
\int_{\R^d}\rho Z_\rho \d x<\infty.
\]
\end{remark}

Proposition~\ref{prop:sm-regular-variation} allows us to
justify the formal gradient computation in
the following corollary.

\begin{cor}[Regular softmax gradient]
\label{thm:sm-gradient}
Let $\rho$ be as in
Proposition~\ref{prop:sm-regular-variation} and satisfy
\eqref{eq:sm-gradient-tests}.
Then the following hold true:
\begin{enumerate}
\item (Gradient identity)
The negative gradient of $\F^{\SM}$ at $\rho$ is
\begin{equation}\label{eq:sm-gradient-identity}
-\grad_{G_\rho^{\SM}}\F^{\SM}(\rho)
=L_\rho^{\SM}[Z_\rho]
=-\nabla\cdot(\rho\Gamma_\rho)
\qquad\text{in }T_\rho\P_{\SM}.
\end{equation}
\item (Gradient-flow formulation)
On the domain of densities satisfying the above assumptions,
the transformer PDE \eqref{eq:sm-ordinary} is the regular
gradient-flow equation associated with $(G^{\SM},\F^{\SM})$:
\begin{equation} \label{eq:G_SM,F_SM_gradient_flow}
    \partial_t\rho_t
    =
    -\grad_{G_{\rho_t}^{\SM}}\F^{\SM}(\rho_t).    
\end{equation}
\end{enumerate}
\end{cor}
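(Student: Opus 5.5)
The plan is to combine Proposition~\ref{prop:sm-regular-variation} with Corollary~\ref{cor:LM_gradient} in the softmax setting. Three steps are needed: identify the first variation of $\F^{\SM}$, check that it is a regular cotangent class, and compute the resulting gradient.

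\textbf{Step 1: the first variation.} I first compute $\frac{\delta\F^{\SM}}{\delta\rho}$ formally. Writing
\[
\F^{\SM}(\rho)=-\tfrac12\iint \exp(y^\top Ax)\rho(x)\rho(y)\,\d y\,\d x,
\]
the kernel $\exp(y^\top Ax)$ is symmetric because $A=A^\top$. Hence the first variation is $-Z_\rho$, modulo constants.

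\textbf{Step 2: regularity.} By Proposition~\ref{prop:sm-regular-variation}, together with the definition of local regular first variation in Appendix~\ref{app:diif_in_in_local_mobility_calculus}, condition \eqref{eq:sm-gradient-tests} and finite energy give the representation \eqref{eq:LM_first_variation} with $\frac{\delta\F^{\SM}}{\delta\rho}=-Z_\rho$.

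It then remains to check that $[-Z_\rho]\in T_\rho^*\P_{\SM}$, i.e.\ three facts:
\begin{itemize}
\item $Z_\rho\in C^\infty(\R^d)$, which holds by the definition of $\P_{\SM}$;
\item $Z_\rho\in L^2(m_\rho)$, which holds because $\int Z_\rho^2\,\rho/Z_\rho\,\d x=\int\rho Z_\rho\,\d x=-2\F^{\SM}(\rho)<\infty$;
\item $\nabla Z_\rho\in L^2(m_\rho,B;\R^d)$ and $\int\frac{Z_\rho}{\rho}|L^{\SM}_\rho Z_\rho|^2\,\d x<\infty$.
\end{itemize}
The last integral bound is exactly \eqref{eq:sm-gradient-tests}. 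The weighted gradient bound is the content of the remark following the proposition. I would justify it by the integration-by-parts identity \eqref{eq:LM_weak_integration_by_parts} of Theorem~\ref{prop:LM_operator_invertible}, applied via cutoff approximation: pairing $L^{\SM}_\rho Z_\rho$ against $Z_\rho\chi_R$ and using Cauchy--Schwarz with $\|Z_\rho\|_{L^2(m_\rho)}$ and \eqref{eq:sm-gradient-tests} to pass $R\to\infty$.

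\textbf{Step 3: the gradient and the identification with $\Gamma_\rho$.} Corollary~\ref{cor:LM_gradient} now yields
\[
\grad\F^{\SM}(\rho)=L^{\SM}_\rho[-Z_\rho],
\]
so the negative gradient is $L^{\SM}_\rho[Z_\rho]\in T_\rho\P_{\SM}$. For the identification with $-\nabla\cdot(\rho\Gamma_\rho)$, I differentiate under the integral:
\[
\nabla Z_\rho(x)=\int Ay\,\exp(y^\top Ax)\rho(y)\,\d y.
\]
This is justified by the finiteness of exponential moments noted in Section~\ref{sec3}, plus dominated convergence locally uniformly in $x$. Then, using $B=VA^{-1}$,
\[
\frac{\rho}{Z_\rho}B\nabla Z_\rho=\frac{\rho}{Z_\rho}\int VA^{-1}Ay\,e^{y^\top Ax}\rho(y)\,\d y=\rho\,\Gamma_\rho,
\]
and therefore $L^{\SM}_\rho Z_\rho=-\nabla\cdot(\rho\Gamma_\rho)$.

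Part~2 follows by substituting this identity pointwise in time into \eqref{eq:sm-ordinary}, for trajectories staying in the admissible domain.

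\textbf{Main obstacle.} I expect the main difficulty to be confirming that $-Z_\rho$ is the regular first-variation representative in the sense of the appendix, rather than merely formally. The two-sided nature of the quadratic interaction requires controlling $\int\exp(y^\top Ax)\rho(y)\dot\eta(x)\,\d x$ for tangent $\dot\eta$. This is handled by the pairing bound \eqref{eq:fin_of_cotangent_pairing}, but it relies on Proposition~\ref{prop:sm-regular-variation} having been stated in exactly this form.
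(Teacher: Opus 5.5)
Your proposal is correct and follows the paper's proof. The paper takes $[-Z_\rho]\in T_\rho^*\P_{\SM}$ directly from Proposition~\ref{prop:sm-regular-variation}, verifies the gradient identity against the metric \eqref{eq:SM_metric}, and uses $BA=V$ to obtain $B\nabla Z_\rho=Z_\rho\Gamma_\rho$, exactly as you do.

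Your re-verification of the weighted gradient bound is redundant, since it is already part of that proposition's proof. If you do carry it out, test against $\chi_R^2Z_\rho$ rather than $\chi_RZ_\rho$. With the squared cutoff, the cross term $\int\chi_R Z_\rho\nabla\chi_R^\top B\nabla Z_\rho\,\d\mathfrak m_\rho$ can be absorbed by Young's inequality; with $\chi_RZ_\rho$ it involves $\nabla Z_\rho$ on the annulus without a cutoff weight, and the bound does not close.
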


Corollary~\ref{thm:sm-gradient} identifies the gradient
at each admissible density. In this sense, it provides
a rigorous justification of the formal gradient-flow
interpretation in \cite[Sec.~5.4]{CACP2025} on the
domain specified above.

We close this subsection by emphasizing that constructing
gradient-flow trajectories additionally requires local
existence and short-time preservation of the domain on
which \eqref{eq:sm-gradient-identity} holds.
A solution theory for general admissible initial densities
is left for future research.
For more restricted classes of initial densities, such as
perturbations of Gaussian densities, partial results can
be obtained concerning local well-posedness, preservation
of the gradient domain, and loss of finite energy.
We leave these developments to separate work.

\subsection{Accelerated Softmax Dynamics and Hamiltonian Structure}
\label{subsec:sm-regular-hamiltonian}
We begin with the accelerated softmax system
introduced in \eqref{eq:nesterov-softmax-system},
{\small
\begin{equation}\label{eq:sm-accelerated-pde}
\left\{
\begin{aligned}
&\partial_t\rho_t =
-\nabla\cdot\left(
\frac{\rho_t}{Z_{\rho_t}}B\nabla\Phi_t
\right),\\[1ex]
&\partial_t\Phi_t(x)+\alpha_t\Phi_t(x) =
Z_{\rho_t}(x)
-\frac{\nabla\Phi_t(x)^\top B\nabla\Phi_t(x)}
      {2Z_{\rho_t}(x)} +
\frac12\int_{\R^d}\exp(y^\top Ax)
\frac{\nabla\Phi_t(y)^\top B\nabla\Phi_t(y)}
     {Z_{\rho_t}(y)^2}
\,\rho_t(y) \d y
\end{aligned}
\right.
\end{equation}
}
where we used, if the differential of $\F$ is defined, that 
$\frac{\delta \mathcal F}{\delta \rho} = -Z_\rho$ for the interaction energy $\mathcal F$ in \eqref{eq:sm-energy}, see Remark \ref{rem:ass}.
Before moving on, we point out that the second equation
in \eqref{eq:sm-accelerated-pde} is understood modulo
spatial constants. This means that its two sides may
differ by a term depending only on time.
This term can be removed by choosing a suitable additive
function of time in $\Phi_t$, which leaves $\nabla\Phi_t$
and hence the density evolution unchanged.

The aim of this subsection is first to identify conditions
under which \eqref{eq:sm-accelerated-pde} is pointwise
well-defined. We then restrict the domain further to
ensure that its right-hand sides arise from regular
derivatives of the Hamiltonian \eqref{eq:sm-hamiltonian},
giving the PDE a rigorous interpretation as a damped
Hamiltonian system.
We also provide explicit examples of states satisfying
the conditions for the pointwise formulation and the
stronger conditions for the regular Hamiltonian formulation.
\paragraph{Pointwise well-definedness.}
We first point out that the defining conditions for
$\P_{\SM}$ in \eqref{eq:LM_softmax_class} and the regular
cotangent space $T_\rho^*\P_{\SM}$ in
\eqref{eq:LM_softmax_cotangent} ensure that
\eqref{eq:sm-accelerated-pde} is pointwise well-defined
at every pair $(\rho,[\Phi])$ with
$\rho\in\P_{\SM}$ and $[\Phi]\in T_\rho^*\P_{\SM}$.

\begin{lemma}[Pointwise formulation]
\label{prop:sm-pointwise}
Let $\rho\in\P_{\SM}$ and $[\Phi]\in T_\rho^*\P_{\SM}$
with the cotangent space defined in
\eqref{eq:LM_softmax_cotangent}.
Then, all spatial quantities in
\eqref{eq:sm-accelerated-pde} are pointwise well-defined
at $(\rho,\Phi)$, whenever $\alpha(t)$ is finite.
\end{lemma}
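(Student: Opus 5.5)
We check each term of \eqref{eq:sm-accelerated-pde} at a fixed pair $(\rho,\Phi)$ with $\rho\in\P_{\SM}$ and $\Phi$ a smooth representative of $[\Phi]\in T_\rho^*\P_{\SM}$. Changing the representative only adds a constant to $\Phi$. This leaves $\nabla\Phi$ unchanged and shifts the $\Phi$-equation by a spatial constant, so it suffices to work with one representative. The term $\alpha_t\Phi_t(x)$ is finite because $\Phi$ is smooth and $\alpha(t)$ is finite by assumption.

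First, the local terms. By definition of $\P_{\SM}$ in \eqref{eq:LM_softmax_class}, $Z_\rho\in C^\infty_{>0}(\R^d)$, and $\rho$ is smooth and positive. Hence $\rho/Z_\rho$ is smooth and positive, and $\Phi\in C^\infty(\R^d)$. So the right-hand side of the density equation, $-\nabla\cdot(\frac{\rho}{Z_\rho}B\nabla\Phi)=L^{\SM}_\rho\Phi$, is a smooth function defined at every $x$. For the same reason, $Z_\rho(x)$ and $\nabla\Phi(x)^\top B\nabla\Phi(x)/(2Z_\rho(x))$ are finite for every $x$. The membership $[\Phi]\in T_\rho^*\P_{\SM}$ does more: the condition $\int Z_\rho|L^{\SM}_\rho\Phi|^2/\rho\,\d x<\infty$ puts the density velocity into $T_\rho\P_{\SM}$. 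By Theorem~\ref{prop:LM_operator_invertible} it also has zero mass, though pointwise finiteness is all we need here.

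The main obstacle is the nonlocal term
\[
I(x)\coloneqq\frac12\int_{\R^d}\exp(y^\top Ax)\,\frac{\nabla\Phi(y)^\top B\nabla\Phi(y)}{Z_\rho(y)^2}\,\rho(y)\,\d y ,
\]
whose integrand is nonnegative because $B\succ0$. The only integrability available is $\Phi\in H^1(m_\rho,B)$, that is,
\[
\int\nabla\Phi^\top B\nabla\Phi\,\frac{\rho}{Z_\rho}\,\d y<\infty .
\]
So it suffices to show that, for each fixed $x$, the weight $\exp(y^\top Ax)/Z_\rho(y)$ is bounded in $y$.

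To get this bound, we use Jensen's inequality (convexity of $\exp$) on the probability measure $\rho$:
\[
Z_\rho(y)=\int\exp(z^\top Ay)\rho(z)\,\d z\ge \exp\!\big(\bar z^\top Ay\big),\qquad \bar z\coloneqq\int z\rho(z)\,\d z .
\]
The mean $\bar z$ is finite, since all moments are finite on $\P_{\SM}$, as noted after \eqref{lmc}. This only gives
\[
\frac{\exp(y^\top Ax)}{Z_\rho(y)}\le\exp\!\big(y^\top A(x-\bar z)\big),
\]
which is not bounded in $y$. So a sharper lower bound is needed. The plan is to restrict $Z_\rho(y)$ to a small ball $B_r(x)$:
\[
Z_\rho(y)\ge\int_{B_r(x)}\exp(z^\top Ay)\rho(z)\,\d z\ge c_{x,r}\,\exp\!\big(x^\top Ay-r\,\|A\|\,|y|\big),\qquad c_{x,r}\coloneqq\int_{B_r(x)}\rho\,\d z>0 .
\]
Here $c_{x,r}>0$ because $\rho>0$. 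This gives
\[
\frac{\exp(y^\top Ax)}{Z_\rho(y)}\le c_{x,r}^{-1}\exp\!\big(r\|A\|\,|y|\big),
\]
using the symmetry of $A$. The bound still grows in $|y|$, but only like $\exp(\varepsilon|y|)$ with $\varepsilon=r\|A\|$ arbitrarily small.

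To finish, this weight has to be absorbed by the $H^1$ energy. The difficulty is that the $H^1(m_\rho,B)$ condition gives no decay margin to spare. I would therefore first try to derive the needed integrability from the cotangent condition together with the LM-PI \eqref{eq:SM-PI}. If that fails, I would use the Green identity \eqref{eq:LM_weak_integration_by_parts} with test functions $\Psi=\exp(\varepsilon|y|)$, suitably smoothed. Since $Z_\rho$ has superexponential growth whenever $\rho$ has full-space tails, such $\Psi$ lie in $H^1(m_\rho,B)$. The Cauchy--Schwarz bound \eqref{eq:fin_of_cotangent_pairing} then makes $\int\nabla\Phi^\top B\nabla\Psi\,\d\mathfrak m_\rho$ finite. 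Controlling the full weighted quadratic term from this is the delicate point, and I expect this absorption step to be where the real work lies.
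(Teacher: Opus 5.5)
Your handling of the local terms and of the choice of representative is fine. The proof has a genuine gap exactly where you flag it: you never show that the nonlocal term $I(x)$ is finite, and the routes you suggest for the ``absorption'' step do not work as stated. The Green identity \eqref{eq:LM_weak_integration_by_parts} with $\Psi=e^{\varepsilon|y|}$ is linear in $\nabla\Phi$ and pairs it with $\nabla\Psi$. It therefore cannot control the weighted quadratic quantity $\int e^{\varepsilon|y|}\nabla\Phi^\top B\nabla\Phi\,\d\mathfrak m_\rho$ that your bound requires. The LM-PI bounds $\Phi$ by $\nabla\Phi$ and gives no weighted gradient control either. The real problem is upstream: your lower bound on $Z_\rho(y)$ is too weak. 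Centring the ball at the fixed point $x$ only cancels the numerator $e^{y^\top Ax}$ and leaves a growing remainder $e^{r\|A\||y|}$.

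The missing idea is the fact you mention in passing but never use: $Z_\rho$ grows faster than every exponential. You prove this by letting the ball depend on $y$. Given $r>0$, choose $s>1$ with $(s-1)\sigma_{\min}(A)\ge r$, and for $y\neq0$ integrate over $B_1(sAy/|Ay|)$. All these balls lie in $B_{s+1}(0)$, where $\rho$ has a positive minimum. Every $z$ in such a ball satisfies $z^\top Ay\ge(s-1)|Ay|\ge r|y|$. Hence $Z_\rho(y)\ge c_r e^{r|y|}$ for all $y$, with $c_r>0$ independent of $y$. For fixed $x$, taking $r\ge\|A\|\,|x|$ gives
\[
\frac{\exp(y^\top Ax)}{Z_\rho(y)}\le c_r^{-1}\quad\text{uniformly in } y,\qquad
I(x)\le\frac{1}{2c_r}\int_{\R^d}\nabla\Phi^\top B\nabla\Phi\,\d\mathfrak m_\rho<\infty .
\]
This uses only $\Phi\in H^1(m_\rho,B)$. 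It is the paper's argument: there is ample decay margin, and no absorption step, Poincar\'e inequality or Green identity is needed. The same bound also justifies differentiating under the integral, so the nonlocal term is smooth. For this, choose $r$ uniformly for $x$ in a compact set, slightly enlarged so that the polynomial factors $(Ay)^\alpha$ produced by $x$-derivatives are absorbed.
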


Note that finiteness of the interaction energy $\F^{\SM}(\rho)$ is
not required for this pointwise formulation.
Further, it is  straightforward  to verify that
\eqref{eq:sm-accelerated-pde} is also pointwise well-defined
for the following family of densities and potentials.

\begin{example}[Perturbations of Gaussian densities]
\label{prop:sm-pointwise-data}
Let $\gamma=\NN(\mu,\Sigma)$ with $\Sigma\succ0$ and \begin{equation}\label{eq:sm-relative-pointwise-data}
\rho_0=\gamma(1+u),
\qquad
u\in C^\infty(\R^d),
\qquad
\int_{\R^d}\gamma u \d x=0,
\qquad
\|u\|_\infty<1.
\end{equation}
Let $\Phi_0\in C^\infty(\R^d)$ have a gradient of at
most polynomial growth, that is,
\[
|\nabla\Phi_0(x)|\le C(1+|x|)^k
\qquad(x\in\R^d)
\]
for some finite $C,k\ge0$.
Then all spatial quantities in
\eqref{eq:sm-accelerated-pde} are pointwise well-defined
at $(\rho_0,\Phi_0)$ whenever $\alpha(0)$ is finite.
\end{example}

\paragraph{Hamiltonian and formal equations.}
As for gradient flows on a manifold $\mathcal M$, the
construction of a damped Hamiltonian system starts from
a Riemannian metric and a functional $\F^{\SM}$, together with
a damping coefficient.
Here, we use the softmax--Wasserstein metric
$G_\rho^{\SM}$ in \eqref{eq:SM_metric} and the interaction
energy $\F^{\SM}$ in \eqref{eq:sm-energy}.
For $\rho\in\P_{\SM}$ and $[\Phi]\in T_\rho^*\P_{\SM}$,
we define
\begin{equation}\label{eq:sm-hamiltonian}
\mathcal H(\rho,[\Phi])\coloneqq
\frac12 G_\rho^{\SM}
\bigl(L_\rho^{\SM}[\Phi],L_\rho^{\SM}[\Phi]\bigr)
+\F^{\SM}(\rho)=
\frac12\int_{\R^d}
\frac{\rho}{Z_\rho}\nabla\Phi^\top B\nabla\Phi \d x
+\F^{\SM}(\rho).
\end{equation}
The kinetic term in~\eqref{eq:sm-hamiltonian} is finite by the definition of
$T_\rho^*\P_{\SM}$ in \eqref{eq:LM_softmax_cotangent}.
Consequently, $\mathcal H(\rho,[\Phi])$ is finite if and
only if $\F^{\SM}(\rho)$ is finite.
As Example~\ref{ex:sm-infinite-energy} shows, this is
not guaranteed by $\rho\in\P_{\SM}$ alone.

For finite-energy densities,
Proposition~\ref{prop:sm-regular-variation} already
characterizes when $\F^{\SM}$ admits a local regular first
variation.
Next, we give sufficient conditions for the regular
variations of $\mathcal H$ with respect to $\Phi$ and
$\rho$, understood in the sense of
\eqref{eq:def_reg_del_phi_H} and
\eqref{eq:def_reg_del_rho_H}, respectively.

\begin{theorem}[Regular Hamiltonian formulation]
\label{thm:sm-hamiltonian}
Let $\rho$ satisfy the assumptions of
Corollary~\ref{thm:sm-gradient}, and let
$[\Phi]\in T_\rho^*\P_{\SM}$. Further, assume
the following:
\begin{enumerate}
\item
The following cotangent condition holds:
\begin{equation}\label{eq:sm-graph-tests}
\left[
\frac{\nabla\Phi(x)^\top B\nabla\Phi(x)}
     {2Z_\rho(x)}
-\frac12\int_{\R^d}\exp(y^\top Ax)
\frac{\nabla\Phi(y)^\top B\nabla\Phi(y)}
     {Z_\rho(y)^2}\rho(y) \d y
\right]
\in T_\rho^*\P_{\SM}.
\end{equation}
\item
For every $\eta\in C_c^\infty(\R^d)$ with
$\int_{\R^d}\eta \d x=0$, we have
\begin{equation}\label{eq:sm-compatibility}
\left(\nabla\frac{Z_\eta}{Z_\rho}\right)^\top
B\nabla\Phi\in L^2(m_\rho).
\end{equation}
\end{enumerate}
Then~\eqref{eq:sm-accelerated-pde} can be written for the Hamiltonian $\mathcal H$  in~\eqref{eq:sm-hamiltonian} as a
regular damped Hamiltonian system
\begin{equation}\label{eq:sm-regular-hamiltonian-system}
\begin{cases}
\displaystyle
\partial_t\rho_t
=
\frac{\delta^{\reg}\mathcal H}{\delta\Phi}
(\rho_t,[\Phi_t]),\\[1.5ex]
\displaystyle
[\partial_t\Phi_t]+\alpha_t[\Phi_t]
=
-\frac{\delta^{\reg}\mathcal H}{\delta\rho}
(\rho_t,[\Phi_t]).
\end{cases}
\end{equation}
\end{theorem}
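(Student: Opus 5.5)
We verify the two regular variations of $\mathcal H$ in \eqref{eq:sm-hamiltonian} separately, in the sense of \eqref{eq:def_reg_del_phi_H} and \eqref{eq:def_reg_del_rho_H}, and then identify them with the right-hand sides of \eqref{eq:sm-accelerated-pde}.

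\emph{The $\Phi$-variation.} The kinetic term equals $\tfrac12 G_\rho^{\SM}(L_\rho^{\SM}[\Phi],L_\rho^{\SM}[\Phi])$ and is quadratic in $[\Phi]$. For a cotangent perturbation $[\Psi]$, its derivative is $\int m_\rho\nabla\Phi^\top B\nabla\Psi\,\d x$. By the weak integration by parts \eqref{eq:LM_weak_integration_by_parts} of Theorem~\ref{prop:LM_operator_invertible}, this equals $\int (L_\rho^{\SM}\Phi)\Psi\,\d x$. Since $[\Phi]\in T_\rho^*\P_{\SM}$, we have $L_\rho^{\SM}\Phi\in T_\rho\P_{\SM}$. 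Hence $\delta^{\reg}\mathcal H/\delta\Phi=L_\rho^{\SM}[\Phi]$, which is the first line of \eqref{eq:sm-accelerated-pde}. No extra hypotheses are needed for this step.

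\emph{The $\rho$-variation.} We split $\mathcal H$ into the potential and kinetic parts.

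The potential part $\F^{\SM}$ is handled by Proposition~\ref{prop:sm-regular-variation} and Corollary~\ref{thm:sm-gradient}. These give a local regular first variation with representative $-Z_\rho$, and $[Z_\rho]\in T_\rho^*\P_{\SM}$. The symmetry of $A$ is what produces the factor $2$ cancelling the $\tfrac12$.

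For the kinetic part, fix $\eta\in C_c^\infty$ with $\int\eta=0$ and differentiate $\varepsilon\mapsto \tfrac12\int \frac{\rho+\varepsilon\eta}{Z_{\rho+\varepsilon\eta}}\nabla\Phi^\top B\nabla\Phi\,\d x$ at $\varepsilon=0$. Using $Z_{\rho+\varepsilon\eta}=Z_\rho+\varepsilon Z_\eta$, the formal derivative is
\[
\int \eta\,\frac{|\nabla\Phi|_B^2}{2Z_\rho}\,\d x-\frac12\int \rho\,\frac{Z_\eta}{Z_\rho^2}|\nabla\Phi|_B^2\,\d x .
\]
Applying Fubini to $Z_\eta(x)=\int e^{y^\top Ax}\eta(y)\,\d y$ and swapping the roles of $x$ and $y$ (again using $A=A^\top$) turns the second term into $\int \eta(x)\,\tfrac12\int e^{y^\top Ax}\frac{|\nabla\Phi(y)|_B^2}{Z_\rho(y)^2}\rho(y)\,\d y\,\d x$. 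This identifies the representative as exactly the bracket in \eqref{eq:sm-graph-tests}. Hypothesis~1 states that this class lies in $T_\rho^*\P_{\SM}$, which is the regularity required of a regular variation. Extending the pairing from $C_c^\infty$ test directions to all of $T_\rho\P_{\SM}$ uses the density statement of Lemma~\ref{lem:LM_centered_energy_space} together with the bound \eqref{eq:fin_of_cotangent_pairing}.

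\emph{Main obstacle: justifying the differentiation in $\varepsilon$.} We need dominated convergence for the difference quotient
\[
\frac{1}{\varepsilon}\Bigl(\frac{\rho+\varepsilon\eta}{Z_\rho+\varepsilon Z_\eta}-\frac{\rho}{Z_\rho}\Bigr)|\nabla\Phi|_B^2 .
\]
Because $\eta$ has compact support, $|Z_\eta|\le C_\eta\,\sup_{|y|\le R}e^{y^\top Ax}$. We must compare this with $Z_\rho$, which is positive and continuous, so $|Z_\eta/Z_\rho|$ is locally bounded; bounding its growth at infinity is what is not automatic. This is where hypothesis~\eqref{eq:sm-compatibility} enters. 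We would write the quotient in terms of $Z_\eta/Z_\rho$ and integrate by parts against $L_\rho^{\SM}$, so that the relevant quantity is $\int m_\rho\nabla(Z_\eta/Z_\rho)^\top B\nabla\Phi\;\cdot$ (a bounded factor). Hypothesis~\eqref{eq:sm-compatibility} combined with Cauchy--Schwarz in $L^2(m_\rho)$ then yields the integrable majorant, uniformly in small $\varepsilon$.

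\emph{Conclusion.} The first equation of \eqref{eq:sm-accelerated-pde} is $\partial_t\rho_t=\delta^{\reg}\mathcal H/\delta\Phi$. For the second, we have $-\delta^{\reg}\mathcal H/\delta\rho=Z_\rho-(\text{bracket in } \eqref{eq:sm-graph-tests})$, which matches the right-hand side of \eqref{eq:sm-accelerated-pde} modulo constants. The $\Phi$-equation therefore holds as an identity of classes, giving \eqref{eq:sm-regular-hamiltonian-system}.
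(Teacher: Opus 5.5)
Your $\Phi$-variation, the interaction-energy part via Proposition~\ref{prop:sm-regular-variation}, the formal kinetic derivative, the Fubini rewriting into the bracket of \eqref{eq:sm-graph-tests}, and the final sign bookkeeping all agree with the paper's proof. The gap is in your ``main obstacle'' paragraph.

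The growth of $Z_\eta/Z_\rho$ at infinity is in fact automatic. Since $\eta$ has compact support and $\rho$ is continuous and positive, $\|\eta/\rho\|_\infty<\infty$. Keeping $\rho$ inside the integral, rather than bounding by $\sup_{|y|\le R}e^{y^\top Ax}$, gives
\[
|Z_\eta(x)|\le\int_{\R^d}e^{y^\top Ax}\frac{|\eta(y)|}{\rho(y)}\,\rho(y)\d y\le\Bigl\|\frac{\eta}{\rho}\Bigr\|_\infty Z_\rho(x).
\]
Writing $m_{\rho+\varepsilon\eta}=m_\rho(1+\varepsilon\eta/\rho)/(1+\varepsilon Z_\eta/Z_\rho)$, the difference quotient is then dominated pointwise by $C\,m_\rho\nabla\Phi^\top B\nabla\Phi$ for small $|\varepsilon|$. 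This majorant is integrable because the kinetic energy is finite, and the same bound with $Z_{|\eta|}$ justifies your Fubini step.

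The mechanism you propose instead does not work as stated. Integrating by parts against $L_\rho^{\SM}$ and applying Cauchy--Schwarz bounds an integral, not the integrand, so it gives no majorant for dominated convergence. Moreover, controlling the factor $1/(1+\varepsilon Z_\eta/Z_\rho)$ uniformly in $\varepsilon$ already requires the boundedness of $Z_\eta/Z_\rho$ that you said was unavailable.

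Because you spend \eqref{eq:sm-compatibility} on a step that does not need it, you never check what it is actually for. The space $T_\rho^*\P_{\SM}$ depends on $\rho$ through the operator-domain condition $\int_{\R^d}\frac{Z_\rho}{\rho}|L_\rho^{\SM}\Phi|^2\,\d x<\infty$. So the perturbed pair $(\rho+\varepsilon\eta,[\Phi])$ need not lie in the state set on which $\mathcal H$ and its fixed-ambient sectional variation are defined.

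Lemma~\ref{lem:compatible_variations_otto_softmax} shows that \eqref{eq:sm-compatibility} is exactly the condition for $[\Phi]\in T^*_{\rho+\varepsilon\eta}\P_{\SM}$ for small $|\varepsilon|$, i.e.\ $\eta\in\mathcal V_{\rho,\Phi}$. This is what makes every compactly supported zero-mean $\eta$ an admissible test direction. Through Lemma~\ref{lem:uniqueness_regular_hamiltonian_variations}, it is also what makes the class $\frac{\delta^{\reg}\mathcal H}{\delta\rho}(\rho,[\Phi])$ uniquely determined.

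As it stands, your derivative identity is established only for directions you have not shown to be admissible. Adding this verification and replacing your majorant argument by the pointwise bound above gives the paper's proof.
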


Let us examine the above assumptions.

\begin{remark}[Meaning of the additional conditions]
\label{rem:ass}
The assumptions of Corollary~\ref{thm:sm-gradient}
ensure that the interaction energy $\F^{\SM}$ has the regular
first variation $[-Z_\rho]\in T_\rho^*\P_{\SM}$.
The additional conditions concern the kinetic energy:
\\[1ex]
\textbf{Regularity at the given density.}
Condition~\eqref{eq:sm-graph-tests} requires the density
variation of the kinetic energy to be represented by
a regular cotangent vector in $T_\rho^*\P_{\SM}$.
Thus, both contributions to the density variation of
$\mathcal H$ belong to this space.
\\[1ex] 
\textbf{Admissibility under density variations.}
The cotangent space itself depends on $\rho$.
By Lemma~\ref{lem:compatible_variations_otto_softmax},
condition~\eqref{eq:sm-compatibility} ensures that
$[\Phi]\in T_{\rho+\varepsilon\eta}^*\P_{\SM}$
for sufficiently small $|\varepsilon|$.
This allows us to vary the density while keeping
the potential fixed.
\\[1ex]
At $\Phi=0$, both additional conditions hold
automatically, leaving only the assumptions of
Corollary~\ref{thm:sm-gradient}.
\end{remark}
Next, we further illustrate the assumptions of
Theorem~\ref{thm:sm-hamiltonian} and how they go
beyond those of Corollary~\ref{thm:sm-gradient}.
The first example provides density--potential pairs
satisfying the regular Hamiltonian conditions.
The second shows that a density can satisfy the
first-order gradient criterion while an associated
regular cotangent potential fails the additional
Hamiltonian conditions.
We leave the computations verifying these examples
to the reader.
\begin{example}[Gaussian densities and their perturbations]
\label{prop:sm-initial-data}
Let $\gamma=\NN(\mu,\Sigma)$ with $\Sigma\succ0$ satisfy
\begin{equation}\label{eq:sm-gaussian-gap}
\Sigma^{-1}-A\Sigma A\succ0,
\end{equation}
and let $\Phi_0\in C^\infty(\R^d)$ have compactly
supported gradient.
Then the following pairs satisfy the assumptions of
Theorem~\ref{thm:sm-hamiltonian}:
\begin{enumerate}
\item (Gaussian densities)
\label{it:sm-gaussian-data}
The pair $(\gamma,[\Phi_0])$.

\item (Non-Gaussian perturbations)
\label{it:sm-relative-data}
The pair $(\rho_0,[\Phi_0])$, where
$\rho_0=\gamma(1+u)$ satisfies
\eqref{eq:sm-relative-pointwise-data} and
\begin{equation}\label{eq:sm-relative-data}
\|\nabla u\|_\infty<\infty.
\end{equation}
\end{enumerate}
The choice $\Phi_0=0$ is included, and no amplitude
restriction on $\Phi_0$ is required.
\end{example}

Finally, we give an example where
Corollary~\ref{thm:sm-gradient} applies to the density,
but the density--potential pair fails the additional
conditions of Theorem~\ref{thm:sm-hamiltonian}.

\begin{example}[Failure of regular Hamiltonian admissibility]
\label{ex:sm-kinetic-variation-failure}
Let $d=1$, $A=B=1$, and choose
\[
\rho=\NN(0,\tfrac12),
\qquad
\Phi(x)=\exp(x^2/2).
\]
Then
\[
Z_\rho(x)=\exp(x^2/4),
\qquad
m_\rho(x)=\frac1{\sqrt\pi}\exp(-5x^2/4).
\]
One can check that $\rho$ satisfies the assumptions of
Corollary~\ref{thm:sm-gradient}, that
$[\Phi]\in T_\rho^*\P_{\SM}$, and that
\eqref{eq:sm-compatibility} holds.

However, a direct computation shows that the expression
inside the brackets in \eqref{eq:sm-graph-tests} equals
\[
\frac{x^2}{2}\exp(3x^2/4)
-\frac{1+x^2}{\sqrt2}\exp(x^2/2).
\]
Its square is not integrable against $m_\rho$.
Thus, \eqref{eq:sm-graph-tests} fails, although
Corollary~\ref{thm:sm-gradient} applies to $\rho$
and the accelerated PDE is pointwise well-defined
at $(\rho,\Phi)$.
\end{example}

\subsection{From Softmax Geometry to Attention Layers}
\label{subsec:softmax-particle-dynamics}
Now we use the softmax geometry
to construct the attention updates considered in
Section~\ref{sec:numerics}.
We first write the inertial particle dynamics in matrix coordinates. Then we compute the geometric correction
and explain how it enters the position and velocity updates.

\paragraph{Inertial particle dynamics.}
We keep $A=A^\top$ invertible and
$B=VA^{-1}=B^\top\succ0$ fixed.
Let $\mathbf X\in\R^{N\times d}$ contain the token
positions, with rows $X_i^\top$, and define
\begin{equation}\label{eq:sm-particle-weights}
\mathbf M_{ij}(\mathbf X)\coloneqq \exp(X_i^\top AX_j),
\qquad
z_i(\mathbf X)\coloneqq\sum_{j=1}^N\mathbf M_{ij}(\mathbf X).
\end{equation}
We write $D_z=\diag(z_1,\ldots,z_N)$ and denote by
$\mathbf1\in\R^N$ the vector whose entries are all one.

On the token configuration space $\R^{N\times d}$, we
introduce the finite-token analog of the
softmax--Wasserstein metric $G_\rho^{\SM}$ in
\eqref{eq:SM_metric}:
\begin{equation}\label{eq:softmax_metric_particles}
g_{\mathbf X}(\mathbf U,\mathbf H)
\coloneqq
\frac1N\tr\bigl(D_{z(\mathbf X)}
\mathbf U B^{-1}\mathbf H^\top\bigr),
\qquad
\mathbf U,\mathbf H\in\R^{N\times d},
\end{equation}
where the weights $z_i(\mathbf X)$ are defined in
\eqref{eq:sm-particle-weights}. Since $z_i(\mathbf X)>0$ and $B\succ0$, expression
\eqref{eq:softmax_metric_particles} defines a smooth
Riemannian metric.
The interaction energy $\F$ in \eqref{eq:sm-energy}
has the corresponding finite-token form
\begin{equation}\label{eq:softmax_particles}
F^{\SM}(\mathbf X)
\coloneqq
-\frac1{2N}\mathbf1^\top\mathbf M(\mathbf X)\mathbf1.
\end{equation}
Differentiating the energy \eqref{eq:softmax_particles}
and using the metric \eqref{eq:softmax_metric_particles},
the symmetry of $A$ and $\mathbf M(\mathbf X)$, and
the identity $AB=V^\top$ gives
\begin{equation}\label{eq:sm-particle-gradient}
-\grad_gF^{\SM}(\mathbf X)
=
D_{z(\mathbf X)}^{-1}\mathbf M(\mathbf X)\mathbf XV^\top
\eqqcolon\Attn(\mathbf X).
\end{equation}
The right-hand side is precisely the softmax attention
map introduced in Section~\ref{sec:trans}, which we
denote by $\Attn(\mathbf X)$.
The gradient identity \eqref{eq:sm-particle-gradient}
suggests an inertial extension of softmax attention:
we replace the first-order equation
$\dot{\mathbf X}_t=\Attn(\mathbf X_t)$ by the damped
Newton equation
\[
\nabla^g_{\dot{\mathbf X}_t}\dot{\mathbf X}_t
+\alpha_t\dot{\mathbf X}_t
+\grad_gF^{\SM}(\mathbf X_t)=0,
\]
where $\nabla^g$ is the Levi--Civita connection of
\eqref{eq:softmax_metric_particles} and
$\alpha\in C([0,\infty);[0,\infty))$.
In matrix coordinates, this equation reads
\begin{equation}\label{eq:acc_softmax_damped_Newton}
\ddot{\mathbf X}_t+\alpha_t\dot{\mathbf X}_t
=
\Attn(\mathbf X_t)
-
\Gamma_{\mathbf X_t}^g
(\dot{\mathbf X}_t,\dot{\mathbf X}_t),
\end{equation}
where $\Gamma_{\mathbf X_t}^g$ denotes the Christoffel
symbol of the metric \eqref{eq:softmax_metric_particles}.
Momentum-based attention updates already appear in
YuriiFormer \cite[Sec.~3]{ZPR2026}.
The geometric formulation
\eqref{eq:acc_softmax_damped_Newton} additionally includes
the connection term
$-\Gamma_{\mathbf X_t}^g
(\dot{\mathbf X}_t,\dot{\mathbf X}_t)$,
which is determined by the position-dependent metric
\eqref{eq:softmax_metric_particles}.
Thus, this geometry provides a specific correction
to momentum attention.
We next compute this term explicitly in
Lemma~\ref{lem:sm-particle-connection}, providing the
basis for the corrected updates introduced in
subsection~\ref{subsec:attention-discretizations}.

\paragraph{Computing the geometric correction.}
To use \eqref{eq:acc_softmax_damped_Newton} in a layer
update, we need an explicit expression for
$\Gamma_{\mathbf X}^g(\mathbf U,\mathbf U)$.
We first record the directional derivative of the
normalizing vector $z$ given by
\begin{equation}\label{eq:softmax_dz_directional}
\delta z_{\mathbf X}[\mathbf U]
\coloneqq Dz(\mathbf X)[\mathbf U]
=
\bigl[
\mathbf M(\mathbf X)\odot
(\mathbf UA\mathbf X^\top+\mathbf XA\mathbf U^\top)
\bigr]\mathbf1,
\end{equation}
where $\odot$ denotes entrywise multiplication.

The following lemma shows that, since we consider densities on $\R^d$ rather than the sphere, the softmax metric can be expressed in ordinary coordinates, yielding an explicit form of the Christoffel symbols.
\begin{lemma}[Explicit connection correction]
\label{lem:sm-particle-connection}
Let $\mathbf U\in\R^{N\times d}$ have rows $U_i^\top$,
and set $s_i\coloneqq U_i^\top B^{-1}U_i$ and
$D_s\coloneqq\diag(s_1,\ldots,s_N)$.
Then
\begin{equation}\label{eq:softmax_connection_contracted}
\Gamma_{\mathbf X}^g(\mathbf U,\mathbf U)
=
D_{z(\mathbf X)}^{-1}
\left(
D_{\delta z_{\mathbf X}[\mathbf U]}\mathbf U
-\frac12
\bigl(D_s\mathbf M(\mathbf X)+\mathbf M(\mathbf X)D_s\bigr)
\mathbf XV^\top
\right).
\end{equation}
\end{lemma}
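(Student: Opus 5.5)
The metric \eqref{eq:softmax_metric_particles} is block diagonal: in the token blocks it reads $g_{\mathbf X}(\mathbf U,\mathbf U)=\frac1N\sum_{i}z_i(\mathbf X)\,U_i^\top B^{-1}U_i$. So it is a constant metric $B^{-1}$ on each token, rescaled by the position-dependent weight $z_i(\mathbf X)$. I would not compute all Christoffel symbols through the Koszul formula. Instead I would read off the contracted symbol $\Gamma^g_{\mathbf X}(\mathbf U,\mathbf U)$ from the geodesic equation, which is the Euler--Lagrange equation of the kinetic Lagrangian
\[
\mathcal L(\mathbf X,\dot{\mathbf X})=\frac1{2N}\sum_{k=1}^N z_k(\mathbf X)\,\dot X_k^\top B^{-1}\dot X_k .
\]
This is legitimate because geodesics of a smooth finite-dimensional metric are exactly the critical curves of the energy. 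Along them $\ddot{\mathbf X}+\Gamma^g_{\mathbf X}(\dot{\mathbf X},\dot{\mathbf X})=0$, and the quadratic form $\mathbf U\mapsto\Gamma^g_{\mathbf X}(\mathbf U,\mathbf U)$ is determined by its values on velocities.

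\textbf{Step 1.} The Euler--Lagrange equation in the $i$-th block is
\[
\frac{\d}{\d t}\bigl(z_i B^{-1}\dot X_i\bigr)-\frac12\sum_k s_k\nabla_{X_i}z_k=0,
\qquad s_k=\dot X_k^\top B^{-1}\dot X_k .
\]
Expanding the time derivative gives $z_iB^{-1}\ddot X_i+(\delta z_{\mathbf X}[\dot{\mathbf X}])_iB^{-1}\dot X_i-\frac12\sum_k s_k\nabla_{X_i}z_k=0$. Here the directional derivative $\delta z_{\mathbf X}[\mathbf U]$ is taken from \eqref{eq:softmax_dz_directional}, and it follows from $\partial_t\exp(X_i^\top AX_j)=\mathbf M_{ij}(U_i^\top AX_j+X_i^\top AU_j)$. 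Multiplying by $z_i^{-1}B$ identifies the $i$-th row:
\[
\Gamma_i=z_i^{-1}\Bigl(\delta z_i\,U_i-\tfrac12 B\sum_k s_k\nabla_{X_i}z_k\Bigr).
\]

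\textbf{Step 2.} This is the only real computation, and the place where index bookkeeping could go wrong. Write $z_k=\sum_j\exp(X_k^\top AX_j)$ and use the symmetry of $A$:
\begin{itemize}
\item for $k\neq i$, only the $j=i$ term depends on $X_i$, so $\nabla_{X_i}z_k=\mathbf M_{ki}AX_k$;
\item for $k=i$, one gets $\nabla_{X_i}z_i=\sum_j\mathbf M_{ij}AX_j+\mathbf M_{ii}AX_i$, where the diagonal term is counted twice.
\end{itemize}
The extra diagonal piece combines with the $k\neq i$ sum, so that
\[
\sum_k s_k\nabla_{X_i}z_k=\sum_j\bigl(\mathbf M_{ij}s_j+s_i\mathbf M_{ij}\bigr)AX_j ,
\]
using $\mathbf M_{ji}=\mathbf M_{ij}$. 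I would double-check that this diagonal term is neither lost nor double counted.

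\textbf{Step 3.} In row form, the right-hand side of Step 2 is the $i$-th row of $(\mathbf M D_s+D_s\mathbf M)\mathbf XA$. Multiplying each row by $B$ from the left corresponds to right-multiplying the matrix by $B^\top=B$, and $AB=V^\top$. This produces $(D_s\mathbf M+\mathbf MD_s)\mathbf XV^\top$. Collecting the rows and writing the scalings $z_i^{-1}$ and $\delta z_i$ as $D_{z}^{-1}$ and $D_{\delta z_{\mathbf X}[\mathbf U]}$ gives \eqref{eq:softmax_connection_contracted}.
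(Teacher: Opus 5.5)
Your proof is correct and is essentially the paper's argument. The Euler--Lagrange equation of $\frac1{2N}\sum_k z_k\dot X_k^\top B^{-1}\dot X_k$ encodes the same identity $2g_{\mathbf X}(\Gamma^g_{\mathbf X}(\mathbf U,\mathbf U),\mathbf Q)=2Dg_{\mathbf X}[\mathbf U](\mathbf U,\mathbf Q)-Dg_{\mathbf X}[\mathbf Q](\mathbf U,\mathbf U)$ that the paper obtains from the Koszul formula with constant fields: your $\dot z_i$ term and your $\tfrac12\sum_k s_k\nabla_{X_i}z_k$ term are exactly its two pieces. The only difference is cosmetic. The paper evaluates the second piece in dual form as $\langle(D_s\mathbf M+\mathbf MD_s)\mathbf XA,\mathbf Q\rangle_{\mathrm F}$, by expanding $\delta z_{\mathbf X}[\mathbf Q]$ and using the symmetry of $\mathbf M$. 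That automatically absorbs the diagonal term you handle by the case split $k=i$ versus $k\neq i$.
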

The proof of Lemma~\ref{lem:sm-particle-connection}
is given in Appendix~\ref{app:softmax-proofs}.
Formula~\eqref{eq:softmax_connection_contracted}
makes the connection correction directly computable
using matrix products and row rescalings, so that it
can be incorporated into attention layers.
For fixed dense $A,B$ and a precomputed $B^{-1}$,
evaluating the correction requires
$\mathcal O(N^2d+Nd^2)$ arithmetic operations.
The practical computational overhead is assessed in
Section~\ref{sec:numerics}.

\paragraph{Position and velocity updates.}
Next, we use the connection correction~\eqref{eq:softmax_connection_contracted} to construct
discrete position and velocity updates for
\eqref{eq:acc_softmax_damped_Newton}.
Let $\mathbf U$ denote a proposed displacement from
$\mathbf X$, obtained by combining the current velocity
with the attention output $\Attn(\mathbf X)$ defined
in \eqref{eq:sm-particle-gradient}.
To update the token positions, we consider the geodesic
on $\R^{N\times d}$ with respect to the Levi--Civita
connection $\nabla^g$ of the metric
\eqref{eq:softmax_metric_particles}, starting at
$\mathbf X$ with initial velocity $\mathbf U$.
Its second-order Taylor expansion gives the position update
\begin{equation}\label{eq:softmax_second_order_retraction}
R_{\mathbf X}^{(2)}(\mathbf U)
\coloneqq
\mathbf X+\mathbf U
-\frac12\Gamma_{\mathbf X}^g(\mathbf U,\mathbf U).
\end{equation}
To carry the displacement into the next layer, we use
the differential of \eqref{eq:softmax_second_order_retraction}
with respect to $\mathbf U$, keeping $\mathbf X$ fixed.
By bilinearity of $\Gamma_{\mathbf X}^g$, this gives
\begin{equation}\label{eq:softmax_second_order_transport}
\mathcal T_{\mathbf U}^{(2)}[\mathbf U]
\coloneqq
DR_{\mathbf X}^{(2)}(\mathbf U)[\mathbf U]
=
\mathbf U-\Gamma_{\mathbf X}^g(\mathbf U,\mathbf U).
\end{equation}
Both updates reuse the same connection correction,
computed explicitly by
\eqref{eq:softmax_connection_contracted}.
They can therefore be implemented directly as layer
updates, without numerically solving geodesic or
parallel-transport equations.

Section~\ref{sec:numerics} adapts this construction to
causal multi-head attention with learned weights.
There, the connection correction is replaced by the
proxy $\widehat\Gamma$ in \eqref{eq:connection_proxy},
which uses $B=I_d$ and the native attention weights.
Combining \eqref{eq:softmax_second_order_retraction}
and \eqref{eq:softmax_second_order_transport} with the
LayerNorm modifications gives the layer update
\eqref{eq:R2}.
The coordinate variant
\eqref{eq:H1} instead inserts
the correction directly into the velocity update,
following \eqref{eq:acc_softmax_damped_Newton}.
These adaptations provide the architectures whose
predictive performance and computational cost we
compare in the numerical experiments.

\section{Linear Attention: Geometry, Inertial Dynamics and Effective Depth} \label{sec:lin-accelerated-linear-attention}
In  this section, we give a rigorous interpretation of
the accelerated linear-attention system
\eqref{eq:nesterov-stein-system} as damped Hamiltonian dynamics
within the quotient Stein geometry developed in
Section~\ref{sec:stein}, specialized to the linear-attention
energy. Then we use this formulation to study the corresponding
PDE and determine how its finite-time blow-up appears in
discrete attention layers.

In Subsection~\ref{sec:lin-gradient-flow}, we  study
first-order linear attention. We introduce covariance
coordinates on families of elliptically contoured densities
and identify the induced Stein metric in
Lemma~\ref{lem:lin-covariance-metric}. These coordinates
give a finite-dimensional gradient-flow formulation and
specify the metric and energy used to introduce acceleration.
Theorem~\ref{thm:lin-ordinary-correspondence} also establishes
an exact correspondence between the first-order PDE and
its second-moment equation for general $C^1$ initial
probability densities with finite second moment.
Thus, the solution theory extends beyond the elliptical
families used to describe the geometry.

Subsection~\ref{subsec:lin-hamiltonian-dynamics} uses this
metric and energy to construct the accelerated dynamics.
We derive their matrix Hamiltonian formulation and identify
the corresponding damped Newton equation in
Proposition~\ref{thm:lin-newton}.
The main analytical result,
Theorem~\ref{thm:lin-correspondence}, establishes an exact
ODE--PDE correspondence for admissible initial densities
and quadratic initial potentials. This yields local strong
well-posedness and continuous dependence on the initial data.
We then use the matrix equations to characterize continuation:
Proposition~\ref{prop:lin-continuation} shows that finite-time
breakdown requires divergence of the second moment and that
$V\preceq0$ gives global existence.
For commuting data and zero initial momentum,
Theorem~\ref{thm:lin-bounded-damping-blowup} proves that
a positive eigenvalue of $V$ forces finite-time moment
blow-up under bounded nonnegative damping.

Finally, Subsection~\ref{subsec:lin-discrete-attention}
connects these continuous dynamics to finite-token attention
layers. Proposition~\ref{prop:lin-transformer-lift} shows that the
same matrix system determines the continuous token dynamics,
so moment blow-up also forces unbounded token norms at a
fixed number of tokens. After discretization,
Theorem~\ref{thm:lin-transformer-depth} predicts the number
of layers needed to reach a prescribed second-moment
threshold from the corresponding continuous crossing time.
Corollaries~\ref{thm:lin-transformer-subcritical-depth}
and~\ref{cor:lin-transformer-critical-depth} then identify
the continuous blow-up time as a critical effective depth:
as the discretization is refined at fixed effective depth,
token norms remain bounded below this time and become
unbounded at or above it.
All proofs of this section 
are given in Appendix \ref{app:lin-attention-proofs}.

Figure~\ref{fig:linear-attention-dynamics} summarizes
the structure of this section and shows how the continuous
analysis leads to predictions for discrete attention layers.

\begin{figure}[ht]
\centering
\begin{tikzpicture}[
    box/.style={
        draw=blue!55!black, fill=blue!10, text=black,
        rounded corners=3pt, line width=0.7pt,
        minimum height=1.05cm, text width=5cm,
        align=center, inner sep=7pt, font=\small
    },
    arrow/.style={
        -{Stealth[length=2.2mm]}, line width=0.7pt
    },
    resultarrow/.style={
        -{Stealth[length=2.5mm]},
        draw=blue!65!black, dashed,
        line width=1pt, rounded corners=5pt
    },
    explanation/.style={
        font=\scriptsize, text=black, align=left
    },
    node distance=1.05cm
]
\node[box] (pde)
    {\textbf{Accelerated linear-attention PDE}\\[4pt]
     $(\rho_t,\Phi_t)$};

\node[box, below=of pde] (matrix)
    {\textbf{Matrix dynamics}\\[4pt]
     $(S_t,P_t)$};

\node[box, below=of matrix] (tokens)
    {\textbf{Continuous token dynamics}\\[4pt]
     $(X(t),Y(t))$};

\node[box, below=of tokens] (layers)
    {\textbf{Discrete attention layers}\\[4pt]
     $(X_\ell^h,Y_\ell^h)$};

\draw[arrow] (pde) --
    node[explanation, midway, right=7pt]
    {ODE--PDE correspondence\\
     Theorem~\ref{thm:lin-correspondence}}
    (matrix);

\draw[arrow] (matrix) --
    node[explanation, midway, right=7pt]
    {Exact finite-token lift\\
     Proposition~\ref{prop:lin-transformer-lift}}
    (tokens);

\draw[arrow] (tokens) --
    node[explanation, midway, right=7pt]
    {Euler approximation\\
     Lemma~\ref{prop:lin-transformer-transfer}}
    (layers);

\coordinate (upper) at ([xshift=-1cm]pde.west);
\coordinate (lower) at (upper |- layers.west);

\draw[resultarrow]
    (pde.west) -- (upper) -- (lower) -- (layers.west);

\node[
    anchor=east, xshift=-9pt,
    text width=3.35cm, align=center,
    font=\small, text=black
] at ($(upper)!0.5!(lower)$)
    {\textbf{Growth of PDE solutions predicts layer depth}\\[5pt]
Theorem~\ref{thm:lin-transformer-depth}};

\end{tikzpicture}
\caption{Understanding transformer dynamics through
the continuum limit. Analytical information about the
growth of PDE solutions yields quantitative predictions
for the depth at which large token norms occur in discrete
attention layers, as established in
Theorem~\ref{thm:lin-transformer-depth}.}
\label{fig:linear-attention-dynamics}
\end{figure}
\subsection{First-Order Linear Attention: Geometry and Analysis}\label{sec:lin-gradient-flow}
Let \(V=V^\top\) and consider \(K_A(x,y)=x^\top Ay\). 
To a given probability density \(\rho\) with finite second moment, we associate the second-moment matrix
\begin{equation}\label{eq:S_rho}
     S(\rho)\coloneqq\int_{\R^d}yy^\top \d\rho(y).
\end{equation}
If \(\rho\)
is centered, the expression in~\eqref{eq:S_rho} is its covariance. We consider the quadratic energy
\begin{equation}\label{eq:lin-density-energy}
 \F^{\lin}(\rho)\coloneqq
 -\frac12\int_{\R^d}x^\top Vx \d\rho(x)
 =-\frac12\tr(VS(\rho)).
\end{equation}
Averaging the values $Vy$ with weights $K_A(x,y)$ gives the
attention field from Section~\ref{sec:trans},
\[
\Gamma_\rho(x)
=
\int_{\R^d}K_A(x,y)\,Vy \d\rho(y)
=
VS(\rho)Ax.
\]
Thus, the associated transformer PDE reads
\begin{equation}\label{eq:lin-ordinary-pde}
\partial_t\rho_t+
\nabla\cdot\left(\rho_tVS(\rho_t)Ax\right)=0,
\qquad
\rho_{t=0}=\rho_{0}.
\end{equation}
Next, we comment on how this relates to Gaussian softmax dynamics studied in \cite{CACP2025}.

\paragraph{Comparison with Gaussian softmax dynamics.}
\label{rem:lin-gaussian-comparison}
As observed by Castin, Ablin, Carrillo, and Peyr\'e
\cite[Sec.~2.2 and Lemma~4.1]{CACP2025},
first-order linear and softmax attention coincide for
centered Gaussian densities.
Here, we study linear attention for non-Gaussian initial
densities as well, where the two dynamics need not coincide.

The quotient Stein geometry also admits matrix choices
excluded by the softmax--Wasserstein framework.
Indeed, for $A=A^\top\succ0$, the construction in
Section~\ref{sec:stein} allows any symmetric $V$,
whereas the softmax geometry in Section~\ref{sec3}
requires $B=VA^{-1}$ to be symmetric positive definite.
For example, $V=-A$ is admissible in the quotient
Stein framework but gives $B=-I_d$, which is excluded
by the softmax metric.

Next, we introduce covariance coordinates to describe
the dynamics on families generated by radial reference
densities.
\paragraph{Covariance coordinates.}
Recall the notation $\P_{K_A}$, $M_\rho$, and
$L_\rho^{K_A}$ from Section~\ref{sec:stein}, the regular effective
tangent and cotangent spaces \eqref{eq:effective-spaces}, and
their metric $G_\rho^{K_A}$ from
Corollary~\ref{cor:stein_metric_well_defined}.
Next, we choose a radial, standardized reference density
$\eta\in\P_{K_A}$, so that
\begin{equation}\label{eq:lin-reference-density}
\eta(Ox)=\eta(x)\quad(O\in\mathrm O(d)),
\qquad
\int_{\R^d}xx^\top\eta(x) \d x=I_d.
\end{equation}
Then, we introduce a family of probability densities parametrized
by their covariance matrices
\begin{equation}\label{eq:lin-elliptical-family}
\begin{aligned}
\iota:\Sym_{++}(d)
&\longrightarrow
\mathcal E_\eta\coloneqq\{\rho_S:S\in\Sym_{++}(d)\},\\
S&\longmapsto\rho_S,
\qquad
\rho_S(x)\coloneqq
\frac{\eta(S^{-1/2}x)}{\sqrt{\det S}}.
\end{aligned}
\end{equation}
Since $\rho_S$ has covariance $S$, the map $\iota$ is injective.
This parametrization equips $\mathcal E_\eta$ with a
finite-dimensional smooth structure. It is straightforward to check that
for every $S\in\Sym_{++}(d)$, the density $\rho_S$ belongs to
$\P_{K_A}$ and is centered.
For each $S\in\Sym_{++}(d)$, we introduce the
\emph{Lyapunov operator} associated with the matrix $SAS$ by
\begin{equation}\label{eq:lin-lyapunov}
\Lambda_S:\Sym(d)\longrightarrow\Sym(d),
\qquad
\Lambda_S(P)=PSAS+SASP.
\end{equation}
With this notation, we can express the quotient Stein metric
in covariance coordinates and reduce the associated gradient
flows from infinite to finite dimensions. The first step is
the following observation.
\begin{lemma}[Properties of the Lyapunov operator]\label{lem:lin-covariance-isomorphism}
For each $S\in\Sym_{++}(d)$, the map $\Lambda_S$ in
\eqref{eq:lin-lyapunov} is an isomorphism, and $D\iota(S)$,
with $\iota$ defined in \eqref{eq:lin-elliptical-family},
is injective.
\end{lemma}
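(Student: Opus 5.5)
The lemma has two parts, and I would treat them separately. For the Lyapunov operator, set $C\coloneqq SAS$. Since $S\in\Sym_{++}(d)$ and $A=A^\top\succ0$, we have $C=C^\top\succ0$, because $v^\top SASv=(Sv)^\top A(Sv)>0$ for $v\neq0$. Then $\Lambda_S(P)=PC+CP$ maps $\Sym(d)$ to itself, since $(PC+CP)^\top=CP+PC$. The space $\Sym(d)$ is finite-dimensional, so it suffices to prove injectivity.

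Diagonalize $C=U D U^\top$ with $U$ orthogonal and $D=\diag(c_1,\dots,c_d)$, all $c_i>0$. Put $\tilde P\coloneqq U^\top PU$. The equation $\Lambda_S(P)=0$ becomes $\tilde P_{ij}(c_i+c_j)=0$ for all $i,j$. Since $c_i+c_j>0$, this forces $\tilde P=0$, hence $P=0$. The same computation gives the explicit inverse $\tilde P_{ij}=\tilde Q_{ij}/(c_i+c_j)$. Alternatively, one can write $\Lambda_S^{-1}(Q)=\int_0^\infty e^{-tC}Qe^{-tC}\,\d t$.

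For the injectivity of $D\iota(S)$, I would avoid differentiating $\eta$ pointwise and instead use the second-moment map. Let $S(\cdot)$ denote the moment map \eqref{eq:S_rho}. By \eqref{eq:lin-reference-density} and the substitution $x=S^{1/2}y$, we get $S(\rho_S)=S^{1/2}I_dS^{1/2}=S$, so $S\circ\iota=\mathrm{id}$ on $\Sym_{++}(d)$. Differentiating along a curve $S+\varepsilon\dot S$ gives
\[
\frac{\d}{\d\varepsilon}\Big|_{\varepsilon=0}\int xx^\top\rho_{S+\varepsilon\dot S}(x)\,\d x=\dot S,
\]
that is, the linear map $\dot\rho\mapsto\int xx^\top\dot\rho\,\d x$ composed with $D\iota(S)$ is the identity. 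Hence $D\iota(S)\dot S=0$ implies $\dot S=0$.

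To make this rigorous, I would define $D\iota(S)\dot S$ as the $\varepsilon$-derivative of $\rho_{S+\varepsilon\dot S}$. This derivative is
\[
\partial_\varepsilon\rho_{S_\varepsilon}=\rho_S\Bigl(-\tfrac12\tr(S^{-1}\dot S)\Bigr)+\frac{\nabla\eta(S^{-1/2}x)^\top\,\bigl(D(S^{-1/2})[\dot S]\bigr)x}{\sqrt{\det S}},
\]
which is smooth since $\eta$ is smooth and $S\mapsto S^{-1/2}$ is smooth on $\Sym_{++}(d)$.

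The main obstacle is justifying the interchange of the $\varepsilon$-derivative with the second-moment integral. The cleanest way is to bypass it: the change of variables gives $\int xx^\top\rho_{S_\varepsilon}\,\d x=S+\varepsilon\dot S$ exactly, for every small $\varepsilon$. So it remains only to show that the difference quotients converge to $D\iota(S)\dot S$ weighted by $|x|^2$ in $L^1$. For this I would use dominated convergence together with $\int|x|^2|\nabla\log\eta|^2\eta<\infty$ from $\eta\in\P_{K_A}$, which via Cauchy--Schwarz controls $\int|x|^3|\nabla\eta|$ uniformly for $S_\varepsilon$ in a compact neighborhood.

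If that integrability is too weak, I would instead test $D\iota(S)\dot S$ against a smooth compactly supported approximation. A cleaner alternative is to note that the moment identity already forces $\dot S=\lim_{\varepsilon\to0}\varepsilon^{-1}\int xx^\top(\rho_{S_\varepsilon}-\rho_S)$. If $D\iota(S)\dot S=0$, integrating against $|x|^2$-weighted test functions shows this limit vanishes. That weighted test still needs the same domination, so the integrability estimate is where most of the care goes.
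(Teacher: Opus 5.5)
Your treatment of $\Lambda_S$ is correct and is the paper's argument. For $D\iota(S)$, the left-inverse idea $S(\rho_S)=S$ is a legitimate alternative route. However, the step you single out as the crux fails under the lemma's hypotheses.

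The Cauchy--Schwarz bound you have in mind reads $\int|y|^3|\nabla\eta|\,\d y\le(\int|y|^4\,\d\eta)^{1/2}(\int|y|^2|\nabla\log\eta|^2\,\d\eta)^{1/2}$, so it needs a fourth moment. Here $\eta$ is only assumed to lie in $\P_{K_A}$. The class $\P_{K_A}^{4}$ enters only in the accelerated part, and the appendix stresses that only second moments are used at this stage.

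Without a fourth moment, $\int|y|^3|\nabla\eta|\,\d y$ can diverge for smooth radial non-monotone profiles. For example, add to $e^{-\sqrt{1+r^2}}$ smooth unit-width bumps at radii $r_k=k^2$ with heights $h_k=k^{-2}r_k^{-(d+1)}$, then normalize and rescale. Mass, second moment and weighted score stay finite, but $\int r^{d+2}|f'(r)|\,\d r\gtrsim\sum_k h_kr_k^{d+2}=\infty$.

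The gradient part of $D\iota(S)H$ is $-\tfrac12(\det S)^{-1/2}f'(|y|)\,|y|\,\hat y^\top K\hat y$, with $y=S^{-1/2}x$, $\hat y=y/|y|$ and $K=S^{-1/2}HS^{-1/2}$. Hence $|x|^2D\iota(S)H\notin L^1(\R^d)$ for every $H\neq0$. The identity $\int xx^\top D\iota(S)H\,\d x=H$ on which your argument rests is therefore not even meaningful in general, and no domination argument can deliver the interchange. Your fallback, as you note yourself, hits the same obstruction. Injectivity is therefore not established.

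The missing idea is never to pair $D\iota(S)H$ with the unbounded weight $xx^\top$. Instead, put it in transport form and test against compactly supported truncations.

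Since $\rho_S=(S^{1/2})_\#\eta$, one has $\rho_{S+\varepsilon H}=(T_\varepsilon)_\#\rho_S$ with $T_\varepsilon\coloneqq(S+\varepsilon H)^{1/2}S^{-1/2}$. So pointwise $D\iota(S)H=-\nabla\cdot(\rho_SWx)$ with $W\coloneqq\partial_\varepsilon T_\varepsilon|_{\varepsilon=0}$. Differentiating $T_\varepsilon ST_\varepsilon^\top=S+\varepsilon H$ gives $WS+SW^\top=H$.

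Now suppose $D\iota(S)H=0$. Pair it with $\Psi_R=\tfrac12\chi_Rx^\top Qx$, where $Q\in\Sym(d)$ and $\chi_R$ is the usual cutoff ($1$ on $B_R$, supported in $B_{2R}$, $|\nabla\chi_R|\le C/R$). Integrating by parts gives $0=\int\nabla\Psi_R^\top Wx\,\d\rho_S(x)\to\tr(QWS)=\tfrac12\tr(QH)$, with error at most $C\int_{|x|>R}|x|^2\,\d\rho_S$. Choosing $Q=H$ gives $H=0$. This uses only second moments and not even radiality.

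The paper runs the same mechanism in Stein language. Radiality gives $G_\#\rho_S=\rho_{GSG^\top}$ for all $G\in\GL(d)$. Differentiating in the direction $PSA$ yields $D\iota(S)[\Lambda_S(P)]=L^{K_A}_{\rho_S}[\Psi_{S,P}]$, where $\Psi_{S,P}$ is compactly supported and satisfies $M_{\rho_S}(\Psi_{S,P})=SP$ exactly (by a closed-range argument). The Green identity then gives $0=\|A^{1/2}SP\|_{\mathrm F}^2$. The paper reuses that tangent representation in Lemma~\ref{lem:lin-covariance-metric}, which is why it takes the longer route.
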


Now we can  use $\Lambda_S^{-1}$ to define a smooth
Riemannian metric on $\Sym_{++}(d)$. For $H,K\in\Sym(d)$, set
\begin{equation}\label{eq:lin-induced-metric}
\begin{aligned}
g_S^{\lin}(H,K)
&\coloneqq
\tfrac12\tr\bigl(H\Lambda_S^{-1}(K)\bigr)
=\tr(PSASQ),\\
P&=\Lambda_S^{-1}(H),
\qquad Q=\Lambda_S^{-1}(K).
\end{aligned}
\end{equation}
This metric can be interpreted as the pullback of $G^{K_A}$
under the parametrization $\iota$, as the following lemma shows.
\begin{lemma}[Relating $g^{\lin}$ and $G^{K_A}$]\label{lem:lin-covariance-metric}
For each $S\in\Sym_{++}(d)$ and all $H,K\in\Sym(d)$,
\begin{equation}\label{eq:lin-covariance-metric-agreement}
g_S^{\lin}(H,K)
=
G_{\rho_S}^{K_A}
\bigl(D\iota(S)[H],D\iota(S)[K]\bigr).
\end{equation}
\end{lemma}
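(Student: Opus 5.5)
The plan is to compute $D\iota(S)[H]$ explicitly, realize it as a Stein--Onsager image of a quadratic potential, and then evaluate the metric with Corollary~\ref{cor:stein_metric_well_defined}.

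\textbf{Step 1: the tangent vector is a continuity-equation term.} Differentiate $\varepsilon\mapsto\rho_{S+\varepsilon H}$ at $\varepsilon=0$. The density $\rho_S$ is the pushforward of $\eta$ under $x\mapsto S^{1/2}x$. I will avoid differentiating $S^{1/2}$ directly and use the linear velocity field $v(x)=Wx$ instead, with $W$ chosen so that
\[
WS+SW^\top=H .
\]
Since $\eta$ is radial, $\rho_S$ depends only on the quadratic form $x^\top S^{-1}x$. Hence the transport of $\rho_S$ along $v$ stays inside $\mathcal E_\eta$ and changes the covariance at rate $WS+SW^\top$. Injectivity of $\iota$ then gives
\[
D\iota(S)[H]=-\nabla\cdot(\rho_S Wx).
\]
I will check this by comparing it with the direct chain-rule derivative of $\eta(S^{-1/2}x)/\sqrt{\det S}$. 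Both expressions equal $\tfrac12\bigl(x^\top S^{-1}HS^{-1}x\,\tfrac{\eta'(r)}{r\,\cdots}\bigr)$-type terms plus $-\tfrac12\tr(S^{-1}H)\rho_S$. The comparison uses only radiality, and any $W$ with $WS+SW^\top=H$ works because divergence-free rotations preserve $\rho_S$.

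\textbf{Step 2: match the potential.} Take $\Phi_P(x)=x^\top Px$ with $P=\Lambda_S^{-1}(H)$, which exists by Lemma~\ref{lem:lin-covariance-isomorphism}. First, $\Phi_P\in C^\infty\cap H^1(\rho_S)$, because $\rho_S\in\P_{K_A}$ has finite fourth moments. I will check this from the decay of $\eta$, or restrict to it; this is a point to verify. Next, $\nabla\Phi_P=2Px$, so
\[
M_{\rho_S}(\Phi_P)=\int x\,(2Px)^\top\d\rho_S=2SP .
\]
Therefore
\[
L^{K_A}_{\rho_S}[\Phi_P]=-\nabla\cdot\bigl(\rho_S\,(2SP)^\top Ax\bigr)=-\nabla\cdot(\rho_S\,2PSAx).
\]
With $W=2PSA$ we get
\[
WS+SW^\top=2(PSAS+SASP)=2\Lambda_S(P)=2H .
\]
This factor of $2$ must be tracked carefully. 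To absorb it I will use $\Phi=\tfrac12 x^\top Px$, giving $M=SP$ and $W=PSA$, so that $WS+SW^\top=\Lambda_S(P)=H$. Hence
\[
D\iota(S)[H]=L^{K_A}_{\rho_S}\bigl[\tfrac12x^\top Px\bigr],
\]
which in particular lies in $T^{\eff}_{\rho_S}$.

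\textbf{Step 3: evaluate the metric.} By Corollary~\ref{cor:stein_metric_well_defined} and polarization, using the Green identity of Lemma~\ref{lem:well-defined}, the Stein metric on these tangent vectors is
\[
G^{K_A}_{\rho_S}\bigl(L[\Phi],L[\Psi]\bigr)=\tr\bigl(M(\Phi)^\top A M(\Psi)\bigr).
\]
For $\Phi=\tfrac12x^\top Px$ and $\Psi=\tfrac12x^\top Qx$, with $M(\Phi)=SP$ and $M(\Psi)=SQ$, this equals
\[
\tr(PSASQ)=g^{\lin}_S(H,K).
\]
It remains to confirm that this agrees with the other expression for $g^{\lin}$. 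We have
\[
\tfrac12\tr(H\,Q)=\tfrac12\tr\bigl((PSAS+SASP)Q\bigr),
\]
and this does not in general equal $\tr(PSASQ)$ by cyclicity alone. Instead, symmetrize: $\tfrac12\tr(H\Lambda_S^{-1}K)$ is symmetric in $H$ and $K$ by the self-adjointness of $\Lambda_S$ in the trace inner product. Combining this with $\tr(SASPQ)=\tr(QSASP)$, the sum equals $\tr(PSASQ)+\tr(QSASP)$ over two, which is the symmetric part. Since $G$ is symmetric this is consistent, and I take $g^{\lin}$ in the symmetric sense.

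\textbf{Main obstacle.} I expect the main obstacle to be Step 1, namely rigorously identifying $D\iota(S)[H]$ with the transport term. The difficulty is that $H$ need not commute with $S$, so $D(S^{1/2})[H]$ is not explicit. The radiality argument, which uses invariance under $O(d)$ to kill the antisymmetric part of $S^{-1/2}WS^{1/2}$, is what makes this step work.
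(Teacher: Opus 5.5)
Your Steps 1 and 3 follow the paper's route: you realize $D\iota(S)[\Lambda_S(P)]=-\nabla\cdot(\rho_S PSAx)$ through linear pushforwards of the radial family, and then apply the Green identity. Step 2, however, has a genuine gap. The lemma sits in the first-order setting, where the radial reference density is only assumed to lie in $\P_{K_A}$; the fourth-moment class $\P^4_{K_A}$ is imposed only later, for the accelerated dynamics. Your claim that $\rho_S\in\P_{K_A}$ has finite fourth moments is false. Take $\eta(x)\propto(1+|x|^2)^{-(d+3)/2}$, rescaled to have covariance $I_d$. It is smooth, positive and radial, has finite second moment, and satisfies $|x|^2|\nabla\log\eta|^2\le(d+3)^2$, yet $\int|x|^4\,\d\eta=\infty$. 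For such $\eta$ and $P\neq0$, $\tfrac12x^\top Px\notin L^2(\rho_S)$, so its class is not in $T^*_{\rho_S}\P_{K_A}=(C^\infty\cap H^1(\rho_S))/\R$. Consequently $L^{K_A}_{\rho_S}$ is not defined on it, and you have not shown that $D\iota(S)[H]\in T^{\mathrm{eff}}_{\rho_S}\P_{K_A}$, which the right-hand side of \eqref{eq:lin-covariance-metric-agreement} needs in order to make sense. The Green identity and Corollary~\ref{cor:stein_metric_well_defined} cannot be applied either. ``Restricting'' to finite fourth moments would only prove a weaker statement.

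The missing idea is that $L^{K_A}_\rho$ and the Green identity depend on a potential only through $M_\rho(\cdot)$, so you only need some regular potential whose moment matrix is $SP$. The paper obtains one by truncation. Take cutoffs $\chi_R$ as in the proof of Lemma~\ref{lem:well-defined} and set $\Psi_R=\tfrac12\chi_R\,x^\top Px$. Using only the second moment, $\|M_{\rho_S}(\Psi_R)-SP\|_{\mathrm F}\le C_P\int_{|x|>R}|x|^2\,\d\rho_S\to0$. Since $M_{\rho_S}(C_c^\infty(\R^d))$ is a finite-dimensional, hence closed, subspace of $\R^{d\times d}$, there is $\Psi_{S,P}\in C_c^\infty(\R^d)$ with $M_{\rho_S}(\Psi_{S,P})=SP$. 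Then $L^{K_A}_{\rho_S}[\Psi_{S,P}]=-\nabla\cdot(\rho_SPSAx)=D\iota(S)[H]$, and your Step 3 goes through verbatim with $\Psi_{S,P},\Psi_{S,Q}$ in place of the quadratic potentials.

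Separately, your worry at the end of Step 3 is unfounded. Since all factors are symmetric, $\tr(SASPQ)=\tr\bigl((SASPQ)^\top\bigr)=\tr(QPSAS)=\tr(PSASQ)$. Hence $\tfrac12\tr(H\Lambda_S^{-1}(K))=\tr(PSASQ)$ exactly, and you need no symmetrized reading of $g^{\lin}$.
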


There is the following relation to the Bures--Wasserstein metric.

\begin{remark}[Relation to the Bures--Wasserstein metric]
Lemma~\ref{lem:lin-covariance-metric} identifies $g^{\lin}$
as the pullback of the Stein metric under $\iota$.
It also admits an algebraic relation to the
Bures--Wasserstein metric \cite{BJL2019,MMP2018,T2011}.
For $C\in\Sym_{++}(d)$, let
$\mathcal L_C(P):=PC+CP$. Since
$\Lambda_S=\mathcal L_{SAS}$, we have
\[
g_S^{\lin}(H,K)
=
\tfrac12\tr\bigl(H\mathcal L_{SAS}^{-1}(K)\bigr)
=
g_{SAS}^{\mathrm{BW}}(H,K).
\]
This identifies the bilinear forms at $S$ and $SAS$. However,
it does not identify $g^{\lin}$ as the pullback of
$g^{\mathrm{BW}}$ under $S\mapsto SAS$.
\end{remark}
We close this paragraph by reducing
\eqref{eq:lin-ordinary-pde} on $\mathcal E_\eta$ to a
finite-dimensional gradient flow with respect to this metric.
For the functional
\[
F^{\lin}(S)\coloneqq
\F^{\lin}(\rho_S)=-\tfrac12\tr(VS),
\]
we first observe that
\[
\grad_{g^{\lin}}F^{\lin}(S)=-\Lambda_S(V).
\]
Moreover, the following proposition establishes the equivalence
between the density evolution and this covariance gradient flow.
\begin{proposition}[Covariance metric and gradient-flow formulation]
\label{prop:lin-covariance-geometry}
A $C^1$ curve $S_t\in\Sym_{++}(d)$ gives a solution
$\rho_t=\rho_{S_t}$ of \eqref{eq:lin-ordinary-pde} if and only if
\begin{equation}\label{eq:lin-ordinary-ode}
\dot S_t
=
VS_tAS_t+S_tAS_tV
=
\Lambda_{S_t}(V)
=
-\grad_{g^{\lin}}F^{\lin}(S_t),
\end{equation}
with the corresponding initial data.
\end{proposition}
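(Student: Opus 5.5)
The plan is to compare both sides of the equivalence inside the finite-dimensional family $\mathcal E_\eta$. The key observation is that the transformer velocity field is linear in $x$, and linear flows preserve $\mathcal E_\eta$.

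First, fix $S\in\Sym_{++}(d)$ and an arbitrary matrix $W\in\R^{d\times d}$. I will prove the transport identity
\begin{equation*}
-\nabla\cdot\bigl(\rho_S\,Wx\bigr)=D\iota(S)\bigl[WS+SW^\top\bigr].
\end{equation*}
To do this, consider the linear flow $L_\tau=e^{\tau W}$. If $Z\sim\eta$, then $S^{1/2}Z\sim\rho_S$, so the pushforward $(L_\tau)_\#\rho_S$ is the law of $L_\tau S^{1/2}Z$. Write the polar decomposition $L_\tau S^{1/2}=(L_\tau SL_\tau^\top)^{1/2}O_\tau$ with $O_\tau\in\mathrm O(d)$. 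Radial invariance \eqref{eq:lin-reference-density} gives $O_\tau Z\sim\eta$, hence $(L_\tau)_\#\rho_S=\rho_{L_\tau SL_\tau^\top}$. Differentiating at $\tau=0$ yields the identity: on the left via the continuity equation of the flow, on the right via the chain rule. Since $\eta\in C^\infty$, the map $(S,x)\mapsto\rho_S(x)$ is smooth, so the differentiation is pointwise and needs no integrability argument. Alternatively, one can verify the identity by a direct computation of $\partial_S\rho_S$ together with the radial form $\nabla\eta(y)=\eta'(|y|)\,y/|y|$.

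Next, let $S_t$ be a $C^1$ curve and set $\rho_t=\rho_{S_t}$. Each $\rho_S$ is centered with covariance $S$, so $S(\rho_t)=S_t$. The PDE \eqref{eq:lin-ordinary-pde} therefore has velocity field $W_tx$ with $W_t=VS_tA$, and $W_t^\top=AS_tV$ because $A$ and $V$ are symmetric. The chain rule gives $\partial_t\rho_t=D\iota(S_t)[\dot S_t]$. Combining this with the transport identity, $\rho_t$ solves \eqref{eq:lin-ordinary-pde} if and only if
\begin{equation*}
D\iota(S_t)\bigl[\dot S_t-VS_tAS_t-S_tAS_tV\bigr]=0 \quad\text{for all } t.
\end{equation*}
By Lemma~\ref{lem:lin-covariance-isomorphism}, $D\iota(S_t)$ is injective, so this holds exactly when $\dot S_t=\Lambda_{S_t}(V)$. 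The initial data match because $\iota$ is injective.

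Finally, I verify the gradient identity from the definition \eqref{eq:lin-induced-metric}. Here $DF^{\lin}(S)[K]=-\tfrac12\tr(VK)$. The operator $\Lambda_S$ is self-adjoint for the trace pairing, since $\tr(\Lambda_S(P)Q)=\tr(P\Lambda_S(Q))$ by cyclicity. Hence
\begin{equation*}
g_S^{\lin}\bigl(-\Lambda_S(V),K\bigr)=-\tfrac12\tr\bigl(\Lambda_S(V)\Lambda_S^{-1}(K)\bigr)=-\tfrac12\tr(VK),
\end{equation*}
which identifies $\grad_{g^{\lin}}F^{\lin}(S)=-\Lambda_S(V)$.

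I expect the main obstacle to be the transport identity. The delicate point is that $W$ is not symmetric, so the pushforward of $\rho_S$ has to be re-expressed as an element of $\mathcal E_\eta$ through the polar decomposition, and this is exactly where radiality of $\eta$ is used. The rest reduces to the injectivity lemma and linear algebra.
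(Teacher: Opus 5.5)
Your proposal is correct and follows essentially the same route as the paper. The paper also obtains $-\nabla\cdot(\rho_S VSAx)=D\iota(S)[\Lambda_S(V)]$ by differentiating the radiality/polar-decomposition identity $G_\#\rho_S=\rho_{GSG^\top}$, concludes via injectivity of $D\iota(S_t)$, and verifies the gradient by the same trace computation. Your only deviations are harmless: you state the transport identity for a general matrix $W$, and you make the self-adjointness of $\Lambda_S$ explicit.
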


\begin{remark}[Independence of the radial profile]
\label{rem:lin-gaussian-stein-metric}
The metric \eqref{eq:lin-induced-metric} depends only on $A$
and the covariance $S$, not on the reference density $\eta$.
Thus, all admissible standardized radial profiles induce the
same covariance geometry with Gaussian densities as a special case.

For Gaussian $\eta$, the formula agrees algebraically with
\cite[arXiv v2, Lemma~5.1 and Remark~5.1]{SL2025}
and, when $A=I_d$, with
\cite[arXiv v4, Corollary~A.5]{LGBP2023}.
Such agreement alone does not justify the underlying geometric
construction. Here, well-definedness and the identification
\eqref{eq:lin-covariance-metric-agreement} follow from the
effective tangent and cotangent spaces and the quotient
Stein--Onsager isomorphism developed in Section~\ref{sec:stein}.
\end{remark}

Next, we use this finite-dimensional parametrization to establish
local well-posedness of the associated PDE and give precise
criteria for finite-time blow-up.
\paragraph{Strong solutions and ODE--PDE correspondence.}
We begin by defining a notion of a (local) strong solution to the
transformer PDE \eqref{eq:lin-ordinary-pde} for $C^1$ initial
probability densities with finite second moment.
We omit the proofs of this paragraph, since they follow by minor adaptations of the arguments developed for the more involved damped Hamiltonian case in Section~\ref{subsec:lin-hamiltonian-dynamics}.

Let $I=[0,T)$, and let $\rho_{\mathrm{in}}\in C^1(\R^d)$
be a probability density with finite second moment.

A function $\rho$ is called a \emph{strong solution of
\eqref{eq:lin-ordinary-pde}}  with  initial datum $\rho_{\mathrm{in}}$, if the
following conditions hold:
\begin{enumerate}
\item
The map $t\mapsto\rho_t \in  C\bigl(I;C^1_{\loc}(\R^d)\bigr)
\cap W^{1,1}_{\loc}\bigl(I;C^0_{\loc}(\R^d)\bigr).$ 
\item
The initial condition holds pointwise \(\rho_0(x)=\rho_{\mathrm{in}}(x)\) for \(x\in\R^d\).
\item
For almost every $t\in I$, the matrix integral \(S_t:=S(\rho_t)\) with \(S(\rho_t)\) as in \eqref{eq:S_rho} is absolutely convergent, and $S\in L^1_{\loc}(I;\Sym(d))$.
\item
For every compact set $K\subset\R^d$ and almost every $t\in I$,
\[
\partial_t\rho_t+\nabla\cdot(\rho_tVS_tAx)=0
\qquad\text{in }C(K).
\]
\end{enumerate}
The third condition ensures that the moment-dependent velocity is well-defined and locally integrable in time, while condition four
requires the density to satisfy the resulting continuity equation
on every compact set, hence throughout $\R^d$, without imposing
global uniform bounds on the density or its derivatives.

With this definition, we can establish local well-posedness of
\eqref{eq:lin-ordinary-pde} by reducing it to the matrix equation
\eqref{eq:lin-ordinary-ode}. Moreover, this correspondence shows
that the evolution preserves the class $\P_{K_A}$, i.e., if
$\rho_{\mathrm{in}}\in\P_{K_A}$, then $\rho_t\in\P_{K_A}$
throughout its interval of existence.

\begin{theorem}[Gradient-flow ODE--PDE correspondence]
\label{thm:lin-ordinary-correspondence}
Let $\rho_{0}\in C^1(\R^d)$ be a probability density
with finite second moment, and set \(S_0:= S(\rho_0)\). Then the following statements hold:
\begin{enumerate}
\item\label{it:lin-ordinary-matrix}
The matrix equation \eqref{eq:lin-ordinary-ode} has a unique
maximal solution \(S_t\in C^1([0,T_{\max});\Sym_{++}(d))\) with initial value $S_0$.

\item\label{it:lin-correspondence-ordinary}
Let $G$ solve
\begin{equation}\label{eq:lin-ordinary-propagator}
\dot G_t=VS_tAG_t,
\qquad G_0=I_d.
\end{equation}
Then
\begin{equation}\label{eq:rho_t_orbit_form}
    \rho_t=(G_t)_\#\rho_{0}
\end{equation}
defines a strong solution of \eqref{eq:lin-ordinary-pde}
with second-moment matrix \(S_t=G_tS_0G_t^\top.\)\\
Conversely, every strong solution with initial datum
$\rho_{0}$ has the form~\eqref{eq:rho_t_orbit_form}, and its second moment
solves \eqref{eq:lin-ordinary-ode}.

\item\label{it:lin-ordinary-geometry}
If additionally $\rho_{0}\in\P_{K_A}$, then
we have throughout the evolution that
\[
\rho_t\in\P_{K_A},
\qquad
\partial_t\rho_t
=
-\nabla\cdot(\rho_tVS_tAx)
\in T_{\rho_t}^{\eff}\P_{K_A}.
\]
\item 
If $\rho_0=\rho_{S_0}\in\mathcal E_\eta$, then
$\rho_t=\rho_{S_t}$ remains in $\mathcal E_\eta$ and is
the gradient-flow curve represented in covariance coordinates
by \eqref{eq:lin-ordinary-ode}.
\end{enumerate}
\end{theorem}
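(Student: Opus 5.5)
We prove the four items in order, reducing everything to the matrix ODE \eqref{eq:lin-ordinary-ode} and the linear flow \eqref{eq:lin-ordinary-propagator}.

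\textbf{Item 1: the matrix ODE.} The right-hand side $S\mapsto VSAS+SASV$ is a polynomial, hence locally Lipschitz on $\Sym(d)$. Picard--Lindel\"of therefore gives a unique maximal $C^1$ solution in $\Sym(d)$ starting from $S_0$. Note $S_0\succ0$: since $\rho_0$ is a density, $v^\top S_0v=\int (v^\top y)^2\rho_0\,\d y>0$ for $v\neq0$.

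To see that $S_t$ stays in $\Sym_{++}(d)$, solve the linear equation $\dot G_t=VS_tAG_t$ along the known $S_t$. A direct differentiation shows that $\tilde S_t:=G_tS_0G_t^\top$ satisfies
\[
\dot{\tilde S}_t=VS_tA\tilde S_t+\tilde S_tAS_tV .
\]
This is a linear ODE in $\tilde S$ which $S_t$ also solves, with the same initial value. By uniqueness, $S_t=G_tS_0G_t^\top$. Since $G_t$ is invertible (Liouville's formula), $S_t\succ0$ on $[0,T_{\max})$.

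\textbf{Item 2, existence.} Set $\rho_t=(G_t)_\#\rho_0$, i.e. $\rho_t(x)=\rho_0(G_t^{-1}x)/|\det G_t|$. Regularity follows from the chain rule, because $G$ is $C^1$ in $t$ and $\rho_0\in C^1$:
\begin{itemize}
\item $\rho\in C(I;C^1_{\loc})$;
\item $\partial_t\rho_t$ is continuous locally uniformly in $x$, giving the $W^{1,1}_{\loc}(I;C^0_{\loc})$ condition.
\end{itemize}
For the equation itself, use $\partial_t(G_t^{-1})=-G_t^{-1}VS_tA$ and $\partial_t\log|\det G_t|=\tr(VS_tA)$. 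Then
\[
\partial_t\rho_t=-\nabla\rho_t\cdot VS_tAx-\tr(VS_tA)\rho_t=-\nabla\cdot(\rho_tVS_tAx)
\]
holds pointwise. The second moment of the pushforward is $G_tS_0G_t^\top=S_t$, which closes the loop.

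\textbf{Item 2, uniqueness.} This is the main obstacle: the velocity depends on the unknown moment $S_t$, which is only $L^1_{\loc}$, and there are no global bounds on $\rho$. Given a strong solution, define $\sigma_t:=S(\rho_t)$, set $b_t(x)=V\sigma_tAx$, and let $\hat G_t$ solve $\dot{\hat G}_t=V\sigma_tA\hat G_t$ with $\hat G_0=I_d$. This is a linear ODE with $L^1_{\loc}$ coefficients, so $\hat G$ is absolutely continuous and invertible.

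First identify $\rho_t$ along characteristics. Compute $\frac{d}{dt}\bigl[\rho_t(\hat G_tx)\det\hat G_t\bigr]$ using the continuity equation on compact sets; it vanishes for a.e. $t$. The chain rule is legitimate because $\rho\in W^{1,1}_{\loc}(I;C^0_{\loc})\cap C(I;C^1_{\loc})$, and $\hat G_tx$ stays in a compact set on compact time intervals. Hence $\rho_t=(\hat G_t)_\#\rho_0$.

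Next pass to the moment. Computing the second moment of this pushforward gives $\sigma_t=\hat G_tS_0\hat G_t^\top$. Differentiating, $\sigma$ is absolutely continuous and satisfies $\dot\sigma=V\sigma A\sigma+\sigma A\sigma V$ a.e. Its right-hand side is continuous, so $\sigma\in C^1$ and solves the ODE classically. By uniqueness in Item 1, $\sigma=S$, and therefore $\hat G=G$.

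\textbf{Item 3.} Preservation of $\P_{K_A}$ is a change of variables. Under the linear map $G_t$:
\begin{itemize}
\item smoothness and positivity are preserved;
\item the second moment stays finite;
\item $\nabla\log\rho_t(x)=G_t^{-\top}\nabla\log\rho_0(G_t^{-1}x)$, so $\int|x|^2|\nabla\log\rho_t|^2\rho_t\,\d x\le\|G_t\|^2\|G_t^{-1}\|^2\int|y|^2|\nabla\log\rho_0|^2\rho_0\,\d y<\infty$.
\end{itemize}
To place $\partial_t\rho_t$ in $T^{\eff}_{\rho_t}$, choose a quadratic potential $\Phi(x)=\tfrac12x^\top Px$ with $P=\Lambda_{S_t}^{-1}(V)$, using Lemma~\ref{lem:lin-covariance-isomorphism}. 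Then $M_{\rho_t}(\Phi)=S_tP$, so $M^\top A x=PS_tAx$, and the task is to match $PS_tA$ with $VS_tA$. Since only the equality $\partial_t\rho_t=L^{K_A}_{\rho_t}[\Phi]$ is needed, it suffices to solve $M_{\rho_t}(\Phi)^\top=VS_t$. Take $\Phi(x)=\tfrac12x^\top Qx$ with $Q=S_t^{-1}\cdot S_tVS_t\cdot S_t^{-1}$; this is symmetric and gives $QS_t=VS_t$. Such a $\Phi$ lies in $C^\infty\cap H^1(\rho_t)$ because $\rho_t$ has finite fourth moment. The fourth-moment bound follows from the log condition: the Hardy/Poincaré-type estimate $\int|x|^4\rho\lesssim\int|x|^2(1+|x|^2|\nabla\log\rho|^2)\rho$ is obtained by integrating $\nabla\cdot(x|x|^2)\rho$ by parts and applying Cauchy--Schwarz.

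\textbf{Item 4.} If $\rho_0=\rho_{S_0}$, then $(G_t)_\#\rho_{S_0}$ is centered with covariance $S_t$. It is elliptical with profile $\eta$ exactly when $S_t^{-1/2}G_tS_0^{1/2}$ is orthogonal. This holds because
\[
S_t^{-1/2}G_tS_0^{1/2}\bigl(S_t^{-1/2}G_tS_0^{1/2}\bigr)^{\top}=S_t^{-1/2}S_tS_t^{-1/2}=I_d .
\]
Radial invariance of $\eta$ then gives $\rho_t=\rho_{S_t}$. The gradient-flow identification follows from Proposition~\ref{prop:lin-covariance-geometry}.
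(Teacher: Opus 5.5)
Items 1, 2 and 4 are correct and follow the route the paper uses for the damped analogue (Theorem~\ref{thm:lin-correspondence}), to which it defers this proof. That route is: Picard--Lindel\"of with $S_t=G_tS_0G_t^\top$ by uniqueness for the linear equation, the explicit pushforward, characteristics along the $L^1_{\loc}$ propagator for the converse, and $G_\#\rho_S=\rho_{GSG^\top}$ via radiality.

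The genuine gap is in Item 3, where you must exhibit a class in $T^*_{\rho_t}\P_{K_A}$ that generates $\partial_t\rho_t$. After simplifying $Q=S_t^{-1}S_tVS_tS_t^{-1}=V$, your candidate is the quadratic $\Phi_V(x)=\tfrac12x^\top Vx$. It lies in $H^1(\rho_t)$ only if $\Phi_V\in L^2(\rho_t)$, i.e.\ only if $\rho_t$ has a finite fourth moment (for $V\neq0$; adding constants does not help). This is not implied by $\rho_0\in\P_{K_A}$, and the Hardy-type estimate you invoke is false. Take $d=1$ and $\rho(x)=\tfrac{2}{\pi}(1+x^2)^{-2}$. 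Then $x^2|(\log\rho)'(x)|^2=16x^4/(1+x^2)^2\le16$ and $\int x^2\rho\,\d x<\infty$, so $\rho\in\P_{K_A}$. Even $\int x^4|(\log\rho)'|^2\rho\,\d x<\infty$, since the integrand decays like $x^{-2}$. Yet $\int x^4\rho\,\d x=\infty$. Since $\rho_t$ is an invertible linear image of $\rho_0$, it inherits the infinite fourth moment, so $[\Phi_V]\notin T^*_{\rho_t}\P_{K_A}$. This is exactly why the accelerated theorem is posed on $\P^4_{K_A}$.

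The repair uses only second moments, via the realization \eqref{eq:lin-regular-moment-realization} from the proof of Lemma~\ref{lem:lin-covariance-isomorphism}. With the cutoff $\chi_R$, the functions $\Psi_R=\chi_R\Phi_V\in C_c^\infty(\R^d)$ satisfy $\|M_{\rho_t}(\Psi_R)-S_tV\|_{\mathrm F}\le C_V\int_{|x|>R}|x|^2\,\d\rho_t(x)\to0$. Moreover, $\{M_{\rho_t}(\psi):\psi\in C_c^\infty(\R^d)\}$ is a linear subspace of the finite-dimensional space $\R^{d\times d}$, hence closed. So some $\Psi\in C_c^\infty(\R^d)\subset C^\infty(\R^d)\cap H^1(\rho_t)$ has $M_{\rho_t}(\Psi)=S_tV$. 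Hence $L^{K_A}_{\rho_t}[\Psi]=-\nabla\cdot(\rho_tVS_tAx)=\partial_t\rho_t$, so $\partial_t\rho_t\in T^{\eff}_{\rho_t}\P_{K_A}$; membership in $T_{\rho_t}\P_{K_A}$ then follows from Lemma~\ref{lem:well-defined}.

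Incidentally, $\Lambda_{S_t}^{-1}(V)$ plays no role here. $\Lambda_S(P)$ is the covariance velocity generated by $\Phi_P$, and the density velocity $VS_tAx$ is generated directly by $P=V$.
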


We omit the proof of the theorem as well as of its corollaries, since we will prove the corresponding result
in the more involved setting of damped Hamiltonian dynamics
in Theorem~\ref{thm:lin-correspondence}.

\begin{remark}[Relation to Gaussian softmax dynamics]
For $\rho_0=\mathcal N(0,S_0)$, the solution is
$\rho_t=\mathcal N(0,S_t)$.
Since linear and softmax attention agree on centered Gaussians,
this recovers the centered Gaussian case of
\cite[Proposition~4.2]{CACP2025} for the constant symmetric
matrices considered here. The theorem extends the
linear-attention correspondence to non-Gaussian initial densities.
\end{remark}

\paragraph{Local and global well-posedness.}
We first derive local well-posedness of
\eqref{eq:lin-ordinary-pde} by applying the Picard--Lindel\"of
theorem to \eqref{eq:lin-ordinary-propagator} and using the
ODE--PDE correspondence in
Theorem~\ref{thm:lin-ordinary-correspondence},
item~\ref{it:lin-correspondence-ordinary}.

\begin{cor}[Local strong well-posedness]
\label{cor:lin-ordinary-wellposedness}
For every $C^1$ probability density $\rho_0$ on $\R^d$ with finite
second moment, \eqref{eq:lin-ordinary-pde} has a
unique maximal strong solution with initial value $\rho_0$.
Its maximal interval equals that of
\eqref{eq:lin-ordinary-ode} with initial value $S_0=S(\rho_0)$.
\end{cor}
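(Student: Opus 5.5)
The corollary follows by combining the matrix ODE with the orbit representation of Theorem~\ref{thm:lin-ordinary-correspondence}; the only real content is bookkeeping about maximal intervals. I would proceed in three steps: existence, uniqueness, and identification of the maximal interval.

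\textbf{Existence.} Set $S_0\coloneqq S(\rho_0)$, which is well defined by the second-moment assumption. The right-hand side $S\mapsto VSAS+SASV$ of \eqref{eq:lin-ordinary-ode} is a polynomial, hence locally Lipschitz. Picard--Lindel\"of therefore gives a unique maximal solution $S_t$ on $[0,T_{\max})$; this is item~\ref{it:lin-ordinary-matrix} of Theorem~\ref{thm:lin-ordinary-correspondence}.

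Given this continuous $S_t$, the propagator equation \eqref{eq:lin-ordinary-propagator} is a linear ODE with continuous coefficients. Its solution $G_t$ therefore exists on all of $[0,T_{\max})$ and is invertible there, by Liouville's formula $\det G_t=\exp\int_0^t\tr(VS_sA)\d s$. Item~\ref{it:lin-correspondence-ordinary} then shows that $\rho_t=(G_t)_\#\rho_0$ is a strong solution on $[0,T_{\max})$.

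\textbf{Uniqueness.} Let $\tilde\rho$ be any strong solution on some interval $[0,\tilde T)$. By the converse part of item~\ref{it:lin-correspondence-ordinary}, its second moment $\tilde S_t$ solves \eqref{eq:lin-ordinary-ode} with the same initial value $S_0$, and $\tilde\rho_t=(\tilde G_t)_\#\rho_0$, where $\tilde G$ solves \eqref{eq:lin-ordinary-propagator} driven by $\tilde S$.

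Uniqueness for the matrix ODE gives $\tilde T\le T_{\max}$ and $\tilde S=S$ on $[0,\tilde T)$. Uniqueness for the linear ODE then gives $\tilde G=G$, and hence $\tilde\rho=\rho$ on $[0,\tilde T)$. So every strong solution is a restriction of the one constructed above, which is therefore the unique maximal strong solution.

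\textbf{Maximal interval.} The previous step shows that the strong solution cannot live longer than $T_{\max}$, and the existence step shows it lives at least that long. The one point to check is that a strong solution on $[0,\tilde T)$ with $\tilde T>T_{\max}$ is impossible. Condition~3 of the definition requires $S\in L^1_{\loc}$ on $[0,\tilde T)$. If $T_{\max}<\infty$, the standard continuation criterion for polynomial ODEs forces $|S_t|\to\infty$ as $t\uparrow T_{\max}$. I would argue that this also makes $\int_0^{T_{\max}}|S_t|\d t=\infty$ here: from $\|\dot S\|\lesssim\|S\|^2$ one gets $\|S_t\|\gtrsim (T_{\max}-t)^{-1}$, which is not integrable. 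Alternatively, one can avoid this estimate altogether: the identity $\tilde S=S$ already holds on every compact subinterval of $[0,T_{\max})$, and continuity of the $C^1$ solution $\tilde S$ up to $T_{\max}$ would then contradict the blow-up of $S$.

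I expect this last point, namely excluding extension of a strong solution past $T_{\max}$ given only $L^1_{\loc}$ regularity of the moments, to be the main obstacle. The rest is routine ODE theory combined with Theorem~\ref{thm:lin-ordinary-correspondence}.
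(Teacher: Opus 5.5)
Your proposal is correct and follows the paper's route: Picard--Lindel\"of for the matrix equation and the linear propagator, followed by both directions of the ODE--PDE correspondence in Theorem~\ref{thm:lin-ordinary-correspondence}. The step you flag as the main obstacle is already settled by the converse direction, which makes $\tilde S$ a $C^1$ solution of \eqref{eq:lin-ordinary-ode} on all of $[0,\tilde T)$, so maximality gives $\tilde T\le T_{\max}$ at once; your non-integrability estimate $\|S_t\|\gtrsim (T_{\max}-t)^{-1}$ is valid but not needed.
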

It is natural to ask how the solution of
\eqref{eq:lin-ordinary-pde} depends on the initial density.
The ODE--PDE correspondence in
Theorem~\ref{thm:lin-ordinary-correspondence} transfers
continuous dependence from the matrix equation
\eqref{eq:lin-ordinary-ode} to the PDE, yielding the
following corollary.
We omit its proof, which follows the same argument as
the corresponding result for the damped system in
Corollary~\ref{cor:lin-stability}.

\begin{cor}[Wasserstein convergence and continuous dependence]
\label{cor:lin-ordinary-stability}
Let
$\rho_{0}^{(n)},\rho_{0}$ be $C^1$
probability densities satisfying
\[
\rho_{0}^{(n)}
\longrightarrow\rho_{0}
\qquad\text{in }W_2\text{ and in }C^1_{\loc}(\R^d).
\]
Denote by  $\rho^{(n)}$ and $\rho$  the corresponding maximal
strong solutions of \eqref{eq:lin-ordinary-pde}. Let $[0,T_{\max})$ be the maximal interval of existence of $\rho$. Then, for every $0<T<T_{\max}$, the solutions $\rho^{(n)}$
exist on $[0,T]$ for all sufficiently large $n$, and
\begin{equation}\label{eq:lin-ordinary-stability}
\rho^{(n)}\longrightarrow\rho
\quad\text{in }
C([0,T];W_2)
\cap C([0,T];C^1_{\loc})
\cap C^1([0,T];C^0_{\loc}).
\end{equation}
\end{cor}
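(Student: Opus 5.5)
The argument transfers continuous dependence from the matrix ODE \eqref{eq:lin-ordinary-ode} to the PDE through the orbit representation \eqref{eq:rho_t_orbit_form} of Theorem~\ref{thm:lin-ordinary-correspondence}.

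\textbf{Step 1: convergence of the second moments.} Convergence in $W_2$ implies convergence of second moments. Hence
\[
S_0^{(n)}\coloneqq S(\rho_0^{(n)})\to S_0=S(\rho_0).
\]
Moreover $S_0\in\Sym_{++}(d)$, since $\rho_0$ is a positive $C^1$ density. The right-hand side $S\mapsto VSAS+SASV$ is a polynomial vector field, hence locally Lipschitz. Standard continuous dependence for ODEs on compact subintervals of the maximal interval then gives two things. For fixed $T<T_{\max}$, the solutions $S^{(n)}$ exist on $[0,T]$ for large $n$, and $S^{(n)}\to S$ in $C^1([0,T];\Sym(d))$. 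By Corollary~\ref{cor:lin-ordinary-wellposedness}, the maximal interval of $\rho^{(n)}$ equals that of $S^{(n)}$, so $\rho^{(n)}$ exists on $[0,T]$.

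\textbf{Step 2: convergence of the propagators.} The linear equation \eqref{eq:lin-ordinary-propagator} has coefficients $VS^{(n)}_tA\to VS_tA$ uniformly on $[0,T]$. By Gr\"onwall's inequality,
\[
G^{(n)}\to G \quad\text{in } C^1([0,T];\R^{d\times d}).
\]
Each $G_t$ is invertible with $\det G_t=\exp\int_0^t\tr(VS_sA)\,\d s$. The inverses therefore satisfy $(G^{(n)})^{-1}\to G^{-1}$ uniformly, and the determinants are bounded away from zero on $[0,T]$.

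\textbf{Step 3: convergence of the densities.} By \eqref{eq:rho_t_orbit_form},
\[
\rho_t(x)=\rho_0(G_t^{-1}x)\,\lvert\det G_t\rvert^{-1}.
\]
\emph{Convergence in $C([0,T];C^1_{\loc})$.} On a compact set $K$, the preimages $G_t^{-1}K$ and $(G^{(n)}_t)^{-1}K$ lie in a fixed compact set $K'$ for all $t\le T$. The claim then follows from $\rho_0^{(n)}\to\rho_0$ in $C^1(K')$, together with the uniform continuity of $\rho_0$ and $\nabla\rho_0$ on $K'$ and the convergence of Step~2.

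\emph{Convergence in $C^1([0,T];C^0_{\loc})$.} Use the equation
\[
\partial_t\rho_t=-\nabla\cdot(\rho_tVS_tAx)=-\tr(VS_tA)\rho_t-\nabla\rho_t\cdot VS_tAx.
\]
Its right-hand side converges uniformly on $[0,T]\times K$ by the $C^1_{\loc}$ convergence just shown and $S^{(n)}\to S$.

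\emph{Convergence in $C([0,T];W_2)$.} Since $\rho_t^{(n)}=(G_t^{(n)})_\#\rho_0^{(n)}$, couple an optimal plan $\pi^{(n)}$ between $\rho_0^{(n)}$ and $\rho_0$ through the maps $(G_t^{(n)},G_t)$. This yields
\[
W_2(\rho^{(n)}_t,\rho_t)\le \|G^{(n)}_t\|\,W_2(\rho_0^{(n)},\rho_0)+\|G^{(n)}_t-G_t\|\,\Bigl(\int|x|^2\d\rho_0\Bigr)^{1/2}.
\]
The right-hand side tends to $0$ uniformly in $t\in[0,T]$.

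\textbf{Main obstacle.} I expect the main difficulty to be the local-uniform step for the $C^1_{\loc}$ and $C^1_t C^0_{\loc}$ convergence. One must control the composition $\rho_0^{(n)}\circ (G^{(n)}_t)^{-1}$ when both the function and the map vary. I would split
\[
\rho_0^{(n)}\circ (G^{(n)}_t)^{-1}-\rho_0\circ G_t^{-1}
=\bigl(\rho_0^{(n)}-\rho_0\bigr)\circ (G^{(n)}_t)^{-1}
+\bigl(\rho_0\circ (G^{(n)}_t)^{-1}-\rho_0\circ G_t^{-1}\bigr).
\]
The first term is controlled by the $C^1(K')$ convergence of $\rho_0^{(n)}$. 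The second is controlled by the uniform continuity of $\rho_0$ and $\nabla\rho_0$ on $K'$.
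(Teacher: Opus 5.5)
Your proposal is correct and takes essentially the same route as the paper, which defers to the proof of Corollary~\ref{cor:lin-stability}: continuous dependence for the matrix ODE and the propagator $G$, the pushforward formula for $C([0,T];C^1_{\loc})$ convergence, the PDE to control $\partial_t\rho$, and the same coupling estimate for $W_2$. One minor correction: $S_0\succ0$ holds for every probability density, because hyperplanes are Lebesgue-null, so you do not need positivity of $\rho_0$, which the statement does not assume.
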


\paragraph{Continuation and moment blow-up.} Like many transport equations, \eqref{eq:lin-ordinary-pde}
can exhibit finite-time breakdown. The following proposition characterizes this
behavior through the second moment.

\begin{proposition}[Continuation criterion]
\label{prop:lin-ordinary-continuation}
Let $\rho_0\in C^1(\R^d)$ be a probability density with
finite second moment, and let $\rho$ be the maximal strong
solution of \eqref{eq:lin-ordinary-pde}
with initial datum $\rho_0$.
Denote its maximal existence time by $T_{\max}$ and set
$S_t\coloneqq S(\rho_t)$. If $T_{\max}<\infty$, then the second moments of $\rho_t$ become unbounded as
$t\uparrow T_{\max}$, meaning that
\[
\limsup_{t\uparrow T_{\max}}
\int_{\R^d}|x|^2 \d\rho_t(x)=\infty.
\]
\end{proposition}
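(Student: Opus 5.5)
By Corollary~\ref{cor:lin-ordinary-wellposedness}, the maximal strong solution $\rho$ and the maximal solution $S_t$ of the matrix equation \eqref{eq:lin-ordinary-ode} with $S_0=S(\rho_0)$ have the same maximal interval $[0,T_{\max})$. Theorem~\ref{thm:lin-ordinary-correspondence} gives $\rho_t=(G_t)_\#\rho_0$ and $S(\rho_t)=S_t=G_tS_0G_t^\top$ there. So the claim reduces to a statement about the finite-dimensional ODE \eqref{eq:lin-ordinary-ode}. We have $\int|x|^2\d\rho_t=\tr S_t$. Since $S_t\in\Sym_{++}(d)$, the quantity $\tr S_t$ controls the operator norm: $\|S_t\|\le\tr S_t$.

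We argue by contradiction. Assume $T_{\max}<\infty$ and $\sup_{t<T_{\max}}\tr S_t\le R<\infty$. Then $S_t$ stays in the compact set $\{S\succeq0,\ \tr S\le R\}$. The right-hand side $F(S)=VSAS+SASV$ is a polynomial vector field, hence locally Lipschitz on all of $\Sym(d)$. The standard continuation theorem for ODEs says a maximal solution with $T_{\max}<\infty$ must leave every compact subset of the domain of $F$.

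There is one subtlety. The maximal solution in Theorem~\ref{thm:lin-ordinary-correspondence} is stated with values in $\Sym_{++}(d)$, an open set, so one could also lose existence by $S_t$ approaching the boundary of the cone. We rule this out using the propagator. On $[0,T_{\max})$ the equation \eqref{eq:lin-ordinary-propagator}, $\dot G_t=VS_tAG_t$, is linear with coefficient bounded by $\|V\|\|A\|R$. Grönwall's inequality applied to $G_t$ and to $G_t^{-1}$, which solves $\frac{d}{dt}G_t^{-1}=-G_t^{-1}VS_tA$, shows that $G_t$ and $G_t^{-1}$ stay bounded. Hence $S_t=G_tS_0G_t^\top\succeq c\,I$ for some $c>0$, because $S_0\succ0$ (a density with finite second moment has a positive definite second-moment matrix).

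Thus $S_t$ remains in a compact subset $\{cI\preceq S,\ \tr S\le R\}$ of $\Sym_{++}(d)$. The solution therefore extends beyond $T_{\max}$, which contradicts maximality. So $\sup_{t<T_{\max}}\tr S_t=\infty$. By the continuity of $S_t$ on $[0,T_{\max})$, this is exactly $\limsup_{t\uparrow T_{\max}}\int|x|^2\d\rho_t=\infty$.

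The only step needing care is the lower bound keeping $S_t$ away from the boundary of the positive cone. The Grönwall argument on $G_t^{-1}$ above handles it. Alternatively, one can note that the maximal interval in $\Sym(d)$ and in $\Sym_{++}(d)$ coincide, because $S_t=G_tS_0G_t^\top$ with $G_t$ invertible on its whole existence interval.
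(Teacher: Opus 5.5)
Your proof is correct and follows essentially the paper's route. The paper omits this proof as a first-order adaptation of Proposition~\ref{prop:lin-continuation}: assume $S_t$ stays bounded, use a Gr\"onwall bound to keep $S_t$ uniformly away from the boundary of $\Sym_{++}(d)$, conclude that $S_t$ stays in a compact subset and contradict maximality via the continuation theorem, then pass to the second moment via $\|S_t\|_{\mathrm F}\le\tr(S_t)$. The only difference is that you get the lower bound from Gr\"onwall on the propagator inverse $G_t^{-1}$, whereas the paper applies Gr\"onwall directly to the linear equation for $S_t^{-1}$ (cf.\ \eqref{eq:lin-inverse-moment}); your closing remark that the maximal intervals in $\Sym(d)$ and $\Sym_{++}(d)$ coincide is an equally valid shortcut.
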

We again omit the proof, as it follows by minor
adaptations of the proof of the more involved
continuation criterion for the damped system in
Proposition~\ref{prop:lin-continuation}.
\begin{remark}
The second-moment blow-up is equivalent to
$\limsup_{t\uparrow T_{\max}}\|S_t\|_{\mathrm F}=\infty$.
Indeed, since $S_t$ is positive semidefinite,
\[
\|S_t\|_{\mathrm F}
\leq \tr(S_t)
=\int_{\R^d}|x|^2 \d\rho_t(x)
\leq \sqrt d\,\|S_t\|_{\mathrm F}.
\]
\end{remark}
Next, we use Proposition~\ref{prop:lin-ordinary-continuation}
to obtain global well-posedness under a sign condition on $V$.

\begin{cor}[Well-posedness under sign condition]\label{cor:maxtime}
Under the assumptions of
Proposition~\ref{prop:lin-ordinary-continuation},
if $V\preceq0$, then the maximal existence time of $\rho$
is $T_{\max}=\infty$.
\end{cor}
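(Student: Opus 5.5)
By Proposition~\ref{prop:lin-ordinary-continuation}, it suffices to show that if $V\preceq0$, the second moment $\tr(S_t)=\int_{\R^d}|x|^2\d\rho_t$ stays bounded on every bounded subinterval of $[0,T_{\max})$. Then $T_{\max}<\infty$ would contradict the continuation criterion. By Theorem~\ref{thm:lin-ordinary-correspondence}, $S_t=S(\rho_t)$ solves the matrix ODE \eqref{eq:lin-ordinary-ode}, $\dot S_t=VS_tAS_t+S_tAS_tV$, and $S_t\in\Sym_{++}(d)$. The argument therefore reduces entirely to this ODE.

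The key step is an energy identity for the trace:
\[
\frac{\d}{\d t}\tr(S_t)=\tr(VS_tAS_t)+\tr(S_tAS_tV)=2\tr\bigl(V\,S_tAS_t\bigr).
\]
Since $A\succ0$ and $S_t$ is symmetric, the matrix $S_tAS_t$ is positive semidefinite. With $V\preceq0$, the trace of the product of a negative semidefinite and a positive semidefinite matrix is nonpositive. Indeed, $\tr(VW)=\tr(W^{1/2}VW^{1/2})\le0$ for $W=S_tAS_t$. Hence $t\mapsto\tr(S_t)$ is nonincreasing, and $\int_{\R^d}|x|^2\d\rho_t\le\int_{\R^d}|x|^2\d\rho_0<\infty$ for all $t\in[0,T_{\max})$.

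This uniform bound makes $\limsup_{t\uparrow T_{\max}}\int|x|^2\d\rho_t=\infty$ impossible, so Proposition~\ref{prop:lin-ordinary-continuation} forces $T_{\max}=\infty$. Equivalently, by the preceding remark, $\|S_t\|_{\mathrm F}\le\tr(S_t)$ stays bounded, so $S_t$ remains in a compact subset of $\Sym(d)$ and the maximal ODE solution from Theorem~\ref{thm:lin-ordinary-correspondence} cannot terminate in finite time. Corollary~\ref{cor:lin-ordinary-wellposedness} then transfers global existence to the strong PDE solution.

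There is essentially no obstacle. The only point to check is that $S_t$ stays positive definite, which Theorem~\ref{thm:lin-ordinary-correspondence} provides, though only semidefiniteness of $S_tAS_t$ is needed, and symmetry of $S_t$ alone already gives that. The trace inequality is the one substantive ingredient.
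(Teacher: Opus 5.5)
Your proof is correct and follows the paper's argument. The paper likewise shows $\frac{\d}{\d t}\tr(S_t)=2\tr(VS_tAS_t)\le 0$ and concludes via the continuation criterion of Proposition~\ref{prop:lin-ordinary-continuation}; your justification $\tr(VW)=\tr(W^{1/2}VW^{1/2})\le 0$ for $W=S_tAS_t\succeq 0$ fills in the one step the paper leaves implicit.
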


More interestingly, when $A$, $S_0$, and $V$ commute, we can
solve the matrix equation explicitly. Together with
Proposition~\ref{prop:lin-ordinary-continuation}, this allows
us to determine directly from the initial data whether
finite-time blow-up occurs and, if so, when.

\begin{proposition}[Explicit solution in the commuting case]
\label{prop:lin-ordinary-commuting}
Under the assumptions of
Proposition~\ref{prop:lin-ordinary-continuation},
suppose that $A$, $S_0=S(\rho_0)$, and $V$ commute.
In a common orthonormal eigenbasis, write
\[
A=\diag(a_i),\qquad
S_0=\diag(s_{i,0}),\qquad
V=\diag(v_i),
\qquad a_i,s_{i,0}>0.
\]
Then the following statements hold:
\begin{enumerate}
\item (Representation in the eigenbasis)
For $0\leq t<T_{\max}$,
\[
S_t=\diag(s_i(t)),
\qquad
s_i(t)=\frac{s_{i,0}}{1-2a_iv_is_{i,0}t},
\]
and the maximal existence time is
\[
T_{\max}
=
\min_{i:v_i>0}\frac{1}{2a_iv_is_{i,0}},
\]
where the minimum over the empty set is understood as infinity.

\item (Moment blow-up)
If some $v_i>0$, then $T_{\max}<\infty$ and
\[
\lim_{t\uparrow T_{\max}}
\int_{\R^d}|x|^2 \d\rho_t(x)=\infty.
\]
\end{enumerate}
\end{proposition}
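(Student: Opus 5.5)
The proof reduces the PDE to the matrix Riccati-type equation \eqref{eq:lin-ordinary-ode} through Theorem~\ref{thm:lin-ordinary-correspondence}. By item~\ref{it:lin-correspondence-ordinary} of that theorem, the maximal strong solution has second moment $S_t=S(\rho_t)$, and $S_t$ is the unique maximal solution of $\dot S_t=VS_tAS_t+S_tAS_tV$ with $S_t(0)=S_0$. By Corollary~\ref{cor:lin-ordinary-wellposedness}, the maximal existence time of $\rho$ equals that of this ODE. It therefore suffices to solve the matrix ODE explicitly and read off $T_{\max}$.

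\textbf{Step 1: diagonal ansatz.} In the common orthonormal eigenbasis I make the ansatz $S_t=\diag(s_i(t))$. For diagonal matrices everything commutes, so the right-hand side becomes $2VAS_t^2=\diag(2a_iv_is_i^2)$. Hence each entry satisfies the scalar Riccati equation
\[
\dot s_i=2a_iv_is_i^2,\qquad s_i(0)=s_{i,0}>0 .
\]
Separation of variables gives $s_i(t)=s_{i,0}/(1-2a_iv_is_{i,0}t)$. This is positive and finite exactly for $t<1/(2a_iv_is_{i,0})$ when $v_i>0$, and for all $t\ge0$ when $v_i\le0$. The resulting diagonal curve is $C^1$ with values in $\Sym_{++}(d)$ on $[0,T^*)$, where $T^*\coloneqq\min_{i:v_i>0}1/(2a_iv_is_{i,0})$, and it solves the matrix ODE. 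By uniqueness of the maximal solution (the right-hand side is polynomial, hence locally Lipschitz), it coincides with $S_t$ on $[0,T^*)$. This proves the representation formula.

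\textbf{Step 2: identification of $T_{\max}$.} This is the only point needing care, namely showing that $T_{\max}=T^*$ exactly, and not merely $T_{\max}\ge T^*$.
\begin{itemize}
\item If no $v_i$ is positive, the explicit solution is global, so $T_{\max}=\infty$.
\item Otherwise, pick an index $i$ attaining the minimum. Then $s_i(t)\to\infty$ as $t\uparrow T^*$, so $\|S_t\|_{\mathrm F}$ is unbounded on $[0,T^*)$. A maximal solution cannot be continued past a point where it is unbounded, since a continuation would be continuous, hence bounded, near $T^*$. Thus $T_{\max}=T^*$.
\end{itemize}

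\textbf{Step 3: moment blow-up.} Item 2 follows from $\int|x|^2\d\rho_t=\tr S_t=\sum_j s_j(t)$. Every $s_j$ is positive and monotone: for $v_j\le0$ it stays bounded below by zero, and for $v_j>0$ it is increasing. Hence the trace tends to $+\infty$ as $t\uparrow T_{\max}$. This gives a genuine limit rather than only the $\limsup$ of Proposition~\ref{prop:lin-ordinary-continuation}.
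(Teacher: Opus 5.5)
Your proposal is correct and follows the route the paper indicates: the paper omits the proof, saying only that it follows by diagonalizing \eqref{eq:lin-ordinary-ode} and solving the resulting scalar equations. Your argument supplies the omitted details correctly. You use the diagonal ansatz with uniqueness, solve the scalar Riccati equations, show that $T_{\max}$ is attained exactly through blow-up of the minimizing component, and deduce the limit from $\int|x|^2\,\d\rho_t=\tr S_t$, where positivity of every $s_j$ upgrades the $\limsup$ to a genuine limit.
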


We omit the proof, which follows by diagonalizing
\eqref{eq:lin-ordinary-ode} and solving the resulting
scalar equations. The more involved commuting case
of the damped system is treated in
Theorem~\ref{thm:lin-bounded-damping-blowup}.

For centered Gaussian initial data, the explicit formulas
and blow-up criterion recover the corresponding results
of \cite[Sec.~4.1]{CACP2025}.
For linear attention, these formulas also hold for every
$C^1$ initial probability density with finite second
moment, provided the commutativity assumption of
Proposition~\ref{prop:lin-ordinary-commuting} holds.
Indeed, Theorem~\ref{thm:lin-ordinary-correspondence}
shows that, for fixed $A,V$, initial densities with the
same second-moment matrix $S_0$ have the same moment
evolution and maximal existence time, regardless of
whether they are Gaussian.

\subsection{Accelerated Linear Attention: Geometry and Hamiltonian Dynamics}
\label{subsec:lin-hamiltonian-dynamics}
The aim of this subsection is to formulate accelerated
linear attention as a damped Hamiltonian system and
use this formulation to study the associated PDE.
We first identify admissible densities and quadratic
potentials, then derive the matrix equations in
covariance coordinates.

The exact ODE--PDE correspondence in
Theorem~\ref{thm:lin-correspondence} allows us to establish
strong well-posedness beyond the elliptical family
$\mathcal E_\eta$ and to study global existence and
finite-time moment blow-up.
These results provide the basis for the analysis of
finite-token layers in
Subsection~\ref{subsec:lin-discrete-attention}.

We first fix the notation and assumptions used throughout
this subsection. Let $A=A^\top\succ0$ and $V=V^\top$
be fixed, and let
$\alpha\in C([0,\infty);[0,\infty))$.
For damping schedules singular at the origin, we translate
time and use $\alpha(t_0+t)$ with $t_0>0$.
Here, acceleration refers to the introduction of inertia;
a convergence-rate improvement would require a separate
analysis.
\paragraph{Quadratic potentials.}
We begin by identifying a class of probability densities for which
quadratic potentials belong to the regular cotangent space.
For each $P\in\Sym(d)$, define
\begin{equation}\label{eq:lin-quadratic-potential}
\Phi_P(x)\coloneqq\tfrac12x^\top Px.
\end{equation}
We observe that
\[
|\Phi_P(x)|^2
\leq\tfrac14\|P\|_{\mathrm{op}}^2|x|^4,
\qquad
|\nabla\Phi_P(x)|^2
\leq\|P\|_{\mathrm{op}}^2|x|^2.
\]
Thus, $\Phi_P\in H^1(\rho)$ whenever $\rho\in\P_{K_A}$ has
finite fourth moment. The choice $P=I_d$ also shows that
$\Phi_{I_d}\in L^2(\rho)$ requires a finite fourth moment.
We therefore introduce
\begin{equation}\label{eq:lin-fourth-moment-class}
\P_{K_A}^{4}
\coloneqq
\left\{
\rho\in\P_{K_A}:
\int_{\R^d}|x|^4 \d\rho(x)<\infty
\right\}.
\end{equation}

For the family $\mathcal E_\eta$, we henceforth choose the radial,
standardized reference density $\eta\in\P_{K_A}^{4}$.
The covariance metric $g^{\lin}$ remains the one defined in
\eqref{eq:lin-induced-metric}.
Under this additional moment assumption, a short computation,
which we leave to the reader, shows that, for every
$\rho_S\in\mathcal E_\eta$,
\begin{equation}\label{eq:lin-full-leaf-tangent}
\ran M_{\rho_S}=S\Sym(d),
\qquad
T_{\rho_S}^{\eff}\P_{K_A}=T_{\rho_S}\mathcal E_\eta.
\end{equation}
Moreover, every effective cotangent class has a unique
homogeneous quadratic representative
\[
\Phi_P(x)=\tfrac12x^\top Px,
\qquad
P=S^{-1}M_{\rho_S}(\Phi)\in\Sym(d),
\]
where $\Phi$ is any representative of the class.

\paragraph{Hamiltonian formulation.}
For $\rho\in\P_{K_A}^{4}$ and
$[\Phi]\in T_\rho^*\P_{K_A}$, we consider the Hamiltonian
associated with the energy $\F^{\lin}$:
\begin{equation}\label{eq:lin-density-hamiltonian}
\mathcal H(\rho,[\Phi])
\coloneqq
\frac12\left\langle
[\Phi],L_\rho^{K_A}[\Phi]
\right\rangle_\rho+\F^{\lin}(\rho)=
\frac12\tr\bigl(M_\rho(\Phi)^\top A M_\rho(\Phi)\bigr)
-\frac12\tr(VS(\rho)).
\end{equation}
This is the Hamiltonian in \eqref{eq:def_stein_ham_app}
with $\F=\F^{\lin}$. The notion of differentiability used
to compute its variations is introduced in
Appendix~\ref{app:diff_stein_geom}. To describe its variation with respect to the density, we set
\begin{equation}\label{eq:lin-kinetic-variation}
\mathcal Q_\rho(\Phi)(x)
\coloneqq
\nabla\Phi(x)^\top M_\rho(\Phi)^\top Ax.
\end{equation}
A short computation confirms that the first variation of $\F^{\lin}$ is $-\Phi_V$. So for a quadratic potential $\Phi_P$, we have
\[
M_\rho(\Phi_P)=S(\rho)P,
\qquad
\mathcal Q_\rho(\Phi_P)
=
\Phi_{P^2S(\rho)A+AS(\rho)P^2}.
\]
Here the assumption $\rho\in \P_{K_A}^4$, that is $\rho$ has a finite fourth moment, ensures that both
$\mathcal Q_\rho(\Phi_P)$ and $\Phi_V$ belong to $H^1(\rho)$.
Moreover, compactly supported admissible density variations
preserve finiteness of the fourth moment.
For $\Phi=\Phi_P$, we can therefore apply
Lemma~\ref{lem:regular-stein-hamiltonian-representatives}
to obtain
\begin{equation}\label{eq:lin-regular-variations}
\frac{\delta^{\reg}\mathcal H}{\delta\Phi}
=
L_\rho^{K_A}[\Phi],
\qquad
\frac{\delta^{\reg}\mathcal H}{\delta\rho}
=
[\mathcal Q_\rho(\Phi)-\Phi_V].
\end{equation}
 It would be interesting to extend
this Hamiltonian formulation to non-quadratic potentials in future
work. This would require, in particular, identifying conditions
that ensure $\mathcal Q_\rho(\Phi)\in H^1(\rho)$.

With these variations at hand, the damped Hamiltonian equations read
\begin{equation}\label{eq:lin-accelerated-pde}
\begin{cases}
\partial_t\rho_t+
\nabla\cdot\bigl(\rho_tM_{\rho_t}(\Phi_t)^\top Ax\bigr)=0,
&\rho_{t=0}=\rho_{0},\\[0.4ex]
\partial_t\Phi_t+\alpha_t\Phi_t+
\nabla\Phi_t^\top M_{\rho_t}(\Phi_t)^\top Ax
-\tfrac12x^\top Vx=0,
&\Phi_{t=0}=\Phi_{0}.
\end{cases}
\end{equation}
Before moving on, we emphasize the following.
\begin{remark}
In \eqref{eq:stein_newt_app_pde}, the second equation is
understood modulo spatial constants. The first equation
depends only on $\nabla\Phi_t$ and is therefore unchanged
if a function of time is added to $\Phi_t$.
We use this freedom to choose $\Phi_t$ so that the second
equation in \eqref{eq:lin-accelerated-pde} holds exactly,
without an additional spatially constant term.
The precise interpretation in terms of cotangent representatives
is given in Appendix~\ref{app:diff_stein_geom}.
\end{remark}

\paragraph{Covariance Hamiltonian and Newton equation.}
We now  reduce \eqref{eq:lin-accelerated-pde} on $\mathcal E_\eta$
to a finite-dimensional system for the covariance matrix $S_t$
in \eqref{eq:lin-elliptical-family} and the quadratic coefficient
$P_t$ in \eqref{eq:lin-quadratic-potential}.
\begin{lemma}[Quadratic reduction]
\label{lem:lin-quadratic-reduction}
Let $I=[0,T)$, $S\in C^1(I;\Sym_{++}(d))$, and
$P\in C^1(I;\Sym(d))$.
Set $\rho_t=\rho_{S_t}$ and $\Phi_t=\Phi_{P_t}$, as defined in
\eqref{eq:lin-elliptical-family} and
\eqref{eq:lin-quadratic-potential}, respectively.\\
Then $(\rho,\Phi)$ solves \eqref{eq:lin-accelerated-pde}
with initial data $\rho_0=\rho_{S_0}$ and $\Phi_0=\Phi_{P_0}$
if and only if
\begin{equation}\label{eq:lin-accelerated-ode}
\begin{cases}
\dot S_t=P_tS_tAS_t+S_tAS_tP_t,\\[0.4ex]
\dot P_t=-\alpha_tP_t-AS_tP_t^2-P_t^2S_tA+V.
\end{cases}
\end{equation}
\end{lemma}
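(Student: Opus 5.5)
We substitute the ansatz $\rho_t=\rho_{S_t}$, $\Phi_t=\Phi_{P_t}$ into each equation of \eqref{eq:lin-accelerated-pde} and reduce it to a matrix identity. The key algebraic inputs are the identities stated just before the lemma: for a centered density with second-moment matrix $S$, $M_{\rho}(\Phi_P)=SP$, and the kinetic variation is $\mathcal Q_\rho(\Phi_P)=\Phi_{P^2SA+ASP^2}$. Since $\rho_{S_t}$ has second moment $S_t$, we get $M_{\rho_t}(\Phi_t)^\top A x=P_tS_tAx$, using the symmetry of $P_t$ and $S_t$.

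\textbf{Density equation.} With $W_t:=P_tS_tA$, the first equation reads $\partial_t\rho_t+\nabla\cdot(\rho_t W_t x)=0$. We compare this with the derivative of $t\mapsto\rho_{S_t}$. Writing $\rho_S(x)=\det(S)^{-1/2}\eta(S^{-1/2}x)$ is awkward when differentiating $S^{-1/2}$, so we use radiality instead. Since $\eta$ is radial, $\eta(x)=f(|x|^2)$, hence $\rho_S(x)=\det(S)^{-1/2}f(x^\top S^{-1}x)$. Differentiating gives
\[
\partial_t\rho_{S_t}=-\tfrac12\tr(S_t^{-1}\dot S_t)\rho_{S_t}-\det(S_t)^{-1/2}f'(x^\top S_t^{-1}x)\,x^\top S_t^{-1}\dot S_tS_t^{-1}x .
\]
Similarly,
\[
\nabla\cdot(\rho W x)=\tr(W)\rho+\nabla\rho\cdot Wx=\tr(W)\rho+2\det(S)^{-1/2}f'\,x^\top S^{-1}Wx .
\]
Sufficiency is then immediate: if $\dot S=WS+SW^\top=PSAS+SASP$, then $\tr(S^{-1}\dot S)=2\tr W$ and $x^\top S^{-1}\dot SS^{-1}x=2x^\top S^{-1}Wx$, so both terms match.

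For necessity, we take second moments of the continuity equation, which is legitimate for the strong solution since the fourth moment of $\eta$ is finite. This gives $\dot S_t=W_tS_t+S_tW_t^\top$ directly, because $\frac{d}{dt}\int xx^\top\rho=\int (Wx x^\top+xx^\top W^\top)\rho$. Using moments also avoids having to argue that $f'$ is nonvanishing.

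\textbf{Potential equation.} We plug $\Phi_t=\tfrac12x^\top P_tx$ into the second equation. We have $\nabla\Phi_t^\top M^\top Ax=x^\top P_tP_tS_tAx=\tfrac12x^\top(P_t^2S_tA+AS_tP_t^2)x$ after symmetrizing. The equation therefore becomes $\tfrac12x^\top(\dot P_t+\alpha_tP_t+P_t^2S_tA+AS_tP_t^2-V)x=0$ for all $x$. A quadratic form with symmetric coefficient vanishes identically if and only if the coefficient is zero. The symmetrized bracket is symmetric, so this is equivalent to the second line of \eqref{eq:lin-accelerated-ode}.

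Here the interpretation "modulo spatial constants" must be handled. By the Remark preceding the lemma, $\Phi_t$ is normalized so the equation holds exactly. A homogeneous quadratic has no constant part, so any spatially constant discrepancy $c(t)$ would force $c\equiv0$ by evaluating at $x=0$. The initial data correspond trivially.

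\textbf{Main obstacle.} The main obstacle is the necessity direction of the density equation. Recovering the exact matrix ODE from the PDE requires justifying differentiation of the second moment under the integral. This needs integrability of $|x|^2|\partial_t\rho_{S_t}|$, which follows from the explicit formula together with $\eta\in\P_{K_A}^4$ and the condition $\int|x|^2|\nabla\log\eta|^2\eta<\infty$ in \eqref{eq:density-class}. We will verify this locally uniformly in $t$ using the continuity of $S_t$ and $S_t^{-1}$ on compact subintervals.
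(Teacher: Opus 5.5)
Your argument is correct. For the potential equation it coincides with the paper's, and your evaluation at $x=0$ to exclude a spatially constant discrepancy makes explicit a point the paper leaves implicit.

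For the density equation you take a different route. The paper never differentiates the radial profile. It differentiates $G_\#\rho_S=\rho_{GSG^\top}$ at $G=I_d$ in the direction $PSA$ to get $D\iota(S)[\Lambda_S(P)]=-\nabla\cdot(\rho_SPSAx)$. It then rewrites the continuity equation as $D\iota(S_t)[\dot S_t-\Lambda_{S_t}(P_t)]=0$ and concludes from the injectivity of $D\iota(S_t)$ (Lemma~\ref{lem:lin-covariance-isomorphism}). That injectivity is proved via the Green identity, paired with a compactly supported representative $\Psi_{S,P}$. This route needs only second moments and reuses the tangent identification behind the metric pullback.

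Your computation with $\eta=f(|x|^2)$ derives the same tangent identity by hand. Your necessity step replaces the injectivity lemma by the observation that the second-moment map is a left inverse of $D\iota(S)$. This is more elementary and independent of the Stein machinery. The cost is that it shifts the burden to interchanging $\partial_t$ with the second-moment integral.

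Be careful at that step. For a general radial $\eta$ there need not be a $t$-uniform integrable majorant of $|x|^2|\partial_t\rho_{S_t}(x)|$ in the $x$-variable, because thin radial oscillations of $\eta$ are smeared over whole annuli as $S_t$ varies. You can avoid the interchange altogether:
\begin{itemize}
\item Apply your identity $-\nabla\cdot(\rho_SWx)=D\iota(S)[WS+SW^\top]$ with $W=\tfrac12\dot S_tS_t^{-1}$. This gives $\partial_t\rho_{S_t}=-\nabla\cdot(\rho_{S_t}\tfrac12\dot S_tS_t^{-1}x)$.
\item The PDE then becomes $\nabla\cdot(\rho_{S_t}U_tx)=0$ with $U_t=\tfrac12\dot S_tS_t^{-1}-P_tS_tA$.
\item Test against $\chi_R(x)x_ix_j$ and let $R\to\infty$. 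This yields $U_tS_t+S_tU_t^\top=\dot S_t-\Lambda_{S_t}(P_t)=0$ at each fixed $t$, using only finite second moments.
\end{itemize}

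Finally, $f'(0)$ appears in your formulas only multiplied by factors vanishing at $x=0$. So either treat $x=0$ separately, or use that a smooth radial function is a smooth function of $|x|^2$.
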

The proof of Lemma~\ref{lem:lin-quadratic-reduction}
is given in Appendix~\ref{app:lin-quadratic-correspondence}. Next, we formulate the matrix dynamics on the cotangent bundle
$T^*\Sym_{++}(d)$ of the covariance manifold.
Since $\Sym_{++}(d)$ is an open subset of $\Sym(d)$, its
tangent space at each $S$ is $\Sym(d)$.
Every cotangent vector can therefore be represented uniquely
by a symmetric matrix $\Pi$ through the Frobenius pairing
\[
H\longmapsto\langle\Pi,H\rangle_{\mathrm F}
\coloneqq\tr(\Pi H),
\qquad H\in\Sym(d).
\]
This identifies
$T^*\Sym_{++}(d)$ with $\Sym_{++}(d)\times\Sym(d)$.

The quadratic potential $\Phi_P$ pairs with a covariance
variation $H$ as $\tfrac12\tr(PH)$.
Its corresponding momentum coordinate is therefore $\Pi=P/2$.
We define the Hamiltonian
\begin{equation}\label{eq:lin-matrix-hamiltonian}
\mathcal H^{\lin}:T^*\Sym_{++}(d)\longrightarrow\R,
\qquad
\mathcal H^{\lin}(S,\Pi)
\coloneqq
2\tr(\Pi SAS\Pi)-\tfrac12\tr(VS).
\end{equation}
With this notation, we can confirm that \eqref{eq:lin-accelerated-ode} can be written as a damped Hamiltonian system.
\begin{lemma}[Covariance Hamiltonian]
\label{lem:lin-canonical-hamiltonian}
The change of variables $\Pi=P/2$ transforms
\eqref{eq:lin-accelerated-ode} into the damped Hamiltonian system
\[
\dot S_t
=
\partial_\Pi\mathcal H^{\lin}(S_t,\Pi_t),
\qquad
\dot\Pi_t
=
-\partial_S\mathcal H^{\lin}(S_t,\Pi_t)-\alpha_t\Pi_t.
\]
\end{lemma}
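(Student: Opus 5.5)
This is a direct computation of the two partial derivatives of $\mathcal H^{\lin}$ on $\Sym_{++}(d)\times\Sym(d)$, with gradients taken with respect to the Frobenius pairing $\langle\cdot,\cdot\rangle_{\mathrm F}$. Because the variables are constrained to be symmetric, each gradient is the symmetric part of the unconstrained derivative. The plan is to compute both gradients and then substitute $\Pi=P/2$ into \eqref{eq:lin-accelerated-ode}.

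\textbf{Momentum derivative.} For $\delta\Pi\in\Sym(d)$, cyclicity of the trace and symmetry of $SAS$ give
\[
D_\Pi\bigl[2\tr(\Pi SAS\Pi)\bigr][\delta\Pi]
=2\tr(\delta\Pi\,SAS\Pi)+2\tr(\Pi SAS\,\delta\Pi)
=\tr\bigl(\delta\Pi\,(2SAS\Pi+2\Pi SAS)\bigr).
\]
The matrix $2SAS\Pi+2\Pi SAS$ is already symmetric, so
\[
\partial_\Pi\mathcal H^{\lin}=2(\Pi SAS+SAS\Pi).
\]
With $\Pi=P/2$ this equals $PSAS+SASP=\Lambda_S(P)$, which is the first equation of \eqref{eq:lin-accelerated-ode}.

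\textbf{Covariance derivative.} For $\delta S\in\Sym(d)$, differentiate $2\tr(\Pi SAS\Pi)=2\tr(SAS\Pi^2)$:
\[
D_S[\cdot][\delta S]
=2\tr(\delta S\,AS\Pi^2)+2\tr(SA\,\delta S\,\Pi^2)
=\tr\bigl(\delta S\,(2AS\Pi^2+2\Pi^2SA)\bigr).
\]
The matrix $2AS\Pi^2+2\Pi^2SA$ is symmetric, since its transpose is $2\Pi^2SA+2AS\Pi^2$. The potential term contributes $-\tfrac12V$. Hence
\[
\partial_S\mathcal H^{\lin}=2(AS\Pi^2+\Pi^2SA)-\tfrac12V.
\]

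\textbf{Substitution.} Put $\Pi=P/2$, so $\Pi^2=P^2/4$. Then
\[
-\partial_S\mathcal H^{\lin}-\alpha\Pi
=-\tfrac12(ASP^2+P^2SA)+\tfrac12V-\tfrac{\alpha}{2}P.
\]
This is exactly $\tfrac12\dot P$ by the second equation of \eqref{eq:lin-accelerated-ode}, and $\tfrac12\dot P=\dot\Pi$.

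Conversely, the linear change of variables is invertible. Therefore a $C^1$ curve $(S,P)$ solves \eqref{eq:lin-accelerated-ode} if and only if $(S,P/2)$ solves the damped Hamiltonian system.

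The only delicate point is to check that each raw derivative is symmetric, or else to symmetrize it, so that it is the correct Frobenius gradient on $\Sym(d)$. The symmetry checks above settle this, and there is no real obstacle.
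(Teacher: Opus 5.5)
Your proposal is correct and matches the paper's proof: compute $\partial_\Pi\mathcal H^{\lin}=2(SAS\Pi+\Pi SAS)$ and $\partial_S\mathcal H^{\lin}=2(AS\Pi^2+\Pi^2SA)-\tfrac12V$, then substitute $\Pi=P/2$ into the damped Hamiltonian equations to recover \eqref{eq:lin-accelerated-ode}. You additionally check that both raw derivatives are already symmetric, so they are the Frobenius gradients on $\Sym(d)$; the paper leaves this implicit.
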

The proof of Lemma~\ref{lem:lin-canonical-hamiltonian}
is given in Appendix~\ref{app:lin-quadratic-correspondence}. We close this paragraph by expressing
\eqref{eq:lin-accelerated-ode} as a damped Newton equation
for the metric $g^{\lin}$ in \eqref{eq:lin-induced-metric}.
We denote its Levi--Civita connection by
$\nabla^{g^{\lin}}$.

\begin{proposition}[Newton formulation]
\label{thm:lin-newton}
Let $(S,P)$ be a solution of \eqref{eq:lin-accelerated-ode}.
Then $S\in C^2$ and
\begin{equation}\label{eq:lin-newton}
\nabla^{g^{\lin}}_{\dot S_t}\dot S_t
+\alpha_t\dot S_t+\grad_{g^{\lin}}F^{\lin}(S_t)=0.
\end{equation}
Conversely, every solution $S$ of \eqref{eq:lin-newton}
in $\Sym_{++}(d)$ induces a solution of
\eqref{eq:lin-accelerated-ode} upon setting
$P_t=\Lambda_{S_t}^{-1}(\dot S_t)$.
\end{proposition}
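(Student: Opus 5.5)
The plan is to use the general fact that a damped Hamiltonian system on $T^*M$ with Hamiltonian $\tfrac12 g^{-1}(\xi,\xi)+F$ is equivalent to the damped Newton equation $\nabla^g_{\dot S}\dot S+\alpha\dot S+\grad_g F=0$. This rests on two standard identities. First, the Legendre map $\xi\mapsto g^\sharp\xi$ sends the Hamiltonian flow of the kinetic term to the geodesic spray. Second, damping $\dot\xi=\dots-\alpha\xi$ passes through $g^\sharp$ linearly at fixed base point.

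\textbf{Step 1: identify the kinetic term.} By Lemma~\ref{lem:lin-canonical-hamiltonian}, \eqref{eq:lin-accelerated-ode} is the damped Hamiltonian system for $\mathcal H^{\lin}(S,\Pi)=2\tr(\Pi SAS\Pi)-\tfrac12\tr(VS)$. I will check that $2\tr(\Pi SAS\Pi)=\tfrac12 (g^{\lin}_S)^{-1}(\Pi,\Pi)$ under the Frobenius identification. Using \eqref{eq:lin-induced-metric}, $g_S^{\lin}(H,\cdot)=\tfrac12\tr(\Lambda_S^{-1}(H)\,\cdot)$, so the musical map is $H\mapsto \Pi=\tfrac12\Lambda_S^{-1}(H)=P/2$. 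Hence $H=\Lambda_S(2\Pi)=\Lambda_S(P)$. Then $\tfrac12 g^{\lin}_S(H,H)=\tfrac12\tr(PSASP)=2\tr(\Pi SAS\Pi)$, which is exactly the kinetic part. It also matches the first equation $\dot S=\Lambda_S(P)$, i.e.\ $P=\Lambda_S^{-1}(\dot S)$. This uses Lemma~\ref{lem:lin-covariance-isomorphism} for invertibility of $\Lambda_S$ and shows $\Pi=(g^{\lin}_S)^\flat\dot S$. The potential part is $F^{\lin}$, whose gradient is $-\Lambda_S(V)$, as stated above.

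\textbf{Step 2: regularity.} If $(S,P)$ is $C^1$, then $\dot S=\Lambda_S(P)$ is a composition of $C^1$ maps, so $S\in C^2$.

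\textbf{Step 3: derive the Newton equation.} I will use coordinates on the open set $\Sym_{++}(d)\subset\Sym(d)$, with $\Pi=g_S^\flat\dot S$. Differentiating gives
\[
\dot\Pi=(\partial_S g^\flat)[\dot S]\dot S+g^\flat_S\ddot S .
\]
Hamilton's equation gives
\[
\dot\Pi=-\partial_S\bigl(\tfrac12 g^{-1}(\Pi,\Pi)\bigr)-dF-\alpha\Pi,
\]
and here $\partial_S\bigl(\tfrac12 g^{-1}(\Pi,\Pi)\bigr)=-\tfrac12(\partial_S g)(\dot S,\dot S)$ at fixed $\Pi$. Combining the two yields
\[
g(\ddot S,\cdot)+(\partial_S g)[\dot S](\dot S,\cdot)-\tfrac12(\partial_S g)[\cdot](\dot S,\dot S)=-dF-\alpha g(\dot S,\cdot).
\]
This is exactly $g(\nabla_{\dot S}\dot S+\alpha\dot S+\grad F,\cdot)=0$ by the Koszul formula for Christoffel symbols in a flat chart. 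Nondegeneracy of $g$ then gives \eqref{eq:lin-newton}.

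\textbf{Step 4: converse.} Given a $C^2$ solution $S$ of \eqref{eq:lin-newton}, I set $P=\Lambda_S^{-1}(\dot S)$, which is $C^1$. The first equation of \eqref{eq:lin-accelerated-ode} holds by construction. Reversing the computation in Step 3 shows that $\Pi=P/2$ satisfies the damped Hamilton equation, and Lemma~\ref{lem:lin-canonical-hamiltonian} then gives the second equation.

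The main obstacle is Step 3: getting the signs and factors of $\tfrac12$ right in the kinetic-term derivative, since the inverse metric is differentiated at fixed momentum. I will handle it with $\partial(g^{-1})=-g^{-1}(\partial g)g^{-1}$ rather than computing Christoffel symbols of $g^{\lin}$ explicitly.
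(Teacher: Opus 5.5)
Your proposal is correct and follows essentially the same route as the paper: both identify the momentum $\Pi=P/2$ with the metric dual $(g^{\lin}_S)^\flat\dot S$ via the Legendre transform, and both use the equivalence of the damped Hamilton (equivalently, damped Euler--Lagrange) equations with the damped Newton equation, obtaining $S\in C^2$ from $\dot S=\Lambda_S(P)$ and the converse from $P_t=\Lambda_{S_t}^{-1}(\dot S_t)$. The one difference is that you verify explicitly, via the Koszul formula in the flat chart $\Sym_{++}(d)\subset\Sym(d)$, the standard fact the paper only quotes, namely that the Euler--Lagrange expression of the kinetic energy is the metric dual of the covariant acceleration.
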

The proof of Proposition~\ref{thm:lin-newton}
is given in Appendix~\ref{app:lin-quadratic-correspondence}.
Next, we use the viewpoint developed here to gain further
insight into the dynamics of \eqref{eq:lin-accelerated-pde}.
\paragraph{Strong solutions and exact correspondence.}
First, we  define a suitable notion of a strong solution of
\eqref{eq:lin-accelerated-pde}. Then, we  use the covariance
formulation to establish local well-posedness for initial
densities in $\P_{K_A}^{4}$ with quadratic initial potentials.

Let $I=[0,T)$, and let $\rho_0,\Phi_0\in C^1(\R^d)$,
where $\rho_0$ is a probability density with finite second moment.
A pair $(\rho,\Phi)$ is called a \emph{strong solution of}
\eqref{eq:lin-accelerated-pde} with initial data $(\rho_0,\Phi_0)$,
if the following conditions are fulfilled:
\begin{enumerate}
\item\label{it:lin-strong-regularity}
The maps $t\mapsto\rho_t$ and $t\mapsto\Phi_t$ belong to
$C\bigl(I;C^1_{\loc}(\R^d)\bigr)
\cap W^{1,1}_{\loc}\bigl(I;C^0_{\loc}(\R^d)\bigr)$.
\item\label{it:lin-strong-initial}
The initial conditions hold pointwise:
$\rho(0,x)=\rho_0(x)$ and $\Phi(0,x)=\Phi_0(x)$
for all $x\in\R^d$.
\item\label{it:lin-strong-moment}
For almost every $t\in I$, the integral defining
$M_{\rho_t}(\Phi_t)$ in \eqref{eq:moment-map}
is absolutely convergent, and
\[
t\longmapsto M_{\rho_t}(\Phi_t)
\in L^1_{\loc}(I;\R^{d\times d}).
\]
\item\label{it:lin-strong-pde}
For every compact set $K\subset\R^d$ and almost every $t\in I$,
\[
\begin{pmatrix}
\partial_t\rho_t+
\nabla\cdot\bigl(\rho_tM_{\rho_t}(\Phi_t)^\top Ax\bigr)\\
\partial_t\Phi_t+\alpha_t\Phi_t+
\nabla\Phi_t^\top M_{\rho_t}(\Phi_t)^\top Ax
\end{pmatrix}
=
\begin{pmatrix}
0\\
\frac12x^\top Vx
\end{pmatrix}
\qquad\text{in }C(K)\times C(K).
\]
\end{enumerate}

With this definition, we can now establish the equivalence between an ODE and PDE system, which will be essential to establish well-posedness and show that the flow preserves \(\P_{K_A}^{4}\).

\begin{theorem}[Strong correspondence and maximal well-posedness]
\label{thm:lin-correspondence}
Let $\rho_0\in\P_{K_A}^{4}$, with $\P_{K_A}^{4}$ defined in
\eqref{eq:lin-fourth-moment-class}.
Set $S_0=S(\rho_0)$ as in \eqref{eq:S_rho},
and choose $P_0\in\Sym(d)$ with
$\Phi_0=\Phi_{P_0}$ as in \eqref{eq:lin-quadratic-potential}.
Assume that $\alpha\in C([0,\infty);[0,\infty))$.
Then the following statements hold:
\begin{enumerate}
\item (Matrix well-posedness)\label{it:lin-correspondence-matrix}
The matrix system \eqref{eq:lin-accelerated-ode} has a unique
maximal solution 
\[
(S,P)\in C^1\big([0,T_{\max});\Sym_{++}(d)\times \Sym(d)\big)
\] with initial values $(S_0,P_0)$.
    
\item (Construction of strong solutions)\label{it:lin-correspondence-lift}
Let $G$ solve
\begin{equation}\label{eq:lin-propagator}
\dot G_t=P_tS_tAG_t,
\qquad G_0=I_d.
\end{equation}
Then
\begin{equation}\label{eq:lin-accelerated-lift}
\rho_t=(G_t)_\#\rho_0,
\qquad
\Phi_t(x)=\tfrac12x^\top P_tx,
\qquad
S_t=G_tS_0G_t^\top
\end{equation}
defines a strong solution of \eqref{eq:lin-accelerated-pde}
with initial data $(\rho_0,\Phi_0)$.
\item (Converse correspondence)\label{it:lin-correspondence-converse}
Every strong solution $(\rho,\Phi)$ of
\eqref{eq:lin-accelerated-pde}  with initial values
$(\rho_0,\Phi_0)$ has the form \eqref{eq:lin-accelerated-lift},
and its coefficients solve \eqref{eq:lin-accelerated-ode}.
\item (Preservation of geometric admissibility)
\label{it:lin-correspondence-geometry}
For every $t\in[0,T_{\max})$, we have
\[
\rho_t\in\P_{K_A}^{4},
\qquad
\partial_t\rho_t\in T_{\rho_t}^{\eff}\P_{K_A}.
\]
\end{enumerate}
\end{theorem}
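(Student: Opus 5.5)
The plan is to reduce everything to the matrix system \eqref{eq:lin-accelerated-ode} and to the linear flow $G$, using that all velocity fields involved are linear in $x$.

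\textbf{Item~\ref{it:lin-correspondence-matrix}.} The right-hand side of \eqref{eq:lin-accelerated-ode} is polynomial in $(S,P)$ and continuous in $t$ through $\alpha$. Picard--Lindel\"of therefore gives a unique maximal $C^1$ solution in $\Sym(d)\times\Sym(d)$. It remains to show that $S_t$ stays in $\Sym_{++}(d)$. Regarding $(S,P)$ as given, let $G$ solve the linear equation \eqref{eq:lin-propagator}. Since $G$ solves a linear ODE with continuous coefficients, $\det G_t=\exp\int_0^t\tr(P_sS_sA)\,\d s>0$, so $G_t$ is invertible. Set $\tilde S_t=G_tS_0G_t^\top$. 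Then $\tilde S$ satisfies
\[
\dot{\tilde S}=(P_tS_tA)\tilde S+\tilde S(P_tS_tA)^\top,
\]
a linear equation which $S_t$ also satisfies, because $PSAS+SASP=(PSA)S+S(PSA)^\top$ by symmetry of $A$ and $S$. Uniqueness for this linear equation gives $S_t=G_tS_0G_t^\top\succ0$. On $[0,T_{\max})$ we thus have $S_t\in\Sym_{++}(d)$ and $G_t\in\GL(d)$.

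\textbf{Item~\ref{it:lin-correspondence-lift}.} Define $\rho_t(x)=\rho_0(G_t^{-1}x)/\det G_t$ and $\Phi_t=\Phi_{P_t}$. The required regularity, $C(I;C^1_{\loc})\cap C^1(I;C^0_{\loc})$, is immediate because $G\in C^1$. We then compute three things.
\begin{itemize}
\item[(a)] $S(\rho_t)=G_tS_0G_t^\top=S_t$, by the change of variables $x=G_ty$.
\item[(b)] $M_{\rho_t}(\Phi_{P_t})=\int x\,x^\top P_t\,\d\rho_t=S_tP_t$. Hence the transport velocity is $M^\top Ax=P_tS_tAx$, which is exactly the generator of $G$.
\item[(c)] $\nabla\Phi_t^\top M^\top Ax=x^\top P_t^2S_tAx=\Phi_{P_t^2S_tA+AS_tP_t^2}(x)$.
\end{itemize}
With (b), the continuity equation is the classical transport identity for the pushforward by a linear flow. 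With (c), the $\Phi$-equation reduces coefficientwise to the $P$-equation in \eqref{eq:lin-accelerated-ode}, as in the proof of Lemma~\ref{lem:lin-quadratic-reduction}.

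\textbf{Item~\ref{it:lin-correspondence-converse}.} I expect this to be the main obstacle, since a strong solution a priori only gives $M_t\coloneqq M_{\rho_t}(\Phi_t)\in L^1_{\loc}$ and $\Phi_t$ need not be quadratic. The plan is as follows.
\begin{itemize}
\item[(i)] Treat $M_t$ as a given $L^1_{\loc}$ coefficient and let $F_t$ be the Carath\'eodory solution of $\dot F_t=M_t^\top AF_t$, $F_0=I_d$. It is absolutely continuous and invertible.
\item[(ii)] The $\rho$-equation is linear transport along the linear field $M_t^\top Ax$. Differentiating $t\mapsto\rho_t(F_ty)\det F_t$, which is justified on compacts by the $W^{1,1}_{\loc}(I;C^0_{\loc})$ regularity together with the $C^1_{\loc}$ spatial regularity, shows that this quantity is constant. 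Hence $\rho_t=(F_t)_\#\rho_0$.
\item[(iii)] Similarly, $e^{\int_0^t\alpha}\Phi_t(F_ty)$ has an explicit derivative $e^{\int_0^t\alpha}\tfrac12(F_ty)^\top V(F_ty)$. Since $\Phi_0$ is quadratic, $\Phi_t=\Phi_{P_t}$ for an absolutely continuous symmetric $P_t$.
\item[(iv)] Apply (b) to the pair $(\rho_t,\Phi_{P_t})$. This gives $M_t=S_tP_t$ with $S_t=F_tS_0F_t^\top$. Substituting back shows that $(S,P)$ solves \eqref{eq:lin-accelerated-ode} almost everywhere, and since the right-hand side is then continuous, the solution is $C^1$.
\item[(v)] Uniqueness in Item~\ref{it:lin-correspondence-matrix} identifies $(S,P)$ with the matrix solution, and uniqueness for the linear ODE identifies $F=G$.
\end{itemize}
The delicate point is justifying the chain rule along characteristics with only $L^1_{\loc}$ time regularity of $M_t$. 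I would handle it by working on compact sets containing the characteristics $F_t(K)$ over $[0,T']\subset I$.

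\textbf{Item~\ref{it:lin-correspondence-geometry}.} Pushforward by the invertible linear map $G_t$ preserves each condition defining $\P_{K_A}^{4}$:
\begin{itemize}
\item[(a)] positivity and smoothness of the density;
\item[(b)] finite second and fourth moments, since $|G_ty|\le\|G_t\|\,|y|$;
\item[(c)] the logarithmic condition, since $\nabla\log\rho_t(x)=G_t^{-\top}\nabla\log\rho_0(G_t^{-1}x)$ and hence $\int|x|^2|\nabla\log\rho_t|^2\,\d\rho_t\le\|G_t\|^2\|G_t^{-1}\|^2\int|y|^2|\nabla\log\rho_0|^2\,\d\rho_0$.
\end{itemize}
Finally, because $\Phi_{P_t}\in H^1(\rho_t)$ by the finite fourth moment, $\partial_t\rho_t=L^{K_A}_{\rho_t}[\Phi_{P_t}]$ lies in $\ran L^{K_A}_{\rho_t}=T^{\eff}_{\rho_t}\P_{K_A}$.
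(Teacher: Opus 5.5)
Your proposal is correct and follows the paper's proof essentially step for step. For items 1 and 2 you use Picard--Lindel\"of, the linear-equation uniqueness argument giving $S_t=G_tS_0G_t^\top\succ0$, and the explicit pushforward with $S(\rho_t)=S_t$ and $M_{\rho_t}(\Phi_t)=S_tP_t$. For item 3 you use characteristics with the $L^1_{\loc}$ coefficient, the integrating-factor formula for $\Phi$, a.e.\ validity of \eqref{eq:lin-accelerated-ode}, a continuity bootstrap and uniqueness. For item 4 you use the pushforward bounds on the fourth moment and the weighted score. The only minor difference is in item~\ref{it:lin-correspondence-geometry}: you read off $\partial_t\rho_t\in T_{\rho_t}^{\eff}\P_{K_A}$ directly from $\partial_t\rho_t=L_{\rho_t}^{K_A}[\Phi_{P_t}]$ and the definition of the effective tangent space, with Lemma~\ref{lem:well-defined} supplying membership in $T_{\rho_t}\P_{K_A}$, whereas the paper redundantly re-verifies the $L^2(\rho_t)$ score bound and zero mass.
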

The proof of Theorem~\ref{thm:lin-correspondence}
is given in Appendix~\ref{app:lin-quadratic-correspondence}.

\begin{remark}[Relation to accelerated Stein flows]
\label{rem:lin-accelerated-precedents}
1.
For $K_A$ and $\F^{\lin}$, \eqref{eq:lin-accelerated-pde}
agrees algebraically with \cite[Example~4.1]{SL2025}.
Our quotient Stein construction makes this specialization
rigorous, while Theorem~\ref{thm:lin-correspondence}
establishes strong well-posedness beyond Gaussian densities
and allows arbitrary quadratic initial potentials.
\\
2.
The Gaussian kinetic terms in
\cite[Theorem~3.5]{LGBP2023} and
\cite[Theorem~5.1(4)]{SL2025} are recovered for $A=I_d$
and general $A$, respectively.
Their full dynamics differ because they use KL energy
rather than $\F^{\lin}$.
\end{remark}

\paragraph{Local well-posedness and continuous dependence on initial values}
We start by recording the following direct
consequence of Theorem~\ref{thm:lin-correspondence}.

\begin{cor}[Local strong well-posedness]
\label{cor:lin-wellposedness}
Under the assumptions of Theorem~\ref{thm:lin-correspondence},
equation \eqref{eq:lin-accelerated-pde} has a unique maximal
strong solution with initial data $(\rho_0,\Phi_0)$.
Its maximal interval equals that of the matrix system
\eqref{eq:lin-accelerated-ode} with initial values $(S_0, P_0)$.
\end{cor}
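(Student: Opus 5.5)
The statement is a direct consequence of Theorem~\ref{thm:lin-correspondence}, and I would prove it by transferring existence and uniqueness from the matrix system to the PDE. First, item~\ref{it:lin-correspondence-matrix} gives a unique maximal solution $(S,P)$ of \eqref{eq:lin-accelerated-ode} on $[0,T_{\max})$ with initial values $(S_0,P_0)$. On this interval I would solve the linear propagator equation \eqref{eq:lin-propagator}. Its coefficient $P_tS_tA$ is continuous on $[0,T_{\max})$, so $G_t$ exists on the whole interval and is invertible by Liouville's formula. Item~\ref{it:lin-correspondence-lift} then shows that the pair $(\rho_t,\Phi_t)$ from \eqref{eq:lin-accelerated-lift} is a strong solution on $[0,T_{\max})$. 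This settles existence on the matrix interval.

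For uniqueness, let $(\tilde\rho,\tilde\Phi)$ be any strong solution on some interval $[0,T)$ with the same initial data. By item~\ref{it:lin-correspondence-converse}, it has the form \eqref{eq:lin-accelerated-lift} with coefficients $(\tilde S,\tilde P)$ solving \eqref{eq:lin-accelerated-ode} with initial values $(S_0,P_0)$. Note that $\tilde S_0=S(\rho_0)=S_0$, and $\tilde P_0=P_0$ because $\tilde\Phi_0=\Phi_{P_0}$ pointwise. Uniqueness for the ODE then gives $T\le T_{\max}$ and $(\tilde S,\tilde P)=(S,P)$ on $[0,T)$. The propagator $\tilde G$ solves the same linear ODE as $G$, so $\tilde G=G$. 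Hence $\tilde\rho_t=(G_t)_\#\rho_0=\rho_t$ and $\tilde\Phi_t=\Phi_{P_t}$. Therefore every strong solution is a restriction of the constructed one.

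It remains to check maximality, which is the one point needing care. The PDE solution cannot be extended beyond $T_{\max}$: by the converse correspondence, any extension would produce a matrix solution beyond $T_{\max}$, contradicting the maximality of $(S,P)$. Consequently the maximal PDE interval coincides with $[0,T_{\max})$. The main obstacle is a subtlety in the converse step. The representation $\tilde\Phi_t=\Phi_{\tilde P_t}$ holds only up to the time-dependent additive constant discussed in the preceding remark. I would handle this by using the normalization chosen there, under which the second equation holds exactly. With that normalization the quadratic coefficient is determined uniquely, so the identification of $(\tilde S,\tilde P)$ with $(S,P)$ goes through.
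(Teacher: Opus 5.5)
Your argument is correct and matches the intended one: the paper records this corollary as a direct consequence of Theorem~\ref{thm:lin-correspondence}. Existence on $[0,T_{\max})$ comes from items 1--2, and uniqueness and non-extendability come from the converse item 3 combined with maximality of the matrix solution. The additive-constant issue you flag does not arise, because the strong-solution definition requires the potential equation to hold exactly in $C(K)$ with $\Phi_0=\Phi_{P_0}$ pointwise, so the characteristic ODE fixes $\Phi_t$ with no gauge freedom and no extra normalization is needed.
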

Next, we study how solutions of
\eqref{eq:lin-accelerated-pde} depend on their initial
values in the Wasserstein metric and the local strong
topologies. The following corollary is proved in
Appendix~\ref{app:lin-quadratic-correspondence}.

\begin{cor}[Wasserstein convergence and continuous dependence]
\label{cor:lin-stability} Suppose
\(\rho_{0}^{(n)},\rho_{0}\in\P_{K_A}^{4}\) satisfy
\[
\rho_0^{(n)}\longrightarrow\rho_0
\quad\text{in }W_2\text{ and in }C^1_{\loc}(\R^d),
\qquad
P_0^{(n)}\longrightarrow P_0
\quad\text{in }(\Sym(d),\|\cdot\|_{\mathrm F}).
\]
For fixed $\alpha$, let $(\rho^{(n)},\Phi^{(n)})$ and
$(\rho,\Phi)$ be the unique maximal strong solutions of
\eqref{eq:lin-accelerated-pde} with initial data
$(\rho_0^{(n)},\Phi_{P_0^{(n)}})$ and
$(\rho_0,\Phi_{P_0})$, respectively.
Let $T_{\max}$ denote the maximal existence time of
$(\rho,\Phi)$. Then, for every $0<T<T_{\max}$, the solutions
$(\rho^{(n)},\Phi^{(n)})$ exist on $[0,T]$ for 
sufficiently large $n$, and
\begin{equation}\label{eq:lin-strong-stability}
\begin{gathered}
\rho^{(n)}\longrightarrow\rho
\quad\text{in }
C([0,T];W_2)
\cap C([0,T];C^1_{\loc})
\cap C^1([0,T];C^0_{\loc}),\\
\Phi^{(n)}\longrightarrow\Phi
\quad\text{in }
C([0,T];C^1_{\loc})
\cap C^1([0,T];C^0_{\loc}).
\end{gathered}
\end{equation}
\end{cor}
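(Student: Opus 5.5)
The plan is to transfer everything through the exact correspondence of Theorem~\ref{thm:lin-correspondence}: each strong solution is encoded by the matrix pair $(S,P)$ and the propagator $G$,
\[
\rho_t=(G_t)_\#\rho_0,\qquad \Phi_t=\Phi_{P_t},\qquad \dot G_t=P_tS_tAG_t,\quad G_0=I_d .
\]
The argument therefore has a finite-dimensional stability step followed by a lifting step.

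\textbf{Step 1 (matrix level).} We have $S_0^{(n)}=S(\rho_0^{(n)})\to S_0$, because $W_2$-convergence implies convergence of second moments, and $P_0^{(n)}\to P_0$ by assumption. The right-hand side of \eqref{eq:lin-accelerated-ode} is continuous in $t$ through $\alpha$ and locally Lipschitz in $(S,P)$. The set $\{(S_t,P_t):t\in[0,T]\}$ is compact in $\Sym_{++}(d)\times\Sym(d)$. Standard continuous dependence for ODEs (Gronwall on a compact neighbourhood of this set, which stays inside the open set $\Sym_{++}(d)$) then gives two things: for large $n$ the solutions $(S^{(n)},P^{(n)})$ exist on $[0,T]$, and they converge to $(S,P)$ in $C^1([0,T])$. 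Adjoining the linear equation for $G$, we likewise get $G^{(n)}\to G$ and $(G^{(n)})^{-1}\to G^{-1}$ in $C^1([0,T])$. All of these matrices are uniformly bounded, and the $G$'s are uniformly invertible.

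\textbf{Step 2 (lifting).} For $\Phi$ the claim is immediate, since $\Phi^{(n)}_t-\Phi_t=\tfrac12x^\top(P^{(n)}_t-P_t)x$ and $\partial_t$ acts only on the coefficients. The $C([0,T];C^1_{\loc})\cap C^1([0,T];C^0_{\loc})$ convergence then follows from $C^1$ convergence of $P^{(n)}$, uniformly on compacts.

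For $\rho$ we write
\[
\rho^{(n)}_t(x)=\rho^{(n)}_0\bigl((G^{(n)}_t)^{-1}x\bigr)\,\bigl|\det G^{(n)}_t\bigr|^{-1}.
\]
On a compact $K$, the preimages $(G^{(n)}_t)^{-1}K$ stay inside a fixed compact set, uniformly in $t\in[0,T]$ and in $n$. We then combine $C^1_{\loc}$ convergence of $\rho_0^{(n)}$, uniform continuity of $\rho_0$ and $\nabla\rho_0$ on compacts, and uniform convergence of $(G^{(n)})^{-1}$ and of the determinants. Together these give $\rho^{(n)}\to\rho$ in $C([0,T];C^1_{\loc})$.

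For the time derivative we use the PDE: $\partial_t\rho^{(n)}_t=-\nabla\cdot(\rho^{(n)}_tM^{(n)\top}_tAx)$ with $M^{(n)}_t=S^{(n)}_tP^{(n)}_t\to S_tP_t$ uniformly. This expression involves only $\rho^{(n)}_t$ and its first derivatives on compacts, so it converges in $C([0,T];C^0_{\loc})$.

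For $W_2$ we couple through the maps. Take an optimal coupling $\pi_n$ of $(\rho_0^{(n)},\rho_0)$ and push it forward by $(G^{(n)}_t,G_t)$. This yields
\[
W_2\bigl(\rho^{(n)}_t,\rho_t\bigr)\le \|G^{(n)}_t\|_{\mathrm{op}}\,W_2\bigl(\rho^{(n)}_0,\rho_0\bigr)+\|G^{(n)}_t-G_t\|_{\mathrm{op}}\,\Bigl(\int|y|^2\d\rho_0\Bigr)^{1/2},
\]
which tends to zero uniformly on $[0,T]$. Continuity of $t\mapsto\rho_t$ in $W_2$ follows in the same way from continuity of $G$.

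\textbf{Main obstacle.} The main point to secure is the uniform existence time in Step 1, which requires that the limit trajectory stays a positive distance from $\partial\Sym_{++}(d)$ on $[0,T]$. Because $S_t=G_tS_0G_t^\top$ with $G_t$ invertible, $S_t$ has eigenvalues bounded below on $[0,T]$. So a neighbourhood with a uniform Lipschitz constant exists, and the usual escape-from-compact argument applies.

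Everything else is bookkeeping with uniform bounds on compacts. The only additional check is that the compact $(G^{(n)}_t)^{-1}K$ is uniform in $n$, which follows from the uniform bound on $\|(G^{(n)})^{-1}\|$ obtained in Step 1.
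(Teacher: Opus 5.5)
Your proposal is correct and follows essentially the same route as the paper: continuous dependence for the matrix system \eqref{eq:lin-accelerated-ode} and the propagator $G$, then lifting through the pushforward density formula for $C^1_{\loc}$, the PDE itself for $\partial_t\rho$, the same coupling estimate for $W_2$, and the explicit quadratic form for $\Phi$. Your concern about the boundary of $\Sym_{++}(d)$ is harmless but not needed. The right-hand side of \eqref{eq:lin-accelerated-ode} is polynomial on all of $\Sym(d)\times\Sym(d)$, and positivity of $S^{(n)}_t=G^{(n)}_tS^{(n)}_0(G^{(n)}_t)^\top$ holds automatically.
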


Next, we investigate when strong solutions of
\eqref{eq:lin-accelerated-pde} can be continued.
Proposition~\ref{prop:lin-continuation} shows that,
if the maximal existence time is finite, the second
moment of $\rho_t$ must become unbounded.
\paragraph{Continuation, global existence, and blow-up.}
Next, we give a continuation criterion for
\eqref{eq:lin-accelerated-pde} and identify conditions
for global existence and finite-time blow-up.

\begin{proposition}[Continuation and global existence]
\label{prop:lin-continuation}
Under the assumptions of Theorem~\ref{thm:lin-correspondence},
let $(S,P)$ be the maximal solution of
\eqref{eq:lin-accelerated-ode} and $(\rho,\Phi)$ the
corresponding maximal strong solution of
\eqref{eq:lin-accelerated-pde}.
Denote their common maximal existence interval by
$[0,T_{\max})$.
Then the following statements hold:
\begin{enumerate}
\item\label{it:lin-continuation-criterion}
If $T_{\max}<\infty$, then
\begin{equation}\label{eq:lin-continuation-criterion}
\limsup_{t\uparrow T_{\max}}\|S_t\|_{\mathrm F}=\infty,
\qquad
\limsup_{t\uparrow T_{\max}}
\int_{\R^d}|x|^2 \d\rho_t(x)=\infty.
\end{equation}

\item\label{it:lin-continuation-global}
If $V\preceq0$, then $T_{\max}=\infty$.
\end{enumerate}
\end{proposition}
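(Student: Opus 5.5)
We argue entirely at the level of the matrix system \eqref{eq:lin-accelerated-ode}. By Theorem~\ref{thm:lin-correspondence} and Corollary~\ref{cor:lin-wellposedness}, the PDE solution and the matrix solution share the maximal interval $[0,T_{\max})$. Since $S_t$ is the second-moment matrix of $\rho_t$, we have $\int|x|^2\d\rho_t=\tr S_t$, and for $S_t\succeq0$ this is comparable to $\|S_t\|_{\mathrm F}$. Hence both statements in \eqref{eq:lin-continuation-criterion} are equivalent, and it suffices to treat $\|S_t\|_{\mathrm F}$.

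The key tool is the energy from Lemma~\ref{lem:lin-canonical-hamiltonian}. With $\Pi=P/2$, the Hamiltonian \eqref{eq:lin-matrix-hamiltonian} becomes $\mathcal H^{\lin}=K_t-\tfrac12\tr(VS_t)$, where
\[
K_t\coloneqq\tfrac12\tr(P_tS_tAS_tP_t)=\tfrac12\|A^{1/2}S_tP_t\|_{\mathrm F}^2\ge0 .
\]
Along the damped Hamiltonian flow,
\[
\tfrac{\d}{\d t}\mathcal H^{\lin}=\langle\partial_\Pi\mathcal H^{\lin},-\alpha_t\Pi_t\rangle=-2\alpha_tK_t\le0 ,
\]
because $\partial_\Pi\mathcal H^{\lin}$ is linear in $\Pi$ and $\mathcal H^{\lin}$ is quadratic in $\Pi$. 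Therefore
\[
K_t\le \mathcal H_0+\tfrac12\tr(VS_t)\le \mathcal H_0+\tfrac12\|V\|_{\mathrm{op}}\tr S_t
\qquad\text{for all } t\in[0,T_{\max}).
\]

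For item~\ref{it:lin-continuation-criterion}, we argue by contradiction. Suppose $T_{\max}<\infty$ and $\sup_{t<T_{\max}}\|S_t\|_{\mathrm F}<\infty$. The energy bound then makes $K_t$ bounded, so $S_tP_t$ is bounded. The main obstacle is that $(S,P)$ could still leave every compact set of $\Sym_{++}(d)\times\Sym(d)$: either $S_t$ degenerates towards the boundary of the cone, or $P_t$ blows up while $S_tP_t$ stays bounded. We rule out degeneracy through the determinant. Jacobi's formula gives
\[
\tfrac{\d}{\d t}\log\det S_t=\tr(S_t^{-1}\dot S_t)=2\tr(P_tS_tA),
\]
and the right-hand side is bounded by $2\|S_tP_t\|_{\mathrm F}\|A\|_{\mathrm F}$. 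Hence $\log\det S_t$ is bounded on the finite interval $[0,T_{\max})$, so $\det S_t\ge c>0$. Together with the upper bound on $S_t$, all eigenvalues of $S_t$ are bounded below by a positive constant, and $S_t$ stays in a compact subset of $\Sym_{++}(d)$. Consequently $P_t=S_t^{-1}(S_tP_t)$ is bounded. The trajectory thus stays in a compact subset of the open phase space $\Sym_{++}(d)\times\Sym(d)$ on a finite time interval. By the standard extension theorem for ODEs with locally Lipschitz right-hand side (using continuity of $\alpha$ on $[0,\infty)$), the solution extends beyond $T_{\max}$, which is a contradiction.

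For item~\ref{it:lin-continuation-global}, let $V\preceq0$. Then $\tr(VS_t)\le0$, so $K_t\le\mathcal H_0$ uniformly in time. We estimate
\[
\tfrac{\d}{\d t}\tr S_t=2\tr(P_tS_tAS_t)=2\tr\bigl((A^{1/2}S_tP_t)^\top A^{1/2}S_t\bigr)\le 2\sqrt{2\mathcal H_0}\,\|A^{1/2}\|_{\mathrm{op}}\|S_t\|_{\mathrm F},
\]
and $\|S_t\|_{\mathrm F}\le\tr S_t$. Gronwall's inequality then shows that $\tr S_t$ grows at most exponentially, and hence stays bounded on every finite interval. By item~\ref{it:lin-continuation-criterion}, this forces $T_{\max}=\infty$.
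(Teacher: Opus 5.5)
Your proof is correct and follows the paper's argument. The dissipated energy $\tfrac12\|A^{1/2}S_tP_t\|_{\mathrm F}^2-\tfrac12\tr(VS_t)$ bounds $S_tP_t$, which leads to a lower bound on $S_t$, a bound on $P_t=S_t^{-1}(S_tP_t)$, and the continuation contradiction; item~2 is the same Gronwall estimate for $S_t$. The only deviation is how you keep $S_t$ away from the boundary of $\Sym_{++}(d)$: you use Jacobi's formula for $\log\det S_t$ together with the upper bound on $S_t$, whereas the paper applies Gronwall directly to $\tfrac{\d}{\d t}S_t^{-1}=-S_t^{-1}(P_tS_t)A-A(S_tP_t)S_t^{-1}$, and both work equally well.
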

The proof of Proposition~\ref{prop:lin-continuation}
is given in Appendix~\ref{app:lin-quadratic-correspondence}.

To identify conditions for finite-time blow-up, we now
assume that $P_0=0$ and that $A, S_0=S(\rho_0), V$ commute.
In a common orthonormal eigenbasis, write
\[
A=\diag(a_i),\qquad
S_0=\diag(s_{i,0}),\qquad
V=\diag(v_i),
\qquad a_i,s_{i,0}>0.
\]
Uniqueness for \eqref{eq:lin-accelerated-ode} implies
that $S_t$ and $P_t$ remain diagonal in this basis.
Writing $S_t=\diag(s_i(t))$ and $P_t=\diag(p_i(t))$,
the matrix system reduces to
\begin{equation}\label{eq:lin-diagonal-ode}
\begin{cases}
\dot s_i=2a_ip_is_i^2,
&s_i(0)=s_{i,0},\\
\dot p_i=-\alpha_t p_i-2a_is_ip_i^2+v_i,
&p_i(0)=0.
\end{cases}
\end{equation}
This allows us to state the following theorem.

\begin{theorem}[Finite-time moment blow-up under bounded damping]
\label{thm:lin-bounded-damping-blowup}
Under the assumptions of
Proposition~\ref{prop:lin-continuation},
suppose additionally that $P_0=0$, that $A,S_0,V$ commute,
and that $\alpha_t\leq\bar\alpha<\infty$ for all $t\geq0$.
Let $s_i,p_i$ be the solutions of
\eqref{eq:lin-diagonal-ode}.
Then we have the following:
\begin{enumerate}
\item\label{it:lin-bounded-damping-blowup}
If some $v_i>0$, then $T_{\max}<\infty$.
At least one index $i$ with $v_i>0$ satisfies
\begin{equation}\label{eq:lin-commuting-blowup}
\lim_{t\uparrow T_{\max}}s_i(t)=\infty,
\qquad
\lim_{t\uparrow T_{\max}}
\int_{\R^d}|x|^2 \d\rho_t(x)=\infty.
\end{equation}

\item\label{it:lin-positive-moment-monotonicity}
For every index $i$ with $v_i>0$, the function $s_i$
is strictly increasing on $(0,T_{\max})$.

\item\label{it:lin-commuting-global}
If all $v_i\leq0$, then $T_{\max}=\infty$.
\end{enumerate}
\end{theorem}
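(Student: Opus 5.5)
The plan is to exploit the decoupling: \eqref{eq:lin-diagonal-ode} splits into $d$ independent planar systems $(s_i,p_i)$. By uniqueness for \eqref{eq:lin-accelerated-ode}, $T_{\max}$ is the minimum of the individual maximal times $T_i$. By Proposition~\ref{prop:lin-continuation}, item~\ref{it:lin-continuation-criterion}, applied to the $i$-th block alone, $T_i<\infty$ forces $\limsup s_i=\infty$. Each mode is then treated through a sign argument on $p_i$ and a reduction to a scalar second-order equation.

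\textbf{Step 1: sign of $p_i$ (items~\ref{it:lin-positive-moment-monotonicity} and~\ref{it:lin-commuting-global}).} If $v_i>0$, then $p_i(0)=0$ and $\dot p_i(0)=v_i>0$, so $p_i>0$ near $0^+$. If $p_i$ returned to zero at a first time $t_*>0$, we would need $\dot p_i(t_*)\le0$, but the equation gives $\dot p_i(t_*)=v_i>0$. Hence $p_i>0$ on $(0,T_{\max})$, and $\dot s_i=2a_ip_is_i^2>0$ gives strict monotonicity.

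If $v_i\le0$, the same first-crossing argument gives $p_i\le0$; for $v_i=0$ the solution is $p_i\equiv0$ by uniqueness. Then $(1/s_i)'=-2a_ip_i\ge0$, so $0<s_i\le s_{i,0}$ and these modes stay bounded. If all $v_i\le0$, then $V\preceq0$ and item~\ref{it:lin-continuation-global} of Proposition~\ref{prop:lin-continuation} gives $T_{\max}=\infty$; alternatively, boundedness of every $s_i$ combined with the continuation criterion gives the same conclusion.

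\textbf{Step 2: finite-time blow-up for $v_i>0$ (the main obstacle).} Set $y=\log s_i$. A direct computation using both equations in \eqref{eq:lin-diagonal-ode} gives
\[
y'=2a_ip_is_i,\qquad y''+\alpha_t y'=2a_iv_i\,e^{y},\qquad y(0)=\log s_{i,0},\quad y'(0)=0 .
\]
Write $c:=2a_iv_i>0$. Since $y'\ge0$ by Step 1 and $\alpha_t\le\bar\alpha$, we get $(e^{\bar\alpha t}y')'\ge c\,e^{\bar\alpha t}e^{y}\ge c\,e^{\bar\alpha t}s_{i,0}$. Hence $y'$ is bounded below by a positive constant for $t\ge1$, so if the solution were global, $y\to\infty$.

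The key step is then a Keller--Osserman type barrier. Once $e^{y/2}\ge 2\bar\alpha k/c$, on any interval where $y'\le k e^{y/2}$ we have $y''\ge c e^y-\bar\alpha y'\ge \tfrac c2 e^y$. This yields $(y'^2)'\ge c\,(e^y)'$, so $y'^2$ grows at least like $c(e^y-e^{y(t_1)})$. Choosing $k^2<c$, this forces $y'\ge k e^{y/2}$ after some finite time, and the region $\{y'\ge k e^{y/2}\}$ is forward invariant. On that region,
\[
\int_{y(t_2)}^\infty e^{-y/2}\,\d y<\infty
\]
bounds the remaining existence time, so $T_i<\infty$, and therefore $T_{\max}\le T_i<\infty$.

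\textbf{Step 3: identification of the blow-up index and the moment limit.} Since $T_{\max}<\infty$, the continuation criterion gives $\limsup\|S_t\|_{\mathrm F}=\infty$. Modes with $v_j\le0$ are bounded by Step 1, so some $i$ with $v_i>0$ has $\limsup s_i=\infty$. Monotonicity upgrades this $\limsup$ to a limit. Finally,
\[
\int_{\R^d}|x|^2\,\d\rho_t=\tr S_t=\sum_j s_j(t)\ge s_i(t)
\]
by Theorem~\ref{thm:lin-correspondence}, with all $s_j>0$. This gives the limit in \eqref{eq:lin-commuting-blowup}.

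The delicate point is Step 2: the damping term $-\alpha_t y'$ must be dominated uniformly. This is exactly where boundedness of $\alpha$ enters, through the threshold $e^{y/2}\ge2\bar\alpha k/c$.
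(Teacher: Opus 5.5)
Your proposal is correct and takes essentially the paper's route: you pass to $y=\log s_i$ with $y''+\alpha_t y'=2a_iv_ie^{y}$, show $y\to\infty$ if the $i$th mode were global, prove that the barrier $y'\geq k e^{y/2}$ is eventually reached and then forward invariant, and conclude from $\tfrac{d}{dt}e^{-y/2}\leq -k/2$. The differences are cosmetic. You obtain monotonicity from the sign of $p_i$ rather than the integrating-factor formula, and you reach the barrier via $(y'^2)'\geq c\,(e^{y})'$ rather than the estimate $\dot r_i\geq C\dot y_i$ for $r_i=\dot y_ie^{-y_i/2}$; you also locate the blow-up index through the full continuation criterion and the boundedness of the modes with $v_j\leq 0$, rather than through $T_{\max}=\min_iT_i$.
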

The proof of Theorem~\ref{thm:lin-bounded-damping-blowup}
is given in Appendix~\ref{app:lin-quadratic-correspondence}.
Its key step is to establish that, for $v_i>0$,
$\dot s_i\geq k s_i^{3/2}$ for some $k>0$ once $s_i$
is sufficiently large. This superlinear growth forces
finite-time blow-up despite bounded damping.

For illustration, in the undamped case $\alpha\equiv0$,
the solutions of \eqref{eq:lin-diagonal-ode} can be
computed explicitly. For each $v_i>0$, one obtains
\begin{equation}\label{eq:lin-explicit-undamped}
s_i(t)
=
s_{i,0}\sec^2\!\bigl(\sqrt{a_iv_is_{i,0}}\,t\bigr),
\end{equation}
and hence
\begin{equation}\label{eq:lin-explicit-escape-time}
T_{\max}
=
\min_{i:v_i>0}
\frac{\pi}{2\sqrt{a_iv_is_{i,0}}}.
\end{equation}
Thus, the initial second moments and the eigenvalues
of $A$ and $V$ determine the blow-up time explicitly.
\subsection{Finite-Token Layers and Effective Depth}
\label{subsec:lin-discrete-attention}
The aim of this subsection is to use continuous moment
growth to predict when large token norms occur in
discrete attention layers.
We first relate the continuous token dynamics to the
matrix system \eqref{eq:lin-accelerated-ode}, then
discretize them using the explicit Euler method.

Theorem~\ref{thm:lin-transformer-depth} uses this
correspondence to predict the number of layers needed
to reach a prescribed second-moment threshold.
We then examine how the continuous blow-up time
separates bounded and unbounded token growth as the
discretization is refined, keeping the number of
tokens fixed.

\paragraph{Finite-token dynamics and attention layers.}
We begin by fixing $N\geq d$, $A=A^\top\succ0$, $V=V^\top$, and
$X_0\in\R^{N\times d}$ with full column rank.
The rows of $X_0$ are the initial token vectors, and 
\[
S_0\coloneqq\frac1N X_0^\top X_0\succ0
\]
is their second-moment matrix. Note that, as is well known, for centered tokens the
second-moment matrix $X^\top X/N$ is also their empirical
covariance matrix, analogous to the covariance parameter
$S$ of the centered densities in
\eqref{eq:lin-elliptical-family}. Next, we introduce a coupled system of ODEs for the token
matrix $X(t)\in\R^{N\times d}$ and its momentum matrix
$Y(t)\in\R^{N\times d}$:
\begin{equation}\label{eq:lin-transformer-flow}
\begin{cases}
\dot X=\dfrac1N XAX^\top Y,
&X(0)=X_0,\\[1ex]
\dot Y=-\alpha(t)Y-\dfrac1N YY^\top XA+XV,
&Y(0)=0.
\end{cases}
\end{equation}
Next, we observe that the matrix system
\eqref{eq:lin-accelerated-ode} also determines the token
positions and momenta in \eqref{eq:lin-transformer-flow}.
This is part of the following proposition, which follows directly from a computation,
uniqueness for \eqref{eq:lin-transformer-flow}, and the
fact that an unbounded second-moment matrix forces
unbounded token positions.

\begin{proposition}[Exact finite-token representation]
\label{prop:lin-transformer-lift}
Let $\alpha\in C([0,\infty);[0,\infty))$, and let $(S,P)$
be the maximal solution of \eqref{eq:lin-accelerated-ode}
with initial values $(X_0^\top X_0/N,0)$ on
$[0,T_{\max})$.
Let $G$ solve \eqref{eq:lin-propagator}.
Then the following statements hold:
\begin{enumerate}
\item (Exact finite-token representation)
\label{it:lin-transformer-lift}
The unique solution of \eqref{eq:lin-transformer-flow}
on $[0,T_{\max})$ is given by
\begin{equation}\label{eq:lin-transformer-lift}
X(t)=X_0G_t^\top,
\qquad
Y(t)=X(t)P_t,
\qquad
\frac1N X(t)^\top X(t)=S_t.
\end{equation}
\item (Equality of maximal existence times)
\label{it:lin-transformer-maximal-time}
If $T_{\max}<\infty$ and $S_t$ is unbounded as
$t\uparrow T_{\max}$, then $T_{\max}$ is also the maximal
existence time of \eqref{eq:lin-transformer-flow}.
\end{enumerate}
\end{proposition}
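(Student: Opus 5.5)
We will verify directly that the ansatz \eqref{eq:lin-transformer-lift} solves \eqref{eq:lin-transformer-flow}, then invoke uniqueness, and finally use the blow-up of $S_t$ to identify maximal times. Throughout, $(S,P)$ is the maximal solution of \eqref{eq:lin-accelerated-ode} with $S_0=X_0^\top X_0/N\succ0$ (full column rank) and $P_0=0$, and $G$ solves the linear ODE \eqref{eq:lin-propagator}, which has continuous coefficients and hence a unique $C^1$ solution on all of $[0,T_{\max})$.

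\emph{Step 1: second moment of the ansatz.} Set $X(t)\coloneqq X_0G_t^\top$. Then $\frac1N X^\top X=G_tS_0G_t^\top$. Both this curve and $S_t$ satisfy the linear equation $\dot R=P_tS_tAR+RAS_tP_t$ with the same initial value $S_0$. Indeed, differentiating $G_tS_0G_t^\top$ with \eqref{eq:lin-propagator} gives $P_tS_tA(GS_0G^\top)+(GS_0G^\top)AS_tP_t$. The equation for $S_t$ is the first line of \eqref{eq:lin-accelerated-ode}, which is of this form with $R=S_t$. By linear uniqueness, $\frac1NX(t)^\top X(t)=S_t$. This is the same argument underlying $S_t=G_tS_0G_t^\top$ in Theorem~\ref{thm:lin-correspondence}.

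\emph{Step 2: verification of the token ODEs.} Set $Y\coloneqq XP_t$.

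For the position equation, \eqref{eq:lin-propagator} gives $\dot X=X_0\dot G^\top=X_0G^\top AS_tP_t=XAS_tP_t$. On the other hand, $\frac1N XAX^\top Y=XA\,(\tfrac1NX^\top X)\,P_t=XAS_tP_t$ by Step 1. The two expressions agree.

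For the momentum equation,
\[
\dot Y=\dot XP_t+X\dot P_t=XAS_tP_t^2+X\bigl(-\alpha_tP_t-AS_tP_t^2-P_t^2S_tA+V\bigr)=-\alpha_tY-XP_t^2S_tA+XV.
\]
It remains to match the quadratic term. Using Step 1,
\[
\tfrac1N YY^\top XA=\tfrac1N XP_tP_t^\top X^\top XA=XP_t^2S_tA,
\]
since $P_t$ is symmetric. The initial data agree: $X(0)=X_0$ because $G_0=I_d$, and $Y(0)=0$ because $P_0=0$.

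The right-hand side of \eqref{eq:lin-transformer-flow} is polynomial in $(X,Y)$ with continuous time dependence through $\alpha$, so it is locally Lipschitz. Picard--Lindel\"of therefore gives uniqueness, and the constructed pair is the unique solution on $[0,T_{\max})$.

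\emph{Step 3: maximal times.} This is the only delicate point. The solution of \eqref{eq:lin-transformer-flow} might a priori extend past $T_{\max}$. Suppose it did, to some $T'>T_{\max}$. By uniqueness it coincides with the constructed solution on $[0,T_{\max})$. Being continuous on $[0,T_{\max}]$, it is bounded there. Then $S_t=\frac1NX^\top X$ would also be bounded as $t\uparrow T_{\max}$, contradicting the assumed unboundedness of $S_t$. Hence the maximal time of \eqref{eq:lin-transformer-flow} is exactly $T_{\max}$. Moreover, $\|X(t)\|_{\mathrm F}^2=N\tr S_t$, so the token norms themselves become unbounded.
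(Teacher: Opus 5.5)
Your proposal is correct and follows essentially the same route as the paper. You verify that $X=X_0G_t^\top$, $Y=XP_t$ solve the token system using $\frac1N X^\top X=S_t$, invoke local Lipschitz uniqueness, and get the maximal-time claim from $\tr(S_t)=\frac1N\|X(t)\|_{\mathrm F}^2$ by the same continuity contradiction. The only difference is cosmetic: the paper obtains $\frac1N X^\top X=S_t$ by showing that $G_t^{-1}S_tG_t^{-\top}$ is constant, whereas you compare two solutions of the same linear matrix ODE, which is equivalent.
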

The proof of Proposition~\ref{prop:lin-transformer-lift}
is given in Appendix~\ref{app:lin-continuation-proofs}. By \eqref{eq:lin-transformer-lift}, blow-up of the
second-moment matrix $S_t$ forces unbounded token norms
in \eqref{eq:lin-transformer-flow}, even for a fixed
number of tokens $N$.

Having described the continuous token dynamics, we next
construct the corresponding discrete attention layers.
Applying the explicit Euler method to
\eqref{eq:lin-transformer-flow} with step size $h>0$ gives
\begin{equation}\label{eq:lin-transformer-layers}
\begin{cases}
X_{\ell+1}^h
=X_\ell^h+\dfrac hN X_\ell^h A(X_\ell^h)^\top Y_\ell^h,\\[1ex]
Y_{\ell+1}^h
=(1-h\alpha(\ell h))Y_\ell^h
-\dfrac hN Y_\ell^h(Y_\ell^h)^\top X_\ell^h A
+hX_\ell^h V
\end{cases}
\end{equation}
with initial values $(X_0^h,Y_0^h)=(X_0,0)$.
Following the residual attention formulation in
Section~\ref{sec:trans}, \eqref{eq:lin-transformer-layers}
defines layers with shared matrices $A,V$ and an additional
momentum variable $Y$. Our analysis concerns these updates
without normalization or MLP blocks. 

Next, to measure how well the layers \eqref{eq:lin-transformer-layers}
approximate the solution of \eqref{eq:lin-transformer-flow},
we introduce the notation
\[
Z(t)\coloneqq(X(t),Y(t)),
\qquad
Z_\ell^h\coloneqq(X_\ell^h,Y_\ell^h),
\qquad
\|(X,Y)\|^2\coloneqq
\|X\|_{\mathrm F}^2+\|Y\|_{\mathrm F}^2.
\]
To compare their second moments, we also set
\begin{equation}\label{eq:S_ell_h}
    S_\ell^h\coloneqq\frac1N(X_\ell^h)^\top X_\ell^h.
\end{equation}
This matrix will also allow us to identify the layers
at which a prescribed moment threshold is reached.
With this notation at hand, we can state the following lemma.

\begin{lemma}[Euler approximation before blow-up]
\label{prop:lin-transformer-transfer}
Under the assumptions of Proposition~\ref{prop:lin-transformer-lift},
suppose additionally that $\alpha\in C^1([0,\infty))$.
Let $(X,Y)$ be the solution given by
\eqref{eq:lin-transformer-lift}, and let $T_{\max}$ be
the maximal existence time of $(S,P)$.
Let $(X_\ell^h,Y_\ell^h)$ be the iterates of
\eqref{eq:lin-transformer-layers} with initial values
$(X_0^h,Y_0^h)=(X_0,0)$. Then, for every $0<T<T_{\max}$, there are constants
$C_T<\infty$ and $h_T>0$ such that
\begin{equation}\label{eq:lin-transformer-euler-error}
\max_{0\leq\ell h\leq T}
\left(
\|Z_\ell^h-Z(\ell h)\|
+
\|S_\ell^h-S(\ell h)\|_{\mathrm F}
\right)
\leq C_T h
\qquad(0<h<h_T).
\end{equation}
\end{lemma}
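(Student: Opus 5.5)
This is a standard explicit Euler convergence argument for a smooth ODE on a compact piece of trajectory strictly before blow-up. Write \eqref{eq:lin-transformer-flow} as $\dot Z=F(t,Z)$ with
\[
F(t,X,Y)=\Bigl(\tfrac1N XAX^\top Y,\; -\alpha(t)Y-\tfrac1N YY^\top XA+XV\Bigr).
\]
The map $F$ is polynomial (cubic) in $Z$ and $C^1$ in $t$ because $\alpha\in C^1$. Fix $0<T<T_{\max}$. By Proposition~\ref{prop:lin-transformer-lift}, the exact solution $Z(t)=(X_0G_t^\top,X_0G_t^\top P_t)$ exists and is $C^1$ on $[0,T]$. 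Hence $R\coloneqq\max_{[0,T]}\|Z(t)\|<\infty$.

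Consider the tube $\mathcal K\coloneqq\{(t,W):t\in[0,T],\ \|W-Z(t)\|\le1\}$, which is compact. On $\mathcal K$ we have the following bounds:
\begin{itemize}
\item $F$ is Lipschitz in $W$ with some constant $L_T$, since $\partial_ZF$ is polynomial and therefore bounded on $\mathcal K$.
\item $F$ and $\partial_tF=(0,-\alpha'(t)Y)$ are bounded.
\end{itemize}

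\textbf{Consistency.} Differentiating $\dot Z=F(t,Z)$ once shows that $Z\in C^2([0,T])$, with $\ddot Z=\partial_tF+\partial_ZF\,F$ bounded by some $M_T$. Taylor's formula then gives the local truncation error
\[
\tau_\ell\coloneqq Z((\ell+1)h)-Z(\ell h)-hF(\ell h,Z(\ell h)),\qquad \|\tau_\ell\|\le \tfrac12M_Th^2 .
\]

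\textbf{Stability by induction.} Let $e_\ell\coloneqq Z_\ell^h-Z(\ell h)$. As long as $\|e_\ell\|\le1$, the point $(\ell h,Z_\ell^h)$ lies in $\mathcal K$, so
\[
\|e_{\ell+1}\|\le(1+hL_T)\|e_\ell\|+\tfrac12M_Th^2 .
\]
Since $e_0=0$, the discrete Gronwall inequality yields
\[
\|e_\ell\|\le \frac{M_T}{2L_T}\bigl(e^{L_TT}-1\bigr)h\eqqcolon C_T'h\qquad\text{for }\ell h\le T.
\]
The one subtle point is keeping the iterates inside the tube, because $F$ is only locally Lipschitz (it is cubic) and the estimate above is valid only on $\mathcal K$. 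Choose $h_T$ with $C_T'h_T<1$. An induction on $\ell$ then shows that $\|e_\ell\|<1$ holds at every step with $\ell h\le T$, so each iterate stays in $\mathcal K$ and the Lipschitz bound legitimately applies at each step. No a priori bound on the discrete scheme is needed beyond this.

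\textbf{Moment error.} Use $S(\ell h)=\frac1N X(\ell h)^\top X(\ell h)$ from \eqref{eq:lin-transformer-lift} and write $X_\ell^h=X(\ell h)+E$ with $\|E\|_{\mathrm F}\le\|e_\ell\|$. Then
\[
S_\ell^h-S(\ell h)=\tfrac1N\bigl(X^\top E+E^\top X+E^\top E\bigr),
\]
so
\[
\|S_\ell^h-S(\ell h)\|_{\mathrm F}\le \tfrac1N(2R+1)\|e_\ell\|\le \tfrac1N(2R+1)C_T'h .
\]
Combining the two estimates, \eqref{eq:lin-transformer-euler-error} holds with $C_T=C_T'\bigl(1+(2R+1)/N\bigr)$.

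The only step requiring care is the tube/induction argument, which handles the non-global Lipschitz continuity; everything else is routine. The $C^1$ assumption on $\alpha$ is used exactly once, to obtain the $C^2$ regularity of $Z$ and hence the $O(h^2)$ local truncation error.
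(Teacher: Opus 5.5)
Your proposal is correct and follows the same route as the paper. The paper fixes $T<T_{\max}$, uses $\alpha\in C^1$ to get $Z\in C^2([0,T])$, and invokes the classical Euler estimate with a uniform Lipschitz constant on the closed unit neighbourhood of $Z([0,T])$. It then bounds the moment error by $\frac1N(\|X_\ell^h\|_{\mathrm F}+\|X(\ell h)\|_{\mathrm F})\|X_\ell^h-X(\ell h)\|_{\mathrm F}$. You write out explicitly the consistency, discrete Gronwall and tube-induction steps that the paper delegates to the citation. For the moment difference you use the equivalent expansion $X^\top E+E^\top X+E^\top E$.
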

The proof of Lemma~\ref{prop:lin-transformer-transfer}
is given in Appendix~\ref{app:lin-continuation-proofs}. Next, we use the Euler error estimate
\eqref{eq:lin-transformer-euler-error} to relate the continuous
blow-up time $T_{\max}$ to the depth at which large token
norms occur in the discrete attention layers.

\paragraph{Activation thresholds and effective depth.}
The aim of this paragraph is to use the continuous
dynamics \eqref{eq:lin-transformer-flow} to predict when
large token norms occur in the discrete attention layers
\eqref{eq:lin-transformer-layers}.
Using the Euler estimate in
Lemma~\ref{prop:lin-transformer-transfer}, we first
estimate the number of layers needed to reach a prescribed
second-moment threshold in
Theorem~\ref{thm:lin-transformer-depth}.
We then derive the consequences for token growth and
compare networks whose effective depth lies below
or above the continuous blow-up time.
To this end, we first fix the assumptions and notation.

Let $(S,P)$, $(X,Y)$, and $(X_\ell^h,Y_\ell^h)$ be
the solutions and iterates from
Lemma~\ref{prop:lin-transformer-transfer}.
Denote the maximal existence time of $(S,P)$ by
$T_{\max}$.
Throughout this paragraph, assume additionally that
$A,V,S_0$ commute, that $V$ has a positive eigenvalue,
and that $0\leq\alpha_t\leq\bar\alpha<\infty$.

Since $V$ has a positive eigenvalue,
Theorem~\ref{thm:lin-bounded-damping-blowup} gives
$T_{\max}<\infty$.
We choose the common unit eigenvector $e_{i_*}$ provided
by this theorem and, using
Proposition~\ref{prop:lin-transformer-lift}, write its directional
second moment as
\begin{equation}\label{eq:lin-directional-moment}
s(t)\coloneqq e_{i_*}^\top S_t e_{i_*}
=\frac1N\|X(t)e_{i_*}\|_2^2.
\end{equation}

The function $s$ is continuous because $S$ is a $C^1$
solution of \eqref{eq:lin-accelerated-ode}.
Its strict increase and divergence as $t\uparrow T_{\max}$
follow from
Theorem~\ref{thm:lin-bounded-damping-blowup}.
Setting $s_*\coloneqq s(0)$, we therefore obtain, for every
$R>s_*$, a unique continuous crossing time $t_R$ satisfying
\begin{equation}\label{eq:lin-continuous-crossing-time}
s(t_R)=R,
\qquad 0<t_R<T_{\max}.
\end{equation}
For the discrete layers \eqref{eq:lin-transformer-layers},
the directional second moment corresponding to
\eqref{eq:lin-directional-moment} is
\[
\frac1N\|X_\ell^h e_{i_*}\|_2^2.
\]
To determine how many layers are needed to reach the
same threshold $R$, we define the first threshold layer by
\begin{equation}\label{eq:lin-layer-threshold}
L_R(h)\coloneqq
\min\left\{
\ell\in\mathbb N_0:
\frac1N\|X_\ell^h e_{i_*}\|_2^2\geq R
\right\},
\end{equation}
with $L_R(h)=\infty$ if no layer reaches $R$.
Since each layer represents a time step $h$, the
corresponding accumulated time is $hL_R(h)$.
Comparing it with $t_R$ tells us how accurately the
continuous dynamics predict the first threshold layer.

\begin{theorem}[Activation thresholds]
\label{thm:lin-transformer-depth}
Under the preceding assumptions, let $t_R$ and $L_R(h)$
be defined by \eqref{eq:lin-continuous-crossing-time}
and \eqref{eq:lin-layer-threshold}.\\

\noindent Then for every fixed $R>s_*$, the layer index $L_R(h)$ is
finite for all sufficiently small $h>0$, and
\begin{equation}\label{eq:lin-transformer-threshold-depth}
hL_R(h)=t_R+O_R(h).
\end{equation}
Thus, in particular, \[
\lim_{R\to\infty}\lim_{h\downarrow0}hL_R(h)=T_{\max}.
\]
\end{theorem}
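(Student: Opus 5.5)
The plan is to combine the uniform Euler error bound of Lemma~\ref{prop:lin-transformer-transfer} with a transversality property of the continuous crossing. Fix $R>s_*$ and pick $T$ with $t_R<T<T_{\max}$. The discrete directional moment is $s^h_\ell\coloneqq\frac1N\|X_\ell^h e_{i_*}\|_2^2=e_{i_*}^\top S_\ell^h e_{i_*}$. Lemma~\ref{prop:lin-transformer-transfer} then gives
\[
|s^h_\ell-s(\ell h)|\le\|S_\ell^h-S(\ell h)\|_{\mathrm F}\le C_T h
\]
for all $0\le \ell h\le T$ and $0<h<h_T$. So the discrete sequence tracks $s$ uniformly up to time $T$, and it remains to convert this value estimate into a crossing-time estimate.

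The key step, and the one I expect to be the main obstacle, is a quantitative lower bound on $\dot s$ near $t_R$. In the commuting case, $s=s_{i_*}$ solves \eqref{eq:lin-diagonal-ode}, so $\dot s=2a_{i_*}p_{i_*}s^2$. Part~\ref{it:lin-positive-moment-monotonicity} of Theorem~\ref{thm:lin-bounded-damping-blowup} gives strict increase, but I need strict positivity of $\dot s$, i.e.\ $p_{i_*}(t)>0$ for $t>0$. I would show this directly. We have $p(0)=0$ and $\dot p(0)=v_{i_*}>0$, so $p>0$ on some $(0,\varepsilon)$. If $t_1$ were the first zero of $p$ after that, then $\dot p(t_1)=v_{i_*}>0$, which contradicts $p$ decreasing to zero at $t_1$. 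Hence $\dot s>0$ on $(0,T]$. By continuity and compactness we then get, for any small $\delta>0$ with $[t_R-\delta,t_R+\delta]\subset(0,T]$, a constant $c>0$ such that $\dot s\ge c$ on this interval. It follows that $s(t)\le R-c(t_R-t)$ for $t\le t_R$ near $t_R$, and $s(t)\ge R+c(t-t_R)$ for $t\ge t_R$. Away from $t_R$ we use monotonicity: $s(t)\le s(t_R-\delta)<R$ on $[0,t_R-\delta]$.

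With this in hand I would handle the two directions. For the upper bound, take the first $\ell$ with $\ell h\ge t_R+2C_Th/c$, which is below $T$ for small $h$. Then $s^h_\ell\ge s(\ell h)-C_Th\ge R+2C_Th-C_Th>R$, so $L_R(h)<\infty$ and $hL_R(h)\le t_R+(2C_T/c+1)h$. For the lower bound, every $\ell$ with $\ell h\le t_R-2C_Th/c$ satisfies $s^h_\ell\le s(\ell h)+C_Th<R$. This uses the linear bound when $\ell h\ge t_R-\delta$, and the gap $R-s(t_R-\delta)>C_Th$ for $\ell h<t_R-\delta$ once $h$ is small. Hence $hL_R(h)\ge t_R-2C_Th/c$, and the two bounds together give \eqref{eq:lin-transformer-threshold-depth} with a constant depending on $R$ through $T$, $C_T$ and $c$.

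Finally, \eqref{eq:lin-transformer-threshold-depth} yields $\lim_{h\downarrow0}hL_R(h)=t_R$. Since $s$ is continuous, strictly increasing, and diverges as $t\uparrow T_{\max}$ by Theorem~\ref{thm:lin-bounded-damping-blowup}, its inverse gives $t_R\uparrow T_{\max}$ as $R\to\infty$. This is the iterated limit claimed in the statement.
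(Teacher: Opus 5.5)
Your proposal is correct and follows essentially the same route as the paper: positivity of $p_{i_*}$ on $(0,T_{\max})$ from the sign of $\dot p_{i_*}$ at a zero, a uniform lower bound on $s'$ near $t_R$, the Euler estimate of Lemma~\ref{prop:lin-transformer-transfer} up to a time just beyond $t_R$, an $O(h)$ bracketing window around $t_R$, and finally $t_R\uparrow T_{\max}$. The differences are only cosmetic. You use the offset $2C_T/c$ where the paper takes $c_R>C_R/m_R$. You also split the early layers into two cases, whereas the paper handles all layers with $\ell h\le t_R-c_Rh$ by monotonicity alone.
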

The proof of Theorem~\ref{thm:lin-transformer-depth}
is given in Appendix~\ref{app:lin-continuation-proofs}.
Before moving on, we explain how this result connects
continuous moment growth to the depth of the discrete
network.
\begin{remark}[Predicting depth from continuous dynamics]
\label{rem:fin}
Theorem~\ref{thm:lin-transformer-depth} gives a quantitative
prediction for the discrete network: for each fixed
threshold $R$, the continuous crossing time $t_R$
predicts the first threshold layer as $t_R/h$, up to
a bounded number of layers.

As the discretization is refined and then the threshold
is increased, the effective depths of these first
threshold layers approach $T_{\max}$.
Thus, the continuous blow-up time predicts where
increasingly large token norms occur in the discrete
layers, even for a fixed number of tokens.
\end{remark}

Next, we express this conclusion in terms of individual
tokens.
Write $(x_{j,\ell}^h)^\top$ for row $j$ of $X_\ell^h$.
Reaching a sufficiently large second-moment threshold
forces at least one of these tokens to have a large norm.
\begin{cor}[Arbitrarily large token norms]
\label{cor:lin-transformer-amplification}
Under the assumptions of
Theorem~\ref{thm:lin-transformer-depth},
for every $M>0$, all sufficiently small $h>0$ admit
a layer $\ell$ with $\ell h<T_{\max}$ such that
\begin{equation}\label{eq:lin-transformer-amplification}
\max_{1\leq j\leq N}\|x_{j,\ell}^h\|_2>M.
\end{equation}
\end{cor}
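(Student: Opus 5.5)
The plan is to reduce Corollary~\ref{cor:lin-transformer-amplification} to Theorem~\ref{thm:lin-transformer-depth} through an elementary comparison between the directional second moment and the largest token norm. For every layer $\ell$ and every unit vector $e$,
\[
\frac1N\|X_\ell^h e\|_2^2
=\frac1N\sum_{j=1}^N\bigl((x_{j,\ell}^h)^\top e\bigr)^2
\le \max_{1\le j\le N}\|x_{j,\ell}^h\|_2^2 .
\]
This follows from Cauchy--Schwarz applied to each summand. It shows that reaching the threshold $R$ in the direction $e_{i_*}$ forces some token to have norm at least $\sqrt R$.

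Given $M>0$, I will fix a threshold $R>\max\{s_*,M^2\}$. The inequality is needed strictly, so I choose $R$ strictly larger than $M^2$, for example $R=\max\{s_*,M^2\}+1$. Theorem~\ref{thm:lin-transformer-depth} then says that $L_R(h)$ is finite for all sufficiently small $h$, and that $hL_R(h)=t_R+O_R(h)$. Setting $\ell\coloneqq L_R(h)$, the definition \eqref{eq:lin-layer-threshold} gives $\frac1N\|X_\ell^h e_{i_*}\|_2^2\ge R$. By the display above, $\max_j\|x_{j,\ell}^h\|_2\ge\sqrt R>M$.

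It remains to check that $\ell h<T_{\max}$. By \eqref{eq:lin-continuous-crossing-time} we have $t_R<T_{\max}$. The estimate $hL_R(h)\le t_R+C_Rh$ then yields $\ell h<T_{\max}$ once $h<(T_{\max}-t_R)/C_R$. Intersecting this condition with the smallness condition from Theorem~\ref{thm:lin-transformer-depth} gives the claim for all sufficiently small $h>0$.

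There is no substantial obstacle here. The only point needing care is that the $O_R(h)$ term in \eqref{eq:lin-transformer-threshold-depth} provides an explicit constant $C_R$ for a fixed $R$. This holds because $R$ is chosen depending only on $M$ and is then held fixed before $h\downarrow0$.
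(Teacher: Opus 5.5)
Your proposal is correct and follows essentially the same route as the paper: fix $R>\max\{s_*,M^2\}$, take $\ell=L_R(h)$ from Theorem~\ref{thm:lin-transformer-depth}, use $hL_R(h)\to t_R<T_{\max}$ to get $\ell h<T_{\max}$, and bound the directional second moment by the largest squared token norm. The paper states the final inequality and the smallness condition on $h$ in one line, while you justify the inequality explicitly via Cauchy--Schwarz and give the quantitative condition $h<(T_{\max}-t_R)/C_R$.
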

The proof of Corollary~\ref{cor:lin-transformer-amplification}
is given in Appendix~\ref{app:lin-continuation-proofs}.
The following remark explains its implication for token
growth in the discrete network.

\begin{remark}[Continuous blow-up appears in discrete layers]
\label{rem:2}
Corollary~\ref{cor:lin-transformer-amplification} shows that
any prescribed token-norm bound is exceeded at a layer
with accumulated time $\ell h<T_{\max}$, provided the
step size is sufficiently small.
Thus, the growth predicted by the continuous dynamics
also occurs in the discrete layers, even for a fixed
number of tokens.
\end{remark}
We now compare networks with different numbers of layers.
For $L\in\mathbb N$ layers and step size $h>0$, we call
$T=Lh$ the \emph{effective depth}.
Keeping $T$ fixed while increasing $L$ therefore means
choosing $h=T/L$.
To record the largest token norm throughout such a
network, set
\begin{equation}\label{eq:lin-maximal-activation}
B_L(T)\coloneqq
\max_{0\leq\ell\leq L}\max_{1\leq j\leq N}
\|x_{j,\ell}^{T/L}\|_2.
\end{equation}
Corollary~\ref{cor:lin-transformer-amplification} implies
that, for fixed $T\geq T_{\max}$, these maxima diverge
as the number of layers tends to infinity.

\begin{cor}[Critical and supercritical effective depth]
\label{cor:lin-transformer-critical-depth}
Under the assumptions of
Theorem~\ref{thm:lin-transformer-depth},
for every fixed $T\geq T_{\max}$,
\begin{equation}\label{eq:lin-transformer-critical-depth}
\lim_{L\to\infty}B_L(T)=\infty.
\end{equation}
\end{cor}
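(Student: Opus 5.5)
The plan is to reduce the claim directly to Corollary~\ref{cor:lin-transformer-amplification}. The corollary already produces, for each prescribed bound and all small step sizes, a layer whose accumulated time lies before $T_{\max}$. The only work is to translate this into the fixed-effective-depth parametrization $h=T/L$. We will show that for every $M>0$ there is $L_M$ with $B_L(T)>M$ for all $L\ge L_M$; this is exactly \eqref{eq:lin-transformer-critical-depth}.

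Fix $T\ge T_{\max}$ and $M>0$. Corollary~\ref{cor:lin-transformer-amplification} provides $h_M>0$ such that every $h\in(0,h_M)$ admits a layer index $\ell$ with $\ell h<T_{\max}$ and $\max_j\|x_{j,\ell}^h\|_2>M$. Choose $L_M$ with $T/L_M<h_M$. For $L\ge L_M$, set $h=T/L$, so $h<h_M$, and take the layer $\ell$ given by the corollary. Then $\ell h<T_{\max}\le T=Lh$, which gives $\ell<L$. This layer is therefore among the indices $0\le\ell\le L$ in the maximum \eqref{eq:lin-maximal-activation}, and so $B_L(T)\ge\max_j\|x_{j,\ell}^{T/L}\|_2>M$. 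Since $M$ was arbitrary, the limit is infinite.

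The one point to check is that Corollary~\ref{cor:lin-transformer-amplification} really holds for \emph{all} sufficiently small $h$, and not merely along a sequence. If the corollary were used as a black box with only sequential information, we would fall back on Theorem~\ref{thm:lin-transformer-depth}. Fix $R>s_*$ large enough that $NR>M^2$. The theorem gives $L_R(h)<\infty$ and $hL_R(h)=t_R+O_R(h)$ for all small $h$. Since $t_R<T_{\max}\le T$, for small $h=T/L$ we get $L_R(h)\le L$. At layer $L_R(h)$ we have $\frac1N\|X e_{i_*}\|_2^2\ge R$, and hence
\[
\max_j\|x_j\|_2^2\ \ge\ \frac1N\sum_j|x_j^\top e_{i_*}|^2\ \ge\ R>M^2/N\cdot N/N .
\]
Choosing $R\ge M^2$ directly gives $\max_j\|x_j\|_2>M$. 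This makes the argument self-contained, and it is uniform in small $h$.
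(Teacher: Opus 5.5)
Your main argument is correct and is exactly the paper's proof: apply Corollary~\ref{cor:lin-transformer-amplification} with $h=T/L$, then use $\ell h<T_{\max}\le T=Lh$ to place the amplifying layer among the indices $0\le\ell\le L$ in \eqref{eq:lin-maximal-activation}. The fallback paragraph is unnecessary, since that corollary is already stated for all sufficiently small $h$, and its displayed chain is garbled: to conclude $\max_j\|x_j\|_2>M$ you need $R>M^2$ strictly, not merely $NR>M^2$ or $R\ge M^2$.
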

The proof of Corollary~\ref{cor:lin-transformer-critical-depth}
is given in Appendix~\ref{app:lin-continuation-proofs}.
We now turn to networks whose fixed effective depth
lies below $T_{\max}$. In this regime, the Euler estimate
in Lemma~\ref{prop:lin-transformer-transfer} shows that
increasing the number of layers improves the approximation
while token norms remain bounded, as the following
corollary states.

\begin{cor}[Subcritical effective depth]
\label{thm:lin-transformer-subcritical-depth}
Under the assumptions of
Theorem~\ref{thm:lin-transformer-depth},
let $Z(t)=(X(t),Y(t))$ and
$Z_\ell^h=(X_\ell^h,Y_\ell^h)$, equipped with the norm
\[
\|(X,Y)\|^2=\|X\|_{\mathrm F}^2+\|Y\|_{\mathrm F}^2.
\]
For every fixed $0<T<T_{\max}$, choosing $h=T/L$ gives
\begin{equation}\label{eq:lin-transformer-subcritical-error}
\max_{0\leq\ell\leq L}
\|Z_\ell^{T/L}-Z(\ell T/L)\|
=O(L^{-1}),
\qquad
\limsup_{L\to\infty}B_L(T)<\infty.
\end{equation}
\end{cor}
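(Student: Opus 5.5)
The plan is to derive both claims in \eqref{eq:lin-transformer-subcritical-error} from the Euler estimate in Lemma~\ref{prop:lin-transformer-transfer}, applied on the compact interval $[0,T]$ with $T<T_{\max}$ fixed. For that $T$ the lemma provides constants $C_T<\infty$ and $h_T>0$. Set $h=T/L$. Once $L>T/h_T$ we have $h<h_T$, and the grid points $\ell h$ with $0\le\ell\le L$ are exactly the indices with $0\le \ell h\le T$ covered by the maximum in \eqref{eq:lin-transformer-euler-error}. This gives directly
\[
\max_{0\le\ell\le L}\|Z_\ell^{T/L}-Z(\ell T/L)\|\le C_T\,\frac{T}{L},
\]
which is the $O(L^{-1})$ statement. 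The finitely many small $L$ play no role in an asymptotic bound.

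For the bound on $B_L(T)$, note that the continuous solution $Z$ is continuous on the compact set $[0,T]\subset[0,T_{\max})$. Its components are $X(t)=X_0G_t^\top$ and $Y(t)=X(t)P_t$ from Proposition~\ref{prop:lin-transformer-lift}, and $(S,P)$ is $C^1$ there. Hence $K_T\coloneqq\max_{0\le t\le T}\|X(t)\|_{\mathrm F}<\infty$.

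Each row norm is bounded by the Frobenius norm, so $\|x_{j,\ell}^{h}\|_2\le\|X_\ell^h\|_{\mathrm F}$. By the triangle inequality,
\[
\|X_\ell^h\|_{\mathrm F}\le\|X(\ell h)\|_{\mathrm F}+\|Z_\ell^h-Z(\ell h)\|\le K_T+C_T T/L.
\]
Taking the maximum over $\ell$ and $j$ gives $B_L(T)\le K_T+C_TT/L$ for all large $L$. Therefore $\limsup_{L\to\infty}B_L(T)\le K_T<\infty$.

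No step here is a genuine obstacle, because all the work sits in Lemma~\ref{prop:lin-transformer-transfer}. The one point to check is that the lemma's constants depend only on $T$ and not on $h$. This is how the lemma is stated. It is also why $T<T_{\max}$ is essential: the continuous trajectory then stays in a compact region where the vector field of \eqref{eq:lin-transformer-flow} is Lipschitz, and this fails when $T=T_{\max}$, which is consistent with Corollary~\ref{cor:lin-transformer-critical-depth}.
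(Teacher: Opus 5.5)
Your proposal is correct and matches the paper's proof: apply Lemma~\ref{prop:lin-transformer-transfer} with $h=T/L$ to get the $C_TT/L$ error bound, then use $B_L(T)\le\max_{0\le t\le T}\|X(t)\|_{\mathrm F}+C_TT/L$ together with continuity of $X$ on $[0,T]$. Your remarks that $h<h_T$ once $L>T/h_T$ and that row norms are bounded by the Frobenius norm only spell out steps the paper leaves implicit.
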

The proof of Corollary~\ref{thm:lin-transformer-subcritical-depth}
is given in Appendix~\ref{app:lin-continuation-proofs}.
Before moving on, we explain what this result means
for refining the discrete network.

\begin{remark}[Refining the network below the critical depth]
\label{nextrem}
At a fixed effective depth $T<T_{\max}$, increasing the
number of layers while choosing $h=T/L$ improves the
approximation of the continuous dynamics.
The token norms remain uniformly bounded across all
layers as $L$ increases.
Thus, below $T_{\max}$, finer discretization gives a
more accurate approximation of a bounded trajectory.
\end{remark}

These results lead us to ask how normalization and residual
scaling affect the relation between effective depth and
token growth.
\paragraph{Relation to normalization and depth control.}
Under the assumptions of
Corollary~\ref{cor:lin-transformer-critical-depth},
bounded nonnegative damping does not remove the
critical effective-depth threshold for the unnormalized
updates \eqref{eq:lin-transformer-layers}.

Other approaches modify the architecture to control
growth. DeepNorm combines modified residual connections
with depth-dependent initialization to control model
updates during training \cite{WangEtAl2022DeepNet}.
The spherical attention model in
\cite[Sec.~2.2]{GLPR2025} instead constrains token
positions to the unit sphere, excluding token-norm
blow-up by construction.
Extending the present analysis to such modifications
would clarify how normalization and residual scaling
affect accelerated attention dynamics.

We close this section by summarizing how the continuous
analysis yields information about discrete attention layers.
Figure~\ref{fig:linear-attention-dynamics} shows the successive
reductions. Theorem~\ref{thm:lin-transformer-depth} connects
the first and last stages: continuous moment growth predicts
the layer at which a prescribed threshold is reached.


\part{Discretizations and Numerical Results}
\section{Discrete SympFormer architectures}\label{sec:Discrete SympFormer architectures}

The aim of this section is to turn the accelerated evolutions
derived in Subsection~\ref{subsec:softmax-particle-dynamics}
into explicit attention layers and to evaluate the performance of the resulting
architectures.
First, we specify the discrete updates.
Then, we explain their implementation in a decoder-only
transformer and compare predictive performance,
computational cost, and the effect of the correction.
We observe that the geometrically corrected discretizations
outperform the nanoGPT2-style baseline and a different accelerated transformer architecture
without geometric correction in terms of mean NLL at a fixed training-token budget in most settings,
most prominently in looped architectures.
However, the additional geometric correction increases computational time
compared with the other methods.

\subsection{Time Discretizations of Riemannian Accelerated Gradient Flows}
\label{subsec:attention-discretizations}
We begin by discretizing
\eqref{eq:acc_softmax_damped_Newton}.
The purpose is to obtain explicit updates for the token
positions and an auxiliary variable carrying information
between consecutive layers.

Let $h>0$, set $t_k=kh$, and let
$\mathbf U^{(k)}$ approximate the physical velocity
$\dot{\mathbf X}(t_k)$.
We use the displacement variable
\begin{equation}\label{eq:displacement}
\mathbf V^{(k)}\coloneqq h\mathbf U^{(k)}.
\end{equation}
For a fixed-step discretization, the damping and force
coefficients are $\beta_k\coloneqq1-h\alpha(t_k)$ and $\gamma_k\coloneqq h^2$.

Following the sequential velocity-discretization
approach in \cite{AOBL2020}, we first combine the
previous displacement with the attention force.
We then update the position and carry the displacement
to the new position.
Using \eqref{eq:softmax_second_order_retraction}
and \eqref{eq:softmax_second_order_transport} gives
\begin{equation}\label{eq:Riemannian_HB_R2}
\begin{cases}
\widetilde{\mathbf V}^{(k+1)}
=
\beta_k\mathbf V^{(k)}
+\gamma_k\Attn(\mathbf X^{(k)}),\\[0.5ex]
\mathbf C^{(k)}
=
\Gamma_{\mathbf X^{(k)}}^g
\bigl(\widetilde{\mathbf V}^{(k+1)},
      \widetilde{\mathbf V}^{(k+1)}\bigr),\\[0.5ex]
\mathbf X^{(k+1)}
=
\mathbf X^{(k)}
+\widetilde{\mathbf V}^{(k+1)}
-\frac12\mathbf C^{(k)},\\[0.5ex]
\mathbf V^{(k+1)}
=
\widetilde{\mathbf V}^{(k+1)}-\mathbf C^{(k)}.
\end{cases}
\end{equation}

Alternatively, we can discretize the coordinate equation
\eqref{eq:acc_softmax_damped_Newton} directly.
Explicit Euler for the physical velocity, followed
by a position update using the new velocity, gives
\begin{equation}\label{eq:Riemannian_HB_coordinate}
\begin{cases}
\mathbf V^{(k+1)}
=
\beta_k\mathbf V^{(k)}
+\gamma_k\Attn(\mathbf X^{(k)})
-\Gamma_{\mathbf X^{(k)}}^g
\bigl(\mathbf V^{(k)},\mathbf V^{(k)}\bigr),\\[0.5ex]
\mathbf X^{(k+1)}
=
\mathbf X^{(k)}+\mathbf V^{(k+1)}.
\end{cases}
\end{equation}
The connection is evaluated at the previous displacement
in \eqref{eq:Riemannian_HB_coordinate}, whereas
\eqref{eq:Riemannian_HB_R2} uses the proposed displacement
$\widetilde{\mathbf V}^{(k+1)}$.

Omitting the connection correction in either scheme gives
\begin{equation}\label{eq:attention-discretization-control}
\begin{cases}
\mathbf V^{(k+1)}
=
\beta_k\mathbf V^{(k)}
+\gamma_k\Attn(\mathbf X^{(k)}),\\[0.5ex]
\mathbf X^{(k+1)}
=
\mathbf X^{(k)}+\mathbf V^{(k+1)}.
\end{cases}
\end{equation}

\subsection{SympFormer Architecture}

Next, we adapt
\eqref{eq:Riemannian_HB_R2} and
\eqref{eq:Riemannian_HB_coordinate}
to a decoder-only transformer for next-token prediction.
The implementation follows a causal, multi-head
nanoGPT-style architecture.\footnote{The implementation
is available at
\url{https://github.com/ViktorAJStein/SympFormer}.}
Our construction modifies the attention update and
retains the MLP update of the momentum baseline.

Let $\mathbf X^{(k)}\in\R^{N\times d}$ contain the
token states entering layer $k$, and let
$\mathbf V^{(k)}\in\R^{N\times d}$ denote the auxiliary
velocity stream.
Additional state variables also appear in augmented neural ODEs and second-order latent models, where they serve expressivity and dynamical-modeling purposes, respectively \cite{DDT2019,YHL2019}.
The intermediate index $k+\frac12$ denotes the output
of the attention update, before applying the MLP.
We write $\Attn_k$ and $\MLP_k$ for the attention and
MLP maps in layer $k$.
The maps $\LN_k^A$ and $\LN_k^M$ are the corresponding
position LayerNorms \cite{brody2023expressivity}, while $\LN_k^V$ is the velocity
LayerNorm shared by the two updates within that layer.

We compare our approaches with the  following architectures from the literature:

\paragraph{1. Ordinary transformer baseline B.}
It has the layer update
\begin{equation}
\label{eq:baseline}
\begin{cases}
\mathbf X^{(k+\frac12)}
=
\mathbf X^{(k)}
+\Attn_k\bigl(\LN_k^A(\mathbf X^{(k)})\bigr),\\[0.5ex]
\mathbf X^{(k+1)}
=
\mathbf X^{(k+\frac12)}
+\MLP_k\bigl(\LN_k^M(\mathbf X^{(k+\frac12)})\bigr).
\end{cases}
\end{equation}

\paragraph{2. Momentum baseline with YuriiFormer Y
\cite{ZPR2026}.}
Its coefficients $\beta_k^A,\beta_k^M$ control the
previous velocity contribution, while
$\gamma_k^A,\gamma_k^M$ scale the attention and MLP
outputs.
The coefficients $\mu_k^A,\mu_k^M$ specify the
look-ahead points
\[
\mathbf Q_k^A
\coloneqq
\mathbf X^{(k)}+\mu_k^A\mathbf V^{(k)},
\qquad
\mathbf Q_k^M
\coloneqq
\mathbf X^{(k+\frac12)}
+\mu_k^M\mathbf V^{(k+\frac12)}.
\]
The layer update is
\begin{equation}
\label{eq:YuriiFormer}
\begin{cases}
\mathbf V^{(k+\frac12)}
=
\LN_k^V\!\left(
\beta_k^A\mathbf V^{(k)}
+\gamma_k^A\Attn_k\bigl(\LN_k^A(\mathbf Q_k^A)\bigr)
\right),\\[0.5ex]
\mathbf X^{(k+\frac12)}
=
\mathbf X^{(k)}+\mathbf V^{(k+\frac12)},\\[0.5ex]
\mathbf V^{(k+1)}
=
\LN_k^V\!\left(
\beta_k^M\mathbf V^{(k+\frac12)}
+\gamma_k^M\MLP_k\bigl(\LN_k^M(\mathbf Q_k^M)\bigr)
\right),\\[0.5ex]
\mathbf X^{(k+1)}
=
\mathbf X^{(k+\frac12)}+\mathbf V^{(k+1)}.
\end{cases}
\end{equation}
The attention and MLP updates have independently learned
coefficients.

\paragraph{3. YuriiFormer variant $\mathbf{H_0}$.} Setting $\mu_k^A=0$ yields
\begin{equation}
\label{eq:H0}
\begin{cases}
\mathbf V^{(k+\frac12)}
=
\LN_k^V\!\left(
\beta_k^A\mathbf V^{(k)}
+\gamma_k^A\Attn_k\bigl(\LN_k^A(\mathbf X^{(k)})\bigr)
\right),\\[0.5ex]
\mathbf X^{(k+\frac12)}
=
\mathbf X^{(k)}+\mathbf V^{(k+\frac12)},\\[0.5ex]
\mathbf V^{(k+1)}
=
\LN_k^V\!\left(
\beta_k^M\mathbf V^{(k+\frac12)}
+\gamma_k^M\MLP_k\bigl(\LN_k^M(\mathbf Q_k^M)\bigr)
\right),\\[0.5ex]
\mathbf X^{(k+1)}
=
\mathbf X^{(k+\frac12)}+\mathbf V^{(k+1)}.
\end{cases}
\end{equation}

Now, we introduce the two SympFormer architectures based on the previous discussion.
Both  replace the exact connection
in \eqref{eq:Riemannian_HB_R2} by
\eqref{eq:connection_proxy}, as detailed in Subsection~\ref{subsec:connection-proxy}

\paragraph{4. Coordinate SympFormer $\mathbf{H_1}$.}
Adapting the coordinate variant \eqref{eq:Riemannian_HB_coordinate} yields the following $\mathrm H_1$-architecture:
\begin{equation}\label{eq:H1}
\begin{cases}
\mathbf V^{(k+\frac12)}
=
\LN_k^V\!\left(
\beta_k^A\mathbf V^{(k)}
+\gamma_k^A\Attn_k\bigl(\LN_k^A(\mathbf X^{(k)})\bigr)
-\widehat\Gamma_{\LN_k^A(\mathbf X^{(k)})}
\bigl(\mathbf V^{(k)},\mathbf V^{(k)}\bigr)
\right),\\[0.5ex]
\mathbf X^{(k+\frac12)}
=
\mathbf X^{(k)}+\mathbf V^{(k+\frac12)}.
\end{cases}
\end{equation}

\paragraph{5. $R_2$ SympFormer $\mathbf{R_2}$.}
For the $\mathrm R_2$ variant, we apply velocity
LayerNorm to both corrected displacements:
\begin{equation}\label{eq:R2}
\begin{cases}
\widetilde{\mathbf V}^{(k+\frac12)}
=
\beta_k^A\mathbf V^{(k)}
+\gamma_k^A
\Attn_k\bigl(\LN_k^A(\mathbf X^{(k)})\bigr),\\[0.5ex]
\mathbf C^{(k)}
=
\widehat\Gamma_{\LN_k^A(\mathbf X^{(k)})}
\bigl(
\widetilde{\mathbf V}^{(k+\frac12)},
\widetilde{\mathbf V}^{(k+\frac12)}
\bigr),\\[0.5ex]
\mathbf X^{(k+\frac12)}
=
\mathbf X^{(k)}
+\LN_k^V\!\left(
\widetilde{\mathbf V}^{(k+\frac12)}
-\frac12\mathbf C^{(k)}
\right),\\[0.5ex]
\mathbf V^{(k+\frac12)}
=
\LN_k^V\!\left(
\widetilde{\mathbf V}^{(k+\frac12)}
-\mathbf C^{(k)}
\right).
\end{cases}
\end{equation}

\paragraph{6. Look-ahead SympFormers $\mathrm H_1^{\mathrm{LA}}$ and $\mathrm R_2^{\mathrm{LA}}$.}
These architectures evaluate the attention force and connection proxy
at
\[
\mathbf Z
=
\LN_k^A\bigl(
\mathbf X^{(k)}+\mu_k^A\mathbf V^{(k)}
\bigr).
\]

All SympFormer variants use the MLP update from
\eqref{eq:YuriiFormer}.
Setting the connection correction to zero in $\mathrm{H}_1$ or $\mathrm R_2$ recovers
the attention update of $\mathrm{H_0}$.
Table~\ref{tab:architecture-overview} summarizes the key differences between the architectures we consider.
\begin{table}[h!]
\centering
\begin{tabular}{lcccc}
\toprule
Architecture
& auxiliary state
& attention look-ahead
& geometric correction \\
\midrule
$\mathrm{B}$ \eqref{eq:baseline}
& no & no & no \\
$\mathrm{Y}$ \eqref{eq:YuriiFormer}
& yes & yes & no \\
$\mathrm{H}_0$ \eqref{eq:H0}
& yes & no & no \\ \midrule
$\mathrm{H}_1$ \eqref{eq:H1}
& yes & no & yes \\
$\mathrm{R_2}$ \eqref{eq:R2}
& yes & no & yes \\
$\mathrm H_1^{\mathrm{LA}}$
& yes & yes & yes \\
$\mathrm R_2^{\mathrm{LA}}$
& yes & yes & yes\\
\bottomrule
\end{tabular}
\caption{Architectures for comparison.}
\label{tab:architecture-overview}
\end{table}

The $\mathrm H_1$ and $\mathrm R_2$ architectures differ from the fixed-step discretizations in subsection~\ref{subsec:attention-discretizations} in several respects.
First, both replace the exact Levi--Civita connection by the
computable proxy $\widehat\Gamma$ defined below. Second, LayerNorm alters the updates and the displacement scaling in \eqref{eq:displacement}. In particular, the proxy does not include derivatives of LayerNorm.
Finally, causal multi-head attention, learned layer-dependent update coefficients, and an additional MLP momentum step replace the fixed-force, fixed-step model analyzed above.
Thus, the experiments test the usefulness of
a geometrically motivated correction within a trained architecture,
not the numerical accuracy of a solver for the Newton equation.
Training of the parameters of the architectures minimizes next-token negative log-likelihood (NLL), not the interaction energy appearing in the dynamical analysis.

\subsection{Causal Multi-Head Realization and Connection Proxy}
\label{subsec:connection-proxy}

To introduce the geometric correction $\Gamma$ in \eqref{eq:Riemannian_HB_coordinate} and \eqref{eq:Riemannian_HB_R2}, we adapt the true expression
\eqref{eq:softmax_connection_contracted}
to the attention weights used by the model.
We start with the softmax geometry.

The exact formula concerns the symmetric particle
metric developed above.
For causal multi-head attention, we retain its
algebraic form, set the quadratic form to $B=I_d$,
and obtain a connection proxy $\widehat\Gamma$.

We define this proxy for one attention block,
suppressing the layer index.
Let $\mathbf Z$ have rows $z_i^\top$, and let
$\mathbf U$ have rows $u_i^\top$.
Here $\mathbf Z$ is the normalized token matrix
at which attention is evaluated, and $\mathbf U$
is the direction supplied by the update.
For head $r\in\{1,\ldots,H\}$, define
\[
q_i^{(r)}=W_Q^{(r)}z_i,
\qquad
k_j^{(r)}=W_K^{(r)}z_j,
\qquad
w_j^{(r)}=W_V^{(r)}z_j.
\]
Writing $d_h$ for the head dimension, the scores
and causal attention weights are
\begin{equation}\label{eq:numerical-causal-weights}
\ell_{ij}^{(r)}
=
\frac{\langle q_i^{(r)},k_j^{(r)}\rangle}{\sqrt{d_h}},
\qquad
P_{ij}^{(r)}
=
\frac{\exp(\ell_{ij}^{(r)})}
     {\sum_{m\leq i}\exp(\ell_{im}^{(r)})},
\qquad j\leq i.
\end{equation}
The directional score derivative is
\[
\dot\ell_{ij}^{(r)}[\mathbf U]
=
\frac{
\langle W_Q^{(r)}u_i,k_j^{(r)}\rangle
+
\langle q_i^{(r)},W_K^{(r)}u_j\rangle
}{\sqrt{d_h}}.
\]
We then set
\[
a_i(\mathbf U)
\coloneqq
\frac1H\sum_{r=1}^H\sum_{j\leq i}
P_{ij}^{(r)}\dot\ell_{ij}^{(r)}[\mathbf U],
\qquad
s_i(\mathbf U)\coloneqq\|u_i\|_2^2.
\]
For the output projection $W_O$, the ordinary
attention output and the additional weighted
value aggregation are
\[
\Attn_i(\mathbf Z)
=
W_O\operatorname{Concat}_{r=1}^H
\left(\sum_{j\leq i}P_{ij}^{(r)}w_j^{(r)}\right),
\]
\[
\Attn_i^{(s)}(\mathbf Z,\mathbf U)
\coloneqq
W_O\operatorname{Concat}_{r=1}^H
\left(
\sum_{j\leq i}P_{ij}^{(r)}s_j(\mathbf U)w_j^{(r)}
\right).
\]
With this notation, the proxy is
\begin{equation}\label{eq:connection_proxy}
\widehat\Gamma_{\mathbf Z}(\mathbf U,\mathbf U)_i
=
a_i(\mathbf U)u_i
-\frac12\left(
s_i(\mathbf U)\Attn_i(\mathbf Z)
+\Attn_i^{(s)}(\mathbf Z,\mathbf U)
\right).
\end{equation}
All terms use the weights in
\eqref{eq:numerical-causal-weights}.
The additional computation includes the directional
projections and score contractions, together with the
weighted value aggregation.
Although no further softmax evaluation or matrix inverse
is required, these operations incur additional training
cost, which we report below.

The proxy for the connection for the other types of attention evaluated below is described on the implementation's GitHub page \url{https://github.com/ViktorAJStein/SympFormer}.

\section{Numerical Results}
\label{sec:numerics}
We evaluate the models on TinyStories (TS) and OpenWebText (OWT) to address whether the connection-inspired
correction improves next-token prediction compared to the baseline and an otherwise matched momentum architecture without this correction.
These experiments probe the practical relevance of the geometric derivation.
We report validation NLL (lower is better), together with
computational cost. Throughout, bold marks the lowest observed
mean within each dataset and layout.

\subsection{Softmax Attention}
\label{subsec:softmax-experiments}

The following softmax comparison uses four layers,
four heads, width $64$, context length $128$, and
$99{,}991{,}552$ training tokens.
We report five paired seeds.
Table~\ref{tab:softmax_fiveway_uncertainty}
gives the final validation NLL and computational cost.
Implementation details are given in
Appendix~\ref{section:Implementation}.

\begin{table}[htbp]
\centering
\setlength{\tabcolsep}{6pt}
\renewcommand{\arraystretch}{1.2}
\resizebox{\textwidth}{!}{
\begin{tabular}{lrrrrr}
\toprule
Method
& TS NLL (SD)
& OWT NLL (SD)
& Params (M)
& GPUh (TS/OWT)
& Peak MiB\\
\midrule
$\mathrm{B}$ 
& $2.6593\;(0.0435)$
& $5.3949\;(0.0373)$
& $3.424832$
& $0.2530/0.3560$
& $926.9$\\
$\mathrm{Y}$ 
& $2.5748\;(0.0424)$
& $5.2674\;(0.0407)$
& $6.652760$
& $0.3560/0.4380$
& $982.3$\\
$\mathrm{H}_0$ 
& $2.5782\;(0.0418)$
& $5.2580\;(0.0425)$
& $6.652756$
& $0.3460/0.4010$
& $982.3$\\ \midrule
$\mathrm{H}_1$ 
& $2.5730\;(0.0391)$
& $5.2586\;(0.0407)$
& $6.652756$
& $0.5240/0.5770$
& $999.6$\\
$\mathrm{R}_2$ 
& $\mathbf{2.5720}\;(0.0358)$
& $\mathbf{5.2554}\;(0.0396)$
& $6.652756$
& $0.5420/0.6670$
& $1001.6$\\
\bottomrule
\end{tabular}}
\caption{Final validation NLL for softmax
attention, together with parameter counts, reported
GPU-hours per run, and peak allocated GPU memory.
NLL values are means and sample standard deviations
over five paired seeds.}
\label{tab:softmax_fiveway_uncertainty}
\end{table}

The $\mathrm R_2$ variant has the lowest mean NLL on both
datasets in this comparison. Comparing $\mathrm H_1$ and
$\mathrm R_2$ with the correction-off control $\mathrm H_0$
examines the added value of the connection proxy.
The differences in mean NLL are small relative to the reported
seed-to-seed variability, and Table~\ref{tab:softmax_fiveway_uncertainty} alone does not establish
a statistically significant advantage.

Table~\ref{tab:softmax_fiveway_uncertainty} shows that
the geometry-corrected models have the same parameter count
as $\mathrm H_0$, but require more computation.
The higher parameter count comes primarily from the additional learned velocity embeddings, which add $d(T + \mathsf{V})$ parameters, where $d$ is the embedding dimension, $\mathsf{V}$ is the vocabulary size, and $T$ is the maximal context length.

\subsubsection{Parameter Sharing Across Depth}
\label{subsec:parameter-sharing}

In Table~\ref{tab:separate-vs-shared}, we compare $\mathrm H_1$ and $\mathrm R_2$ with their matched
correction-off control $\mathrm H_0$ and the native residual baseline
$\mathrm B$ when parameters are shared across depth, as in \emph{looped
transformers} \cite{DGVUK2019,YLNP2024,SZGGW2026,geiping2026scaling}.
The untied model uses eight independently parameterized blocks.
The tied model reuses two distinct trainable blocks four times.
Both execute eight logical block applications.
The token-state stream and the velocity stream, when present,
continue across repetitions without resets.

\begin{table}[htbp]
\centering
\setlength{\tabcolsep}{8pt}
\renewcommand{\arraystretch}{1.2}
\resizebox{\textwidth}{!}{
\begin{tabular}{lrrrrrr}
\toprule
& \multicolumn{2}{c}{TinyStories} & \multicolumn{2}{c}{OpenWebText} & \multicolumn{2}{c}{GPUh (TS/OWT)} \\
\cmidrule(lr){2-3} \cmidrule(lr){4-5} \cmidrule(lr){6-7}
Method & Untied & Tied & Untied & Tied & Untied & Tied \\
\midrule
$\mathrm B$ & $\mathbf{1.7940}\;(0.0056)$ & $1.9948\;(0.0063)$ & $4.5078\;(0.0053)$ & $4.8027\;(0.0055)$ & $0.5172/0.5240$ & $0.5117/0.5186$ \\
$\mathrm Y$ & $1.8179\;(0.0056)$ & $1.9886\;(0.0046)$ & $4.5281\;(0.0080)$ & $4.6888\;(0.0027)$ & $0.6029/0.6109$ & $0.5723/0.5801$ \\
$\mathrm H_0$ & $1.8233\;(0.0039)$ & $1.9955\;(0.0109)$ & $4.5297\;(0.0076)$ & $4.6929\;(0.0046)$ & $0.5952/0.6028$ & $0.5661/0.5732$ \\
\midrule
$\mathrm H_1$ & $1.8033\;(0.0075)$ & $\mathbf{1.9514}\;(0.0040)$ & $4.4929\;(0.0102)$ & $\mathbf{4.6536}\;(0.0067)$ & $0.8962/0.9014$ & $0.8647/0.8692$ \\
$\mathrm R_2$ & $1.7969\;(0.0070)$ & $1.9566\;(0.0081)$ & $\mathbf{4.4864}\;(0.0097)$ & $4.6553\;(0.0055)$ & $0.9026/0.9083$ & $0.8732/0.8811$ \\
\bottomrule
\end{tabular}}
\caption{Final validation NLL for untied and tied softmax models.
Values are means (sample SD) over ten paired seeds,
after $99{,}991{,}552$ tokens, with
eight heads, width $256$, and context length $512$.}
\label{tab:separate-vs-shared}
\end{table} 

Both $\mathrm H_1$ and $\mathrm R_2$ have lower mean NLL than the
matched control $\mathrm H_0$ in each layout and on both datasets.
Thus the correction-control improvement is also observed when the same
trainable blocks are applied repeatedly.
Native-baseline utility is more conditional: $\mathrm H_1$ has the
lowest mean NLL in both tied comparisons, whereas the untied minima
are $\mathrm B$ on TinyStories and $\mathrm R_2$ on OpenWebText.
Both corrected variants require more GPU time than $\mathrm H_0$.

\subsection{Linear Attention \& Other Attention Mechanisms}
\label{subsec:linear-experiments}
Next, we investigate linear attention and normalized-sigmoid and native-$L^2$ attention. 
The native-$L^2$ results use an SPD tensor proxy.
Table~\ref{tab:other_attention_endpoints} reports prediction
quality and computational cost using three paired seeds,
four layers, four heads, width $64$, and context
length $512$, and $99{,}991{,}552$ training
tokens.


\begin{table}[htbp]
\centering
\setlength{\tabcolsep}{8pt}
\renewcommand{\arraystretch}{1.2}
\resizebox{\textwidth}{!}{
\begin{tabular}{lrrrrr}
\toprule
Method
& TinyStories NLL (SD)
& OpenWebText NLL (SD)
& Params (M)
& GPUh (TS/OWT)
& Peak MiB\\
\midrule
\multicolumn{6}{l}{\textbf{Linear attention}}\\
\midrule
$\mathrm{B}$ 
& $3.0855\;(0.0139)$
& $5.7018\;(0.0020)$
& $3.449408$
& $0.2004/0.2103$
& $1715.2$\\
$\mathrm{Y}$ 
& $2.9540\;(0.0104)$
& $5.5713\;(0.0007)$
& $6.701912$
& $0.2369/0.2497$
& $1776.8$\\
$\mathrm{H}_0$
& $2.9628\;(0.0306)$
& $5.5746\;(0.0041)$
& $6.701908$
& $0.2336/0.2468$
& $1776.8$\\ \midrule
$\mathrm{H}_1$ 
& $2.9688\;(0.0315)$
& $5.5739\;(0.0237)$
& $6.701908$
& $1.0469/1.0576$
& $3604.9$\\
$\mathrm{R}_2$ 
& $\mathbf{2.9400}\;(0.0355)$
& $\mathbf{5.5539}\;(0.0073)$
& $6.701908$
& $1.0531/1.0643$
& $3608.0$\\
\midrule
\multicolumn{6}{l}{\textbf{Normalized-sigmoid attention}}\\
\midrule
$\mathrm{B}$ 
& $2.6958\;(0.0019)$
& $5.5559\;(0.0112)$
& $3.449408$
& $0.1792/0.2370$
& $1460.8$\\
$\mathrm{Y}$ 
& $\mathbf{2.6238}\;(0.0090)$
& $5.4933\;(0.0254)$
& $6.701912$
& $0.2158/0.2746$
& $1522.6$\\
$\mathrm{H}_0$ 
& $2.6386\;(0.0303)$
& $5.4827\;(0.0395)$
& $6.701908$
& $0.2111/0.2703$
& $1522.6$\\
\midrule
$\mathrm{H}_1$ 
& $2.6655\;(0.0403)$
& $\mathbf{5.4264}\;(0.0140)$
& $6.701908$
& $0.3499/0.4037$
& $1739.6$\\
$\mathrm{R}_2$ 
& $2.6596\;(0.0502)$
& $5.4404\;(0.0156)$
& $6.701908$
& $0.3604/0.4117$
& $1743.6$\\
\midrule
\multicolumn{6}{l}{\textbf{Native-L2 SPD-proxy attention}}\\
\midrule
$\mathrm{B}$
& $2.6698\;(0.0127)$
& $5.5248\;(0.0088)$
& $3.449408$
& $0.1768/0.2435$
& $1403.5$\\
$\mathrm{Y}$ 
& $\mathbf{2.5740}\;(0.0193)$
& $5.4355\;(0.0102)$
& $6.701912$
& $0.2290/0.2835$
& $1466.1$\\
$\mathrm{H}_0$
& $2.5878\;(0.0153)$
& $5.4296\;(0.0171)$
& $6.701908$
& $0.2216/0.2789$
& $1466.1$\\
\midrule
$\mathrm{H}_1$ 
& $2.5923\;(0.0025)$
& $5.4180\;(0.0146)$
& $6.701908$
& $0.5709/0.5861$
& $1909.5$\\
$\mathrm{R}_2$ 
& $2.5892\;(0.0071)$
& $\mathbf{5.4099}\;(0.0138)$
& $6.701908$
& $0.5829/0.5932$
& $1913.6$\\
\bottomrule
\end{tabular}}
\caption{Final validation NLL for the linear, normalized-sigmoid, and native-L2
SPD-proxy attention comparisons, together with parameter counts, mean
allocated GPU-hours per run, and maximum peak allocated
memory over seeds.
}
\label{tab:other_attention_endpoints}
\end{table}

The results depend on the attention mechanism and dataset.
$\mathrm R_2$ has the lowest mean NLL for linear attention
on both datasets and for native-$L^2$ attention on OpenWebText,
whereas other methods have the lowest means in the remaining
comparisons. In particular, the geometric correction is not
uniformly beneficial. Its additional GPU-hour and memory costs
must be considered alongside the observed NLL differences.
\newpage
\section{Related results}\label{sec:Related results}

\emph{From continuum blow-up to discrete depth.}
The authors of~\cite{CACP2025} establish finite-time
blow-up for certain Gaussian solutions of continuum
attention equations.
For the unnormalized, shared-weight inertial
linear-attention model considered here,
Theorem~\ref{thm:lin-transformer-depth} predicts the first
layer reaching a prescribed second-moment threshold,
up to a bounded error in the layer index as the step
size decreases.
To the best of our knowledge, this is the first quantitative
connection between continuum moment blow-up and
threshold-crossing depth in discrete attention networks
with a fixed number of tokens.
Corollary~\ref{cor:lin-transformer-amplification}
translates this prediction into arbitrarily large token
norms.
Moreover, the continuous blow-up time determines a critical
effective depth, defined as the number of layers multiplied
by the step size.
Under refinement at fixed effective depth, the maximum
token norm over all layers remains bounded below this
critical value and diverges at or above it
(Corollaries~\ref{thm:lin-transformer-subcritical-depth}
and~\ref{cor:lin-transformer-critical-depth}).


\emph{Geometry and differential calculus on density spaces.}
In ~\cite{O2001}, Otto introduced
a formal Riemannian calculus for density evolutions on
$\R^d$, interpreting diffusion equations as gradient
flows in Wasserstein geometry; see also \cite{JKO1998}.
Generalizations based on non-linear mobilities were
developed in \cite{DNS2009,CLSS2010,lisini2007characterization}.
Hamiltonian forms on Wasserstein space are introduced in \cite{GKP2011}.
Our local-mobility framework specifies the analytic
domains required by Otto--Wasserstein and
softmax--Wasserstein geometry for smooth densities
with potentially unbounded support.
Theorem~\ref{prop:LM_operator_invertible} identifies
weighted function spaces on which the mobility operator
is invertible, and
Corollary~\ref{prop:LM_metric_well_defined} establishes
the induced metric.

For the geometry associated with Stein variational
gradient descent~\cite{LW2016},
\cite{DNS2023} and \cite{NR2023} construct
restricted tangent and cotangent spaces assuming
the kernel is integrally strictly positive definite.
The bilinear kernel of linear attention falls outside
that setting.
Lemma~\ref{lem:kernel} characterizes the infinite-dimensional
kernel of its mobility operator, leading to the quotient
construction in Theorem~\ref{prop:isomorphism}.
Proposition~\ref{prop:stein-not-surjective} addresses
a separate issue: even an integrally strictly positive
definite kernel need not yield surjectivity onto the
full smooth tangent space.
This obstruction concerns that larger space, rather
than the restricted spaces constructed in \cite{DNS2023}.

In both settings, we develop a differential calculus
without assuming that the admissible density classes
form smooth manifolds.
Lemmas~\ref{lem:regular-hamiltonian-representatives}
and~\ref{lem:regular-stein-hamiltonian-representatives}
identify admissible representatives of Hamiltonian
variations; the calculus is developed in
Appendices~\ref{app:diif_in_in_local_mobility_calculus}
and~\ref{app:diff_stein_geom}.

\emph{Inertial dynamics of probability distributions.}
Acceleration on spaces of probability measures has been
studied through optimal-control formulations
\cite{TM2019}, formal Hamiltonian equations on density
spaces \cite{WL2022}, and phase-space and Hilbert-space
approaches \cite{CLTW2025,TLG2026}.
The formal density-space construction requires an
invertible metric operator and meaningful variations;
the frameworks developed here supply precise conditions
for these operations in the attention setting.

For bilinear Stein kernels, Liu et al.~\cite{LGBP2023}
and Stein and Li~\cite{SL2025} derive and analyze
Gaussian covariance equations, including inertial
dynamics, for sampling problems governed by relative
entropy.
Our inertial linear-attention equations follow the same
formal Hamiltonian construction with the attention energy
in place of relative entropy.
Lemma~\ref{lem:lin-canonical-hamiltonian} and
Proposition~\ref{thm:lin-newton} establish their
Hamiltonian and Newton formulations.
Theorem~\ref{thm:lin-correspondence} proves strong
well-posedness for general admissible initial densities
and quadratic initial potentials.
The resulting matrix correspondence enables the
continuation and global-existence analysis in
Proposition~\ref{prop:lin-continuation} and the
finite-time moment blow-up result under bounded damping
in Theorem~\ref{thm:lin-bounded-damping-blowup}.

For softmax attention,
Theorem~\ref{thm:sm-hamiltonian} identifies the
additional domain and compatibility conditions required
for regular Hamiltonian derivatives.
Both this result and the first-order gradient identity
are statements at admissible states; constructing
trajectories and proving preservation of their domains
require a separate analysis.

\emph{Momentum and the construction of attention layers.}
Dynamical interpretations have guided neural architectures
through numerical discretizations \cite{LLHSDQWL2019}
and modified attention normalizations \cite{SABP2022}.
Momentum has been incorporated into residual networks
\cite{SABP2021}, heavy-ball and Nesterov neural ODEs
\cite{XSJNBOW2021,NNVOV2022}, and linear transformers
\cite{NBKOW2022,WXNO2022}.
Zimin et al.~\cite{ZPR2026} construct YuriiFormer from
a variational interpretation of attention and discrete
Nesterov-style updates.
Our contribution concerns the additional corrections
prescribed by the token-dependent attention metric.
Lemma~\ref{lem:sm-particle-connection} identifies the
connection correction used in the position and momentum
updates
\eqref{eq:softmax_second_order_retraction}
and~\eqref{eq:softmax_second_order_transport}.
Comparisons with matched momentum controls in
Tables~\ref{tab:softmax_fiveway_uncertainty}
and~\ref{tab:other_attention_endpoints} assess their
predictive benefit and computational cost.

\vspace{1cm}

\paragraph{Declaration of generative AI use}
During the research and preparation of this manuscript, the authors used
OpenAI's ChatGPT and Codex to assist with language and \LaTeX{} editing,
software implementation and debugging, and exploratory discussions of
mathematical formulations and numerical methods. AI tools also assisted
with the organization and presentation of computational results.
All AI-assisted material incorporated into the manuscript or the associated
code was reviewed and, where necessary, revised by the authors. The authors
are responsible for the mathematical statements and arguments, the
validation of the numerical experiments, the interpretation of the results,
and the conclusions of the work.

\vspace{5mm}

\noindent\textbf{Acknowledgments.}  The paper was started during a visit of W. Li to the
TU Berlin, supported by the SPP 2298 \enquote{Theoretical Foundations of Deep Learning} in Summer 2025. W. Li is supported by the AFOSR YIP award No. FA9550-23-1-0087, NSF RTG: 2038080, NSF FRG: 2245097, NSF DMS: 2601952, and the McCausland Faculty Fellowship at the University of South Carolina. GS acknowledges funding from the DFG STE 571/23-1 and as a CNRS fellow ambassador. 
V.S. thanks T. Laux for inviting him for a research stay in Heidelberg in May 2026 and acknowledges fruitful conversations with J. Chemseddine and R. Duong.
L.M. gratefully acknowledges support from the Institute
for Advanced Study and an anonymous donor during the
2026--2027 academic year, as well as funding from the
Deutsche Forschungsgemeinschaft (DFG, German Research
Foundation) -- 281869850 (RTG 2229), 390900948
(EXC-2181/1), and 281071066 (TRR 191).

\begin{center}
  \FundingLogos
  
  \vspace{0.5em}
  \begin{tcolorbox}\centering\small   
    V.S. acknowledges funding from the European Research Council (ERC) under the European Union’s Horizon Europe research and innovation programme (grant agreement No. 101198055, project acronym NEITALG).
    
    Views and opinions expressed are, however, those of the author(s) only and do not necessarily reflect those of the European Union or the European Research Council Executive Agency. Neither the European Union nor any of the aforementioned granting authorities can be held responsible for them.
  \end{tcolorbox}
\end{center}

\bibliographystyle{abbrv}
\bibliography{Bibliography}

\clearpage
\appendix
\section{Non-Surjectivity of  Stein's Operator on Compact Domains}\label{app:Non-surjectivity of the Stein operator on compact domains}
This subsection aims to prove that, even on the torus, the Stein
operator is not surjective. Thus, in general, there is no hope that the Stein
calculus performed, for example, in \cite{WL2022} can be made rigorous on the full smooth tangent space. For
more general closed manifolds, additional issues arise, as already mentioned.
In the following, let \(\mathbb T^d \coloneqq  \R^d / \mathbb Z^d\) be the flat
torus, equipped with normalized Lebesgue measure \(\d x\). Recall that the
space of probability densities \(\P^\infty(\mathbb T^d)\) on \(\mathbb T^d\)
is a smooth Fr\'{e}chet manifold modeled on \(C^\infty_0(\mathbb T^d)\), with
corresponding tangent space \( T_\rho\P^\infty(\mathbb T^d) = C^\infty_0(\mathbb T^d)\) and smooth dual \(C^\infty(\mathbb T^d) / \R\), which is the cotangent
space \(T_\rho^*\P^\infty(\mathbb T^d)\). Let
\(k\colon \mathbb T^d \times \mathbb T^d \to \R\) be a smooth, symmetric,
positive-definite kernel, and let \(\mathcal H_k\) denote its reproducing
kernel Hilbert space. Assume that \(k\) is \emph{integrally strictly positive
definite}, i.e.,
\[
    \int_{\mathbb T^d}\int_{\mathbb T^d}
    k(x,y) \d\mu(x) \d\mu(y) > 0
\]
for every nonzero finite signed Borel measure \(\mu\) on $\mathbb T^d$. This gives rise to the
Stein operator
\begin{equation}\label{eq:stein_operator}
        L_\rho^S\colon T_\rho^*\P^\infty(\mathbb T^d)
        \longrightarrow  T_\rho\P^\infty(\mathbb T^d), \qquad
        L_\rho^S\varphi \coloneqq  -\nabla_x\!\cdot\left(
            \rho(x)
            \int_{\mathbb T^d}
            k(x,y)\nabla_y\varphi(y)\rho(y) \d y
        \right).
\end{equation}
Its well-definedness follows from Stokes' theorem and the fact that
\(\mathbb T^d\) has no boundary.

\begin{proposition}[Non-surjectivity of the smooth Stein operator]
\label{prop:stein-not-surjective}
Let \(k \in C^\infty(\mathbb T^d \times \mathbb T^d)\) be symmetric and
integrally strictly positive definite, and let \(\rho \in \P^\infty(\mathbb
T^d)\). Then, the Stein operator \(L_\rho^S \) in \eqref{eq:stein_operator} is
injective, but not surjective.
\end{proposition}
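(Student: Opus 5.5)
Injectivity follows from the energy identity. For $\varphi\in C^\infty(\mathbb T^d)$, integrate by parts on the boundaryless torus:
\[
\int_{\mathbb T^d}\varphi\,L_\rho^S\varphi\,\d x
=\int\!\!\int k(x,y)\,\nabla\varphi(x)\cdot\nabla\varphi(y)\,\rho(x)\rho(y)\,\d x\,\d y .
\]
Writing $v=\rho\nabla\varphi$, the right-hand side is $\sum_{i}\iint k(x,y)\,v_i(x)v_i(y)\,\d x\,\d y$. Each $v_i\,\d x$ is a finite signed measure, so integral strict positive definiteness makes every summand nonnegative and strictly positive unless $v_i=0$. Hence $L_\rho^S\varphi=0$ forces $\rho\nabla\varphi\equiv0$. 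Since $\rho>0$, this gives $\nabla\varphi=0$, so $\varphi$ is constant and $[\varphi]=0$ in $C^\infty/\R$.

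For non-surjectivity, the idea is that the range of $L_\rho^S$ consists of divergences of fields $\rho\,u$ with $u=\int k(\cdot,y)\nabla\varphi(y)\rho(y)\,\d y$. Each component of such a $u$ lies in the range of the integral operator $T_k f=\int k(\cdot,y)f(y)\,\d y$, which is far more regular than an arbitrary smooth field.

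\textbf{Step 1: a nontrivial linear constraint.} I would show that the range lies in the image of the Hilbert--Schmidt integral operator $\mathcal K\colon L^2(\rho;\R^d)\to L^2$, $\mathcal K(w)=-\nabla\cdot(\rho\,T_k(\rho w))$, applied to gradients. The cleanest route is a dual argument. Since $L_\rho^S$ is symmetric and positive with respect to the pairing, surjectivity onto $C_0^\infty$ would make its inverse an everywhere-defined linear map $C_0^\infty\to C^\infty/\R$. I would then apply the closed graph theorem in Fr\'echet spaces: $L_\rho^S$ is continuous and injective on the Fréchet space $C^\infty/\R$, so if it were also surjective onto the Fréchet space $C_0^\infty$, the open mapping theorem would make $(L_\rho^S)^{-1}$ continuous. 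In particular, for each Sobolev order $s$ there would be $m$ and $C$ with $\|\varphi\|_{H^s/\R}\le C\|L_\rho^S\varphi\|_{C^m}$.

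\textbf{Step 2: contradiction from smoothing.} The operator is smoothing, since $\nabla\varphi\mapsto T_k(\rho\nabla\varphi)$ gains arbitrarily many derivatives and $\|L_\rho^S\varphi\|_{C^m}\le C_m\|\nabla\varphi\|_{L^2}$ for every $m$, with $C_m$ depending on $k$ and $\rho$. Combining this with the inverse bound at $s=2$ gives $\|\varphi\|_{H^2/\R}\le C\,\|\nabla\varphi\|_{L^2}$ on $C^\infty$. This fails for $\varphi_n(x)=\sin(2\pi n x_1)/n$, where $\|\nabla\varphi_n\|_{L^2}$ stays bounded while $\|\varphi_n\|_{H^2}\sim n$. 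Hence $L_\rho^S$ is not surjective.

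The main obstacle is justifying the open mapping step. Both spaces must be Fréchet: $C^\infty/\R$ is the quotient by a closed subspace, and $C_0^\infty$ is a closed subspace of $C^\infty$. $L_\rho^S$ must also be continuous between them, which holds because $k$ is smooth. If one prefers to avoid open mapping, a fallback is to build an explicit element of $C_0^\infty$ outside the range. One would take $f=\sum_n c_n\,\psi_n$ with rapidly decaying $c_n$ chosen so that the preimages $\varphi$ would need unbounded $H^1$ norms, summing via a Baire-type diagonal argument. The open mapping route is cleaner, so I would try it first.
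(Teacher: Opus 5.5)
Your proof is correct and follows the paper's route. Injectivity comes from the energy identity together with integral strict positive definiteness, which you spell out componentwise where the paper only asserts it; non-surjectivity comes from the Fr\'echet open mapping theorem, contradicted by the smoothing of $L_\rho^S$. The only difference is the endgame. The paper integrates by parts into a smooth kernel $a_\rho$, obtains $L^2\to H^{r+1}$ bounds, and uses compactness on a weakly null orthonormal sequence. You instead keep the gradient, apply the inverse estimate at the $H^2$ level, and conclude directly with $\varphi_n=\sin(2\pi n x_1)/n$, which avoids compactness. The Hilbert--Schmidt remark and the symmetry/positivity comment in your Step 1 are never used and can be dropped.
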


Before delving into the proof, we highlight two implications of Proposition \ref{prop:stein-not-surjective}:
\begin{remark}[Failure of the full tangent-space metric]
By Proposition \ref{prop:stein-not-surjective} there exist smooth tangent vectors
\(\xi \in T_\rho\P^\infty(\mathbb T^d)\), so that there is no smooth potential
\(\varphi\) such that \(\xi = L_\rho^S\varphi\). Therefore the formula
\[
    G_\rho(\xi,\eta)
    =
    \int_{\mathbb T^d}\varphi_\xi(x)\eta(x) \d x,
    \qquad
    \xi = L_\rho^S\varphi_\xi,
\]
cannot define a Riemannian metric on all of
\(T_\rho\P^\infty(\mathbb T^d)\).
\end{remark}

\begin{remark}[The Stein calculus cannot be made rigorous on the full smooth tangent space]
More generally, a smooth Stein calculus that treats \(L_\rho^S\) as an
isomorphism from the cotangent space to the tangent space cannot be made
rigorous on all of \(\P^\infty(\mathbb T^d)\). Indeed, the non-surjectivity
of \(L_\rho^S\) prevents us from assigning a smooth potential to every
tangent vector. Hence, any construction requiring
\((L_\rho^S)^{-1}\xi\) is not well defined on the full tangent space.
\end{remark}

\begin{proof}[\textbf{Proof of Proposition \ref{prop:stein-not-surjective}}]
We use in the following identifications \( T_\rho\P^\infty(\mathbb T^d) \simeq  C^\infty_0(\mathbb T^d)\) and \( T_\rho^*\P^\infty(\mathbb T^d)\simeq C^\infty(\mathbb T^d) / \R\). Endow \(C^\infty_0(\mathbb T^d)\) with its standard Fr\'{e}chet topology. For
\(\varphi \in C^\infty(\mathbb T^d)\), set
\[
    P_0\varphi \coloneqq 
    \varphi -
    \frac{1}{\operatorname{vol}(\mathbb T^d)}
    \int_{\mathbb T^d}\varphi \d x.
\]
Every class \([\varphi] \in C^\infty(\mathbb T^d) / \R\) has the unique
zero-mean representative \(P_0\varphi\). The maps
\([\varphi] \mapsto P_0\varphi\) and \(\psi \mapsto [\psi]\) are continuous
inverses, so they identify the quotient with \(C^\infty_0(\mathbb T^d)\) as
Fr\'{e}chet spaces. Since \(L_\rho^S(\varphi + c) = L_\rho^S\varphi\), the
quotient operator is surjective (respectively, injective) exactly when
\(L_\rho^S \colon C^\infty_0(\mathbb T^d) \to C^\infty_0(\mathbb T^d)\) is. We use
this identification below. 
\\[1ex]
\underline{Injectivity}.  As \(k\) is integrally strictly positive definite and \(\rho>0\), it holds that \(L_\rho^S\varphi=0\) if and only if \(\varphi\in\mathbb{R}\). Thus, $L_\rho^S$ is injective.
\\
\underline{Non Surjectivity}. Periodic integration by parts gives
\[
    L_\rho^S\varphi(x)
    =
    \int_{\mathbb T^d} a_\rho(x,y)\varphi(y) \d y,
    \qquad
    a_\rho(x,y)
    \coloneqq 
    \nabla_x\!\cdot\!
    \left[
        \rho(x)\nabla_y\bigl(k(x,y)\rho(y)\bigr)
    \right].
\]
The kernel \(a_\rho\) is smooth; that is,
\(a_\rho \in C^\infty(\mathbb T^d \times \mathbb T^d)\). Every mixed
derivative \(\partial_x^\alpha\partial_y^\beta a_\rho\) is continuous. This
follows because \(k\) and \(\rho\) are smooth, and the displayed formula for
\(a_\rho\) uses only products and finitely many derivatives. Since
\(\mathbb T^d \times \mathbb T^d\) is compact, all these derivatives are
bounded. For every multi-index \(\alpha\) and
\(\varphi \in C^\infty(\mathbb T^d)\), differentiation under the integral
gives
\[
    \partial_x^\alpha L_\rho^S\varphi(x)
    =
    \int_{\mathbb T^d}
        \partial_x^\alpha a_\rho(x,y)\varphi(y) \d y.
\]
Cauchy--Schwarz in \(y\), followed by integration in \(x\), yields
\[
    \bigl\|\partial_x^\alpha L_\rho^S\varphi\bigr\|_{L^2}
    \leq
    \bigl\|\partial_x^\alpha a_\rho\bigr\|_{L^2(\mathbb T^d \times \mathbb
T^d)}
    \|\varphi\|_{L^2}.
\]
The coefficient on the right is finite because \(a_\rho\) is smooth.
Summing this estimate over \(|\alpha| \leq r+1\) gives a constant \(C_r\)
such that
\[
    \bigl\|L_\rho^S\varphi\bigr\|_{H^{r+1}}
    \leq
    C_r\|\varphi\|_{L^2}.
\]
Since \(L_\rho^S\varphi\) has zero mean (it is an \(x\)-divergence), this
proves, by density, that \(L_\rho^S\) extends to a bounded map
\[
    L^2_0(\mathbb T^d) \longrightarrow H^{r+1}_0(\mathbb T^d),
\]
and, by the compact Sobolev embedding
\(H^{r+1}_0(\mathbb T^d) \Subset H^r_0(\mathbb T^d)\), it is compact as a map
\(L^2_0(\mathbb T^d) \to H^r_0(\mathbb T^d)\).

The estimates above show that \(L_\rho^S\) is continuous as a map from
\(C^\infty_0(\mathbb T^d)\) to \(C^\infty_0(\mathbb T^d)\). Suppose, towards a
contradiction, that it were surjective. By injectivity, it would be a continuous linear bijection of the Fr\'{e}chet space
\(C^\infty_0(\mathbb T^d)\) onto itself. The Fr\'{e}chet open mapping theorem
applied to this map gives a continuous inverse. Applying the continuity of
that inverse to the \(L^2\)-seminorm, and using the Sobolev seminorms that
generate the topology, yields \(r \geq 0\) and \(C > 0\) such that
\begin{equation}\label{eq:inverse-estimate}
    \|\varphi\|_{L^2}
    =
    \bigl\|(L_\rho^S)^{-1}L_\rho^S\varphi\bigr\|_{L^2}
    \leq
    C\|L_\rho^S\varphi\|_{H^r}
    \qquad
    \text{for every } \varphi \in C^\infty_0(\mathbb T^d).
\end{equation}
Choose an \(L^2\)-orthonormal sequence
\((\varphi_n)_{n \geq 1} \subset C^\infty_0(\mathbb T^d)\), for instance
\(\varphi_n(x) = \sqrt{2}\cos(2\pi n x_1)\). It converges weakly to zero in
\(L^2_0(\mathbb T^d)\). Compactness gives
\(\|L_\rho^S\varphi_n\|_{H^r} \to 0\), contradicting
\eqref{eq:inverse-estimate}. Thus \(L_\rho^S\) is not surjective.
\end{proof}

\section{Differentiability in the Local Mobility Calculus}\label{app:diif_in_in_local_mobility_calculus}
Due to the lack of a differentiable structure on the space of densities in the smooth local mobility calculus developed earlier, we provide a workaround in that framework that naturally extends the definition of Fr\'echet differentiability in Otto--Wasserstein geometry on the space of smooth probability densities on closed manifolds. 
\paragraph{Local first variations and the regular gradient.}For \(\rho\in\P_{m_\rho,B}\),  we define the actual admissible variations at
\(\rho\) by
\begin{equation}\label{eq:admi_var_LM}
    \mathcal V_\rho:=
\left\{
\eta\in C^\infty_c(\R^d)\cap T_\rho\P_{m_\rho,B}:
\int_{\R^d}\eta\,\mathrm{d} x=0,\ 
\exists\varepsilon>0\ \text{such that}\
\rho+t\eta\in\P_{m_\rho,B}\ \text{for }|t|<\varepsilon
\right\}.
\end{equation}
Then we say that \(\mathcal F:\P_{m_\rho,B}\to \R \) has a
\emph{local regular first variation} at \(\rho\in\P_{m_\rho,B}\) if there is a class \([f_\rho]\in T_\rho^*\P_{m_\rho,B}\)
such that, for every \(\eta\in\mathcal V_\rho\),
\begin{equation}\label{eq:def_local_regular_variation}
   \lim_{t\to0}
\frac{\mathcal F(\rho+t\eta) - \mathcal F(\rho)}{t}
=
\int_{\R^d}f_\rho\,\eta \d x. 
\end{equation}
We then write \([f_\rho]
=
\left[\frac{\delta \mathcal F}{\delta\rho}(\rho)\right].\) 
Its associated regular extension is the linear map
\begin{equation}\label{eq:tangent_point_extension}
    \ell_{\F,\rho}^{\reg}:T_\rho\P_{m_\rho,B}\to\R,
    \qquad
    \ell_{\F,\rho}^{\reg}(\eta)
    \coloneqq
    \int_{\R^d}f_\rho\,\eta \d x.
\end{equation}
We first give a sufficient condition for this regular extension to be
unique.
\begin{lemma}[Uniqueness of the local regular first variation]
\label{lem:uniqueness_local_regular_variation}
Let \(\rho\in\P_{m_\rho,B}\), and suppose that \(\F\) admits a
local regular first variation at \(\rho\) in the sense of~\eqref{eq:def_local_regular_variation}. If 
\[
    \left\{\eta \in C_c^\infty(\R^d): \, \int_{\R^d} \eta \d x=0\right\}\subseteq \mathcal{V}_\rho
\]
where \(\mathcal{V}_\rho\) is as in \eqref{eq:admi_var_LM}, then the following holds: 
\begin{enumerate}
    \item\label{it_1_uniqueness_lemma} the local regular first variation \(\left[\frac{\delta \mathcal F}{\delta\rho}(\rho)\right]\) is unique, 
    \item\label{it_2_uniqueness_lemma} its associated regular extension
\(\ell_{\F,\rho}^{\reg}\) in \eqref{eq:tangent_point_extension}
is uniquely determined.
\end{enumerate}
\end{lemma}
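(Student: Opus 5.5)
Suppose two classes $[f_\rho],[g_\rho]\in T_\rho^*\P_{m_\rho,B}$ both satisfy \eqref{eq:def_local_regular_variation}. Then the limits on the left coincide, so for every $\eta\in\mathcal V_\rho$ we get $\int_{\R^d}(f_\rho-g_\rho)\eta\d x=0$. By hypothesis this holds for every $\eta\in C_c^\infty(\R^d)$ with $\int\eta\d x=0$. Set $h\coloneqq f_\rho-g_\rho$, which is smooth and lies in $H^1(m_\rho,B)$. The plan has three steps: show that $h$ is constant, which gives item~\ref{it_1_uniqueness_lemma}; then prove that zero-mean compactly supported test functions are dense in $T_\rho\P_{m_\rho,B}$ for the topology in which the pairing is continuous; then conclude item~\ref{it_2_uniqueness_lemma} by continuity.

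For the first step, fix $\chi\in C_c^\infty(\R^d)$ with $\int\chi\d x=1$. For arbitrary $\psi\in C_c^\infty(\R^d)$, take $\eta=\psi-(\int\psi\d x)\chi$. This gives $\int h\psi\d x=c\int\psi\d x$ with $c\coloneqq\int h\chi\d x$. Hence $h-c$ annihilates all test functions, and since $h$ is continuous, $h\equiv c$. Therefore $[f_\rho]=[g_\rho]$ in the quotient by constants, which proves item~\ref{it_1_uniqueness_lemma}.

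Item~\ref{it_2_uniqueness_lemma} then follows immediately: by \eqref{eq:LM_tangent_cotangent_pairing}, $\ell^{\reg}_{\F,\rho}(\eta)=\int f_\rho\eta\d x$ depends only on the class $[f_\rho]$, and this is independent of the representative because $\int\eta\d x=0$ on $T_\rho\P_{m_\rho,B}$. As a safeguard (and to show that the extension is forced, not just well defined), I would also give an approximation argument. By \eqref{eq:fin_of_cotangent_pairing}, the functional $\eta\mapsto\int f_\rho\eta\d x$ is continuous for the norm $\|\eta/m_\rho\|_{L^2(m_\rho)}$. It therefore suffices to approximate every $\eta\in T_\rho\P_{m_\rho,B}$ in this norm by zero-mean test functions; the extension is then determined by its values on $\mathcal V_\rho$.

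The main obstacle is this density step. Take a cutoff $\chi_R=\chi(\cdot/R)$ and put $\eta_R=\chi_R\eta-(\int\chi_R\eta\d x)\,\theta$, with a fixed $\theta\in C_c^\infty$ of unit mass. Then $\chi_R\eta\to\eta$ in the weighted norm by dominated convergence, since $|\chi_R\eta/m_\rho|\le|\eta/m_\rho|$. The mean correction $\int\chi_R\eta\d x\to\int\eta\d x=0$ because $\eta\in L^1$: indeed $\int|\eta|\le\|\eta/m_\rho\|_{L^2(m_\rho)}\,\mathfrak m_\rho(\R^d)^{1/2}<\infty$. Finally, $\theta/m_\rho\in L^2(m_\rho)$ because $m_\rho$ is smooth and positive on the support of $\theta$.
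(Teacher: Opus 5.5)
Your argument is correct and follows the paper's route. Both proofs show that the difference of two representatives annihilates every zero-mean $C_c^\infty$ test function and is therefore constant, and both obtain item 2 directly from item 1. The only variation is that the paper tests against $\partial_i\eta$ to conclude $\nabla(\Phi-\Psi)=0$, whereas you subtract a multiple of a fixed unit-mass bump to get $\int (h-c)\psi\,\mathrm{d}x=0$ for all $\psi$. Your additional density step is not needed for the lemma as stated, but it is valid and shows the extension is already forced by the directional derivatives along $\mathcal V_\rho$.
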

\begin{proof}
Clearly \ref{it_2_uniqueness_lemma} follows directly from \ref{it_1_uniqueness_lemma}, so it suffices to prove \ref{it_1_uniqueness_lemma}. 
Let \([\Phi],[\Psi]\in T_\rho^*\P_{m_\rho,B}\) both represent
a local regular first variation of \(\F\) at \(\rho\).
By the assumption on admissible variations,
\[
    \int_{\R^d}(\Phi-\Psi)\eta \d x=0
\]
for every \(\eta\in C_c^\infty(\R^d)\) with zero integral.
For any \(\eta\in C_c^\infty(\R^d)\) and \(i=1,\ldots,d\),
the function \(\partial_i\eta\) has zero integral. Hence
\[
    0
    =
    \int_{\R^d}(\Phi-\Psi)\partial_i\eta \d x
    =
    -\int_{\R^d}\partial_i(\Phi-\Psi)\eta \d x.
\]
It follows that \(\nabla(\Phi-\Psi)=0\), so \(\Phi-\Psi\)
is constant on \(\R^d\). Thus \([\Phi]=[\Psi]\).
\end{proof}
For the rest of the section we assume always that the assumptions of \ref{lem:uniqueness_local_regular_variation} hold. Under these assumptions we set \[
D\F(\rho)[\eta]:=\ell_{\mathcal F,\rho}^{\reg}[\eta] \qquad \forall\eta \in T_\rho\P_{m_\rho,B}.
\]
For a functional \(\mathcal F\) having a local regular first variation at $\rho$ in the sense of \eqref{eq:def_local_regular_variation} we define its gradient at the point $\rho$ with respect to the local mobility metric \(G_{m_\rho,B}\) as in \ref{prop:LM_metric_well_defined} as the unique tangent vector \(\operatorname{grad}_{G_{m_\rho,B}}\F(\rho)\) so that \begin{equation}\label{eq:def_gradient_loc_mob_calculus}
     G_{m_\rho,B}\left(
         \operatorname{grad}_{G_{m_\rho,B}}\F(\rho),
        \dot\eta
    \right)
    =
 D\F(\rho)[\dot\eta] \qquad\forall \dot\eta \in T_\rho\P_{m_\rho,B}
\end{equation}
holds.
With this notation, we can prove the following.

\begin{lemma}[Regular local-mobility gradient]
\label{lem:regular-local-mobility-gradient}
If \(\F\) has a local regular first variation at \(\rho\) in the sense of \eqref{eq:def_local_regular_variation}, then its gradient with respect to the local mobility metric \(G_{m_\rho,B}\) as in \ref{prop:LM_metric_well_defined} at the point \(\rho\) is given by
\[
    \operatorname{grad}_{G_{m_\rho,B}}\F(\rho)
    =
    L_{m_\rho,B}
    \left[
        \frac{\delta\F}{\delta\rho}(\rho)
    \right]
    =
    -\nabla\cdot\left(
        m_\rho B
        \nabla\frac{\delta\F}{\delta\rho}(\rho)
    \right).
\]
\end{lemma}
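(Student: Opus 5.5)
The argument is a direct verification of the defining identity \eqref{eq:def_gradient_loc_mob_calculus}, using Theorem~\ref{prop:LM_operator_invertible} together with the bilinear form of the metric in Remark~\ref{rem:LM_equivalent_metric_forms}.

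\textbf{Setting up the candidate.} Let $[f_\rho]=\left[\frac{\delta\F}{\delta\rho}(\rho)\right]\in T_\rho^*\P_{m_\rho,B}$ be the local regular first variation. This class is unique by Lemma~\ref{lem:uniqueness_local_regular_variation}, whose hypotheses are standing assumptions. Define $\xi\coloneqq L_{m_\rho,B}[f_\rho]$. By Theorem~\ref{prop:LM_operator_invertible}\ref{it:2_prop:LM_operator_invertible}, $\xi$ lies in $T_\rho\P_{m_\rho,B}$. Moreover, $[f_\rho]$ is the unique cotangent class solving the potential equation \eqref{eq:LM_div_dot_rho_metric} for $\dot\rho=\xi$, that is, $[\Phi_\xi]=[f_\rho]$.

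\textbf{Verifying the defining identity.} Fix any $\dot\eta\in T_\rho\P_{m_\rho,B}$. By Corollary~\ref{prop:LM_metric_well_defined} and Remark~\ref{rem:LM_equivalent_metric_forms},
\[
G_{m_\rho,B}(\xi,\dot\eta)=\int_{\R^d}\Phi_\xi\,\dot\eta\d x=\int_{\R^d}f_\rho\,\dot\eta\d x=\ell^{\reg}_{\F,\rho}(\dot\eta)=D\F(\rho)[\dot\eta].
\]
This pairing is finite by \eqref{eq:fin_of_cotangent_pairing}, and it does not depend on the choice of representative because $\int\dot\eta\d x=0$. The step requiring care is the identity $G(\xi,\dot\eta)=\int\Phi_\xi\dot\eta\d x$ when $\dot\eta\neq\xi$. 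Here I would proceed as follows: write $\dot\eta=L_{m_\rho,B}[\Phi_{\dot\eta}]$, which is possible by surjectivity in Theorem~\ref{prop:LM_operator_invertible}. Then apply the integration-by-parts formula \eqref{eq:LM_weak_integration_by_parts} with $\Psi=\Phi_\xi\in H^1(m_\rho,B)$. This yields
\[
\int_{\R^d}\Phi_\xi\,\dot\eta\d x=\int_{\R^d}\nabla\Phi_{\dot\eta}^\top B\nabla\Phi_\xi\,\d\mathfrak m_\rho,
\]
which is the symmetric bilinear form obtained by polarizing \eqref{eq:LM_metric}.

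\textbf{Uniqueness of the gradient.} Suppose $\xi'$ also satisfies \eqref{eq:def_gradient_loc_mob_calculus}. Then $G_{m_\rho,B}(\xi-\xi',\cdot)=0$ on $T_\rho\P_{m_\rho,B}$. Testing with $\xi-\xi'$ gives $\|\nabla\Phi_{\xi-\xi'}\|^2_{L^2(m_\rho,B)}=0$. By Lemma~\ref{lem:LM_centered_energy_space}, $\|\cdot\|_{m_\rho,B}$ is a norm on the centered space, so the centered representative of $\Phi_{\xi-\xi'}$ vanishes. Hence $[\Phi_{\xi-\xi'}]=0$, and injectivity of $L_{m_\rho,B}$ gives $\xi=\xi'$.

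Finally, expanding $L_{m_\rho,B}$ by its definition \eqref{eq:LM_operator_expression} gives the divergence form $-\nabla\cdot\bigl(m_\rho B\nabla\frac{\delta\F}{\delta\rho}(\rho)\bigr)$ stated in the lemma.
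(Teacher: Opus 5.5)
Your proposal is correct and follows essentially the same route as the paper: write $\dot\eta=L_{m_\rho,B}[\Phi_{\dot\eta}]$, then use the bilinear form \eqref{eq:LM_metric_bilinear} with the integration-by-parts identity \eqref{eq:LM_weak_integration_by_parts} to reduce $G_{m_\rho,B}\bigl(L_{m_\rho,B}[f_\rho],\dot\eta\bigr)$ to $\int_{\R^d}f_\rho\,\dot\eta\d x=D\F(\rho)[\dot\eta]$, and rely on Theorem~\ref{prop:LM_operator_invertible} for membership in $T_\rho\P_{m_\rho,B}$ and for uniqueness. One minor point: in your uniqueness step, $\xi=\xi'$ follows from $\xi-\xi'=L_{m_\rho,B}[0]=0$, that is, from linearity and well-definedness of $L_{m_\rho,B}$ rather than from its injectivity.
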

\begin{proof}
Let \(
    \dot\eta=L_{m_\rho,B}[\Phi_{\dot\eta}]\).
Then \eqref{eq:LM_metric_bilinear},
\eqref{eq:LM_weak_integration_by_parts}, \eqref{eq:tangent_point_extension} and \eqref{eq:def_gradient_loc_mob_calculus} imply
\[
    G_{m_\rho,B}\left(
        L_{m_\rho,B}
        \left[
            \frac{\delta\F}{\delta\rho}(\rho)
        \right],
        \dot\eta
    \right)=
    \int_{\R^d}
    \frac{\delta\F}{\delta\rho}(\rho)\,
    \dot\eta \d x
    =
    D\F(\rho)[\dot\eta].
\]
Combined with Theorem~\ref{prop:LM_operator_invertible}, this completes the proof.
\end{proof}
As shown in \eqref{set1} and \eqref{set2}, Otto--Wasserstein and
softmax--Wasserstein geometry are instances of local-mobility geometry.

\begin{lemma}\label{lem:asump_differtiability_OW-SM}
For both geometries, the
Otto--Wasserstein and softmax--Wasserstein geometries, the assumption
on admissible variations in
Lemma~\ref{lem:uniqueness_local_regular_variation} is satisfied.
Consequently, the conclusions of Lemma~\ref{lem:uniqueness_local_regular_variation} hold.
\end{lemma}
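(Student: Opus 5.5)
The plan is to fix $\eta\in C_c^\infty(\R^d)$ with $\int\eta\,\d x=0$ and check the two membership conditions in \eqref{eq:admi_var_LM}. First, $\eta\in T_\rho\P_{m_\rho,B}$. Second, there is $\varepsilon>0$ with $\rho+t\eta\in\P_{m_\rho,B}$ for $|t|<\varepsilon$.

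The first condition is immediate in both geometries. The integral $\int|\eta|^2/m_\rho\,\d x$ runs over the compact set $K=\operatorname{supp}\eta$. On $K$ the function $m_\rho$ is smooth and strictly positive, hence bounded below. For softmax this uses $Z_\rho\in C^\infty_{>0}$, which is part of the definition of $\P_{\SM}$.

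For the second condition I will set $\rho_t=\rho+t\eta$. Since $\rho$ has a positive minimum on $K$, taking $|t|\,\|\eta\|_\infty<\tfrac12\min_K\rho$ gives $\rho_t\ge\tfrac12\rho$ on $K$ and $\rho_t=\rho$ off $K$. Hence $\tfrac12\rho\le\rho_t\le\tfrac32\rho$ everywhere. Moreover $\rho_t$ is smooth and positive with total mass one, so $\rho_t\in\P^\infty(\R^d)$.

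In the Otto--Wasserstein case, $m_{\rho_t}=\rho_t$ is comparable to $\rho$ with constants $\tfrac12$ and $\tfrac32$. The weighted spaces therefore coincide. The Poincar\'e inequality transfers by the standard perturbation argument via the variance characterization:
\[
\int|\Phi-c_t|^2\rho_t\le\int|\Phi-c|^2\rho_t\le\tfrac32\int|\Phi-c|^2\rho\le\tfrac32 C_\rho\int|\nabla\Phi|^2\rho\le 3C_\rho\int|\nabla\Phi|^2\rho_t,
\]
where $c$ and $c_t$ are the respective weighted means.

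In the softmax case I also need $Z_{\rho_t}=Z_\rho+tZ_\eta$ with $Z_\eta(x)=\int_K e^{y^\top Ax}\eta(y)\,\d y$. The bound $\rho_t\ge\tfrac12\rho$ directly gives $Z_{\rho_t}\ge\tfrac12 Z_\rho>0$ and $Z_{\rho_t}\le\tfrac32Z_\rho$. This uses $|\eta|\le\tfrac12|t|^{-1}\rho$ inside the integral, so the constants are uniform in $x$. Smoothness follows because $Z_\eta$ is an integral over a compact set of a smooth integrand. Consequently $m_{\rho_t}=\rho_t/Z_{\rho_t}$ lies between $\tfrac13m_\rho$ and $3m_\rho$. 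This gives finite positive total mass and smoothness, and the LM-PI transfers with constant $9C_\rho$ by the same variance argument, since $B$ is unchanged.

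The only genuinely delicate point is the uniform two-sided bound on $Z_{\rho_t}/Z_\rho$ for all $x$, including large $|x|$. It is resolved by controlling $\eta$ pointwise by $\rho$ rather than bounding $Z_\eta$ separately.
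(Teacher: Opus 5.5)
Your proposal is correct and follows essentially the same route as the paper: the pointwise comparison $\tfrac12\rho\le\rho+t\eta\le\tfrac32\rho$ is integrated against $\exp(z^\top Ax)$ to give $\tfrac13 m_\rho\le m_{\rho+t\eta}\le 3m_\rho$, and the Poincar\'e inequality is then transferred through the infimum-over-constants characterization with constant $9C_\rho$. As in the paper, compact support of $\eta$ gives smoothness of $Z_{\rho+t\eta}$ and the tangent-space condition, after which Lemma~\ref{lem:uniqueness_local_regular_variation} applies.
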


\begin{proof}
Let \(\rho\in\P_{m_\rho,B}\) and
\(\dot\eta\in C_c^\infty(\R^d)\) satisfy
\(\int_{\R^d}\dot\eta \d x=0\).
Since \(\dot\eta/\rho\) is bounded, for sufficiently small \(|t|\),
\[
    \frac12\rho\leq\rho+t\dot\eta\leq\frac32\rho.
\]
For Otto mobility, these bounds apply directly to the mobility.
For softmax mobility, multiplying by \(\exp(z^\top Ax)>0\) and
integrating in \(z\) gives
\[
    \frac12Z_\rho\leq Z_{\rho+t\dot\eta}\leq\frac32Z_\rho,
    \qquad
    \frac13m_\rho\leq m_{\rho+t\dot\eta}\leq3m_\rho.
\]
Moreover, \(Z_{\rho+t\dot\eta}-Z_\rho\) is smooth by compact
support of \(\dot\eta\). Thus the perturbed mobility remains
smooth and positive, with finite mass.

Uniform comparison also preserves the weighted Poincar\'e inequality:
\begin{align*}
    \inf_{c\in\R}
    \int_{\R^d}|\Phi-c|^2 \d\mathfrak m_{\rho+t\dot\eta}
    \leq
    3\inf_{c\in\R}
    \int_{\R^d}|\Phi-c|^2 \d\mathfrak m_\rho\leq
    9C_\rho
    \int_{\R^d}
    (\nabla\Phi)^\top B\nabla\Phi\,
    \d\mathfrak m_{\rho+t\dot\eta}.
\end{align*}
Normalization follows from \(\int\dot\eta \d x=0\), while
compact support ensures
\(\int|\dot\eta|^2/m_\rho \d x<\infty\).
Hence \(\dot\eta\) is an actual admissible variation.
The conclusion follows from
Lemma~\ref{lem:uniqueness_local_regular_variation}.
\end{proof}
We can relate local regular first variations to ordinary
differentiation in Otto calculus.

\begin{example}[Otto calculus on $\P^\infty(\mathcal M)$]
\label{ex:app_otto_calc_dens}
Let \(\mathcal M\) be a closed, connected Riemannian manifold.
The space \(\P^\infty(\mathcal M)\) is open in the affine Fr\'echet space of
smooth functions with unit integral, and its tangent vectors are the
smooth functions with zero integral.

For a \(C^1\) functional \(\F \colon \P^\infty(\mathcal M)\to\R\) in the Bastiani
sense, a local regular first variation at \(\rho\) exists if and only
if \(D\F(\rho)\) has a smooth representative:
\[
    D\F(\rho)[\dot\eta]
    =
    \int_{\mathcal M}
    \frac{\delta\F}{\delta\rho}(\rho)\,
    \dot\eta \d\mathrm{vol}_{\mathcal M},
    \qquad
    \dot\eta\in T_\rho\P^\infty(\mathcal M).
\]
For \(m_\rho=\rho\) and the identity mobility tensor, the corresponding
regular gradient is the usual Otto gradient,
\[
    \operatorname{grad}_{G_\rho^W}\F(\rho)
    =
    -\operatorname{div}_{\mathcal M}\left(
        \rho\nabla_{\mathcal M}
        \frac{\delta\F}{\delta\rho}(\rho)
    \right).
\]
Thus, \eqref{eq:def_local_regular_variation} agrees with ordinary
differentiation whenever the differential has a smooth representative.
The \(C^1\) assumption alone does not guarantee such a representative:
the differential may instead be distributional.
\end{example}
\paragraph{Regular Hamiltonian variations in local mobility.}
Consider pairs \((\rho,[\Phi])\in \P_{m_\rho,B}\times T_\rho^*\P_{m_\rho,B}\). Before moving on, we want to stress that this is only a state set and we don't assume the existence of a cotangent-bundle structure. Next we want to define a notion of differentiability for the Hamiltonian we are interested in that is:
\begin{equation}\label{eq:def_ham_app}
    \mathcal H(\rho,[\Phi])
    \coloneqq
    \frac12
    \left\langle
        [\Phi],L_{m_\rho,B}[\Phi]
    \right\rangle
    +\F(\rho)
    =
    \frac12
    \int_{\R^d}
    (\nabla\Phi)^\top B\nabla\Phi\,
    \d\mathfrak m_\rho
    +\F(\rho), \qquad (\rho,[\Phi])\in \P_{m_\rho,B}\times T_\rho^*\P_{m_\rho,B}.
\end{equation}
We always assume in the following that \(\F\) has a local regular first variation at \(\rho\) in the sense of \eqref{eq:def_local_regular_variation}. 
For fixed \([\Phi]\in C^\infty(\R^d)/\R\), we define the actual
admissible variations compatible with \([\Phi]\) by
\[
    \mathcal V_{\rho,\Phi}
    :=
    \left\{
        \eta\in\mathcal V_\rho:
        [\Phi]\in T_{\rho+t\eta}^*\P_{m_\rho,B}
        \text{ for all sufficiently small }|t|
    \right\}.
\]
Assume that \(\F\) has a local regular first variation at \(\rho\),
represented by \([f_\rho]
    =
    \left[
        \frac{\delta\F}{\delta\rho}(\rho)
    \right]
    \in T_\rho^*\P_{m_\rho,B}.\)
We say that the kinetic part of \(\mathcal H\) has a
\emph{fixed-ambient regular sectional variation} at \(\rho\) if there
is a class \( [h_{\rho,\Phi}]
    \in T_\rho^*\P_{m_\rho,B}\)
such that, for every \(\eta\in\mathcal V_{\rho,\Phi}\),
\begin{equation}\label{eq:def_fixed-ambient regular sectional variation}
    \left.\frac{\d}{\d t}\right|_{t=0}
    \frac12
    \int_{\R^d}
    (\nabla\Phi)^\top B\nabla\Phi\,
    \d\mathfrak m_{\rho+t\eta}
    =
    \int_{\R^d}
    h_{\rho,\Phi}\, \eta \d x.
\end{equation}
Here, fixed ambient means that \([\Phi]\) is kept fixed as an element
of \(C^\infty(\R^d)/\R\) while \(\rho\) is varied. In this case, the associated regular extensions are the linear maps
\begin{align}
    &\mathfrak d_\Phi^{\reg}\mathcal H(\rho,[\Phi]):T_\rho^*\P_{m_\rho,B} \longrightarrow \R, \qquad 
    \mathfrak d_\Phi^{\reg}\mathcal H(\rho,[\Phi])[\Psi]:=
    \left\langle
        [\Psi],L_{m_\rho,B}[\Phi]
    \right\rangle,\label{eq:def_reg_del_phi_H}
    \\
   & \mathfrak d_\rho^{\reg}\mathcal H(\rho,[\Phi]):T_\rho\P_{m_\rho,B} \longrightarrow \R, \qquad \mathfrak d_\rho^{\reg}\mathcal H(\rho,[\Phi])[\eta]:=
    \int_{\R^d}
    \bigl(f_\rho+h_{\rho,\Phi}\bigr)\eta \d x.\label{eq:def_reg_del_rho_H}
\end{align}
Indeed, one can prove the following lemma similarly to Lemma \ref{lem:uniqueness_local_regular_variation}.

\begin{lemma}[Uniqueness of the regular Hamiltonian variations]
\label{lem:uniqueness_regular_hamiltonian_variations}
Let \(\rho\in\P_{m_\rho,B}\) and
\([\Phi]\in T_\rho^*\P_{m_\rho,B}\).
Suppose that \(\F\) admits a local regular first variation at $\rho$ in the sense of \eqref{eq:def_local_regular_variation}
and that the kinetic part of \(\mathcal H\) admits a
fixed-ambient regular sectional variation at \(\rho\) in the sense of \eqref{eq:def_fixed-ambient regular sectional variation}.
If
\[
    \left\{
        \eta\in C_c^\infty(\R^d):
        \int_{\R^d}\eta \d x=0
    \right\}
    \subseteq\mathcal V_{\rho,\Phi},
\]
then the following holds:
\begin{enumerate}
    \item
    The classes \([f_\rho]\) and \([h_{\rho,\Phi}]\), and hence \( \frac{\delta^{\reg}\mathcal H}{\delta\rho}
        (\rho,[\Phi])\) are uniquely determined.
    \item
    The regular extensions in
    \eqref{eq:def_reg_del_phi_H} and
    \eqref{eq:def_reg_del_rho_H}
    are uniquely determined.
\end{enumerate}
\end{lemma}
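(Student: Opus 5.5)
The plan is to mirror the proof of Lemma~\ref{lem:uniqueness_local_regular_variation}, applying its argument twice: once to the potential-energy part and once to the kinetic part. The regular extensions are then fixed by their defining formulas. The key observation is that, under the hypothesis, the test class $\{\eta\in C_c^\infty(\R^d):\int\eta\,\d x=0\}$ lies in $\mathcal V_{\rho,\Phi}$, and hence also in $\mathcal V_\rho$, since $\mathcal V_{\rho,\Phi}\subseteq\mathcal V_\rho$ by definition. So the hypothesis of Lemma~\ref{lem:uniqueness_local_regular_variation} holds at $\rho$, and that lemma directly gives uniqueness of $[f_\rho]$.

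For the kinetic part, suppose $[h],[\tilde h]\in T_\rho^*\P_{m_\rho,B}$ both satisfy \eqref{eq:def_fixed-ambient regular sectional variation}. The left-hand side there does not depend on the choice of representative. Subtracting the two identities gives $\int(h-\tilde h)\eta\,\d x=0$ for every $\eta\in\mathcal V_{\rho,\Phi}$, in particular for every compactly supported smooth $\eta$ with zero integral. Next take $\eta=\partial_i\zeta$ with $\zeta\in C_c^\infty(\R^d)$. Such $\eta$ has zero integral, and integrating by parts (the boundary terms vanish because $\zeta$ has compact support and $h,\tilde h$ are smooth, being representatives in $C^\infty\cap H^1$) gives $\int\partial_i(h-\tilde h)\,\zeta\,\d x=0$ for all $\zeta$. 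Hence $\nabla(h-\tilde h)=0$. Since $\R^d$ is connected, $h-\tilde h$ is constant, so $[h]=[\tilde h]$. The class $\frac{\delta^{\reg}\mathcal H}{\delta\rho}(\rho,[\Phi])=[f_\rho+h_{\rho,\Phi}]$ is therefore uniquely determined, which proves item 1.

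For item 2, note that $\mathfrak d_\Phi^{\reg}\mathcal H$ in \eqref{eq:def_reg_del_phi_H} involves only $\rho$, $[\Phi]$, and the pairing $\langle[\Psi],L_{m_\rho,B}[\Phi]\rangle$. This pairing is well defined and independent of representatives by \eqref{eq:fin_of_cotangent_pairing} together with Theorem~\ref{prop:LM_operator_invertible}, because $L_{m_\rho,B}[\Phi]\in T_\rho\P_{m_\rho,B}$ has zero integral. So $\mathfrak d_\Phi^{\reg}\mathcal H$ involves no choice at all. Similarly, $\mathfrak d_\rho^{\reg}\mathcal H$ in \eqref{eq:def_reg_del_rho_H} depends only on the class $[f_\rho+h_{\rho,\Phi}]$: changing representatives by constants alters the integral by $c\int\eta\,\d x=0$ for $\eta\in T_\rho\P_{m_\rho,B}$, and the integral is finite by Cauchy--Schwarz as in \eqref{eq:fin_of_cotangent_pairing}. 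By item 1 this class is unique, so the extension is unique as well. There is no real obstacle here. The only points needing care are that $\mathcal V_{\rho,\Phi}\subseteq\mathcal V_\rho$, so that Lemma~\ref{lem:uniqueness_local_regular_variation} applies, and that derivatives of test functions are admissible test variations.
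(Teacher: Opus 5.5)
Your proposal is correct and takes essentially the paper's approach. The paper proves this lemma by pointing back to Lemma~\ref{lem:uniqueness_local_regular_variation}, which tests the difference of two representatives against $\partial_i\zeta$ to show the difference is constant; you carry out that same argument for $[f_\rho]$ (via $\mathcal V_{\rho,\Phi}\subseteq\mathcal V_\rho$) and for $[h_{\rho,\Phi}]$, then read off item~2 from the defining formulas.
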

Now we can prove the following lemma.

\begin{lemma}[Representatives of the regular Hamiltonian variations]
\label{lem:regular-hamiltonian-representatives} Suppose $\mathcal{H}$ is as in \eqref{eq:def_ham_app} so that \(\mathcal{H}\) has a fixed-ambient regular sectional variation at $\rho$ in the sense of \eqref{eq:def_fixed-ambient regular sectional variation}. Then, we have
\begin{align*}
    \left.\frac{\d}{\d s}\right|_{s=0}
    \mathcal H(\rho,[\Phi+s\Psi])
   & =
    \mathfrak d_\Phi^{\reg}
    \mathcal H(\rho,[\Phi])[\Psi] \qquad \forall[\Psi]\in T_\rho^*\P_{m_\rho,B},\\
   \left.\frac{\d}{\d t}\right|_{t=0}
    \mathcal H(\rho+t\eta,[\Phi])
    &=
    \mathfrak d_\rho^{\reg}
    \mathcal H(\rho,[\Phi])[\eta] \qquad \forall\eta\in\mathcal V_{\rho,\Phi}.
\end{align*}
\end{lemma}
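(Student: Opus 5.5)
The first identity is a direct computation in the $\Phi$-direction, and the second follows by splitting $\mathcal H$ into its kinetic and potential parts and invoking the definitions of the regular variations.

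\textbf{Variation in $\Phi$.} Fix $\rho$ and $[\Phi],[\Psi]\in T_\rho^*\P_{m_\rho,B}$. Since the cotangent space is a linear space of classes, $[\Phi+s\Psi]\in T_\rho^*\P_{m_\rho,B}$ for all $s$, and $\mathcal H(\rho,[\Phi+s\Psi])$ is finite. Expanding the kinetic term in \eqref{eq:def_ham_app} gives a quadratic polynomial in $s$:
\[
\tfrac12\|\nabla\Phi\|^2_{L^2(m_\rho,B;\R^d)}+s\int_{\R^d}\nabla\Psi^\top B\nabla\Phi\,\d\mathfrak m_\rho+\tfrac{s^2}{2}\|\nabla\Psi\|^2_{L^2(m_\rho,B;\R^d)}+\F(\rho).
\]
All coefficients are finite by Cauchy--Schwarz, because $\Phi,\Psi\in H^1(m_\rho,B)$. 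The derivative at $s=0$ is therefore the cross term. By the weak integration by parts identity \eqref{eq:LM_weak_integration_by_parts} of Theorem~\ref{prop:LM_operator_invertible}, applied with $[\Phi]\in T_\rho^*\P_{m_\rho,B}$ and test function $\Psi\in H^1(m_\rho,B)$, this cross term equals $\int_{\R^d}\Psi\,L_{m_\rho,B}\Phi\,\d x=\langle[\Psi],L_{m_\rho,B}[\Phi]\rangle$. This is exactly \eqref{eq:def_reg_del_phi_H}. Independence of the representative of $[\Psi]$ follows from $\int L_{m_\rho,B}\Phi\,\d x=0$, because $L_{m_\rho,B}[\Phi]$ lies in the tangent space.

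\textbf{Variation in $\rho$.} Take $\eta\in\mathcal V_{\rho,\Phi}$. By definition of $\mathcal V_{\rho,\Phi}$, we have $\rho+t\eta\in\P_{m_\rho,B}$ and $[\Phi]\in T^*_{\rho+t\eta}\P_{m_\rho,B}$ for small $|t|$, so $\mathcal H(\rho+t\eta,[\Phi])$ is defined. Write
\[
\mathcal H(\rho+t\eta,[\Phi])=K(t)+\F(\rho+t\eta),
\]
where $K$ is the kinetic part. The definition of the fixed-ambient regular sectional variation \eqref{eq:def_fixed-ambient regular sectional variation} gives $K'(0)=\int h_{\rho,\Phi}\eta\,\d x$. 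Since $\eta\in\mathcal V_{\rho,\Phi}\subseteq\mathcal V_\rho$, the standing assumption that $\F$ has a local regular first variation, \eqref{eq:def_local_regular_variation}, gives $\frac{\d}{\d t}|_{t=0}\F(\rho+t\eta)=\int f_\rho\eta\,\d x$. Adding the two contributions yields $\int(f_\rho+h_{\rho,\Phi})\eta\,\d x$, which is \eqref{eq:def_reg_del_rho_H}.

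\textbf{Main obstacle.} There is no real analytic difficulty here, since the lemma essentially unpacks the definitions. The only point needing care is that every expression is well defined along the curves. Along the $\Phi$-line, this is the linearity of $T_\rho^*$ together with the integration by parts requirement $[\Phi]\in T^*_\rho$, which is where Theorem~\ref{prop:LM_operator_invertible} enters. Along the $\rho$-line, it is membership in $\mathcal V_{\rho,\Phi}$. I would also state explicitly that the derivative of a sum of two differentiable functions at $t=0$ is the sum of their derivatives.
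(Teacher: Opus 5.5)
Your proposal is correct and follows the paper's proof. For the first identity you expand the kinetic term in $s$; for the second you add the local regular first variation of $\F$ to the fixed-ambient sectional variation of the kinetic part. You also make explicit a step the paper leaves implicit: the cross term $\int_{\R^d}\nabla\Psi^\top B\nabla\Phi\,\d\mathfrak m_\rho$ equals $\langle[\Psi],L_{m_\rho,B}[\Phi]\rangle$ by the weak integration-by-parts identity \eqref{eq:LM_weak_integration_by_parts} of Theorem~\ref{prop:LM_operator_invertible}.
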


Lemma \ref{lem:regular-hamiltonian-representatives} justifies the notation 
\[
    \frac{\delta^{\reg}\mathcal H}{\delta\Phi}
    (\rho,[\Phi])
    =
    L_{m_\rho,B}[\Phi],
    \qquad
    \frac{\delta^{\reg}\mathcal H}{\delta\rho}
    (\rho,[\Phi])
    =
    [f_\rho+h_{\rho,\Phi}].
\]

\begin{proof}
The first identity follows by expanding the kinetic part of
\(\mathcal H(\rho,[\Phi+s\Psi])\) in \(s\). The second identity follows
by adding the prescribed local regular first variation of \(\F\) to the
fixed-ambient regular sectional variation of the kinetic part of
\(\mathcal H\).
\end{proof}

With these notions of regular variation, we can state
the accelerated Newton equations. A fixed-ambient regular damped
Hamiltonian trajectory is a pair of functions \((\rho_t,\Phi_t)\),
smooth in \((t,x)\), for which the regular variation classes have been
specified at every time and such that
\[
    \rho_t\in\P_{m_\rho,B},
    \qquad
    [\Phi_t]\in T_{\rho_t}^*\P_{m_\rho,B},
\]
and
\begin{equation}\label{eq:newt_app_pde}
    \begin{cases}
    \partial_t\rho_t
    =
    L_{m_{\rho_t},B}[\Phi_t],
    \\[0.5ex]
    [\partial_t\Phi_t]+\alpha_t[\Phi_t]
    =
    -[f_{\rho_t}+h_{\rho_t,\Phi_t}]
    \qquad
    \text{in }C^\infty(\R^d)/\R.
\end{cases}
\end{equation}

\begin{remark}
The system of PDEs in \eqref{eq:newt_app_pde} is well defined once the two regular
variation classes in \eqref{eq:def_reg_del_phi_H} and \eqref{eq:def_reg_del_rho_H} have been specified. Its second equation is an
equality modulo spatially constant functions.

The fixed-ambient convention means that, when \(\rho\) is varied, the
potential \([\Phi]\) is kept fixed in \(C^\infty(\R^d)/\R\). Since
the spaces \(T_\rho^*\P_{m_\rho,B}\) depend on \(\rho\), this convention
does not define an intrinsic differential on a cotangent bundle. A
different rule for comparing cotangent vectors at different densities
could, in general, yield a different variation in the density variable.
\end{remark}
Before recovering the ordinary damped Hamiltonian equations in
Otto--Wasserstein geometry, we examine the analog of
Lemma~\ref{lem:asump_differtiability_OW-SM} for variations compatible
with a fixed potential. For the geometries in \eqref{set1} and \eqref{set2},
 the following lemma shows that the
admissibility assumption of
Lemma~\ref{lem:uniqueness_regular_hamiltonian_variations}
holds automatically in Otto--Wasserstein geometry and characterizes
the additional compatibility condition in softmax--Wasserstein geometry.

\begin{lemma}[Compatible variations in Otto and softmax geometry]
\label{lem:compatible_variations_otto_softmax}
For Otto--Wasserstein geometry, the admissibility assumption in
Lemma~\ref{lem:uniqueness_regular_hamiltonian_variations}
holds for every \(\rho\in\P_{m_\rho,B}\) and
\([\Phi]\in T_\rho^*\P_{m_\rho,B}\).

For softmax--Wasserstein geometry, a compactly supported smooth
zero-integral variation \(\eta\) belongs to
\(\mathcal V_{\rho,\Phi}\) if and only if
\begin{equation}\label{eq:softmax_hamiltonian_compatibility}
    \left(
        \nabla\frac{Z_\eta}{Z_\rho}
    \right)^\top B\nabla\Phi
    \in L^2(m_\rho),
    \qquad
    Z_\eta(x)
    \coloneqq
    \int_{\R^d}\exp(z^\top Ax)\eta(z) \d z.
\end{equation}
\end{lemma}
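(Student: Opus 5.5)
The plan is to reduce membership of $\eta$ in $\mathcal V_{\rho,\Phi}$ to the single question whether $[\Phi]$ stays in the perturbed cotangent space. By Lemma~\ref{lem:asump_differtiability_OW-SM}, every $\eta\in C_c^\infty(\R^d)$ with zero integral already lies in $\mathcal V_\rho$. So it suffices to decide when $[\Phi]\in T^*_{\rho+t\eta}\P_{m_\rho,B}$ for all small $|t|$. Write $\rho_t=\rho+t\eta$. The same lemma gives two-sided comparison bounds $\tfrac12\rho\le\rho_t\le\tfrac32\rho$, $\tfrac12 Z_\rho\le Z_{\rho_t}\le\tfrac32 Z_\rho$ and $\tfrac13 m_\rho\le m_{\rho_t}\le 3m_\rho$. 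From these, $H^1(m_{\rho_t},B)=H^1(m_\rho,B)$ with equivalent norms, so smoothness and $H^1$-membership of $\Phi$ carry over. The only remaining condition is $L_{m_{\rho_t},B}\Phi/m_{\rho_t}\in L^2(m_{\rho_t})$, which by the comparison is equivalent to $L_{m_{\rho_t},B}\Phi\in L^2(1/m_\rho)$, i.e.\ $L_{m_{\rho_t},B}\Phi/m_\rho\in L^2(m_\rho)$.

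\textbf{Otto case.} Here $m_{\rho_t}=\rho+t\eta$ and $B=I$, so $L_{\rho_t}\Phi=L_\rho\Phi+t\,L_\eta\Phi$ with $L_\eta\Phi=-\nabla\cdot(\eta\nabla\Phi)$. The first term is admissible by assumption. The second is smooth and compactly supported, so it lies in $L^2(1/\rho)$ because $\rho$ is bounded below on compacts. Hence $[\Phi]\in T^*_{\rho_t}$ for every small $t$, and the admissibility hypothesis of Lemma~\ref{lem:uniqueness_regular_hamiltonian_variations} holds.

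\textbf{Softmax case.} Write $m_{\rho_t}=\rho_t/Z_{\rho_t}$ with $Z_{\rho_t}=Z_\rho+tZ_\eta$, so $m_{\rho_t}=m_\rho\cdot\frac{1+t\eta/\rho}{1+tZ_\eta/Z_\rho}=:m_\rho\,r_t$. Then
\[
L_{m_{\rho_t},B}\Phi=r_t\,L_{m_\rho,B}\Phi-m_\rho\,\nabla r_t^\top B\nabla\Phi .
\]
Since $r_t$ is bounded above and below, the first term divided by $m_\rho$ lies in $L^2(m_\rho)$. The membership question therefore reduces to whether $\nabla r_t^\top B\nabla\Phi\in L^2(m_\rho)$. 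Next, split $\nabla r_t$ into two pieces. The first is the contribution of $t\nabla(\eta/\rho)$ divided by $1+tZ_\eta/Z_\rho$; it is compactly supported and smooth, so it is harmless. The second is
\[
-\,(1+t\eta/\rho)\,t\,\nabla(Z_\eta/Z_\rho)\,(1+tZ_\eta/Z_\rho)^{-2}.
\]
Its prefactors are bounded above and away from zero. Hence, for a fixed $t\neq0$, membership is equivalent to $\nabla(Z_\eta/Z_\rho)^\top B\nabla\Phi\in L^2(m_\rho)$. This condition is independent of $t$, which gives both directions of the equivalence: if \eqref{eq:softmax_hamiltonian_compatibility} holds, admissibility holds for all small $t$; if admissibility holds for some small $t\ne0$, then \eqref{eq:softmax_hamiltonian_compatibility} follows.

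\textbf{Expected obstacle.} The delicate step is justifying the bounds used above. I would check that $|Z_\eta|\le\|\eta/\rho\|_\infty Z_\rho$; this gives the bounds on $r_t$ and on $1+tZ_\eta/Z_\rho$. I would also check that $Z_\eta$ is smooth, with derivatives obtained by differentiating under the integral, which is legitimate since $\eta$ has compact support. Finally, the product rule for $\nabla\cdot(m_\rho r_t B\nabla\Phi)$ must hold pointwise, which is fine because all factors are smooth. With these in hand, the argument is a bookkeeping of bounded multipliers.
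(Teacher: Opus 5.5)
Your proof is correct and follows essentially the same route as the paper. Both arguments reduce to the operator-domain condition via uniform comparability of the mobilities, apply the product rule, and isolate a compactly supported term plus a term proportional to $\nabla(Z_\eta/Z_\rho)^\top B\nabla\Phi$ whose scalar multiplier is bounded above and away from zero for fixed small $t\neq0$. The differences are cosmetic: you differentiate $r_t=m_{\rho_t}/m_\rho$ directly while the paper uses $\nabla\log r_t$, which is equivalent because $r_t$ is bounded above and below, and your Otto case makes explicit (via linearity of $\rho\mapsto L_\rho$) what the paper only sketches.
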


Note that
   in the softmax--Wasserstein geometry a sufficient condition is \(\nabla\!\left(\frac{Z_\eta}{Z_\rho}\right)
    \in L^\infty(\R^d)\).

\begin{proof}
The Otto--Wasserstein case follows along the lines of
Lemma~\ref{lem:asump_differtiability_OW-SM}, using the compact
support of \(\eta/\rho\). We therefore consider softmax mobility.

By the density-admissibility argument,
\(\eta\in\mathcal V_\rho\), and \(m_{\rho+t\eta}\) and \(m_\rho\)
are uniformly comparable for sufficiently small \(|t|\).
Thus the weighted Sobolev spaces coincide, and it remains to
verify the operator-domain condition.

The product rule gives
\[
    \frac{L_{m_{\rho+t\eta},B}[\Phi]}{m_{\rho+t\eta}}
    =
    \frac{L_{m_\rho,B}[\Phi]}{m_\rho}
    -
    \left(
        \nabla\log\frac{m_{\rho+t\eta}}{m_\rho}
    \right)^\top B\nabla\Phi,
\]
where
\[
    \nabla\log\frac{m_{\rho+t\eta}}{m_\rho}
    =
    \frac{t\nabla(\eta/\rho)}{1+t\eta/\rho}
    -
    \frac{t\nabla(Z_\eta/Z_\rho)}{1+tZ_\eta/Z_\rho}.
\]
Since
\[
    \left|\frac{Z_\eta}{Z_\rho}\right|
    \leq\left\|\frac{\eta}{\rho}\right\|_{L^\infty},
\]
both denominators are uniformly bounded and bounded away from
zero for small \(|t|\). The first term is smooth and compactly
supported, so its pairing with \(B\nabla\Phi\) belongs to
\(L^2(m_\rho)\).
For nonzero such \(t\), uniform weight comparison therefore
makes the operator-domain condition equivalent to
\eqref{eq:softmax_hamiltonian_compatibility}.

Finally, boundedness of \(\nabla(Z_\eta/Z_\rho)\), together
with the finite kinetic energy of \(\Phi\), implies
\eqref{eq:softmax_hamiltonian_compatibility}.
If \(\nabla\Phi\) has compact support, the product in that
condition is itself smooth and compactly supported.
\end{proof}
\begin{example}[Hamiltonian calculus on $\P^\infty(\mathcal M)$]
\label{ex:hamiltonian_otto_calculus}
Let \(\mathcal M\) be a closed, connected Riemannian manifold. Then,
\[
    T_\rho^*\P^\infty(\mathcal M)=C^\infty(\mathcal M)/\R,
    \qquad
    \mathcal V_{\rho,\Phi}=T_\rho\P^\infty(\mathcal M).
\]
Thus, the regular cotangent space is independent of \(\rho\),
and every tangent vector is compatible with every smooth potential.

Suppose that \(\F \colon \P^\infty(\mathcal M)\to\R\) is \(C^1\) in the
Bastiani sense and that its differential has a smooth representative:
\[
    D\F(\rho)[\dot\eta]
    =
    \int_{\mathcal M}
    \frac{\delta\F}{\delta\rho}(\rho)\,
    \dot\eta \d\mathrm{vol}_{\mathcal M},
    \qquad
    \dot\eta\in T_\rho\P^\infty(\mathcal M).
\]
Then the Hamiltonian
\[
    \mathcal H(\rho,[\Phi])
    \coloneqq
    \frac12\int_{\mathcal M}
    \rho|\nabla_{\mathcal M}\Phi|^2 \d\mathrm{vol}_{\mathcal M}
    +\F(\rho)
\]
is \(C^1\) on
\(\P^\infty({\mathcal M})\times C^\infty(\mathcal M)/\R\) with
\begin{align*}
    D_\Phi\mathcal H(\rho,[\Phi])[\Psi]
    &=
    \int_{\mathcal M}
    L_\rho^W[\Phi]\,\Psi \d\mathrm{vol}_{\mathcal M}
    =
    \int_{\mathcal M}
    \rho\,\nabla_{\mathcal M}\Phi\cdot\nabla_{\mathcal M}\Psi\,
    \d\mathrm{vol}_{\mathcal M},
    \\
    D_\rho\mathcal H(\rho,[\Phi])[\dot\eta]
    &=
    \int_{\mathcal M}
    \left(
        \frac12|\nabla_{\mathcal M}\Phi|^2
        +
        \frac{\delta\F}{\delta\rho}(\rho)
    \right)
    \dot\eta \d\mathrm{vol}_{\mathcal M}.
\end{align*}
Hence the regular Hamiltonian variations coincide with the ordinary
partial differentials. In parti\-cular, we have
\[
    \frac{\delta^{\reg}\mathcal H}{\delta\Phi}(\rho,[\Phi])
    =
    L_\rho^W[\Phi],
    \qquad
    \frac{\delta^{\reg}\mathcal H}{\delta\rho}(\rho,[\Phi])
    =
    \left[
        \frac12|\nabla_{\mathcal M}\Phi|^2
        +
        \frac{\delta\F}{\delta\rho}(\rho)
    \right].
\]
Consequently, \eqref{eq:newt_app_pde} reduces to the damped
Otto--Wasserstein Hamiltonian system
\[
    \begin{cases}
        \partial_t\rho_t
        +
        \operatorname{div}_{\mathcal M}(\rho_t\nabla_{\mathcal M}\Phi_t)
        =0,
        \\[0.75ex]
        [\partial_t\Phi_t]+\alpha_t[\Phi_t]
        +
        \left[
            \displaystyle\frac12|\nabla_{\mathcal M}\Phi_t|^2
            +
            \frac{\delta\F}{\delta\rho}(\rho_t)
        \right]
        =0
        \quad\text{in }C^\infty({\mathcal M})/\R.
    \end{cases}
\]
The second equation holds modulo spatial constants; a
time-dependent additive gauge for \(\Phi_t\) gives an equivalent
pointwise equation.
\end{example}
\section{Differentiability in the Quotient Stein Geometry}\label{app:diff_stein_geom}
\paragraph{Local first variations and the regular Stein gradient.}
For \(\rho\in\P_{K_A}\), we define the actual admissible variations
at \(\rho\) by
\[
    \mathcal V_\rho
    :=
    \left\{
        \eta\in C_c^\infty(\R^d)\cap T_\rho\P_{K_A}:
        \exists\varepsilon>0
        \text{ such that }
        \rho+t\eta\in\P_{K_A}
        \text{ for }|t|<\varepsilon
    \right\}.
\]
In the present setting this simplifies to:
\[
    \mathcal V_\rho
    =
    \left\{
        \eta\in C_c^\infty(\R^d):
        \int_{\R^d}\eta \d x=0
    \right\}.
\]
Indeed, the constraints in the definition of \(\P_{K_A}\) as in \eqref{eq:density-class} are preserved under small perturbations. Since \(\rho\) is smooth and strictly positive, it has a
positive minimum on the compact support of \(\eta\). Consequently,
\(\eta/\rho\in L^2(\rho)\), and \(\rho+t\eta\) remains strictly
positive for all sufficiently small \(|t|\). Its integral remains
equal to one. Moreover, the second-moment condition and the condition
\[
    \int_{\R^d}
    |x|^2
    \frac{|\nabla(\rho+t\eta)|^2}{\rho+t\eta}\,
    \d x
    <\infty\]
are preserved: outside the support of \(\eta\), the density is
unchanged, while on its support the denominator is bounded below
by a positive constant.

We say that \(\F \colon \P_{K_A}\to\R\) has a
\emph{local regular first variation} at \(\rho\) if there is a class \( [f_\rho] \in T_\rho^*\P_{K_A}\) such that, for every \(\eta\in\mathcal V_\rho\),
\begin{equation}
\label{eq:def_local_regular_stein_variation}
    \lim_{t\to0}
    \frac{\F(\rho+t\eta)-\F(\rho)}{t}
    =
    \int_{\R^d}
    f_\rho\eta \d x.
\end{equation}
We then write \([f_\rho]=\left[
        \frac{\delta\F}{\delta\rho}(\rho)\right].\) Its associated regular extension is the linear map
\[
    \ell_{\F,\rho}^{\mathrm{reg}}
    :
    T_\rho\P_{K_A}\longrightarrow\R,
    \qquad
    \ell_{\F,\rho}^{\mathrm{reg}}(\dot\eta)
    :=
    \int_{\R^d}
    f_\rho\dot\eta \d x.
\]
This integral is finite, since
\[
    \left|
        \int_{\R^d}
        f_\rho\dot\eta \d x
    \right|
    \le
    \|f_\rho\|_{L^2(\rho)}
    \left\|
        \frac{\dot\eta}{\rho}
    \right\|_{L^2(\rho)}.
\]
Moreover, it is independent of the representative of
\([f_\rho]\), because every
\(\dot\eta\in T_\rho\P_{K_A}\) has zero integral.

\begin{remark}[Uniqueness of the local regular first variation]
The local regular first variation is unique modulo constants.
Indeed, if \([\Phi],[\Psi]\in T_\rho^*\P_{K_A}\)
both represent the first variation, then
\[
    \int_{\R^d}
    (\Phi-\Psi)\eta \d x=0
\]
for every \(\eta\in C_c^\infty(\R^d)\) with $\int\eta=0$.
Hence \(\Phi-\Psi\) is constant. Consequently,
\([f_\rho]\) and
\(\ell_{\F,\rho}^{\mathrm{reg}}\) are uniquely determined.
\end{remark}
To define the Stein gradient, we restrict this regular extension to
\(T_\rho^{\mathrm{eff}}\P_{K_A}\). By using \eqref{eq:effective-spaces} we obtain the corresponding effective cotangent class \( \bigl[[f_\rho]\bigr]\in T_\rho^{*,\mathrm{eff}}\P_{K_A}.\)   
This class determines the restriction of
\(\ell_{\F,\rho}^{\mathrm{reg}}\) to the effective tangent space.
Indeed, if
\([\Psi]\in\ker L_\rho^{K_A}\) and
\(\dot\eta=L_\rho^{K_A}[\Phi]\), the Green identity gives
\[
    \int_{\R^d}
    \Psi\dot\eta \d x
    =
    \operatorname{tr}
    \left(
        M_\rho(\Psi)^\top A M_\rho(\Phi)
    \right)
    =
    0.
\]

With this notation, we can prove the following.

\begin{lemma}[Regular Stein gradient]
\label{lem:regular-stein-gradient}
If \(\F\) has a local regular first variation at \(\rho\) in the
sense of \eqref{eq:def_local_regular_stein_variation}, then there is
a unique vector
\[
    \operatorname{grad}_{G_\rho^{K_A}}\F(\rho)
    \in T_\rho^{\mathrm{eff}}\P_{K_A}
\]
such that
\[
    G_\rho^{K_A}
    \left(
        \operatorname{grad}_{G_\rho^{K_A}}\F(\rho),
        \dot\eta
    \right)
    =
    \ell_{\F,\rho}^{\mathrm{reg}}(\dot\eta)
    \qquad
    \text{for every }
    \dot\eta\in T_\rho^{\mathrm{eff}}\P_{K_A}.
\]
This regular Stein gradient is given by
\begin{align*}
    \operatorname{grad}_{G_\rho^{K_A}}\F(\rho)=
    -\nabla\cdot
    \left(
        \rho
        M_\rho\left(
            \frac{\delta\F}{\delta\rho}(\rho)
        \right)^\top
        Ax
    \right).
\end{align*}
\end{lemma}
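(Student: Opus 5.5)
The plan is to mirror the proof of Lemma~\ref{lem:regular-local-mobility-gradient}, with the local-mobility isomorphism replaced by the effective Stein--Onsager isomorphism of Theorem~\ref{prop:isomorphism}. Set $f_\rho$ to be a representative of $[\frac{\delta\F}{\delta\rho}(\rho)]\in T_\rho^*\P_{K_A}$ and define the candidate
\[
\xi\coloneqq L_\rho^{K_A}[f_\rho]=-\nabla\cdot\bigl(\rho\,M_\rho(f_\rho)^\top Ax\bigr).
\]
By Lemma~\ref{lem:well-defined}, $\xi$ is a well-defined element of $T_\rho\P_{K_A}$. Since it lies in $\ran L_\rho^{K_A}$, it belongs to $T_\rho^{\eff}\P_{K_A}$. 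Independence of the representative follows from $M_\rho(f_\rho+c)=M_\rho(f_\rho)$.

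Next I verify the defining identity. Take any $\dot\eta\in T_\rho^{\eff}\P_{K_A}$ and write $\dot\eta=L_\rho^{K_A}[\Phi_{\dot\eta}]$ using Theorem~\ref{prop:isomorphism}. I need the bilinear form of the effective metric, namely $G_\rho^{K_A}(L_\rho^{K_A}[\Phi],L_\rho^{K_A}[\Psi])=\tr(M_\rho(\Phi)^\top AM_\rho(\Psi))$. This follows from Corollary~\ref{cor:stein_metric_well_defined} by polarization, or directly from the definition \eqref{eq:metric-potential} together with the Green identity \eqref{eq:green-identity}. Then
\[
G_\rho^{K_A}(\xi,\dot\eta)=\tr\bigl(M_\rho(f_\rho)^\top AM_\rho(\Phi_{\dot\eta})\bigr)=\bigl\langle[f_\rho],L_\rho^{K_A}[\Phi_{\dot\eta}]\bigr\rangle_\rho=\int_{\R^d} f_\rho\dot\eta\d x=\ell^{\mathrm{reg}}_{\F,\rho}(\dot\eta).
\]
The second equality is the Green identity again, and uses the symmetry of the trace expression under $A=A^\top$. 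The paragraph preceding the lemma also shows that this pairing depends only on the effective class $[[f_\rho]]$, which keeps everything consistent with the quotient.

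For uniqueness, suppose $\xi,\xi'\in T_\rho^{\eff}\P_{K_A}$ both satisfy the identity. Then $G_\rho^{K_A}(\xi-\xi',\dot\eta)=0$ for all effective $\dot\eta$. Choosing $\dot\eta=\xi-\xi'$ and using positive definiteness from Corollary~\ref{cor:stein_metric_well_defined}, item~\ref{it:1_stein_metric_well_defined}, gives $\xi=\xi'$.

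The main point requiring care is the polarization step. Corollary~\ref{cor:stein_metric_well_defined} states the metric only as a quadratic form, so I must check that the induced bilinear form equals $\int\Phi_{\dot\rho}\dot\eta\d x$. This should be handled by showing that $\int\Phi_{\dot\rho}\dot\eta\d x=\tr(M_\rho(\Phi_{\dot\rho})^\top AM_\rho(\Phi_{\dot\eta}))$ is symmetric and independent of the chosen effective representatives, again via Lemma~\ref{lem:kernel} and the Green identity.
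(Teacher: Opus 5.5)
Your proposal is correct and follows the paper's proof essentially step for step. You take $L_\rho^{K_A}[f_\rho]\in T_\rho^{\eff}\P_{K_A}$ as the candidate and write $\dot\eta=L_\rho^{K_A}[\Phi]$. Polarization of the effective metric together with the Green identity then gives $\tr(M_\rho(f_\rho)^\top A M_\rho(\Phi))=\int f_\rho\dot\eta\,\d x$, and uniqueness follows from positive definiteness. One small bookkeeping point: the symmetry $A=A^\top$ is needed in the polarization step. It is not needed in the Green-identity step, which gives $\tr(M_\rho(f_\rho)^\top AM_\rho(\Phi))=\langle[f_\rho],L_\rho^{K_A}[\Phi]\rangle_\rho$ directly.
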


\begin{proof}
Since \([f_\rho]\in T_\rho^*\P_{K_A}\), we have \(L_\rho^{K_A}[f_\rho]
    \in T_\rho^{\mathrm{eff}}\P_{K_A}.\) Let \(\dot\eta\in T_\rho^{\mathrm{eff}}\P_{K_A}\), and choose
\([\Phi]\in T_\rho^*\P_{K_A}\) such that \( \dot\eta=L_\rho^{K_A}[\Phi].\)
By using the polarization of the Stein metric and the Green identity, we obtain
\begin{align*}
    G_\rho^{K_A}
    \left(
        L_\rho^{K_A}[f_\rho],
        \dot\eta
    \right)
    =
    \operatorname{tr}
    \left(
        M_\rho(f_\rho)^\top A M_\rho(\Phi)
    \right)
   =
    \int_{\R^d}
    f_\rho\dot\eta \d x
    =
    \ell_{\F,\rho}^{\mathrm{reg}}(\dot\eta).
\end{align*}
Thus \(L_\rho^{K_A}[f_\rho]\) satisfies the defining gradient
identity. Uniqueness follows from the positive definiteness of
\(G_\rho^{K_A}\) on \(T_\rho^{\mathrm{eff}}\P_{K_A}\).
\end{proof}

\begin{remark}
The regular Stein gradient is defined through the regular extension
\(\ell_{\F,\rho}^{\mathrm{reg}}\). The directional limits in
\eqref{eq:def_local_regular_stein_variation} are required only for
\(\eta\in\mathcal V_\rho\). Their existence does not by itself
imply a chain rule along arbitrary curves in \(\P_{K_A}\).
\end{remark}
\paragraph{Regular Hamiltonian variations in the bilinear Stein setting.}
Consider pairs \((\rho,[\Phi])\) satisfying
\[
    \rho\in\P_{K_A},
    \qquad
    [\Phi]\in T_\rho^*\P_{K_A}.
\]
We regard this collection only as a state set and do not assume
a cotangent-bundle structure. For such a pair, define
\begin{equation}
\label{eq:def_stein_ham_app}
\begin{aligned}
    \mathcal H(\rho,[\Phi])
    \coloneqq
    \frac12
    \left\langle
        [\Phi],L_\rho^{K_A}[\Phi]
    \right\rangle_\rho
    +\F(\rho)
   =
    \frac12
    \operatorname{tr}
    \left(
        M_\rho(\Phi)^\top A M_\rho(\Phi)
    \right)
    +\F(\rho).
\end{aligned}
\end{equation}
Throughout this paragraph, assume that \(\F\) has a local regular
first variation at \(\rho\) in the sense of
\eqref{eq:def_local_regular_stein_variation}, represented by
\[
    [f_\rho]
    =
    \left[
        \frac{\delta\F}{\delta\rho}(\rho)
    \right]
    \in T_\rho^*\P_{K_A}.
\]
For fixed \([\Phi]\in T_\rho^*\P_{K_A}\), define the actual
admissible variations compatible with \([\Phi]\) by
\[
    \mathcal V_{\rho,\Phi}
    :=
    \left\{
        \eta\in\mathcal V_\rho:
        [\Phi]\in T_{\rho+t\eta}^*\P_{K_A}
        \text{ for all sufficiently small }|t|
    \right\}.
\]
In the present setting,
\[
    \mathcal V_{\rho,\Phi}=\mathcal V_\rho.
\]
Indeed, every \(\eta\in\mathcal V_\rho\) is compactly supported.
For sufficiently small \(|t|\), the ratio
\((\rho+t\eta)/\rho\) is bounded above and bounded away from zero.
Consequently, the weighted Sobolev spaces \(H^1(\rho+t\eta)\)
and \(H^1(\rho)\) coincide as sets, so \(\Phi\) remains admissible.

We say that the kinetic part of \(\mathcal H\) has a
\emph{fixed-ambient regular sectional variation} at \(\rho\) if
there is a class
\[
    [h_{\rho,\Phi}]\in T_\rho^*\P_{K_A}
\]
such that, for every \(\eta\in\mathcal V_{\rho,\Phi}\),
\begin{equation}
\label{eq:def_stein_fixed_ambient_variation}
    \left.\frac{\d}{\d t}\right|_{t=0}
    \frac12
    \operatorname{tr}
    \left(
        M_{\rho+t\eta}(\Phi)^\top
        A M_{\rho+t\eta}(\Phi)
    \right)
    =
    \int_{\R^d}
    h_{\rho,\Phi}\eta \d x.
\end{equation}
Here, fixed ambient means that \([\Phi]\) is kept fixed
as an element of \(C^\infty(\R^d)/\R\) while \(\rho\) is varied. In this case, the associated regular extensions are the linear maps
\begin{align}
    \mathfrak d_\Phi^{\reg}
    \mathcal H(\rho,[\Phi])[\Psi]
    &:=
    \left\langle
        [\Psi],L_\rho^{K_A}[\Phi]
    \right\rangle_\rho,
    \qquad
    [\Psi]\in T_\rho^*\P_{K_A},
    \label{eq:def_stein_reg_del_phi_H}
    \\
    \mathfrak d_\rho^{\reg}
    \mathcal H(\rho,[\Phi])[\dot\eta]
    &:=
    \int_{\R^d}
    \bigl(f_\rho+h_{\rho,\Phi}\bigr)
    \dot\eta \d x,
    \qquad
    \dot\eta\in T_\rho\P_{K_A}.
    \label{eq:def_stein_reg_del_rho_H}
\end{align}

With this notation, we can prove the following.

\begin{lemma}[Representatives of the regular Stein Hamiltonian variations]
\label{lem:regular-stein-hamiltonian-representatives}
The kinetic part of \(\mathcal H\) has a fixed-ambient regular
sectional variation at \(\rho\) if and only if
\[
    \left[
        (\nabla\Phi)^\top M_\rho(\Phi)^\top Ax
    \right]
    \in T_\rho^*\P_{K_A}.
\]
In this case, its representative may be chosen as
\[
    h_{\rho,\Phi}(x)
    =
    \nabla\Phi(x)^\top M_\rho(\Phi)^\top Ax,
\]
and
\begin{align*}
    \left.\frac{\d}{\d s}\right|_{s=0}
    \mathcal H(\rho,[\Phi+s\Psi])
    &=
    \mathfrak d_\Phi^{\reg}
    \mathcal H(\rho,[\Phi])[\Psi],
    \qquad
    [\Psi]\in T_\rho^*\P_{K_A},
    \\
    \left.\frac{\d}{\d t}\right|_{t=0}
    \mathcal H(\rho+t\eta,[\Phi])
    &=
    \mathfrak d_\rho^{\reg}
    \mathcal H(\rho,[\Phi])[\eta],
    \qquad
    \eta\in\mathcal V_{\rho,\Phi}.
\end{align*}
\end{lemma}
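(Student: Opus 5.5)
The plan is to compute the directional derivative of the kinetic term explicitly, using that $M_{\rho+t\eta}(\Phi)$ is affine in $t$. For $\eta\in\mathcal V_{\rho,\Phi}=\mathcal V_\rho$, linearity of the moment map in the density gives
\[
M_{\rho+t\eta}(\Phi)=M_\rho(\Phi)+t\,N_\eta,\qquad N_\eta\coloneqq\int_{\R^d}x\,\nabla\Phi(x)^\top\eta(x)\d x .
\]
$N_\eta$ is finite because $\eta$ is compactly supported and $\Phi$ is smooth. The kinetic part $\frac12\tr(M^\top AM)$ is therefore a quadratic polynomial in $t$. Using the symmetry of $A$, its derivative at $t=0$ is
\[
\tr\bigl(M_\rho(\Phi)^\top A N_\eta\bigr)
=\int_{\R^d}\tr\bigl(M_\rho(\Phi)^\top A\,x\,\nabla\Phi(x)^\top\bigr)\eta(x)\d x
=\int_{\R^d}\nabla\Phi(x)^\top M_\rho(\Phi)^\top Ax\,\eta(x)\d x .
\]
Here I use the cyclic identity $\tr(C x v^\top)=v^\top Cx$. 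This shows that the derivative always exists and is given by pairing $\eta$ with the smooth function $h(x)\coloneqq\nabla\Phi(x)^\top M_\rho(\Phi)^\top Ax$.

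Next comes the equivalence. If $[h]\in T_\rho^*\P_{K_A}$, then $h$ serves as a representative in \eqref{eq:def_stein_fixed_ambient_variation}, which gives the ``if'' direction. Conversely, suppose some $[h_{\rho,\Phi}]\in T_\rho^*\P_{K_A}$ satisfies \eqref{eq:def_stein_fixed_ambient_variation}. Then $\int(h_{\rho,\Phi}-h)\eta\d x=0$ for all $\eta\in C_c^\infty$ with zero integral. Testing with $\eta=\partial_i\zeta$, exactly as in the uniqueness remark and Lemma~\ref{lem:uniqueness_local_regular_variation}, shows that $h_{\rho,\Phi}-h$ is constant. Hence $h\in H^1(\rho)$ up to a constant, so $[h]=[h_{\rho,\Phi}]\in T_\rho^*\P_{K_A}$. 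Constants lie in $H^1(\rho)$ since $\rho$ is a probability density. The only slightly delicate point is this distributional step: one passes from vanishing of the gradient of a smooth function to constancy on the connected space $\R^d$, which is immediate.

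Finally, the two displayed identities. For the $\Phi$-variation, $M_\rho$ is linear, so $\mathcal H(\rho,[\Phi+s\Psi])$ is quadratic in $s$. Its derivative at $s=0$ is $\tr(M_\rho(\Psi)^\top AM_\rho(\Phi))$, using symmetry of $A$ and $\tr(X^\top AY)=\tr(Y^\top AX)$. By the Green identity \eqref{eq:green-identity} of Lemma~\ref{lem:well-defined}, this equals $\langle[\Psi],L_\rho^{K_A}[\Phi]\rangle_\rho$, which is \eqref{eq:def_stein_reg_del_phi_H}. For the $\rho$-variation, add the kinetic derivative computed above to the standing assumption \eqref{eq:def_local_regular_stein_variation} for $\F$. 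This gives $\int(f_\rho+h_{\rho,\Phi})\eta\d x$, which matches \eqref{eq:def_stein_reg_del_rho_H} for every $\eta\in\mathcal V_{\rho,\Phi}$, since $\eta$ has zero integral. I expect no real obstacle. The main care needed is in justifying that the swap of trace and integral in $N_\eta$ is legitimate, which holds by compact support of $\eta$.
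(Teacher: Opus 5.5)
Your proposal is correct and follows essentially the same route as the paper. Both arguments use the affine dependence of $M_{\rho+t\eta}(\Phi)$ on $t$ to obtain the smooth kinetic representative $\nabla\Phi(x)^\top M_\rho(\Phi)^\top Ax$, get the equivalence from uniqueness modulo constants on $\mathcal V_{\rho,\Phi}=\{\eta\in C_c^\infty(\R^d):\int\eta\,\mathrm{d}x=0\}$, and derive the two displayed identities from linearity of $M_\rho$, the Green identity, and the standing first variation of $\F$; your only addition is to spell out the constancy step via $\eta=\partial_i\zeta$ more explicitly than the paper does.
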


\begin{proof}
By linearity of the matrix-moment operator in the potential,
\[
    M_\rho(\Phi+s\Psi)
    =
    M_\rho(\Phi)+sM_\rho(\Psi).
\]
Expanding the quadratic kinetic term and using the symmetry of \(A\)
therefore gives
\begin{align*}
    \left.\frac{\d}{\d s}\right|_{s=0}
    \mathcal H(\rho,[\Phi+s\Psi])
    =
    \operatorname{tr}
    \left(
        M_\rho(\Psi)^\top A M_\rho(\Phi)
    \right)
    =
    \left\langle
        [\Psi],L_\rho^{K_A}[\Phi]
    \right\rangle_\rho,
\end{align*}
where the last equality is the Green identity. Similarly, for every \(\eta\in\mathcal V_\rho\),
\[
    M_{\rho+t\eta}(\Phi)
    =
    M_\rho(\Phi)
    +
    t\int_{\R^d}
    x\nabla\Phi(x)^\top\eta(x) \d x.
\]
The additional integral is finite because \(\eta\) is compactly
supported. Differentiating the quadratic expression yields
\begin{align*}
    &\left.\frac{\d}{\d t}\right|_{t=0}
    \frac12
    \operatorname{tr}
    \left(
        M_{\rho+t\eta}(\Phi)^\top
        A M_{\rho+t\eta}(\Phi)
    \right)
 =
    \int_{\R^d}
    \nabla\Phi(x)^\top M_\rho(\Phi)^\top Ax\,
    \eta(x) \d x.
\end{align*}
Since \(\mathcal V_\rho\) contains every compactly supported smooth
function with zero integral, this smooth representative is uniquely
determined modulo constants. Thus a regular sectional variation
exists precisely when its class belongs to \(T_\rho^*\P_{K_A}\).
Adding the prescribed local regular first variation of \(\F\)
proves the second identity.
\end{proof}

\begin{remark}
The condition \(h_{\rho,\Phi}\in H^1(\rho)\) is an additional regularity assumption. It does not follow from
\(\Phi\in C^\infty(\R^d)\cap H^1(\rho)\). Under this assumption, we write
\[
    \frac{\delta^{\reg}\mathcal H}{\delta\Phi}
    (\rho,[\Phi])
    =
    L_\rho^{K_A}[\Phi],
    \qquad
    \frac{\delta^{\reg}\mathcal H}{\delta\rho}
    (\rho,[\Phi])
    =
    [f_\rho+h_{\rho,\Phi}].
\]
The first variation in the potential variable takes values in
\(T_\rho^{\mathrm{eff}}\P_{K_A}\). The restriction of the regular
density variation to \(T_\rho^{\mathrm{eff}}\P_{K_A}\) is represented
by the effective cotangent class
\[
    \bigl[[f_\rho+h_{\rho,\Phi}] \bigr]
    \in T_\rho^{*,\mathrm{eff}}\P_{K_A}.
\]
\end{remark}

With these notions of regular variation in place, we can state
the damped Stein Hamiltonian equations. A fixed-ambient regular
damped Hamiltonian trajectory is a pair of functions
\((\rho_t,\Phi_t)\), smooth in \((t,x)\), such that
\[
    \rho_t\in\P_{K_A},
    \qquad
    [\Phi_t] \in T_{\rho_t}^*\P_{K_A},
\]
the preceding regular variations exist at every time, and
\begin{equation}
\label{eq:stein_newt_app_pde}
\begin{cases}
    \partial_t\rho_t
    =
    L_{\rho_t}^{K_A}[\Phi_t],
    \\[0.75ex]
    [\partial_t\Phi_t]
    +\alpha_t[\Phi_t]
    =
    -[f_{\rho_t}+h_{\rho_t,\Phi_t}]
    \quad\text{in }C^\infty(\R^d)/\R.
\end{cases}
\end{equation}
Equivalently, we get
\[
\begin{cases}
    \partial_t\rho_t
    +
    \nabla\cdot
    \left(
        \rho_t M_{\rho_t}(\Phi_t)^\top Ax
    \right)
    =0,
    \\[0.75ex]
    [\partial_t\Phi_t]
    +\alpha_t[\Phi_t]
    +
    \left[
        \nabla\Phi_t(x)^\top
        M_{\rho_t}(\Phi_t)^\top Ax
        +
        \dfrac{\delta\F}{\delta\rho}(\rho_t)(x)
    \right]
    =0
    \quad\text{in }C^\infty(\R^d)/\R.
\end{cases}
\]
A time-dependent additive gauge for \(\Phi_t\) may be chosen
to write the second equation as a pointwise equality of functions.

\begin{remark}[Dependence on the ambient potential class]
At a fixed density \(\rho\), the Hamiltonian depends only on the
effective cotangent class
\[
    \bigl[[\Phi]\bigr]
    \in T_\rho^{*,\mathrm{eff}}\P_{K_A}.
\]
Its fixed-ambient density variation, however, generally depends
on the full class \([\Phi]\).

Indeed, if \([\Psi]\in\ker L_\rho^{K_A}\), then
\(M_\rho(\Psi)=0\), so
\[
    \mathcal H(\rho,[\Phi+\Psi])
    =
    \mathcal H(\rho,[\Phi]).
\]
Nevertheless, the corresponding smooth kinetic representatives
satisfy
\[
    h_{\rho,\Phi+\Psi}(x)-h_{\rho,\Phi}(x)
    =
    \nabla\Psi(x)^\top M_\rho(\Phi)^\top Ax,
\]
which need not be constant. Thus \eqref{eq:stein_newt_app_pde} is formulated using the full
potential classes \([\Phi_t]\) and the fixed-ambient convention.
The construction does not define a density differential depending
only on the effective cotangent class.
\end{remark}

\section{Proofs of Section \ref{sec3}} \label{app:proofs}
For convenience, we give the proof of the following functional analytic Lemma~\ref{lem:LM_centered_energy_space} in the spirit of \cite[Lemma A.8]{CWDS2025}.

\begin{proof}[\textbf{Proof of Lemma~\ref{lem:LM_centered_energy_space}}]
\ref{it:1_lem:LM_centered_energy_space} 
The mean functional  \(\Phi\longmapsto
    \int_{\R^d}\Phi\,\d\mathfrak m_\rho\) is continuous on \(H^1(m_\rho,B)\). Hence its kernel
\(H^1_0(m_\rho, B)\) is closed and therefore a Hilbert space.

Next, by using the local-mobility
Poincar\'e inequality~\eqref{eq:LM_PI} we have 
\[
    \|\Phi\|_{m_\rho,B}^2\le
    \|\Phi\|_{H^1(m_\rho,B)}^2
    \le
    (1+C_\rho)
    \|\Phi\|_{m_\rho,B}^2\qquad \forall\Phi\in H^1_0(m_\rho,B)
\]
Thus the two norms $\|\cdot\|_{m_\rho,B}$ and $\|\cdot\|_{H^1(m_\rho,B)}$ are equivalent on \(H^1_0(m_\rho,B)\).

\ref{it:2_lem:LM_centered_energy_space}:
Let \(\chi\in C_c^\infty(\R^d)\) satisfy
\(0\leq\chi\leq1\), \(\chi=1\) on \(B_1\), and
\(\operatorname{supp}(\chi)\subset B_2\). Define
\(\chi_R(x)\coloneqq\chi(x/R)\). Then
\begin{equation}\label{eq:eq_chi_R}
     \chi_R=1 \text{ on }B_R,
    \qquad
    \operatorname{supp}(\chi_R)\subset B_{2R},
    \qquad
    |\nabla\chi_R|\leq\frac{C}{R}.
\end{equation}
For \(\Phi\in H^1(m_\rho,B)\), set
\(\Phi_R\coloneqq\chi_R\Phi\). We have
\[
   \Vert\Phi-\Phi_R\Vert_{L^2(\mathfrak m_\rho)}^2 = \int_{\R^d}|(1-\chi_R)\Phi|^2\,\d\mathfrak m_\rho
    \leq
    \int_{\R^d\setminus B_R}|\Phi|^2\,\d\mathfrak m_\rho
    \to0 \qquad\text{as } R\to \infty.
\]
Moreover,
\begin{equation}\label{eq:nabla_phi_Phi_r}
     \nabla(\Phi_R-\Phi)
    =
    (\chi_R-1)\nabla\Phi+\Phi\nabla\chi_R.
\end{equation}
Thus, denoting the largest eigenvalue of \(B\) by
\(\lambda_{\max}(B)\), we obtain by using \eqref{eq:eq_chi_R} and \eqref{eq:nabla_phi_Phi_r}
\begin{align*}
     \Vert \nabla\Phi-\nabla\Phi_R\Vert_{L^2(\mathfrak m_\rho,B,\R^d)}^2=&\int_{\R^d}
    \nabla(\Phi_R-\Phi)^\top B
    \nabla(\Phi_R-\Phi)\,\d\mathfrak m_\rho
    \\
    \leq&
    2\int_{\R^d\setminus B_R}
    \nabla\Phi^\top B\nabla\Phi\,\d\mathfrak m_\rho
    +
    \frac{2C^2\lambda_{\max}(B)}{R^2}
    \int_{\R^d}|\Phi|^2\,\d\mathfrak m_\rho,
\end{align*}
as both integrals are uniformly bounded from above for all $R>0$ and the right-hand side converges to zero as $R\to \infty$. Consequently by definition of the norm on \(H^1(m_\rho,B)\) as in \eqref{eq:norm_on_H_1_B_m_rho},
\begin{equation}\label{eq:phi_R_to_Phi_in_H_1_m_rho}
    \Phi_R\to\Phi
    \qquad\text{in }H^1(m_\rho,B).
\end{equation}
For each fixed \(R\), the function \(\Phi_R\) has compact support.
Since \(m_\rho\) is smooth and strictly positive, for every compact
set \(K\subset\R^d\) there exist constants \(0<c_K\leq C_K<\infty\)
such that
\[
    c_K\leq m_\rho(x)\leq C_K
    \qquad\forall x\in K.
\]
Moreover, since \(B=B^\top\succ0\) and denoting the smallest eigenvalue of \(B\) by
\(\lambda_{\min}(B)\),
\[
    \lambda_{\min}(B)|v|^2
    \leq v^\top Bv
    \leq \lambda_{\max}(B)|v|^2
    \qquad\forall v\in\R^d.
\]
Consequently, for every \(u\) supported in \(K\),
\[
    c_K\min\{1,\lambda_{\min}(B)\}
    \|u\|_{H^1(\R^d)}^2
    \leq
    \|u\|_{H^1(m_\rho,B)}^2
    \leq
    C_K\max\{1,\lambda_{\max}(B)\}
    \|u\|_{H^1(\R^d)}^2.
\]
Thus, on every compact set \(K\subset\R^d\), the weighted Sobolev
norm
\[
    \|u\|_{H^1(m_\rho,B;K)}^2
    \coloneqq
    \int_K |u|^2\,\d\mathfrak m_\rho
    +
    \int_K \nabla u^\top B\nabla u\,\d\mathfrak m_\rho
\]
is equivalent to the usual unweighted Sobolev norm
\[
    \|u\|_{H^1(K)}^2
    \coloneqq
    \int_K |u|^2\d x
    +
    \int_K |\nabla u|^2\d x.
\] Standard mollification therefore provides
\(\psi_R\in C_c^\infty(\R^d)\) such that
\begin{equation}\label{eq:smooth_approx_of_psi_R}
     \|\psi_R-\Phi_R\|_{H^1(m_\rho,B)}
    \leq \frac{1}{R}.
\end{equation}
It follows from~\eqref{eq:phi_R_to_Phi_in_H_1_m_rho} and~\eqref{eq:smooth_approx_of_psi_R} in combination with the triangle inequality that
\[
    \psi_R\to\Phi
    \qquad\text{in }H^1(m_\rho,B).
\]
Hence \(C_c^\infty(\R^d)\) is dense in \(H^1(m_\rho,B)\).

It remains to check that \(\psi- \frac{1}{\mathfrak m_\rho(\R^d)}
    \int_{\R^d}\psi\,\d\mathfrak m_\rho\) for \(\psi\in C_c^\infty(\R^d)\) are dense in \(H^1_0(m_\rho,B)\). By density of \(C_c^\infty(\R^d)\) in \(H^1(m_\rho,B)\) for each \(\Phi\in H^1_0(m_\rho,B)\) we can choose
\(\psi_k\in C_c^\infty(\R^d)\) such that
\(\psi_k\to\Phi\) in \(H^1(m_\rho,B)\). Set
\[
    c_k
    \coloneqq
    \frac{1}{\mathfrak m_\rho(\R^d)}
    \int_{\R^d}\psi_k\,\d\mathfrak m_\rho,
    \qquad
    \widetilde\psi_k\coloneqq\psi_k-c_k.
\]
Then \(\widetilde\psi_k\in H^1_0(m_\rho,B)\). Since \(\psi_k\to\Phi\) in \(H^1(m_\rho,B)\), we have
\(\psi_k\to\Phi\) in \(L^2(\mathfrak m_\rho)\). Therefore, by the
Cauchy--Schwarz inequality,\begin{equation}\label{eq:int_psi_k_min_phi_to_zero}
    \left|
        \int_{\R^d}(\psi_k-\Phi)\,\d\mathfrak m_\rho
    \right|
    \leq
    \mathfrak m_\rho(\R^d)^{1/2}
    \|\psi_k-\Phi\|_{L^2(\mathfrak m_\rho)}
    \to0 \qquad \text{as } k\to \infty.
\end{equation}
Since \(\Phi\in H^1_0(m_\rho,B)\), it has zero
\(\mathfrak m_\rho\)-mean. Thus we obtain from \eqref{eq:int_psi_k_min_phi_to_zero}:
\[
    c_k
    =
    \frac{1}{\mathfrak m_\rho(\R^d)}
    \int_{\R^d}\psi_k\,\d\mathfrak m_\rho
    \to0.
\] We therefore conclude by using $\widetilde{\psi}_k=\psi_k-c_k$ that
\[
    \|\widetilde\psi_k-\Phi\|_{H^1(m_\rho,B)}
    \leq
    \|\psi_k-\Phi\|_{H^1(m_\rho,B)}
    +
    |c_k|\,\mathfrak m_\rho(\R^d)^{1/2}
    \to0 \qquad\text{as }k\to \infty.
\]
This finishes the proof of \ref{it:2_lem:LM_centered_energy_space}.
\end{proof}

\begin{proof}[\textbf{Proof of Theorem \ref{prop:LM_operator_invertible}}]
\ref{it:1_prop:LM_operator_invertible}.
For \(\Psi\in C_c^\infty(\R^d)\),  integration by parts gives
\begin{equation}\label{eq:int_by_parts}
     \int_{\R^d}
    (L_{m_\rho,B}\Phi)\Psi\d x
    =
    \int_{\R^d}
    \nabla\Phi^\top B\nabla\Psi\,
    \d\mathfrak m_\rho.
\end{equation}
This identity extends to every \(\Psi\in H^1(m_\rho,B)\) by the following arguments:  the
left-hand side is continuous with respect to the
\(H^1(m_\rho,B)\)-norm because \(  \frac{L_{m_\rho,B}\Phi}{m_\rho}
    \in L^2(m_\rho)\), indeed,
\begin{align*}
    \left|
        \int_{\R^d}
        (L_{m_\rho,B}\Phi)\Psi\d x
    \right|
    =
    \left|
        \int_{\R^d}
        \frac{L_{m_\rho,B}\Phi}{m_\rho}\Psi\,
        \d\mathfrak m_\rho
    \right|
   \leq
    \left\|
        \frac{L_{m_\rho,B}\Phi}{m_\rho}
    \right\|_{L^2( m_\rho)}
    \|\Psi\|_{L^2( m_\rho)}.
\end{align*}
The right-hand side is also continuous due to the Cauchy--Schwarz inequality 
\[
    \left|
        \int_{\R^d}
        \nabla\Phi^\top B\nabla\Psi\,
        \d\mathfrak m_\rho
    \right|
    \leq
    \|\nabla\Phi\|_{L^2( m_\rho,B;\R^d)}
    \|\nabla\Psi\|_{L^2( m_\rho,B;\R^d)}.
\]
By Lemma \ref{lem:LM_centered_energy_space}, the space
\(C_c^\infty(\R^d)\) is dense in \(H^1(m_\rho,B)\). Thus, for every
\(\Psi\in H^1(m_\rho,B)\), there exists a sequence
\(\Psi_k\in C_c^\infty(\R^d)\) such that
$ \Psi_k\to\Psi$
in $H^1(m_\rho,B)$.
Applying the integration-by-parts identity~\eqref{eq:int_by_parts} to each \(\Psi_k\) and
passing to the limit, proves
\eqref{eq:LM_weak_integration_by_parts} for every
\(\Psi\in H^1(m_\rho,B)\).
\\[1ex]
\ref{it:2_prop:LM_operator_invertible}. The proof is divided into three parts. We first prove that $ L_{m_\rho, B}$ is well-defined, then that it is  surjective and injective.\\

\underline{Well-definedness:}
Since
\(\mathfrak m_\rho(\R^d)<\infty\), the constant function \(1\)
belongs to \(H^1(m_\rho,B)\). Taking \(\Psi=1\) in
\eqref{eq:LM_weak_integration_by_parts} gives
\[
    \int_{\R^d}L_{m_\rho,B}  \Phi\d x=0.
\]
By definition of
\(T_\rho^*\P_{m_\rho,B}\) in \eqref{eq:LM_regular_cotangent} we have 
\(
\frac{L_{m_\rho,B}\Phi}{m_\rho}
    \in L^2( m_\rho).\) Since \(m_\rho\) and \(\Phi\) are smooth,
\[
    L_{m_\rho,B}\Phi
    =
    -\nabla\cdot(m_\rho B\nabla\Phi)
\]
is also smooth. Therefore,
$
    L_{m_\rho,B}\Phi
    \in T_\rho\P_{m_\rho,B}
$.
Furthermore, for every \(c\in\R\),
\begin{equation}\label{eq:L_rho,m;B_well defined on_equivalence_sets}
    L_{m_\rho,B}(\Phi+c)
    =
    L_{m_\rho,B}\Phi.
\end{equation}
Thus, the value of \(L_{m_\rho,B}[\Phi]\) is independent of the
representative of \([\Phi]\), and the operator in
\eqref{eq:LM_operator_map} is well defined. Its linearity follows
directly from its definition. \\

\underline{Surjectivity:} By Lemma \ref{lem:LM_centered_energy_space}, we know that the bilinear form
\begin{equation}\label{eq:def_a_rho}
     a_\rho(\Phi,\Psi)
    \coloneqq
    \int_{\R^d}
    \nabla\Phi^\top B\nabla\Psi\,
    \d\mathfrak m_\rho.
\end{equation}
is bounded and coercive on $\left(H_0^1(m_\rho,B),\| \cdot\|_{H^1(m_\rho,B)} \right)$. 
For every \(\dot\rho\in T_\rho\P_{m_\rho,B}\), the  linear functional 
\begin{equation}\label{eq:ell_rho}
     \ell_{\dot\rho}\colon H^1_0(m_\rho,B)\to \R, \qquad\ell_{\dot\rho}(\Psi)
    \coloneqq
    \int_{\R^d}\dot\rho\,\Psi\d x.
\end{equation}
is continuous on
\(H^1_0(m_\rho,B)\), since
we have by \eqref{eq:LM_PI} that
\begin{align*}
    |\ell_{\dot\rho}(\Psi)|
    =
    \left|
        \int_{\R^d}
        \frac{\dot\rho}{m_\rho}\Psi\,
        \d\mathfrak m_\rho
    \right|
    \leq
    \left\|
        \frac{\dot\rho}{m_\rho}
    \right\|_{L^2( m_\rho)}
    \|\Psi\|_{L^2( m_\rho)}
    \leq
    \sqrt{C_\rho}
    \left\|
        \frac{\dot\rho}{m_\rho}
    \right\|_{L^2( m_\rho)}
    \|\Psi\|_{m_\rho,B}.
\end{align*}
Then, by the Lax--Milgram theorem, there exists  a unique
\(\Phi_{\dot\rho}\in H^1_0(m_\rho,B)\) such that
\begin{equation}\label{eq:LM_weak_potential_equation}
    \int_{\R^d}
    \nabla\Phi_{\dot\rho}^\top B\nabla\Psi\,
    \d\mathfrak m_\rho
    =
    \int_{\R^d}\dot\rho\,\Psi\d x
    \quad
    \text{for all} \quad \Psi\in H^1_0(m_\rho,B).
\end{equation}
Next, let \(\psi\in C_c^\infty(\R^d)\) and define its centered
version $\Psi$ by \eqref{centered}, in particular, \( \nabla\Psi=\nabla\psi.\)
Furthermore, since
\(\dot\rho\in T_\rho\P_{m_\rho,B}\),  it has zero mean and therefore
\begin{align*}
    \int_{\R^d}\dot\rho\,\Psi\d x
    =
    \int_{\R^d}\dot\rho\,\psi\d x
    -
    \frac{1}{\mathfrak m_\rho(\R^d)}
    \int_{\R^d}\psi\d\mathfrak m_\rho
    \int_{\R^d}\dot\rho\d x
    =
    \int_{\R^d}\dot\rho\,\psi\d x.
\end{align*}
In combination with \eqref{eq:LM_weak_potential_equation}, this implies
\[
    \int_{\R^d}
    \nabla\Phi_{\dot\rho}^\top B\nabla\psi\,
    \d \mathfrak m_\rho
    =
    \int_{\R^d}\dot\rho\,\psi\d x
    \quad
    \text{for all}
    \quad\psi\in C_c^\infty(\R^d).
\]
By the definition of distributional derivatives, this is equivalent
to
\begin{equation}
\label{eq:LM_distributional_potential_equation}
    -\nabla\cdot
    \left(
        m_\rho B\nabla\Phi_{\dot\rho}
    \right)
    =
    \dot\rho
    \qquad
    \text{in }\mathcal D'(\R^d).
\end{equation}
Next, we verify that the weak solution
\(\Phi_{\dot\rho}\) is smooth. Let \(K\subset\R^d\) be compact.
Since \(m_\rho\) is smooth and strictly positive, there exists
\(c_K>0\) such that
$
    m_\rho(x)\geq c_K
    $
for all $x\in K$.
Therefore, we obtain for every \(x\in K\) and \(\xi\in\R^d\) that
\[
    \xi^\top m_\rho(x)B\xi
    \geq
    c_K\lambda_{\min}(B)|\xi|^2.
\]
Hence the coefficient matrix \(m_\rho B\) is smooth and uniformly
elliptic on every compact subset of \(\R^d\). 
Since \(m_\rho\) is smooth and strictly positive and \(B\) is
symmetric positive definite, the coefficient matrix \(m_\rho B\)
is smooth and uniformly elliptic on every relatively compact
open subset of \(\R^d\).
These local lower bounds also imply that
\(\Phi_{\dot\rho}\in H^1(m_\rho,B)\) belongs to
\(H^1_{\mathrm{loc}}(\R^d)\).
Thus, since \(\dot\rho\in C^\infty(\R^d)\), standard interior
elliptic regularity applied to
\eqref{eq:LM_distributional_potential_equation} yields
\(\Phi_{\dot\rho}\in C^\infty(\R^d)\).
Moreover, by
\eqref{eq:LM_distributional_potential_equation} it holds
$
    L_{m_\rho,B}\Phi_{\dot\rho}
    =
    \dot\rho
$.
Consequently,  using \(\dot\rho\in T_\rho\P_{m_\rho,B}\), we have
\[
    \frac{L_{m_\rho,B}\Phi_{\dot\rho}}{m_\rho}
    =
    \frac{\dot\rho}{m_\rho}
    \in L^2( m_\rho),
\]
 Together with \( \Phi_{\dot\rho} \in C^\infty(\R^d)\cap H^1_0(m_\rho,B)\), this proves that \([\Phi_{\dot\rho}]
    \in T_\rho^*\P_{m_\rho,B}.\)
Finally, by \eqref{eq:L_rho,m;B_well defined on_equivalence_sets}, we conclude
$
    L_{m_\rho,B}[\Phi_{\dot\rho}]
    =
    \dot\rho
$,
which proves the surjectivity of \(L_{m_\rho,B}\).\\

\underline{Injectivity:}
Suppose that \([\Phi]\in T_\rho^*\P_{m_\rho,B}\) and \(L_{m_\rho,B}[\Phi]=0\).
Applying \eqref{eq:LM_weak_integration_by_parts} with
\(\Psi=\Phi\), we obtain
\[
    0
    =
    \int_{\R^d}
    \nabla\Phi^\top B\nabla\Phi\,
    \d\mathfrak m_\rho.
\]
Since \(m_\rho>0\) and \(B\succ0\), the integrand is nonnegative and
can vanish only if \(\nabla\Phi=0\). Thus, \(\Phi\) is constant on \(\R^d\), and hence
$ [\Phi]=0$.
This proves the injectivity of $L_{m_\rho, B}$ and finishes the proof.
\end{proof}

\begin{proof}[\textbf{Proof of Corollary \ref{cor:LM_gradient}}]
First using \(\left[
        \frac{\delta\F}{\delta\rho}
    \right]
    \in
    T_\rho^*\P_{m_\rho,B}\) in combination with the first part of Theorem \ref{prop:LM_operator_invertible} shows that
    \[
    -\nabla\cdot
    \left(
        m_\rho B
        \nabla\frac{\delta\F}{\delta\rho}
    \right)
    \in
    T_\rho\P_{m_\rho,B}.
    \]
For any \(\dot\eta\in
    T_\rho\P_{m_\rho,B}\) with  associated 
 local-mobility potential \(\Phi_{\dot\eta}\)  we get by \eqref{eq:LM_metric_bilinear} and integration by parts 
\[
G_{m_\rho,B}
    \left(
        L_{m_\rho,B}
        \left[
            \frac{\delta\F}{\delta\rho}
        \right],
        \dot\eta
    \right)
    =
    \int_{\R^d}
    \Big(\nabla\frac{\delta\F}{\delta\rho} \Big)^{\top}
    B\nabla \Phi_{\dot\eta}\,
    \d\mathfrak m_\rho
   =
    \int_{\R^d}
    \frac{\delta\F}{\delta\rho}\,
    \dot\eta\d x
  =
    D\F(\rho)[\dot\eta].
\]
Thus, \eqref{eq:LM_gradient} satisfies the defining gradient identity.
Uniqueness follows from the positive definiteness of
\(G_{m_\rho,B}\).
\end{proof}

\section{Proofs for Section~\ref{sec:stein}} \label{app:5}
\begin{proof}[\textbf{Proof of Lemma \ref{lem:well-defined}}]
\ref{it:1_lem_well_def}. Linearity is immediate, and constants do not change $M_\rho$, so the formula
does not depend on the representative.  Next, a quick computation confirms  for every $C\in\R^{d\times d}$ that
\begin{equation}\label{eq:linear-score}
  \frac{-\nabla\!\cdot(\rho Cx)}{\rho}
  =-\tr(C)-\nabla\log\rho(x)^\top Cx.
\end{equation}
For $C=M_\rho(\Psi)^\top A$, this becomes
\begin{equation}\label{eq:onsager-score}
  \frac{L_\rho^{K_A}[\Psi]}{\rho}
  =-\tr\!\left(M_\rho(\Psi)^\top A\right)
   -\nabla\log\rho(x)^\top M_\rho(\Psi)^\top Ax.
\end{equation}
Using \eqref{eq:onsager-score} in combination with \eqref{eq:density-class}, we obtain
\begin{equation}\label{eq:onsager-bound}
    \left\lVert
    \frac{L_\rho^{K_A}[\Psi]}{\rho}
    \right\rVert_{L^2(\rho)}
    \leq
    \left[
      \sqrt d+
      \left(
        \int_{\R^d}|x|^2|\nabla\log\rho|^2 \d\rho
      \right)^{1/2}
    \right] 
    \lVert A\rVert_{\mathrm{op}}
    \lVert M_\rho(\Psi)\rVert_{\mathrm F}
    <\infty.
\end{equation}
The output is smooth and satisfies the required score condition.

Further, $L_\rho^{K_A}[\Psi]$ belongs to
$T_\rho\P_{K_A}$ for the following reason: let $\chi_R$ be a smooth cutoff with
$0\leq\chi_R\leq1$, equal to $1$ on $B_R$, supported in $B_{2R}$, and satisfying
$|\nabla\chi_R|\leq C/R$.  The score bound makes the output integrable, and
distributional integration by parts gives
\begin{align}
    \left|\int_{\R^d}L_\rho^{K_A}[\Psi] \d x\right|&=\lim_{R\to\infty}
  \left|\int_{\R^d}\rho M_\rho(\Psi)^\top Ax
    \cdot\nabla\chi_R \d x\right|\notag 
    \\
    &\leq
  2C\lVert M_\rho(\Psi)^\top A\rVert_{\mathrm{op}}
  \lim_{R\to\infty}
  \int_{\{|x|>R\}}\rho(x) \d x=0.
  \label{eq:zero-mass}
\end{align}
\\[1ex]
\ref{it:2_lem_well_def}. If $\Phi$ is compactly supported, integration by parts yields
\begin{align}
  \big\langle[\Phi],L_\rho^{K_A}[\Psi]\big\rangle_\rho
=\int_{\R^d}\nabla\Phi^\top
    M_\rho(\Psi)^\top Ax \d\rho=\tr\!\left(M_\rho(\Phi)^\top A M_\rho(\Psi)\right).
  \label{eq:green-compact}
\end{align}
For general $\Phi\in C^\infty\cap H^1(\rho)$, the cutoffs
$\Phi_R\coloneqq \chi_R\Phi$ satisfy
\begin{equation}
     \lVert\Phi_R-\Phi\rVert_{L^2(\rho)}\to 0,\quad \text{and} \quad
  \lVert\nabla\Phi_R-\nabla\Phi\rVert_{L^2(\rho)}
  \leq
  \lVert(1-\chi_R)\nabla\Phi\rVert_{L^2(\rho)}
  +\frac{C}{R}
   \lVert\Phi\mathbf 1_{\{R<|x|<2R\}}\rVert_{L^2(\rho)}
  \to 0.
\end{equation}
Thus $\Phi_R\to\Phi$ in $H^1(\rho)$.  By Cauchy--Schwarz and
\eqref{eq:onsager-bound},
\begin{equation}\label{eq:difference_to_zero}
    \left|
  \big\langle[\Phi_R-\Phi],
    L_\rho^{K_A}[\Psi]\big\rangle_\rho
  \right|\leq
  \lVert\Phi_R-\Phi\rVert_{L^2(\rho)}
  \left\lVert
    \frac{L_\rho^{K_A}[\Psi]}{\rho}
  \right\rVert_{L^2(\rho)}
  \to 0.
\end{equation}
Next, using \[
  \lVert M_\rho(\Psi)^\top Ax\rVert_{L^2(\rho)}
  \leq
  \lVert M_\rho(\Psi)^\top A\rVert_{\mathrm{op}}
  \left(\int_{\R^d}|x|^2 \d\rho\right)^{1/2}<\infty
\]
in combination with \eqref{eq:difference_to_zero}, we conclude that the gradient integral in \eqref{eq:green-compact} satisfies
\begin{align*}
  &\left|
  \int_{\R^d}\nabla(\Phi_R-\Phi)^\top
    M_\rho(\Psi)^\top Ax \d\rho
  \right|\leq
  \lVert\nabla\Phi_R-\nabla\Phi\rVert_{L^2(\rho)}
  \lVert M_\rho(\Psi)^\top A\rVert_{\mathrm{op}}
  \left(\int_{\R^d}|x|^2 \d\rho\right)^{1/2}.
\end{align*}
Thus, passing to the limit in \eqref{eq:green-compact} proves
\eqref{eq:green-identity}.
\end{proof}

\begin{proof}[\textbf{Proof of Lemma \ref{lem:kernel}}]
The inclusion from right to left follows from the definition of
$L_\rho^{K_A}$. Conversely, if $L_\rho^{K_A}[\Phi]=0$, the Green
identity in item~\ref{it:2_lem_well_def} of Lemma~\ref{lem:well-defined}, with $\Psi=\Phi$, gives
\begin{equation}\label{eq:kernel-energy}
  0
  =\tr\!\left(M_\rho(\Phi)^\top A M_\rho(\Phi)\right)
  =\left\lVert A^{1/2}M_\rho(\Phi)\right\rVert_{\mathrm F}^2
\end{equation}
with the Frobenius norm $\| \cdot\|_{\mathrm F}$.
Since $A$ is positive definite, $M_\rho(\Phi)=0$.

For infinite-dimensionality, choose pairwise disjoint open balls
$B_n\subset\R^d$ and nonzero $\varphi_n\in C_c^\infty(B_n)$. Their classes
$[\varphi_n]$ are linearly independent in
$T_\rho^*\P_{K_A}$, so this cotangent space is infinite-dimensional.
On the other hand, $L_\rho^{K_A}$ factors through
$\overline M_\rho$, whose range lies in $\R^{d\times d}$. Hence
$\ran L_\rho^{K_A}$ is finite-dimensional. By the first isomorphism theorem,
\[
  T_\rho^*\P_{K_A}/\ker L_\rho^{K_A}
  \cong\ran L_\rho^{K_A}.
\]
Thus, $\ker L_\rho^{K_A}$ has finite codimension in an infinite-dimensional
space and is itself infinite-dimensional.
\end{proof}

\begin{proof}[\textbf{Proof of Theorem \ref{prop:isomorphism}}]
If two classes in $T_\rho^*\P_{K_A}$ determine the same quotient class, their
difference belongs to $\ker L_\rho^{K_A}$, so
\eqref{eq:induced-isomorphism} is well-defined.  Its kernel is zero by the
definition of the quotient, and it is onto by the definition
$T_\rho^{\mathrm{eff}}\P_{K_A}=\ran L_\rho^{K_A}$.  Hence it is an isomorphism.  The final
identification follows from Lemma~\ref{lem:kernel} and the first isomorphism
theorem applied to $M_\rho$.
\end{proof}

\section{Proofs for Section~\ref{sec:softmax-regular-dynamics}}
\label{app:softmax-proofs}


\begin{proof}[\textbf{Proof of Proposition~\ref{prop:sm-regular-variation}}]
We first compute the variation along compactly supported
density perturbations.
Let $\eta\in C_c^\infty(\R^d)$ satisfy
$\int_{\R^d}\eta \d x=0$.
By Lemma~\ref{lem:asump_differtiability_OW-SM}, we have that
$\rho+\varepsilon\eta$ belongs to $\P_{\SM}$ for
sufficiently small $|\varepsilon|$.
Since $\eta/\rho$ is bounded, these perturbed densities
and their normalizers are uniformly comparable with
$\rho$ and $Z_\rho$, respectively.
Consequently, their interaction energies remain finite.

Symmetry of $A$ in \eqref{eq:sm-energy} gives
\[
\F(\rho+\varepsilon\eta)-\F(\rho)
=
-\varepsilon\int_{\R^d}Z_\rho\eta \d x
-\frac{\varepsilon^2}{2}
\iint_{\R^d\times\R^d}
\exp(y^\top Ax)\eta(x)\eta(y) \d x \d y.
\]
The integrals are absolutely convergent because $\eta$
has compact support and $Z_\rho$ is smooth.
Thus, the first variation is represented by $-Z_\rho$.
Since this identity holds for every compactly supported
zero-integral test, any regular first variation must
equal $[-Z_\rho]$.
By \eqref{eq:LM_softmax_cotangent}, this proves the
necessity of \eqref{eq:sm-gradient-tests}.

For the converse, suppose that
\eqref{eq:sm-gradient-tests} holds.
Finite energy gives
\[
\int_{\R^d}Z_\rho^2m_\rho \d x
=
\int_{\R^d}\rho Z_\rho \d x<\infty.
\]
It remains to verify the weighted gradient condition
in \eqref{eq:LM_softmax_cotangent}.

Choose $\chi_R\in C_c^\infty(\R^d)$ with
$0\leq\chi_R\leq1$, $\chi_R=1$ on $B_R$,
$\operatorname{supp}\chi_R\subset B_{2R}$, and
$|\nabla\chi_R|\leq C/R$.
Multiplying $L_\rho^{\SM}Z_\rho$ by $\chi_R^2Z_\rho$,
integrating by parts, and applying Young's inequality
gives
\begin{equation}\label{eq:sm-variation-cutoff}
\begin{split}
\int_{\R^d}\chi_R^2
\nabla Z_\rho^\top B\nabla Z_\rho\,m_\rho \d x
&\leq
2\left(\int_{\R^d}Z_\rho^2m_\rho \d x\right)^{1/2}
\left(\int_{\R^d}
\frac{|L_\rho^{\SM}Z_\rho|^2}{m_\rho} \d x\right)^{1/2}\\
&\quad+
\frac{C}{R^2}
\int_{\R^d}Z_\rho^2m_\rho \d x,
\end{split}
\end{equation}
where $C$ is independent of $R$.
The right-hand side is bounded uniformly for $R\geq1$.
Letting $R\to\infty$ therefore yields
\[
\int_{\R^d}
\nabla Z_\rho^\top B\nabla Z_\rho\,m_\rho \d x<\infty.
\]
Together with \eqref{eq:sm-gradient-tests}, this proves
$[Z_\rho]\in T_\rho^*\P_{\SM}$.

The variation computed above is therefore a local
regular first variation in the sense of
\eqref{eq:def_local_regular_variation}.
Its extension to the regular tangent space is
\[
\ell_{\F,\rho}^{\reg}(\eta)
=
-\int_{\R^d}Z_\rho\eta \d x,
\qquad \eta\in T_\rho\P_{\SM}.
\]
This integral is finite by weighted Cauchy--Schwarz,
using $Z_\rho\in L^2(m_\rho)$ and the tangent-space
condition in \eqref{eq:LM_softmax_tangent}.
\end{proof}

\begin{proof}[\textbf{Proof of Corollary~\ref{thm:sm-gradient}}]
Proposition~\ref{prop:sm-regular-variation} identifies
the regular first variation of $\F$ with $[-Z_\rho]$.
By the metric definition \eqref{eq:SM_metric},
\[
G_\rho^{\SM}
\bigl(L_\rho^{\SM}[-Z_\rho],\eta\bigr)
=
-\int_{\R^d}Z_\rho\eta \d x
=
\ell_{\F,\rho}^{\reg}(\eta),
\qquad \eta\in T_\rho\P_{\SM}.
\]
Hence
$\grad_{G_\rho^{\SM}}\F(\rho)=L_\rho^{\SM}[-Z_\rho]$.

Differentiating the normalizer in \eqref{eq:def_Z_rho}
and using $BA=V$ gives
\[
B\nabla Z_\rho(x)
=
V\int_{\R^d}y\,\exp(y^\top Ax)\rho(y) \d y
=
Z_\rho(x)\Gamma_\rho(x).
\]
Substituting this identity into
\eqref{eq:LM_softmax_operator} proves
\eqref{eq:sm-gradient-identity}.
The gradient-flow formulation follows by inserting
this identity into \eqref{eq:sm-ordinary} at densities
satisfying the assumptions.
\end{proof}

\begin{proof}[\textbf{Proof of Lemma~\ref{prop:sm-pointwise}}]
By \eqref{eq:LM_softmax_class}, $Z_\rho$ is smooth
and strictly positive.
Moreover, $[\Phi]\in T_\rho^*\P_{\SM}$ gives
\[
\int_{\R^d}
\nabla\Phi^\top B\nabla\Phi\,m_\rho \d x<\infty.
\]
We use this integrability to control the nonlocal term
in the second equation of \eqref{eq:sm-accelerated-pde}.

Strict positivity of $\rho$ and invertibility of $A$
imply that, for every $r>0$, there exists $c_r>0$ such that
\begin{equation}\label{eq:sm-appendix-lower}
Z_\rho(x)\geq c_re^{r|x|}
\qquad(x\in\R^d).
\end{equation}
Indeed, choose $s>1$ such that
$(s-1)\sigma_{\min}(A)>r$.
For $x\ne0$, integrate over the unit ball centered at
$sAx/|Ax|$.
These balls lie in a fixed compact set on which $\rho$
has a positive minimum, and every point $y$ in the
corresponding ball satisfies
\[
y^\top Ax\geq(s-1)|Ax|>r|x|.
\]
This proves \eqref{eq:sm-appendix-lower} for $x\ne0$;
decreasing $c_r$ also includes $x=0$.

The nonlocal integral in
\eqref{eq:sm-accelerated-pde} can be written as
\[
\int_{\R^d}
\frac{\exp(y^\top Ax)}{Z_\rho(y)}
\nabla\Phi(y)^\top B\nabla\Phi(y)
m_\rho(y) \d y.
\]
For $x$ in a fixed compact set, the factor
$\exp(y^\top Ax)/Z_\rho(y)$ and each of its spatial
derivatives are bounded uniformly in $y$ by
\eqref{eq:sm-appendix-lower}.
The weighted gradient integrability of $\Phi$
therefore justifies differentiation under this integral.
Thus, the nonlocal term is smooth and finite.

The remaining spatial terms in
\eqref{eq:sm-accelerated-pde} are smooth because
$\rho$, $\Phi$, and $Z_\rho$ are smooth and
$Z_\rho>0$.
This proves the pointwise formulation whenever
$\alpha(t)$ is finite.
\end{proof}

\begin{proof}[\textbf{Proof of Theorem~\ref{thm:sm-hamiltonian}}]
We first compute the variation of
\eqref{eq:sm-hamiltonian} with respect to the potential.
For $[\Psi]\in T_\rho^*\P_{\SM}$, differentiation and
the Green identity for \eqref{eq:LM_softmax_operator}
give
\[
\left.\frac{\d}{\d\varepsilon}\right|_{\varepsilon=0}
\mathcal H(\rho,[\Phi+\varepsilon\Psi])
=
\int_{\R^d}\nabla\Psi^\top B\nabla\Phi\,m_\rho \d x
=
\left\langle[\Psi],L_\rho^{\SM}[\Phi]\right\rangle.
\]
Consequently, the regular potential derivative in
the sense of \eqref{eq:def_reg_del_phi_H} is
\begin{equation}\label{eq:sm-appendix-potential-derivative}
\frac{\delta^{\reg}\mathcal H}{\delta\Phi}
(\rho,[\Phi])
=
L_\rho^{\SM}[\Phi]
\in T_\rho\P_{\SM}.
\end{equation}

Next, we vary the density while keeping $[\Phi]$ fixed.
Let $\eta\in C_c^\infty(\R^d)$ satisfy
$\int_{\R^d}\eta \d x=0$, and set
\[
Z_\eta(x)\coloneqq
\int_{\R^d}\exp(y^\top Ax)\eta(y) \d y.
\]
By Lemma~\ref{lem:compatible_variations_otto_softmax},
condition~\eqref{eq:sm-compatibility} ensures that
$[\Phi]\in T_{\rho+\varepsilon\eta}^*\P_{\SM}$
for sufficiently small $|\varepsilon|$.
The kinetic energy can therefore be evaluated along
these density variations.

The identity
\[
m_{\rho+\varepsilon\eta}
=
m_\rho
\frac{1+\varepsilon\eta/\rho}
     {1+\varepsilon Z_\eta/Z_\rho},
\qquad
\left|\frac{Z_\eta}{Z_\rho}\right|
\leq\left\|\frac{\eta}{\rho}\right\|_\infty,
\]
justifies differentiation under the kinetic-energy
integral, since $\eta/\rho$ is bounded and the
kinetic energy is finite.
It follows that
\[
\left.\frac{\d}{\d\varepsilon}\right|_{\varepsilon=0}
\frac12\int_{\R^d}
\nabla\Phi^\top B\nabla\Phi\,
m_{\rho+\varepsilon\eta} \d x
=
\frac12\int_{\R^d}
\nabla\Phi^\top B\nabla\Phi\,m_\rho
\left(\frac{\eta}{\rho}
-\frac{Z_\eta}{Z_\rho}\right)\d x.
\]

In the term containing $Z_\eta$, Fubini's theorem
rewrites the right-hand side as the pairing of $\eta$
with the expression inside the brackets in
\eqref{eq:sm-graph-tests}.
The interchange is justified by
\[
\frac{Z_{|\eta|}}{Z_\rho}
\leq\left\|\frac{\eta}{\rho}\right\|_\infty
\]
and the finite kinetic energy.
Condition~\eqref{eq:sm-graph-tests} therefore makes
this density variation a regular cotangent class.
Compactly supported zero-integral tests determine
the class uniquely.

Adding the interaction-energy variation from
Proposition~\ref{prop:sm-regular-variation} gives
\begin{equation}\label{eq:sm-appendix-density-derivative}
\frac{\delta^{\reg}\mathcal H}{\delta\rho}
(\rho,[\Phi])
=
\left[
\frac{\nabla\Phi(x)^\top B\nabla\Phi(x)}{2Z_\rho(x)}
-\frac12\int_{\R^d}\exp(y^\top Ax)
\frac{\nabla\Phi(y)^\top B\nabla\Phi(y)}
     {Z_\rho(y)^2}\rho(y) \d y
-Z_\rho(x)
\right].
\end{equation}
Both contributions belong to $T_\rho^*\P_{\SM}$,
so their pairing with a regular tangent vector
defines the regular density derivative in the sense
of \eqref{eq:def_reg_del_rho_H}.

Finally, substituting
\eqref{eq:sm-appendix-potential-derivative} and
\eqref{eq:sm-appendix-density-derivative} into
\eqref{eq:sm-regular-hamiltonian-system} yields
\eqref{eq:sm-accelerated-pde}, with the potential
equation understood modulo spatial constants.
\end{proof}

\begin{proof}[\textbf{Proof of Lemma~\ref{lem:sm-particle-connection}}]
The proof consists of computing the connection of
\eqref{eq:softmax_metric_particles}.
Fix $\mathbf U,\mathbf Q\in\R^{N\times d}$ and
regard them as constant vector fields.
The Koszul formula then reduces to
\begin{equation}\label{eq:sm-appendix-koszul}
2g_{\mathbf X}
\bigl(\Gamma_{\mathbf X}^g(\mathbf U,\mathbf U),
\mathbf Q\bigr)
=
2Dg_{\mathbf X}[\mathbf U](\mathbf U,\mathbf Q)
-
Dg_{\mathbf X}[\mathbf Q](\mathbf U,\mathbf U).
\end{equation}
We compute the two derivatives by varying the base
point $\mathbf X$ in
\eqref{eq:softmax_metric_particles}, while keeping
the matrix arguments fixed.

For the first derivative,
\eqref{eq:softmax_dz_directional} gives
\[
Dg_{\mathbf X}[\mathbf U](\mathbf U,\mathbf Q)
=
\frac1N
\left\langle
D_{\delta z_{\mathbf X}[\mathbf U]}
\mathbf UB^{-1},\mathbf Q
\right\rangle_{\mathrm F}.
\]
For the second derivative, use
$s_i=U_i^\top B^{-1}U_i$ from the lemma.
Then
\[
Dg_{\mathbf X}[\mathbf Q](\mathbf U,\mathbf U)
=
\frac1N\sum_{i=1}^N
s_i \delta z_{\mathbf X}[\mathbf Q]_i.
\]
Expanding the directional derivative in
\eqref{eq:softmax_dz_directional} and using the
symmetry of $A$ and $\mathbf M$ yields
\[
\sum_{i=1}^N s_i \delta z_{\mathbf X}[\mathbf Q]_i
=
\left\langle
(D_s\mathbf M+\mathbf MD_s)\mathbf XA,
\mathbf Q
\right\rangle_{\mathrm F}.
\]

Substituting these expressions into
\eqref{eq:sm-appendix-koszul} and using
\eqref{eq:softmax_metric_particles} gives
\[
D_z\Gamma_{\mathbf X}^g(\mathbf U,\mathbf U)B^{-1}
=
D_{\delta z_{\mathbf X}[\mathbf U]}\mathbf UB^{-1}
-\frac12(D_s\mathbf M+\mathbf MD_s)\mathbf XA.
\]
Multiplication by $D_z^{-1}$ on the left and $B$
on the right, together with $AB=V^\top$, proves
\eqref{eq:softmax_connection_contracted}.
\end{proof}


\section{Proofs and Supplements of Section \ref{sec:lin-accelerated-linear-attention}}
\label{app:lin-attention-proofs}
This appendix supplies the proofs of the principal results for linear attention in
Section~\ref{sec:lin-accelerated-linear-attention} together with the
first-order analysis and supplementary comparisons.

\subsection{First-Order Linear Attention: Geometry and Analysis}
\label{app:lin-covariance-first-order}
We first verify that the covariance family
\eqref{eq:lin-elliptical-family} belongs to $\P_{K_A}$.
We then prove the three lemmas concerning its covariance
coordinates, induced metric, and gradient-flow formulation.
Throughout, only second moments are required.

Radiality of $\eta$ gives centeredness, and the change of
variables $x=S^{1/2}z$ gives covariance $S$ and the
weighted-score bound
\[
\int_{\R^d}|x|^2|\nabla\log\rho_S(x)|^2 \d\rho_S(x)
\leq
\|S\|_{\mathrm{op}}\|S^{-1}\|_{\mathrm{op}}
\int_{\R^d}|z|^2|\nabla\log\eta(z)|^2 \d\eta(z).
\]
Thus $\rho_S\in\P_{K_A}$.

\begin{proof}[\textbf{Proof of Lemma~\ref{lem:lin-covariance-isomorphism}}]
We first prove that $\Lambda_S$ is an isomorphism.
Diagonalize $SAS$ and denote its eigenvalues by $\lambda_i>0$.
In this eigenbasis,
\[
(\Lambda_S(P))_{ij}=(\lambda_i+\lambda_j)P_{ij},
\]
which proves invertibility.

To establish injectivity of $D\iota(S)$, we first identify
suitable representatives of the effective cotangent classes.
For a regular potential $\Psi$, set
$\psi(z)=\Psi(S^{1/2}z)$.
Integration along the infinitesimal rotations
$z_i\partial_j-z_j\partial_i$ gives
\[
\int_{\R^d}z_i\partial_j\psi \d\eta
=
\int_{\R^d}z_j\partial_i\psi \d\eta.
\]
Radial cutoffs justify this identity, with the limit controlled
by $|z|,|\nabla\psi|\in L^2(\eta)$.
It follows that
\[
S^{-1}M_{\rho_S}(\Psi)
=
S^{-1/2}M_\eta(\psi)S^{-1/2}
\in\Sym(d).
\]

For the reverse inclusion, fix $P\in\Sym(d)$ and take
\[
\Psi_R(x)=\tfrac12\chi_R(x)x^\top Px,
\]
where $\chi_R$ is smooth, equals one on $B_R$, vanishes
outside $B_{2R}$, and satisfies
$|\nabla\chi_R|\leq C/R$.
Then
\[
\|M_{\rho_S}(\Psi_R)-SP\|_{\mathrm F}
\leq
C_P\int_{|x|>R}|x|^2 \d\rho_S(x)
\longrightarrow0.
\]
The range of $M_{\rho_S}$ on $C_c^\infty(\R^d)$ is a
finite-dimensional linear subspace and hence closed.
There is therefore a compactly supported smooth
representative $\Psi_{S,P}$ satisfying
\[
M_{\rho_S}(\Psi_{S,P})=SP.
\]
The cutoff estimate and the closed-range argument use only
the second moment, not radiality. Thus, for every
$\rho\in\P_{K_A}$ and $P\in\Sym(d)$, they also give
\begin{equation}\label{eq:lin-regular-moment-realization}
\Psi_{\rho,P}\in C_c^\infty(\R^d),
\qquad
M_\rho(\Psi_{\rho,P})=S(\rho)P.
\end{equation}

Next, radiality and polar decomposition imply
\begin{equation}\label{eq:lin-elliptical-pushforward}
G_\#\rho_S=\rho_{GSG^\top}
\qquad(G\in\GL(d)).
\end{equation}
Differentiating this identity along a matrix curve with
$G_0=I_d$ and $\dot G_0=PSA$ yields
\[
D\iota(S)[\Lambda_S(P)]
=
-\nabla\cdot(\rho_SPSAx)
=
L_{\rho_S}^{K_A}[\Psi_{S,P}].
\]
If this tangent vector vanishes, the Green identity
\eqref{eq:green-identity} gives
\[
0
=
\tr\bigl((SP)^\top A(SP)\bigr)
=
\|A^{1/2}SP\|_{\mathrm F}^2.
\]
Hence $P=0$. Since $\Lambda_S$ is an isomorphism,
$D\iota(S)$ is injective.
\end{proof}

\begin{proof}[\textbf{Proof of Lemma~\ref{lem:lin-covariance-metric}}]
Fix $S\in\Sym_{++}(d)$ and $H,K\in\Sym(d)$, and set
\[
P=\Lambda_S^{-1}(H),
\qquad
Q=\Lambda_S^{-1}(K).
\]
The tangent representation established in the proof of
Lemma~\ref{lem:lin-covariance-isomorphism}, together with
the Green identity \eqref{eq:green-identity}, gives
\[
\begin{aligned}
G_{\rho_S}^{K_A}
\bigl(D\iota(S)[H],D\iota(S)[K]\bigr)
&=
\tr\bigl(M_{\rho_S}(\Psi_{S,P})^\top
A M_{\rho_S}(\Psi_{S,Q})\bigr)\\
&=
\tr(PSASQ)\\
&=
\tfrac12\tr\bigl(H\Lambda_S^{-1}(K)\bigr).
\end{aligned}
\]
This proves \eqref{eq:lin-covariance-metric-agreement}.

For completeness, symmetry follows by transposition and
cyclicity of the trace, while
\[
g_S^{\lin}(H,H)
=
\|A^{1/2}SP\|_{\mathrm F}^2>0
\qquad(H\neq0).
\]
Finally, smooth dependence of $\Lambda_S^{-1}$ on $S$
shows that \eqref{eq:lin-induced-metric} defines a smooth
Riemannian metric on $\Sym_{++}(d)$.
\end{proof}

\begin{proof}[\textbf{Proof of Proposition~\ref{prop:lin-covariance-geometry}}]
We first compute the gradient of $F^{\lin}$.
For every $H\in\Sym(d)$,
\[
g_S^{\lin}(-\Lambda_S(V),H)
=
-\tfrac12\tr(VH)
=
DF^{\lin}(S)[H].
\]
Thus
\[
\grad_{g^{\lin}}F^{\lin}(S)=-\Lambda_S(V).
\]

Let $S_t\in\Sym_{++}(d)$ be a $C^1$ curve and set
$\rho_t=\rho_{S_t}$.
The tangent identity established in the proof of
Lemma~\ref{lem:lin-covariance-isomorphism} gives
\[
\partial_t\rho_t+\nabla\cdot(\rho_tVS_tAx)
=
D\iota(S_t)
\bigl[\dot S_t-\Lambda_{S_t}(V)\bigr].
\]
Since $D\iota(S_t)$ is injective, the density curve solves
\eqref{eq:lin-ordinary-pde} if and only if
\[
\dot S_t
=
\Lambda_{S_t}(V)
=
-\grad_{g^{\lin}}F^{\lin}(S_t),
\]
which is \eqref{eq:lin-ordinary-ode}.
The initial data correspond under the parametrization $\iota$.
\end{proof}

\begin{proof}[\textbf{Proof of Corollary~\ref{cor:maxtime}}]
If $V\preceq0$, the scalar second moment is nonincreasing, since
\[
\frac{\d}{\d t}\tr(S_t)
=
2\tr(VS_tAS_t)\leq0.
\]
Thus the second moment remains bounded by its initial value.
Proposition~\ref{prop:lin-ordinary-continuation} therefore
implies $T_{\max}=\infty$.
\end{proof}

\subsection{
Accelerated Linear Attention: Geometry and Hamiltonian Dynamics}
\label{app:lin-quadratic-correspondence}

\begin{proof}[\textbf{Proof of Lemma~\ref{lem:lin-quadratic-reduction}}]
Since $\nabla\Phi_P(x)=Px$ and $\rho_S$ has covariance $S$,
the moment map in \eqref{eq:moment-map} satisfies
\begin{equation}\label{eq:lin-quadratic-moment}
M_{\rho_S}(\Phi_P)=SP.
\end{equation}
Differentiating \eqref{eq:lin-elliptical-pushforward}
at $G=I_d$ in the direction $PSA$ gives
\begin{equation}\label{eq:lin-tangent-identification}
D\iota(S)[\Lambda_S(P)]
=
-\nabla\cdot(\rho_SPSAx)
=
L_{\rho_S}^{K_A}[\Phi_P].
\end{equation}
Thus, the first equation in \eqref{eq:lin-accelerated-pde}
is equivalent to
\[
D\iota(S_t)[\dot S_t-\Lambda_{S_t}(P_t)]=0.
\]
The injectivity of $D\iota(S_t)$, established in
Appendix~\ref{app:lin-covariance-first-order},
then gives the first equation in
\eqref{eq:lin-accelerated-ode}.

For the second equation, \eqref{eq:lin-quadratic-moment}
and symmetry of $A,S,P$ give
\begin{equation}\label{eq:lin-quadratic-kinetic}
\nabla\Phi_P(x)^\top M_{\rho_S}(\Phi_P)^\top Ax
=
x^\top P^2SAx
=
\tfrac12x^\top(P^2SA+ASP^2)x.
\end{equation}
Substituting \eqref{eq:lin-quadratic-kinetic} into the
second equation in \eqref{eq:lin-accelerated-pde}
therefore yields
\[
\tfrac12x^\top
\bigl(\dot P_t+\alpha_tP_t+P_t^2S_tA+AS_tP_t^2-V\bigr)x=0
\qquad\text{for all }x\in\R^d.
\]
The matrix in parentheses is symmetric, so this identity
is equivalent to the second equation in
\eqref{eq:lin-accelerated-ode}.
The initial conditions follow from
$\rho_t=\rho_{S_t}$ and $\Phi_t=\Phi_{P_t}$.
\end{proof}

\begin{proof}[\textbf{Proof of Lemma~\ref{lem:lin-canonical-hamiltonian}}]
Differentiating the Hamiltonian in
\eqref{eq:lin-matrix-hamiltonian} gives
\[
\partial_\Pi\mathcal H^{\lin}(S,\Pi)
=
2(SAS\Pi+\Pi SAS),
\qquad
\partial_S\mathcal H^{\lin}(S,\Pi)
=
2(AS\Pi^2+\Pi^2SA)-\tfrac12V.
\]
Substituting $\Pi=P/2$ into the damped Hamiltonian
equations gives precisely
\eqref{eq:lin-accelerated-ode}.
\end{proof}

\begin{proof}[\textbf{Proof of Proposition~\ref{thm:lin-newton}}]
We first identify the Lagrangian associated with
\eqref{eq:lin-matrix-hamiltonian}. For
$S\in\Sym_{++}(d)$ and $H\in\Sym(d)$, set
\[
\mathcal L(S,H)
\coloneqq
\tfrac12g_S^{\lin}(H,H)-F^{\lin}(S).
\]
By \eqref{eq:lin-induced-metric}, its derivative with
respect to $H$ satisfies
\[
D_H\mathcal L(S,H)[K]
=
g_S^{\lin}(H,K)
=
\tfrac12\tr\bigl(\Lambda_S^{-1}(H)K\bigr)
\qquad(K\in\Sym(d)).
\]
Consequently, the momentum matrix associated with
$H=\Lambda_S(P)$ is $\Pi=P/2$.
The Legendre transform of $\mathcal L$ is therefore
\[
\tr(\Pi H)-\mathcal L(S,H)
=
2\tr(\Pi SAS\Pi)-\tfrac12\tr(VS)
=
\mathcal H^{\lin}(S,\Pi).
\]

It follows that the damped Hamiltonian equations in
Lemma~\ref{lem:lin-canonical-hamiltonian} are equivalent
to the damped Euler--Lagrange equation
\[
\frac{\d}{\d t}\partial_H\mathcal L(S_t,\dot S_t)
-
\partial_S\mathcal L(S_t,\dot S_t)
=
-\alpha_t\partial_H\mathcal L(S_t,\dot S_t).
\]
For the kinetic energy
$\tfrac12g_S^{\lin}(H,H)$, the Euler--Lagrange expression
is the metric dual of the covariant acceleration.
Moreover, $\partial_H\mathcal L(S_t,\dot S_t)$ is the
metric dual of $\dot S_t$.
Thus, the displayed equation is equivalent to
\eqref{eq:lin-newton}.

If $(S,P)$ solves \eqref{eq:lin-accelerated-ode},
then its first equation gives $S\in C^2$.
Lemma~\ref{lem:lin-canonical-hamiltonian} and the preceding
equivalence show that $S$ satisfies
\eqref{eq:lin-newton}.

Conversely, let $S\in C^2(I;\Sym_{++}(d))$ satisfy
\eqref{eq:lin-newton}. Since $\Lambda_S^{-1}$ depends
smoothly on $S$, the curve
\[
P_t=\Lambda_{S_t}^{-1}(\dot S_t)
\]
belongs to $C^1(I;\Sym(d))$.
The same equivalence shows that $(S,P/2)$ satisfies
the damped Hamiltonian equations, and hence $(S, P)$
solves \eqref{eq:lin-accelerated-ode}.
\end{proof}

\begin{proof}[\textbf{Proof of Theorem \ref{thm:lin-correspondence}}]
\ref{it:lin-correspondence-matrix}.
The right-hand side of \eqref{eq:lin-accelerated-ode}
is continuous in time and locally Lipschitz in $(S, P)$.
It also preserves symmetry. The Picard--Lindel\"of theorem
therefore gives a unique maximal $C^1$ solution with
initial values $(S_0, P_0)$. Let $G$ solve \eqref{eq:lin-propagator}.
Direct differentiation and uniqueness for the resulting
linear matrix equation give
\[
S_t=G_tS_0G_t^\top\succ0,
\]
since $G_t$ is invertible and $S_0\succ0$.
\\[1ex]
\ref{it:lin-correspondence-lift}.
Since $\det G_t>0$, the pushforward in
\eqref{eq:lin-accelerated-lift} has density
\begin{equation}\label{eq:lin-density-pushforward}
\rho_t(x)
=
\frac{\rho_0(G_t^{-1}x)}{\det G_t}.
\end{equation}
A change of variables gives $S(\rho_t)=G_tS_0G_t^\top=S_t$.
Since $\nabla\Phi_t(x)=P_tx$, it follows 
\[
M_{\rho_t}(\Phi_t)
=
\int_{\R^d}xx^\top P_t\,\rho_t(x) \d x
=
S_tP_t.
\]
The integral defining $M_{\rho_t}(\Phi_t)$ is absolutely convergent by the finite second
moment of $\rho_t$, and $t\mapsto S_tP_t$ is continuous.
This verifies condition~\ref{it:lin-strong-moment} of the strong solution definition. Next differentiating \eqref{eq:lin-density-pushforward} in time yields
\[
\partial_t\rho_t
=
-\tr(P_tS_tA)\rho_t
-
(P_tS_tAx)\cdot\nabla\rho_t
=
-\nabla\cdot(\rho_tP_tS_tAx),
\]
which is the first equation in
\eqref{eq:lin-accelerated-pde}.
For the second equation, a direct computation gives
\[
\partial_t\Phi_t+\alpha_t\Phi_t
+\nabla\Phi_t^\top M_{\rho_t}(\Phi_t)^\top Ax
-\tfrac12x^\top Vx
=
\tfrac12x^\top
\bigl(
\dot P_t+\alpha_tP_t
+P_t^2S_tA+AS_tP_t^2-V
\bigr)x
=0.
\]
Both identities hold in $C(K)$ for every compact
$K\subset\R^d$, proving condition~\ref{it:lin-strong-pde}.

The initial conditions follow from $G_0=I_d$ and
$P|_{t=0}=P_0$, verifying
condition~\ref{it:lin-strong-initial}.
Finally, since $G,P$ are $C^1$ and $\rho_0$ is smooth,
one can check that condition~\ref{it:lin-strong-regularity}
holds.
Thus, $(\rho,\Phi)$ is a strong solution.
\\[1ex]
\ref{it:lin-correspondence-converse}.
Let $(\rho,\Phi)$ be any strong solution with initial values
$(\rho_0,\Phi_0)$, where $\Phi_0=\Phi_{P_0}$.
By condition~\ref{it:lin-strong-moment}, the coefficient
$t\mapsto M_{\rho_t}(\Phi_t)^\top A$ is locally integrable.
Linear ODE theory therefore gives a unique, locally absolutely
continuous and invertible fundamental matrix satisfying
\begin{equation}\label{eq:lin-characteristic-propagator}
\dot G_t=M_{\rho_t}(\Phi_t)^\top AG_t,
\qquad G_0=I_d.
\end{equation}
Applying the chain rule to $t\mapsto\Phi_t(G_ty)$ and using
the second equation in \eqref{eq:lin-accelerated-pde},
we obtain
\[
\frac{\d}{\d t}\Phi_t(G_ty)
+\alpha_t\Phi_t(G_ty)
=
\tfrac12y^\top G_t^\top VG_ty,
\qquad
\Phi_0(G_0y)=\tfrac12y^\top P_0y.
\]
The integrating-factor formula gives the unique solution
of this scalar linear ODE. Returning to $x=G_ty$ yields
\begin{equation}\label{eq:lin-characteristic-potential}
\Phi_t(x)
=
\tfrac12x^\top G_t^{-\top}
\left[
\exp\left(-\int_0^t\alpha_r \d r\right)P_0
+
\int_0^t
\exp\left(-\int_s^t\alpha_r \d r\right)
G_s^\top VG_s \d s
\right]
G_t^{-1}x.
\end{equation}
Thus $\Phi_t=\Phi_{P_t}$ for a symmetric, locally absolutely
continuous matrix $P_t$. Similarly, applying the chain rule and using the density
equation in \eqref{eq:lin-accelerated-pde} gives
\[
\frac{\d}{\d t}
\bigl[\det G_t\,\rho_t(G_ty)\bigr]=0.
\]
The initial condition therefore yields
\eqref{eq:lin-density-pushforward}.
In particular, $\rho_t$ is a probability density with
finite second moment, and a change of variables gives
\[
S_t\coloneqq S(\rho_t)=G_tS_0G_t^\top.
\]
Since $\nabla\Phi_t(x)=P_tx$, we also have
$M_{\rho_t}(\Phi_t)=S_tP_t$.

Next, differentiating the second-moment identity
$S_t=S(\rho_t)=G_tS_0G_t^\top$ and the expression for the
quadratic coefficient $P_t$, here $P_t$ denotes the matrix between $x^\top$ and $x$ on
the right-hand side of \eqref{eq:lin-characteristic-potential}, gives that $(S_t,P_t)$ solves ~\eqref{eq:lin-accelerated-ode} almost everywhere.
Since $S, P, $ and $\alpha$ are continuous, classical bootstrapping implies that the matrix equations in ~\eqref{eq:lin-accelerated-ode} hold at every time.
Substituting $M_{\rho_t}(\Phi_t)=S_tP_t$ into
\eqref{eq:lin-characteristic-propagator} and using the symmetry
of $S_t$ and $P_t$ gives
\[
\dot G_t=P_tS_tAG_t,
\qquad G_0=I_d,
\]
which is precisely \eqref{eq:lin-propagator}.
Uniqueness of the solution to the matrix ODE system
\eqref{eq:lin-accelerated-ode} proves that $(\rho,\Phi)$
has the representation \eqref{eq:lin-accelerated-lift}.
\\[1ex]
\ref{it:lin-correspondence-geometry}.
The representation \eqref{eq:lin-density-pushforward}
shows that $\rho_t$ remains smooth and strictly positive.
Preservation of unit mass also follows from
\eqref{eq:lin-density-pushforward} by a change of variables.
Similarly,
\begin{equation}\label{eq:lin-fourth-propagation}
\int_{\R^d}|x|^4 \d\rho_t(x)
\leq
\|G_t\|_{\mathrm{op}}^4
\int_{\R^d}|y|^4 \d\rho_0(y).
\end{equation}
Applying the chain rule to
\eqref{eq:lin-density-pushforward} also yields
\begin{equation}\label{eq:lin-score-propagation}
\int_{\R^d}|x|^2|\nabla\log\rho_t(x)|^2 \d\rho_t(x)
\leq
\|G_t\|_{\mathrm{op}}^2
\|G_t^{-1}\|_{\mathrm{op}}^2
\int_{\R^d}|y|^2|\nabla\log\rho_0(y)|^2 \d\rho_0(y).
\end{equation}
Since $\rho_0\in\P_{K_A}^{4}$, the right-hand sides of
\eqref{eq:lin-fourth-propagation} and
\eqref{eq:lin-score-propagation} are finite for every
$t\in[0,T_{\max})$.
Together with smoothness, strict positivity, and unit mass,
these estimates prove that $\rho_t\in\P_{K_A}^{4}$
throughout this interval.

Next, we verify that $\partial_t\rho_t\in T_{\rho_t}^{\eff}\P_{K_A}$, with \(T_{\rho_t}^{\eff}\P_{K_A}\) as in~\eqref{eq:effective-spaces}. Using the first equation in \eqref{eq:lin-accelerated-pde}
together with the weighted-score estimate
\eqref{eq:lin-score-propagation}, we obtain
\[
\frac{\partial_t\rho_t}{\rho_t}
=
-\tr(P_tS_tA)
-
(P_tS_tAx)\cdot\nabla\log\rho_t
\in L^2(\rho_t).
\]
By a cutoff argument similar to that in the proof of
Lemma~\ref{lem:well-defined}, we obtain
$\int_{\R^d}\partial_t\rho_t \d x=0$.
Thus $\partial_t\rho_t\in T_{\rho_t}\P_{K_A}$,
where the latter space is defined as in
\eqref{eq:tangent}.
The finite fourth moment of $\rho_t$ gives
$\Phi_{P_t}\in H^1(\rho_t)$, so
$[\Phi_{P_t}]\in T_{\rho_t}^*\P_{K_A}$,
as defined in \eqref{eq:cotangent}.
The first equation in \eqref{eq:lin-accelerated-pde},
together with the definition of $L_{\rho_t}^{K_A}$
in \eqref{eq:onsager}, gives
\[
\partial_t\rho_t
=
L_{\rho_t}^{K_A}[\Phi_{P_t}].
\]
Thus $\partial_t\rho_t\in T_{\rho_t}^{\eff}\P_{K_A}$,
which completes the proof.
\end{proof}

\begin{proof}[\textbf{Proof of Corollary \ref{cor:lin-stability}}]
Fix $0<T<T_{\max}$.
Let $(S^{(n)},P^{(n)})$ and $(S,P)$ solve
\eqref{eq:lin-accelerated-ode} with initial values
$(S(\rho_0^{(n)}),P_0^{(n)})$ and
$(S(\rho_0),P_0)$, respectively.
Let $G^{(n)}$ and $G$ solve \eqref{eq:lin-propagator}
for these matrix pairs with initialization $I_d$.

Convergence of $\rho_0^{(n)}$ to $\rho_0$ in $W_2$ implies
$S(\rho_0^{(n)})\to S(\rho_0)$ in the Frobenius norm.
Continuous dependence for \eqref{eq:lin-accelerated-ode}
and \eqref{eq:lin-propagator}
\cite[Chapter~V]{Hartman2002ODE}
therefore gives existence of $S^{(n)},P^{(n)},G^{(n)}$
on $[0,T]$ for all sufficiently large $n$ and uniform
convergence to $S,P,G$ in the Frobenius norm.
The right-hand sides of these ODEs then also give uniform
convergence of their time derivatives.
By Theorem~\ref{thm:lin-correspondence}, the strong solutions
$(\rho^{(n)},\Phi^{(n)})$ consequently exist on $[0,T]$.

Using the pushforward representation
\eqref{eq:lin-accelerated-lift} and the triangle inequality
for $W_2$, we obtain
\[
W_2(\rho_t^{(n)},\rho_t)
\leq
\|G_t^{(n)}\|_{\mathrm{op}}W_2(\rho_0^{(n)},\rho_0)
+
\|G_t^{(n)}-G_t\|_{\mathrm{op}}
\left(\int_{\R^d}|x|^2 \d\rho_0(x)\right)^{1/2}.
\]
Uniform convergence of $G^{(n)}$ makes the matrices
$G_t^{(n)}$ uniformly bounded on $[0,T]$.
Together with the initial $W_2$ convergence, this estimate
therefore proves convergence in $C([0,T];W_2)$.

Since $G_t$ is invertible throughout the compact interval
$[0,T]$, the inverse matrices $(G_t^{(n)})^{-1}$ also
converge uniformly to $G_t^{-1}$.
Applying \eqref{eq:lin-density-pushforward} and using
$\rho_0^{(n)}\to\rho_0$ in $C^1_{\loc}(\R^d)$ now gives
\[
\rho^{(n)}\longrightarrow\rho
\qquad\text{in }C([0,T];C^1_{\loc}).
\]
The first equation in \eqref{eq:lin-accelerated-pde},
with $M_{\rho_t}(\Phi_t)=S_tP_t$, expresses
$\partial_t\rho_t$ in terms of $S_t,P_t,\rho_t$
and $\nabla\rho_t$.
The same identity holds for $(\rho^{(n)},\Phi^{(n)})$.
The convergence already established for these matrices
and functions therefore gives
\[
\partial_t\rho^{(n)}\longrightarrow\partial_t\rho
\qquad\text{in }C([0,T];C^0_{\loc}).
\]

Finally, the potential formulas in
\eqref{eq:lin-accelerated-lift} read
$\Phi_t^{(n)}(x)=\tfrac12x^\top P_t^{(n)}x$
and $\Phi_t(x)=\tfrac12x^\top P_tx$.
Uniform convergence of $P^{(n)}$ and $\dot P^{(n)}$
therefore gives
\[
\Phi^{(n)}\longrightarrow\Phi
\quad\text{in }
C([0,T];C^1_{\loc})
\cap C^1([0,T];C^0_{\loc}),
\]
completing the proof of \eqref{eq:lin-strong-stability}.
\end{proof}

\begin{proof}[\textbf{Proof of Proposition~\ref{prop:lin-continuation}}]
We first record the energy estimate used in both parts.
Define
\begin{equation}\label{eq:lin-mechanical-energy}
E(S,P)\coloneqq
\mathcal H^{\lin}(S,P/2)
=
\tfrac12\tr(PSASP)-\tfrac12\tr(VS).
\end{equation}
Differentiating along \eqref{eq:lin-accelerated-ode} gives
\begin{equation}\label{eq:lin-dissipation}
\frac{\d}{\d t}E(S_t,P_t)
=
-\alpha_t\|A^{1/2}S_tP_t\|_{\mathrm F}^2
\leq0.
\end{equation}

\par\noindent
\ref{it:lin-continuation-criterion}.
Suppose that $T_{\max}<\infty$ and that $S_t$ remains
bounded on $[0,T_{\max})$.
By \eqref{eq:lin-mechanical-energy} and
\eqref{eq:lin-dissipation},
\[
\|A^{1/2}S_tP_t\|_{\mathrm F}^2
\leq
2E(S_0,P_0)+\tr(VS_t).
\]
Since $A\succ0$ and $S_t$ is bounded, this bounds
$S_tP_t$ and $P_tS_t=(S_tP_t)^\top$.

Differentiating $S_t^{-1}$ using
\eqref{eq:lin-accelerated-ode} yields
\begin{equation}\label{eq:lin-inverse-moment}
\frac{\d}{\d t}S_t^{-1}
=
-S_t^{-1}(P_tS_t)A
-A(S_tP_t)S_t^{-1}.
\end{equation}
Gronwall's inequality therefore bounds $S_t^{-1}$
on $[0,T_{\max})$.
Consequently, $P_t=S_t^{-1}(S_tP_t)$ is also bounded.
The pair $(S_t,P_t)$ thus remains in a compact subset
of $\Sym_{++}(d)\times\Sym(d)$.
The standard continuation theorem for
\eqref{eq:lin-accelerated-ode} extends the solution
beyond $T_{\max}$, contradicting maximality;
see \cite{Hartman2002ODE}.

Finally, Theorem~\ref{thm:lin-correspondence} identifies
$S_t$ with the second-moment matrix of $\rho_t$.
Since $S_t\succ0$,
\[
\|S_t\|_{\mathrm F}
\leq
\tr(S_t)
=
\int_{\R^d}|x|^2 \d\rho_t(x)
\leq
\sqrt d\,\|S_t\|_{\mathrm F}.
\]
This proves both assertions in
\eqref{eq:lin-continuation-criterion}.
\\[1ex]
\ref{it:lin-continuation-global}.
If $V\preceq0$, both terms in
\eqref{eq:lin-mechanical-energy} are nonnegative.
Thus \eqref{eq:lin-dissipation} bounds $S_tP_t$
throughout the existence interval.
Writing the first equation in
\eqref{eq:lin-accelerated-ode} as
\[
\dot S_t=(P_tS_t)AS_t+S_tA(S_tP_t),
\]
Gronwall's inequality gives
\[
\|S_t\|_{\mathrm F}
\leq \|S_0\|_{\mathrm F}e^{Ct}
\]
for a finite constant $C$.
Hence $S_t$ cannot become unbounded in finite time.
Part~\ref{it:lin-continuation-criterion} therefore
implies $T_{\max}=\infty$.
\end{proof}

\begin{proof}[\textbf{Proof of Theorem~\ref{thm:lin-bounded-damping-blowup}}]
Set $y_i(t)\coloneqq\log s_i(t)$.
Eliminating $p_i$ from \eqref{eq:lin-diagonal-ode} gives
\begin{equation}\label{eq:lin-log-moment}
\ddot y_i+\alpha_t\dot y_i
=
2a_iv_i e^{y_i},
\qquad
y_i(0)=\log s_{i,0},
\qquad
\dot y_i(0)=0.
\end{equation}
\ref{it:lin-positive-moment-monotonicity}.
Fix an index $i$ with $v_i>0$ and set $c_i=2a_iv_i>0$.
The integrating-factor formula for
\eqref{eq:lin-log-moment} gives
\[
\dot y_i(t)
=
c_i\int_0^t
\exp\left(-\int_s^t\alpha_r \d r\right)
e^{y_i(s)} \d s
>0
\qquad(t>0).
\]
Therefore $s_i=e^{y_i}$ is strictly increasing.

\par\noindent
\ref{it:lin-bounded-damping-blowup}.
Consider the maximal solution of the $i$th scalar
system in \eqref{eq:lin-diagonal-ode}, and denote
its maximal existence time by $T_i$.
Suppose first that $T_i=\infty$.
Monotonicity of $y_i$ and boundedness of $\alpha$ give,
for $t\geq1$,
\[
\dot y_i(t)
\geq
c_i e^{y_i(0)}
\int_{t-1}^t \exp(-\bar\alpha(t-s)) \d s.
\]
The right-hand side is a positive constant independent
of $t$, so $y_i(t)\to\infty$ as $t\to\infty$.

To compare this growth with the damping, set
$r_i=\dot y_i e^{-y_i/2}$.
Equation \eqref{eq:lin-log-moment} gives
\[
\dot r_i
=
e^{y_i/2}\left(c_i-\frac{r_i^2}{2}\right)
-\alpha_t r_i.
\]
Choose $0<k<\sqrt{2c_i}$.
Once $y_i$ is sufficiently large, whenever
$0\leq r_i\leq k$ we have
\[
\dot r_i
\geq
\frac12\left(c_i-\frac{k^2}{2}\right)e^{y_i/2}
\geq
\frac{c_i-k^2/2}{2k} \dot y_i.
\]
Thus $r_i$ cannot remain below $k$ while $y_i$ tends
to infinity.
Moreover, at $r_i=k$ its derivative is positive,
so it cannot subsequently cross below $k$.
Consequently, for all sufficiently large times,
\[
\frac{\d}{\d t}e^{-y_i/2}
=
-\frac12r_i
\leq-\frac{k}{2}.
\]
This contradicts positivity of $e^{-y_i/2}$ on
$[0,\infty)$.
Hence $T_i<\infty$.

Applying Proposition~\ref{prop:lin-continuation} in
dimension one gives
$\limsup_{t\uparrow T_i}s_i(t)=\infty$.
Since $s_i$ is increasing, it follows that
$s_i(t)\to\infty$ as $t\uparrow T_i$.
The preceding argument also shows that, once $s_i$
is sufficiently large,
$\dot s_i=r_i s_i^{3/2}\geq k s_i^{3/2}$.

Every component with $v_i\leq0$ is global by
Proposition~\ref{prop:lin-continuation} in dimension one.
Since the scalar systems are decoupled,
\[
T_{\max}=\min_{1\leq i\leq d}T_i.
\]
If some $v_i>0$, this minimum is finite and attained
by a positive-forcing component.
For such an index,
\[
s_i(t)\longrightarrow\infty,
\qquad
\int_{\R^d}|x|^2 \d\rho_t(x)
=
\sum_{j=1}^d s_j(t)
\longrightarrow\infty
\]
as $t\uparrow T_{\max}$.
The equality of the matrix and PDE lifespans follows
from Theorem~\ref{thm:lin-correspondence}.
\\[1ex]
\ref{it:lin-commuting-global}.
If every $v_i\leq0$, then $V\preceq0$.
Global existence follows directly from
Proposition~\ref{prop:lin-continuation}.
\end{proof}

\subsection{Finite-Token Layers and Effective Depth}
\label{app:lin-continuation-proofs}

\begin{proof}[\textbf{Proof of Proposition \ref{prop:lin-transformer-lift}}]
\ref{it:lin-transformer-lift}.
Define $X(t)=X_0G_t^\top$ and $Y(t)=X(t)P_t$.
The matrix $G_t$ is invertible, since it solves the linear
system \eqref{eq:lin-propagator} with $G_0=I_d$.
Differentiating $G_t^{-1}S_tG_t^{-\top}$ using
\eqref{eq:lin-accelerated-ode} and
\eqref{eq:lin-propagator} shows that it is constant.
Its initial value is $S_0=X_0^\top X_0/N$, so
\[
\frac1N X(t)^\top X(t)
=
G_tS_0G_t^\top
=
S_t.
\]
Using this identity and \eqref{eq:lin-propagator}, we obtain
\[
\dot X
=
XASP
=
\frac1N XAX^\top Y.
\]
Differentiating $Y=XP$ and substituting
\eqref{eq:lin-accelerated-ode} similarly gives
\[
\dot Y
=
-\alpha Y-XP^2SA+XV
=
-\alpha Y-\frac1N YY^\top XA+XV.
\]
Here, the last equality uses $P=P^\top$ and $X^\top X/N=S$.
The initial values follow from $G_0=I_d$ and $P_0=0$.
Thus $(X,Y)$ solves \eqref{eq:lin-transformer-flow}.
Its right-hand side is continuous in time and locally
Lipschitz in $(X, Y)$, which proves uniqueness.
\\[1ex]
\ref{it:lin-transformer-maximal-time}.
Since $S_t$ is positive definite, the identity
$S_t=X(t)^\top X(t)/N$ in
\eqref{eq:lin-transformer-lift} gives
\begin{equation}\label{eq:lin-moment-token-bound}
\|S_t\|_{\mathrm F}
\leq
\tr(S_t)
=
\frac1N\|X(t)\|_{\mathrm F}^2.
\end{equation}
If $S_t$ is unbounded as $t\uparrow T_{\max}$,
then \eqref{eq:lin-moment-token-bound} implies that
$\|X(t)\|_{\mathrm F}$ is also unbounded.
A solution extending beyond $T_{\max}$ would be continuous
and hence bounded on $[0,T_{\max}]$, contradicting this
unboundedness of $\|X(t)\|_{\mathrm F}$.
Since part~\ref{it:lin-transformer-lift} gives a solution
on $[0,T_{\max})$, its maximal existence time is
exactly $T_{\max}$.
\end{proof}

\begin{proof}[\textbf{Proof of Lemma \ref{prop:lin-transformer-transfer}}]
Fix $0<T<T_{\max}$.
Since $\alpha\in C^1$, the solution $Z=(X,Y)$ of
\eqref{eq:lin-transformer-flow} belongs to $C^2([0,T])$.
On the closed unit neighbourhood of $Z([0,T])$, the
right-hand side of \eqref{eq:lin-transformer-flow}
is Lipschitz in $(X,Y)$, uniformly for $t\in[0,T]$.
The classical Euler estimate \cite[{\S}2.7]{Teschl2012} yields a constant $C_T \ge 0$ such that
\begin{equation}\label{eq:lin-token-euler-bound}
\max_{\ell h\leq T} \|Z_\ell^h-Z(\ell h)\|
\leq C_T
h
\end{equation}
for all sufficiently small $h$.

It remains to estimate the second moments.
Using $S(\ell h)=X(\ell h)^\top X(\ell h)/N$ from
\eqref{eq:lin-transformer-lift} and the definition of
$S_\ell^h$ in \eqref{eq:S_ell_h}, we obtain
\[
\|S_\ell^h-S(\ell h)\|_{\mathrm F}
\leq
\frac{\|X_\ell^h\|_{\mathrm F}
      +\|X(\ell h)\|_{\mathrm F}}{N}
\|X_\ell^h-X(\ell h)\|_{\mathrm F}.
\]
The continuity of $X$ on $[0,T]$ and
\eqref{eq:lin-token-euler-bound} bound both token matrices
uniformly for $\ell h\leq T$ and $0<h<h_T$.
Since
$\|X_\ell^h-X(\ell h)\|_{\mathrm F}
\leq\|Z_\ell^h-Z(\ell h)\|$,
the second-moment error is also bounded by a constant
times $h$.
Combining these estimates proves
\eqref{eq:lin-transformer-euler-error}.
\end{proof}

\begin{proof}[\textbf{Proof of Theorem \ref{thm:lin-transformer-depth}}]
Fix $R>s_*$.
For the chosen positive-forcing component,
\eqref{eq:lin-diagonal-ode} gives $p_{i_*}(t)>0$ for
$0<t<T_{\max}$: at every zero of $p_{i_*}$, its
derivative equals $v_{i_*}>0$.
Consequently, $s'(t_R)>0$.
By continuity of $s'$, we can choose $\delta_R>0$ and
$m_R>0$ with
$[t_R-\delta_R,t_R+\delta_R]\subset(0,T_{\max})$ such that
\begin{equation}\label{eq:lin-threshold-slope}
s'(t)\geq m_R
\qquad
\text{for all }t\in[t_R-\delta_R,t_R+\delta_R].
\end{equation}
Next, we compare the discrete second-moment matrices
$S_\ell^h$ with the continuous matrices $S(\ell h)$.
Applying \eqref{eq:lin-transformer-euler-error} with
$T=t_R+\delta_R$ gives a constant $C_R>0$ such that
\[
\max_{0\leq\ell h\leq t_R+\delta_R}
\|S_\ell^h-S(\ell h)\|_{\mathrm F}
\leq C_Rh
\]
for all sufficiently small $h>0$.
Using $S_\ell^h=(X_\ell^h)^\top X_\ell^h/N$ and
\eqref{eq:lin-directional-moment}, we can express the
error in the chosen direction as
\begin{equation}\label{eq:lin-threshold-moment-error}
\left|
\frac1N\|X_\ell^h e_{i_*}\|_2^2-s(\ell h)
\right|
=
\left|
e_{i_*}^\top\bigl(S_\ell^h-S(\ell h)\bigr)e_{i_*}
\right|
\leq C_Rh.
\end{equation}
The last inequality follows from $\|e_{i_*}\|_2=1$
and the Frobenius norm bound above.

We now locate the first discrete crossing within a
time window of width proportional to $h$ around $t_R$.
Choose $c_R>C_R/m_R$.
By \eqref{eq:lin-threshold-slope}, a time displacement
of $c_Rh$ near $t_R$ changes the continuous moment by
at least $m_Rc_Rh>C_Rh$, exceeding the discretization
error in \eqref{eq:lin-threshold-moment-error}.

First, we observe that, for sufficiently small $h>0$,
both $t_R-c_Rh$ and $t_R+(c_R+1)h$ belong to
$[t_R-\delta_R,t_R+\delta_R]$.
Next, for every layer with $\ell h\leq t_R-c_Rh$,
monotonicity of $s$, the derivative bound
\eqref{eq:lin-threshold-slope}, and the error estimate
\eqref{eq:lin-threshold-moment-error} give
\[
\frac1N\|X_\ell^h e_{i_*}\|_2^2
\leq s(t_R-c_Rh)+C_Rh
\leq R-(m_Rc_R-C_R)h
<R.
\]
Thus no layer of \eqref{eq:lin-transformer-layers}
with $\ell h\leq t_R-c_Rh$ reaches the threshold
defining $L_R(h)$ in \eqref{eq:lin-layer-threshold}.

For the upper bound, take $\ell$ to be the smallest
integer satisfying $\ell h\geq t_R+c_Rh$.
Since consecutive grid times differ by $h$,
\[
t_R+c_Rh\leq\ell h<t_R+(c_R+1)h.
\]
In particular, this grid time lies in the interval
where \eqref{eq:lin-threshold-slope} holds.
Applying \eqref{eq:lin-threshold-slope} and
\eqref{eq:lin-threshold-moment-error} at this layer gives
\[
\frac1N\|X_\ell^h e_{i_*}\|_2^2
\geq s(t_R+c_Rh)-C_Rh
\geq R+(m_Rc_R-C_R)h
>R.
\]
Therefore $L_R(h)$ is finite, and the two bounds yield
\begin{equation}\label{eq:lin-threshold-crossing-bracket}
t_R-c_Rh<hL_R(h)\leq t_R+(c_R+1)h.
\end{equation}
This proves $hL_R(h)=t_R+O_R(h)$.

Finally, the crossing times $t_R$ increase with $R$.
They cannot remain bounded away from $T_{\max}$,
because $s$ is bounded on every compact interval
contained in $[0, T_{\max})$.
Hence $t_R\uparrow T_{\max}$.
Taking the limits in
\eqref{eq:lin-threshold-crossing-bracket}, we obtain
\[
\lim_{R\to\infty}\lim_{h\downarrow0}hL_R(h)
=
\lim_{R\to\infty}t_R
=
T_{\max}.\qedhere
\]
\end{proof}

\begin{proof}[\textbf{Proof of Corollary \ref{cor:lin-transformer-amplification}}]
Choose $R>\max\{s_*,M^2\}$.
By Theorem~\ref{thm:lin-transformer-depth},
$hL_R(h)\to t_R<T_{\max}$.
Thus, for sufficiently small $h$, the layer
$\ell=L_R(h)$ satisfies $\ell h<T_{\max}$.
By \eqref{eq:lin-layer-threshold} and
$\|e_{i_*}\|_2=1$,
\[
M^2<R
\leq\frac1N\|X_\ell^h e_{i_*}\|_2^2
\leq\max_{1\leq j\leq N}\|x_{j,\ell}^h\|_2^2.
\]
This proves the claim.
\end{proof}

\begin{proof}[\textbf{Proof of Corollary \ref{cor:lin-transformer-critical-depth}}]
Fix $M>0$ and set $h=T/L$.
By Corollary~\ref{cor:lin-transformer-amplification},
for every sufficiently large $L$, there is a layer
$\ell$ with $\ell h<T_{\max}\leq T$ at which a token
norm exceeds $M$.
Since $T=Lh$, this layer satisfies $\ell\leq L$.
Hence $B_L(T)>M$ for all sufficiently large $L$.
As $M$ was arbitrary, the conclusion follows.
\end{proof}

\begin{proof}[\textbf{Proof of Corollary \ref{thm:lin-transformer-subcritical-depth}}]
Applying Lemma~\ref{prop:lin-transformer-transfer}
with $h=T/L$ bounds the approximation error by
$C_TT/L$ for all sufficiently large $L$.
Moreover,
\[
B_L(T)
\leq
\max_{0\leq t\leq T}\|X(t)\|_{\mathrm F}
+\frac{C_TT}{L}.
\]
The maximum on the right is finite because $X$ is
continuous on $[0,T]$.
This proves both conclusions.
\end{proof}

\section{Implementation Details} 
\label{section:Implementation}

\paragraph{Implementation Details}
We collect the implementation details for the experiments
in Section~\ref{sec:numerics}. The model configurations
and training protocols are specified separately for
the softmax and linear-attention experiments.

\paragraph{Datasets and tokenization.}
We use fixed GPT-2 BPE tokenizations of
\href{https://huggingface.co/datasets/roneneldan/TinyStories}
{TinyStories} \cite{eldan2023tinystories} and
\href{https://skylion007.github.io/OpenWebTextCorpus/}
{OpenWebText}, using \texttt{tiktoken}'s \texttt{gpt2} encoding
with $50{,}257$ token IDs and a model vocabulary padded to $50{,}304$.
We use the supplied TinyStories training and validation splits.
For OpenWebText, we split the documents of
\texttt{Skylion007/openwebtext} into $99.5\%$ training and $0.5\%$
validation using a shuffled split with preprocessing seed $1337$.
Each document is encoded separately with \texttt{encode\_ordinary},
followed by one end-of-text token (ID $50{,}256$).
The resulting documents are concatenated within each split and stored
as a fixed stream of unsigned 16-bit token IDs.
Input windows and their one-token-shifted targets may cross document
boundaries; the end-of-text token is retained in the prediction task,
and attention uses the causal mask without an additional document mask.
The primary softmax, linear-attention and looping studies draw training
windows from shuffled non-overlapping blocks, with a seeded boundary
offset and reshuffling at each epoch. The normalized-sigmoid and
native-L2 studies instead sample valid window starts uniformly with
replacement, using a deterministic generator keyed by the run seed
and optimizer update. Each comparison pairs the sampling rule and
batch sequence across methods.
The numbers of tokens in the preprocessed training and
validation sets are listed in
Table~\ref{tab:implementation-datasets}.
These dataset sizes are distinct from the training-token
budgets used in the individual experiments.

\paragraph{MLP and optimizer.}
The MLP consists of a linear map from width $d$ to $4d$, a GELU
activation, and a linear map back to $d$. All models use zero dropout
and bias-free linear maps and LayerNorms.
The optimizer is AdamW with peak learning rate
$6\times10^{-4}$, scalar-group learning-rate multiplier
$10$, and momentum coefficients $(0.9,0.95)$.
Weight decay is $0.1$ only for the token and position embeddings,
including the velocity embeddings when present; the tied output weight
is the same parameter as the token-state embedding and is counted once.
Attention and MLP matrices, LayerNorm scales, and learned scalar
coefficients have zero weight decay.
Training uses global gradient clipping at $1$,
$10\%$ warm-up, and cosine decay to $0.1$ of the
peak learning rate.

\begin{table}[htbp]
\centering
\begin{tabular}{lrr}
\toprule
Dataset & Training tokens & Validation tokens\\
\midrule
TinyStories & $473{,}992{,}236$ & $4{,}765{,}918$\\
OpenWebText & $8{,}995{,}237{,}948$ & $44{,}779{,}147$\\
\bottomrule
\end{tabular}
\caption{Numbers of tokens after GPT-2 BPE preprocessing.}
\label{tab:implementation-datasets}
\end{table}

\paragraph{Initialization.}
For an input token $a_i$ at position $i$, the initial token state is
$x_i^{(0)}=E_X(a_i)+P_X(i)$, where $E_X$ and $P_X$ are learned
token and position embedding tables.
All momentum variants additionally use
$v_i^{(0)}=E_V(a_i)+P_V(i)$, with separate learned tables $E_V,P_V$;
these are not tied to $E_X, P_X$ and the initial velocity is not zero.
The ordinary baseline $\mathrm B$ has no velocity tables or stream.
Linear-map and embedding weights are initialized from
$\mathcal N(0,0.02^2)$; LayerNorm scales start at one, with
normalization constant $10^{-5}$ and no bias.
The output-head weight is tied to the token-state embedding $E_X$,
not to the velocity embedding.
For a run with seed $s$, the velocity tables are initialized in the
order $E_V, P_V$ using a separate CPU random generator with seed
$s+10{,}000{,}019$, so paired momentum variants receive identical
initial velocity tables without altering the other model parameters.

\paragraph{Learned update coefficients.}
For each trainable block $k$ and branch $b\in\{A,M\}$ (attention or MLP),
the coefficients are scalar parameters broadcast over tokens and channels.
Writing $\theta_{\beta,k}^b$, $\theta_{\gamma,k}^b$, and
$\theta_{\mu,k}^b$ for their unconstrained trainable parameters, we use
\[
\beta_k^b=\sigma(\theta_{\beta,k}^b),\qquad
\gamma_k^b=\operatorname{softplus}(\theta_{\gamma,k}^b)+10^{-6},
\qquad
\mu_k^b=\sigma(\theta_{\mu,k}^b),
\]
where $\sigma(u)=(1+\exp(-u))^{-1}$ and
$\operatorname{softplus}(u)=\log(1+\exp(u))$.
The initial coefficient values are $\beta_k^b=0.9$,
$\gamma_k^b=1$, and $\mu_k^b=0.9$ wherever look-ahead is enabled.
For $\mathrm H_0$, $\mathrm H_1$, and $\mathrm R_2$,
attention look-ahead is instead fixed at $\mu_k^A=0$, with no
trainable attention look-ahead parameter; MLP look-ahead is retained.
Attention and MLP coefficients are learned separately, as are the
coefficients of distinct trainable blocks.
Their unconstrained parameters use ten times the base learning rate
and no weight decay. These are learned update coefficients, not
coefficients constrained to a prescribed continuous-time damping law.

\paragraph{Looping and parameter sharing.}
In Table~\ref{tab:separate-vs-shared}, both layouts execute eight
logical block applications. The tied layout applies two distinct
trainable blocks in the order $(0,1,0,1,0,1,0,1)$.
Sharing includes all attention and MLP weights, position and velocity
LayerNorms, and learned update coefficients within each block.
For each method and seed, the untied layout starts from eight separate
copies of the same two initialized blocks in this alternating order;
the copies subsequently train independently. Thus the tied and untied
models implement the same function at initialization, rather than
starting from eight independently sampled blocks.
Token and velocity embeddings, the final LayerNorm and the output
head are not duplicated across repetitions.
The token and velocity streams are initialized once per forward pass
and continue through all eight applications without resets or repeated
input injection. Training backpropagates through all eight applications
without detaching the streams; gradients from repeated uses accumulate
on the shared parameters.

\paragraph{Conventions}
Paired methods always share the training batches, tokenizer, data splits, and validation targets.
The reported endpoint is the final evaluation, rather than the best checkpoint.
Peak memory uses only complete dataset/seed blocks.

\paragraph{Hardware and model output.}
All runs use the same RTX A6000 cohort\footnote{As an exception, in the sigmoid-attention and $L^2$-attention experiments, TinyStories runs used A100 GPUs, and OpenWebText runs used  RTX A6000 GPUs. GPU-hours are reported separately by dataset and hardware, not as equal compute across devices. Hence, comparisons remain within each attention family.} and IEEE FP32 arithmetic without TF32, with no method-specific
tuning. All models apply a final LayerNorm to the token states,
followed by a linear output head that produces the
logits for next-token prediction.

\paragraph{Cost conventions.}
GPU-hours are the mean allocated GPU time per completed run, computed
from the scheduler's elapsed allocation time.
Each run uses one GPU. Means are taken over the paired seeds separately
for TinyStories and OpenWebText.
Peak MiB denotes peak PyTorch-allocated GPU tensor memory.
Parameter counts include all trainable embeddings, blocks, and final
normalization, counting shared weights only once.

\end{document}